\newtheorem{theorem}{Theorem}[section]
\newtheorem{lemma}[theorem]{Lemma}
\newtheorem{prop}[theorem]{Proposition}
\newtheorem{coro}[theorem]{Corollary}
\newtheorem{defn}[theorem]{Definition}
\theoremstyle{definition}
\newtheorem{remark}[theorem]{Remark}
\numberwithin{equation}{section}
\def \fk{\mathfrak}
\def \msf{\mathsf}
\def \mb{\mathbb}
\def \mc{\mathcal}
\def \scr{\mathscr}
\def \inv{^{-1}}
\def \0{\infty}
\def \cplane{\mathbb{C}}
\def \integer{\mathbb{Z}}
\def \ration{\mathbb{Q}}
\def \qq{\quad}
\def \P{\mb P}
\def \rto{\rightarrow}
\def \hrto{\hookrightarrow}
\def \rrto{\rightrightarrows}
\def \vdimc{\mathrm{vdim}_\cplane}
\def \co{\colon\thinspace}
\def \C{{\msf C}}
\def \N{{\msf N}}
\def \X{{\msf X}}
\def \Y{{\msf Y}}
\def \L{{\msf L}}
\def \D{\msf D}
\def \I{{\msf I}}
\def \f{{\msf f}}
\def \T{\scr T}
\def \cN{{\mc N}}
\def \M{{\overline{\scr M}}}
\def \bl{{\Big\langle}}
\def \br{{\Big\rangle}}
\def \aut{{\mathrm{Aut}}}
\def \vir{{\mathrm{vir}}}
\def \age{{\mathrm{age}}}
\def \cont{{\mathrm{Cont}}}
\def \node{{\mathrm{node}}}
\def \orb{{\mathrm{orb}}}
\def \ord{{\mathrm{ord}}}
\def \tV{{\mathrm{V}}}
\def \tE{{\mathrm{E}}}
\def \tL{{\mathrm{L}}}
\def \tH{{\mathrm{H}}}
\def \fkp{{\fk p}}
\def \fkq{{\fk q}}
\begin{document}

\title{Double ramification cycles with orbifold targets}

\author{Bohui Chen}
\address{Department of Mathematics and Yangtz center of Mathematics,
Sichuan University, Chengdu 610064, China}
\email{chenbohui@scu.edu.cn}

\author{Cheng-Yong Du}
\address{School of mathematics and V.\thinspace C. \& V.\thinspace R. Key Lab, Sichuan Normal University, Chengdu 610068, China}
\email{cyd9966@hotmail.com}

\author{Rui Wang}
\address{Department of Mathematics, University of California, Berkeley, CA 94720-3840 USA}
\email{ruiwang@berkeley.edu}

%    \thanks will become a 1st page footnote.%

\subjclass[2010]{Primary 53D45; Secondary 14N35}
\date{}

\keywords{Root of orbifold line bundles, polynomiality, double ramification cycle, absolute/relative orbifold Gromov--Witten theory}

\begin{abstract}
In this paper, we consider double ramification cycles with orbifold targets. An explicit formula for double ramification cycles with orbifold targets, which is parallel to and generalizes the one known for the smooth case, is provided. Some applications for orbifold Gromov--Witten theory are also included.

\end{abstract}

\maketitle

%\tableofcontents

\section{Introduction}

Rubber invariant appears in the relative Gromov--Witten theory and it plays an important role in many circumstances. Recently, people observe that it is closely related to the so called double ramification cycles (abbreviated as DR-cycles). DR-cycles for target space $X$ as a point and a smooth manifold were studied and an elegant formula for DR-cycles was obtained by
Janda--Pandharipande--Pixton--Zvonkine (\cite{Janda-Pandharipande-Pixton-Zvonkine2017,Janda-Pandharipande-Pixton-Zvonkine2018}). This is a break-through in this subject and it has many interesting applications, see for example \cite{Janda-Pandharipande-Pixton-Zvonkine2017}, \cite{Janda-Pandharipande-Pixton-Zvonkine2018}, \cite{Farkas-Pandharipande2016}, \cite{Tseng-You2018b}, \cite{Fan-Wu-You2020}, \cite{Fan-Wu-You2019}, etc. Such a formula for DR-cycle with orbifold targets $[pt/G]$, where $G$ is a finite group, was obtained by Tseng--You (\cite{Tseng-You2016b}). In this paper based on the relative Gromov--Witten theory for orbifolds developed by Chen--Li--Sun--Zhao (\cite{Chen-Li-Sun-Zhao2011})
and Abramovich--Fantechi (\cite{Abramovich-Fantechi2016}), we are able to develop a parallel formula for DR-cycles when $X$ is a general orbifold (cf. Theorem \ref{T formula-of-DR}). The proof of the formula relies on the polynomiality of certain twisted Gromov--Witten invariants of roots of line bundles, which was discovered by Pixton (\cite{Janda-Pandharipande-Pixton-Zvonkine2017}). In this paper, we verify such a polynomiality property for the case of orbifold line bundles (cf. Theorem \ref{T polynomiality}). As an application we study the relation between relative orbifold Gromov--Witten invariants and absolute orbifold Gromov--Witten invariants of root constructions, generalizing the results in \cite{Tseng-You2018b}.

We next explain our results explicitly. Consider an orbifold line bundle $\L\rto\D=(D^1\rrto D^0)$ with representation $\rho\co D^1\rto U(1)$. Let $\sqrt[r]\L$ be its $r$-th root, and $(\sqrt[r]\D)_\rho$ be the corresponding $r$-th root gerbe over $\D$, which is a banded $\integer_r$-gerbe over $\D$. Let $\Gamma=(\msf g,\vec g,\beta)$ be a topological data for $\D$ with $\vec g=([g_1],\ldots,[g_n])$ and let the vector $A=(a_1,\ldots,a_n)\in\ration^n$ be $\rho$-admissible for $\Gamma$ (i.e., for $\vec g$) in the sense of \eqref{E Gamma-admissible}, then we have a topological data
\[
\Gamma_{A,r,\rho}=\Upsilon_{r,\rho}(\Gamma,A)
\]
for $(\sqrt[r]\D)_\rho$, see Definition \ref{D lift-Gamma}. Let $\M_\Gamma(\D)$ and $\M_{\Gamma_{A,r,\rho}}((\sqrt[r]\D)_\rho)$ be the moduli space of stable maps to $\D$ and $(\sqrt[r]\D)_\rho$ of topological type $\Gamma$ and $\Gamma_{A,r,\rho}$ respectively. Let $\tilde\M_{\Gamma_{A,r,\rho}}((\sqrt[r]\D)_\rho)$ be the weighted blowup of $\M_{\Gamma_{A,r,\rho}}((\sqrt[r]\D)_\rho)$ along the locus of nodal curves with weight given by the orders of orbifold structures over nodal points. Then we have a universal curve $\pi\co\tilde{\scr C}\rto \tilde \M_{\Gamma_{A,r,\rho}}((\sqrt[r]\D)_\rho)$ equipped with a universal map $\f\co\tilde{\scr C}\rto (\sqrt[r]\D)_\rho$. It induces a $K$-bundle $(\sqrt[r]\L)_{\Gamma_{A,r,\rho}}:=\mc R\pi_*\f^*\sqrt[r]\L$ over $\tilde\M_{\Gamma_{A,r,\rho}}((\sqrt[r]\D)_\rho)$. Let $\tau$ be the composition $\tilde\M_{\Gamma_{A,r,\rho}}((\sqrt[r]\D)_\rho)\xrightarrow{\pi} \M_{\Gamma_{A,r,\rho}}((\sqrt[r]\D)_\rho)\xrightarrow{\epsilon}\M_\Gamma(\D)$.
Our first result concerns the cycles
\[
\tau_*(c_d(-(\sqrt[r]\L)_{\Gamma_{A,r,\rho}}) \cap[\tilde\M_{\Gamma_{A,r,\rho}}((\sqrt[r]\D)_\rho)]^\vir),\qq d\geq 0.
\]
\begin{theorem}[see Theorem \ref{T polynomiality}]
Suppose $\D$ is a quotient orbifold of a smooth quasi-projective scheme by a linear algebraic group. Then for every $\Gamma$, every $\rho$-admissible vector $A\in\ration^n$ for $\Gamma$ and every $d\geq 0$, the cycle class $r^{2d-2\msf g+1}\tau_*(c_d(-(\sqrt[r]\L)_{\Gamma_{A,r,\rho}}) \cap[\tilde\M_{\Gamma_{A,r,\rho}}((\sqrt[r]\D)_\rho)]^\vir)$ is a polynomial in $r$ when $r \gg 1$.
\end{theorem}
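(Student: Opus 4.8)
The proof would follow the strategy of Pixton (as presented in Janda–Pandharipande–Pixton–Zvonkine) adapted to the orbifold setting. The plan is to reduce the statement to a computation of $\tau_*$ of the relevant Chern class via localization and Grothendieck–Riemann–Roch, and then to track the dependence on $r$ explicitly. First I would invoke Grothendieck–Riemann–Roch applied to $\pi\co\tilde{\scr C}\rto\tilde\M_{\Gamma_{A,r,\rho}}((\sqrt[r]\D)_\rho)$ to express the Chern character of the $K$-theory class $-(\sqrt[r]\L)_{\Gamma_{A,r,\rho}}=-\mc R\pi_*\f^*\sqrt[r]\L$ in terms of tautological classes on the universal curve: the $\psi$-classes at the markings, the first Chern class of $\f^*\sqrt[r]\L$ along fibers, and boundary (nodal) contributions. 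The key arithmetic input is that the first Chern class of $\f^*\sqrt[r]\L$ restricted to a component behaves, after the weighted blowup, like $\tfrac{1}{r}$ times a class pulled back from $\M_\Gamma(\D)$ together with correction terms at the markings governed by the $\rho$-admissible vector $A$ and at the nodes governed by the orbifold orders there.

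Second, I would organize the resulting expression for $c_d(-(\sqrt[r]\L)_{\Gamma_{A,r,\rho}})\cap[\tilde\M]^\vir$ as a sum over decorated graphs (strata of the boundary), where each graph contributes a product of: vertex factors (integrals against the virtual class of moduli of maps to $\D$ for the sub-topological-types), edge factors coming from the weighted blowup of nodes, and leg factors from the markings. The crucial point is that each such contribution, when one pushes forward by $\tau$ and multiplies by the normalization factor $r^{2d-2\msf g+1}$, is manifestly a Laurent polynomial in $r$ whose coefficients are honest tautological classes on $\M_\Gamma(\D)$ — because the $r$-dependence enters only through: (i) the explicit $\tfrac{1}{r}$-scaling of the first Chern class of the root bundle, (ii) the number of intermediate orbifold structures at nodes (each node of order divisible by $r$ contributes a factor linear in $r$ after the blowup normalization), and (iii) the Bernoulli-type coefficients appearing in the GRR expansion of the root bundle, which are polynomial in $\tfrac{1}{r}$ when $r$ is large enough to make all the ages $\{a_i\}$, $\langle \cdots\rangle$ lie in a fixed range. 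A power-counting argument, matching the exponent $2d-2\msf g+1$ against the genus, the number of nodes, and the degree $d$, then shows that no negative powers of $r$ survive, so the class is polynomial in $r$ for $r\gg 1$.

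Third, the reduction to the quotient hypothesis: the assumption that $\D$ is a global quotient of a smooth quasi-projective scheme by a linear algebraic group is used to guarantee that the moduli spaces $\M_{\Gamma_{A,r,\rho}}((\sqrt[r]\D)_\rho)$ and their weighted blowups are of the expected type (global quotient Deligne–Mumford stacks, or admit enough of a virtual structure and a well-defined proper pushforward), so that GRR and virtual localization are available and the $\tau_*$ pushforward lands in the Chow group of $\M_\Gamma(\D)$ with the tautological-class control above. I would also need the comparison, presumably established earlier in the paper, between $\M_{\Gamma_{A,r,\rho}}((\sqrt[r]\D)_\rho)$ for varying $r$ and a single "limit" object — i.e., that the combinatorial structure of the decorated graphs stabilizes once $r$ exceeds a bound depending only on $\Gamma$ and $A$ — so that one is genuinely comparing a single family of expressions.

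The main obstacle I anticipate is the bookkeeping at the nodes after the weighted blowup: one must verify that the weighted blowup of the nodal locus (with weights the orbifold orders) interacts with $\mc R\pi_*\f^*\sqrt[r]\L$ so that the contribution of each node is exactly a monomial in $r$ of the predicted degree, with no hidden $r$-dependence lurking in the normal bundle of the blown-up locus or in the comparison of virtual classes before and after the blowup. In the smooth case this is the technical heart of the JPPZ argument (the "$r$-linearity" of the node contributions), and in the orbifold case the presence of nontrivial isotropy at the nodes — with orders that may themselves grow with $r$ — makes the power-counting more delicate. Handling this carefully, and checking that the twisted Bernoulli polynomials $B_m(\{a_i\})$ and their node analogues assemble into something polynomial in $r$ after the overall rescaling, is where the real work lies; the rest is a reasonably direct transcription of the smooth-target proof.
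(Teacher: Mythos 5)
Your overall framework is right: the paper does prove Theorem \ref{T polynomiality} by applying orbifold Grothendieck--Riemann--Roch (in Tseng's form, following To\"en) to $\pi\co\tilde{\scr C}\rto\tilde\M_{\Gamma_{A,r,\rho}}((\sqrt[r]\D)_\rho)$, exponentiating to get $c(-(\sqrt[r]\L)_{\Gamma_{A,r,\rho}})$, expanding the nodal contributions as a sum over decorated boundary strata, and tracking the powers of $r$ (the $r^{2\msf g-1-h^1(\fk\Gamma)}$ factor comes from the degree of $\epsilon\co\M_{\Gamma_{A,r,\rho}}((\sqrt[r]\D)_\rho)\rto\M_\Gamma(\D)$ restricted to each stratum). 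The quasi-projective-quotient hypothesis is indeed used only to make To\"en--Tseng GRR available. A minor inaccuracy: virtual localization is not used in this proof; it enters only in the DR-cycle computation of \S 3.

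However, there is a genuine gap in the middle of your argument. You write that each graph contribution ``is manifestly a Laurent polynomial in $r$'' because the $r$-dependence enters only through the $1/r$-scaling of $c_1(\sqrt[r]\L)$, the node normalizations, and Bernoulli coefficients that are polynomial in $1/r$. This is not the actual structure. For a fixed boundary graph $\fk\Gamma$ and a fixed orbifold decoration $\chi$, the strata of $\tilde\M_{\Gamma_{A,r,\rho}}((\sqrt[r]\D)_\rho)$ lying over $\M_{\fk\Gamma,\chi}$ are indexed by \emph{weight functions} $w\co\tH(\fk\Gamma)\rto\{0,\dots,r-1\}$ satisfying certain congruences mod $r$ (Lemma \ref{L weight}). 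The cardinality of this index set grows with $r$, and the arguments of the Bernoulli polynomials at the nodes are $\frac{w(\msf h)+\age_{\chi(\msf h)}(\L)}{r}$ with $w(\msf h)$ ranging over all of $\{0,\ldots,r-1\}$ --- they are not fixed constants divided by $r$. So the contribution of a single graph is a sum over an $r$-dependent set of an $r$-dependent quantity, and this is nothing like ``manifestly'' a Laurent polynomial. The indispensable step you are missing is exactly Pixton's lemma (Proposition $3''$ of \cite{Janda-Pandharipande-Pixton-Zvonkine2017}): after using the identity $B_m(x+y)=\sum_k\binom{m}{k}B_k(x)y^{m-k}$ to express each term as a polynomial in the values $\{w(\msf h)\}$, one needs the non-trivial arithmetic fact that summing a fixed polynomial over all weight functions satisfying modular constraints mod $r$ is itself polynomial in $r$ for $r\gg 1$. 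Without isolating and invoking that lemma, the plan cannot be completed; the ``real work'' you defer to the end is precisely this step, and your points (ii)--(iii) do not capture it.
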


We give an explicit formula for the constant term of
\[
r^{2d-2\msf g+1}\tau_*(c_d(-(\sqrt[r]\L)_{\Gamma_{A,r,\rho}}) \cap[\tilde\M_{\Gamma_{A,r,\rho}}((\sqrt[r]\D)_\rho)]^\vir)
\]
in Proposition \ref{P leading-term}.

With this polynomiality we prove a formula for double ramification cycles with orbifold targets. Let $\Y=\P(\L\oplus\mc O_\D)$ be the projectification of $\L$. Let $\D_0$ and $\D_\0$ be its 0-section and $\0$-section respectively. Let $\Gamma=(\msf g,\vec g,\beta,\mu_0,\mu_\0)$ be a topological data for $(\D_0|\Y|\D_\0)$ with $\mu_0$ and $\mu_\0$ denoting the orbifold information and contact orders for relative marked points mapped to $\D_0$ and $\D_\0$ respectively, see \S \ref{SSS D-DR-cycle}. Then we have the moduli space $\M_\Gamma(\D_0|\Y|\D_\0)^\sim$ of orbifold stable maps of topological type $\Gamma$ to the rubber targets of $(\D_0|\Y|\D_\0)$. By forgetting the contact orders in $\mu_0$ for $\D_0$ and $\mu_\0$ for $\D_\0$ we get a topological data $(\msf g,\beta,\vec g\sqcup\vec\mu_0\sqcup\vec\mu_\0)$ for $\D$, which we still denote by $\Gamma=(\msf g,\beta,\vec g\sqcup\vec\mu_0\sqcup\vec\mu_\0)$, see \eqref{E top-data-D}. There is a natural projection
\[
\epsilon_\D\co \M_\Gamma(\D_0|\Y|\D_\0)^\sim\rto\M_\Gamma(\D).
\]
The double ramification cycle (``DR-cycle'') for $(\D,\L)$ of type $\Gamma$ is defined as (cf. Definition \ref{D def-DR})
\[
\msf{DR}_\Gamma(\D,\L):=\epsilon_{\D,*} ([\M_\Gamma(\D_0|\Y|\D_\0)^\sim]^\vir).
\]
This is an orbifold generalization of the DR cycles for smooth targets defined in \cite{Janda-Pandharipande-Pixton-Zvonkine2018}.

Our second result is the computation for the cycle $\msf{DR}_\Gamma(\D,\L)$. From $\vec g$, $\mu_0$ and $\mu_\0$ we get a $\bar\rho$-compatible vector $A_{\bar\rho}$ (cf. \eqref{E A-bar-rho}) for $\Gamma=(\msf g,\beta,\vec g\sqcup\vec\mu_0\sqcup\vec\mu_\0)$. Then we obtain a topological data
\[
\Gamma_\0(r):=\Gamma_{A_{\bar\rho},r,\bar\rho}= \Upsilon_{r,\bar\rho}(\Gamma,A_{\bar\rho})
\]
for $(\sqrt[r]\D_\0)_{\bar\rho}=(\sqrt[r]\D)_{\bar\rho}$, see \eqref{E Gamma-00-r}.

\begin{theorem}[see Theorem \ref{T DR}]
Under the assumption of previous theorem, the DR-cycle $\msf{DR}_\Gamma(\D,\L)$ can be computed by
\[
\msf{DR}_\Gamma(\D,\L)=\left[- r\cdot\tau_*(c_{\msf g}(-(\sqrt[r]{\L^*})_{\Gamma_\0(r)}) \cap[\tilde\M_{\Gamma_\0(r)} ((\sqrt[r]\D)_{\bar\rho})]^\vir)\right]_{r^0},
\]
for $r\gg 1$, where $[\cdot]_{r^0}$ means the constant term of a polynomial in $r$.
\end{theorem}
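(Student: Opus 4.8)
The plan is to mirror the strategy of Janda--Pandharipande--Pixton--Zvonkine for smooth targets, using the polynomiality established in the previous theorem as the key new analytic input. The starting point is the localization analysis on the moduli space $\M_{\Gamma_\0(r)}((\sqrt[r]\D)_{\bar\rho})$ with respect to the natural $\cplane^*$-action on the total space of the root line bundle $\sqrt[r]\L$ (equivalently, on the fibers of $\Y=\P(\L\oplus\mc O_\D)$). First I would set up the virtual localization formula for the cycle
\[
\tau_*\!\left(c_{\msf g}\big(-(\sqrt[r]{\L^*})_{\Gamma_\0(r)}\big)\cap[\tilde\M_{\Gamma_\0(r)}((\sqrt[r]\D)_{\bar\rho})]^\vir\right),
\]
expressing it as a sum over decorated graphs recording how components of the domain curve distribute over the $0$- and $\infty$-sections $\D_0,\D_\0$ and the unstable ``rubber'' part. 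Because $\sqrt[r]\L$ restricts to a trivializable bundle on each of the two fixed loci $\D_0$ and $\D_\0$ (up to the gerbe band), the Euler-class contributions $c_{\msf g}(-(\sqrt[r]{\L^*})_{\Gamma_\0(r)})$ localize onto the edge/node terms, and the main fixed-locus contribution is precisely a rubber moduli space $\M_\Gamma(\D_0|\Y|\D_\0)^\sim$ (after accounting for the root-gerbe structure), matching the definition of $\msf{DR}_\Gamma(\D,\L)$ via $\epsilon_{\D,*}[\M_\Gamma(\D_0|\Y|\D_\0)^\sim]^\vir$.

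The second step is to extract the $r$-dependence. By Theorem \ref{T polynomiality} applied with $d=\msf g$ and the bundle $\sqrt[r]{\L^*}$ (so that $2d-2\msf g+1=1$), the normalized cycle $r\cdot\tau_*(c_{\msf g}(-(\sqrt[r]{\L^*})_{\Gamma_\0(r)})\cap[\tilde\M_{\Gamma_\0(r)}((\sqrt[r]\D)_{\bar\rho})]^\vir)$ is a polynomial in $r$ for $r\gg 1$; hence its constant term $[\cdot]_{r^0}$ is well-defined. The point is then to identify this constant term graph-by-graph with $\msf{DR}_\Gamma(\D,\L)$. In the localization sum, each graph contributes a Laurent-type expression in $r$ coming from (i) the automorphism/multiplicity factors $r^{\#\{\text{nodes}\}}$ and $r^{\#\{\text{unstable components}\}}$ produced by the weighted blowup $\tilde\M$ along nodal loci and by the gerbe structure, (ii) the inverted Euler classes of the normal bundles, which are polynomial in $r\psi$ after expansion, and (iii) the $\bar\rho$-compatible twisting $A_{\bar\rho}$ fixing the ages at the marked/nodal points. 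Only the graph consisting of a single rubber vertex over the non-rigidified fiber survives in the $r^0$ coefficient; all graphs with extra rational bridges over $0$ or $\infty$ contribute terms that vanish in the limit because their leading power of $r$ is negative, exactly as in the smooth case. I would make this vanishing precise using Proposition \ref{P leading-term}, which identifies the leading term of the normalized cycle and thereby controls the top-degree behavior in $r$ for every stratum.

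The main obstacle I anticipate is Step (iii): rigorously tracking the root-gerbe bookkeeping — the ages, the $\rho$-admissibility conditions defining $\Gamma_{A,r,\rho}$, and the compatibility of the weighted blowup $\tilde\M$ with the $\cplane^*$-fixed-locus decomposition — so that the combinatorial identity between the $r^0$-coefficient of the localization sum and the rubber integral holds on the nose, including the sign $-r$ and the duality $\L \leftrightarrow \L^*$. Concretely, one must check that the gluing/node contributions in the orbifold setting carry the correct $\integer_r$-gerbe multiplicities (a factor of $r$ per node from the banded gerbe, matching the weighted-blowup weights), that the relative/rubber obstruction theory on $\M_\Gamma(\D_0|\Y|\D_\0)^\sim$ is compatible with the localized obstruction theory on $\tilde\M_{\Gamma_\0(r)}((\sqrt[r]\D)_{\bar\rho})$ under $\epsilon_\D$ and $\tau$, and that the degeneration of $\Y$ to its normal-cone rubber targets is reflected correctly on the root side. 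Once these identifications are in place, comparing constant terms in $r$ on both sides yields the stated formula; the remaining computations (expanding $c_{\msf g}$, collecting powers of $r$, and invoking the polynomiality) are routine.
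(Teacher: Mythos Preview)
Your proposal has a genuine conceptual gap in the setup of the localization. You propose to localize on $\M_{\Gamma_\0(r)}((\sqrt[r]\D)_{\bar\rho})$, but this moduli space parameterizes maps to the root gerbe $(\sqrt[r]\D)_{\bar\rho}$, on which $\cplane^*$ acts trivially; the $\cplane^*$-action you describe lives only on the fibers of the twisting bundle $\sqrt[r]\L$, so there is no nontrivial fixed-locus decomposition of this moduli space, and in particular the rubber moduli space $\M_\Gamma(\D_0|\Y|\D_\0)^\sim$ does not arise as a fixed locus here. The paper instead localizes on the \emph{relative} moduli space $\M_{\Gamma(r)}(\Y_{\D_\0,r}|\D_0)$, where the target $\Y_{\D_\0,r}=\P_{r,1}(\L\oplus\mc O_\D)$ carries a genuine fiberwise $\cplane^*$-action. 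In that setup the fixed loci are indexed by bipartite graphs, and two special graphs are singled out: $\Phi_0$, whose fixed locus is the rubber moduli $\M_\Gamma^\sim$, and $\Phi_\0$, whose fixed locus is (essentially) $\M_{\Gamma_\0(r)}((\sqrt[r]{\D_\0})_{\bar\rho})$ with the twisted obstruction bundle $-(\sqrt[r]{\L^*})_{\Gamma_\0(r)}$ coming from the normal-direction deformation.

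The mechanism that produces the identity is also different from what you sketch. It is not that ``only the rubber graph survives in the $r^0$-coefficient''; rather, the equivariant pushforward $\epsilon_{\D,*}[\M_{\Gamma(r)}(\Y_{\D_\0,r}|\D_0)]^\vir$ to $\M_\Gamma(\D)$ must be a polynomial in $t$, so the coefficient of $t^{-1}$ (equivalently of $s^0$ after the substitution $s=tr$) vanishes. After showing via the polynomiality theorem that every graph contribution is a polynomial in $r$ with lowest degree $|\tV^\0_{\mathrm{st}}(\Phi)|$, exactly the two graphs $\Phi_0$ and $\Phi_\0$ contribute to the $r^0s^0$-coefficient. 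The $\Phi_0$-contribution is $\msf{DR}_\Gamma(\D,\L)$ and the $\Phi_\0$-contribution is the Chern-class term; the vanishing of their sum yields the formula, which is also where the sign originates. Your proposal would need to be rewritten to localize on the correct relative space and to use this vanishing argument rather than a direct survival argument.
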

By using the formula for the constant term of
\[
r^{2d-2\msf g+1}\tau_*(c_d(-(\sqrt[r]\L)_{\Gamma_{A,r,\rho}}) \cap[\tilde\M_{\Gamma_{A,r,\rho}}((\sqrt[r]\D)_\rho)]^\vir)
\]
in Proposition \ref{P leading-term}, we get an explicit formula for $\msf {DR}_\Gamma(\D,\L)$, see Theorem \ref{T formula-of-DR}.

There is also a $\rho$-compatible vector $A_\rho=-A_{\bar\rho}$ for $\Gamma=(\msf g,\beta,\vec g\sqcup\vec\mu_0\sqcup\vec\mu_\0)$, from which we get a topological data $\Gamma_0(r):=\Gamma_{A_\rho,r,\rho}=\Upsilon_{r,\rho}(\Gamma,A_\rho)$ for $(\sqrt[r]{\D_0})_\rho=(\sqrt[r]\D)_\rho$. Then we can also compute the DR-cycle $\msf{DR}_\Gamma(\D,\L)$ via
\[
\msf{DR}_\Gamma(\D,\L)=\left[ r\cdot\tau_*(c_{\msf g}(-(\sqrt[r]{\L})_{\Gamma_0(r)}) \cap[\tilde\M_{\Gamma_0(r)}((\sqrt[r]\D)_{\rho})]^\vir) \right]_{r^0},
\]
see Remark \ref{R Gamma-0-DR}. As a consequence we have an equality
\begin{align*}
&\left[- r\cdot\tau_*(c_{\msf g}(-(\sqrt[r]{\L^*})_{\Gamma_\0(r)}) \cap[\tilde\M_{\Gamma_\0(r)}((\sqrt[r]\D)_{\bar\rho})]^\vir) \right]_{r^0}\\
=\,&\left[ r\cdot\tau_*(c_{\msf g}(-(\sqrt[r]{\L})_{\Gamma_0(r)}) \cap[\tilde\M_{\Gamma_0(r)}((\sqrt[r]\D)_{\rho})]^\vir) \right]_{r^0}
\end{align*}
for $r\gg 1$.

As an application of the polynomiality and the computation of DR-cycles we study the relation between relative orbifold Gromov--Witten invariants of a relative pair $(\X|\D)$ and absolute orbifold Gromov--Witten invariants of $\X_{\D,r}$, the $r$-th root construction of $\X$ along the divisor $\D$. Now let $\Gamma=(\msf g,\vec g,\beta,\mu)$ be a topological data for $(\X|\D)$ with $\mu$ denoting the orbifold information and contact orders for relative marked points mapped to $\D$. Then for each $r\in\integer_{\geq 1}$ we get an induced topological data $\Gamma(r)$ for $\X_{\D,r}$ as the way we get $\Gamma_\0(r)$ and $\Gamma_0(r)$ from $\Gamma$, see \eqref{E gamma-r-in-Sec3}. Our third result is
\begin{theorem}[see Theorem \ref{Thm abs-rel} and Theorem \ref{Thm abs-rel-g=0}]
Under the assumption on $\D$ in previous theorems, when $r\gg 1$, the absolute orbifold Gromov--Witten invariant $\Big\langle\underline\alpha,\underline\mu \Big\rangle^{\X_{\D,r}}_{\Gamma(r)}$ of $\X_{\D,r}$ is a polynomial in $r$, and the constant term satisfies
\[
\left[\Big\langle\underline\alpha,\underline\mu \Big\rangle^{\X_{\D,r}}_{\Gamma(r)}\right]_{r^0}
=\Big\langle\underline\alpha\,\Big|\,\underline\mu \Big\rangle^{\X|\D}_{\Gamma}
\]
where as above $[\cdot]_{r^0}$ means the constant term of a polynomial in $r$, and the term on the right hand side is a relative orbifold Gromov--Witten invariant of $(\X|\D)$.

In particular, when the genus $\msf g=0$, the invariant
$\Big\langle\underline\alpha,\underline\mu \Big\rangle^{\X_{\D,r}}_{\Gamma(r)}$ is constant in $r$ when $r\gg 1$, and
\[
\Big\langle\underline\alpha,\underline\mu \Big\rangle^{\X_{\D,r}}_{\Gamma(r)}= \Big\langle\underline\alpha\,\Big|\,\underline\mu \Big\rangle^{\X|\D}_{\Gamma}.
\]
\end{theorem}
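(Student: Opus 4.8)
The plan is to deduce this absolute/relative correspondence from the DR-cycle formula (Theorem \ref{T DR}) together with the polynomiality statement (Theorem \ref{T polynomiality}), following the degeneration-and-localization strategy that in the smooth case goes back to \cite{Tseng-You2018b}. First I would set up the root construction $\X_{\D,r}$ together with its evaluation maps, and identify the moduli space $\M_{\Gamma(r)}(\X_{\D,r})$ with the moduli of stable maps to the root gerbe $(\sqrt[r]{\mc N_{\D/\X}})_{\rho}$ fibered appropriately; the topological data $\Gamma(r)$ is built from $\Gamma$ exactly as $\Gamma_0(r)$ is built in the DR-cycle discussion, with the $\rho$-compatible vector $A_\rho$ recording the orbifold ages at the relative markings. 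The evaluation against the insertions $\underline\alpha$ (pulled back from $\X$, or its rigidified inertia stack) and $\underline\mu$ (cohomology weighted data at the relative markings) reduces the invariant $\langle\underline\alpha,\underline\mu\rangle^{\X_{\D,r}}_{\Gamma(r)}$ to an integral over $\M_{\Gamma(r)}(\X_{\D,r})$ of a class that is \emph{independent of $r$} except through the virtual class and the obstruction contribution of the root line bundle.

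The second step is the degeneration: apply the orbifold degeneration formula of \cite{Chen-Li-Sun-Zhao2011}/\cite{Abramovich-Fantechi2016} to the deformation of $\X_{\D,r}$ to the normal-cone degeneration $\X \cup_{\D} \Y_r$, where $\Y_r$ is the $r$-th root of the $\P^1$-bundle $\Y=\P(\mc N_{\D/\X}\oplus\mc O_\D)$ along $\D_\0$. This expresses $\langle\underline\alpha,\underline\mu\rangle^{\X_{\D,r}}_{\Gamma(r)}$ as a sum over splitting data of products of a relative invariant of $(\X|\D)$ (which does \emph{not} depend on $r$) with a "rubber-type" integral over the $\Y_r$ side. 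The key point is that every term on the $\Y_r$ side, after pushing forward, is governed by the $K$-theory class $(\sqrt[r]{\mc N})_{\Gamma_\0(r)}$ exactly as in the DR-cycle computation; by Theorem \ref{T DR} (and its $\rho$-version in Remark \ref{R Gamma-0-DR}) these rubber contributions, multiplied by the appropriate power of $r$ from the gerbe automorphisms and the $\psi$-class relations, assemble into a polynomial in $r$. Matching the constant term: the rubber/$\Y_r$ factor contributes the $\P^1$-bundle "cap" which collapses to the identity in the constant term (this is precisely the content of the DR formula saying $\msf{DR}_\Gamma(\D,\L)$ equals the constant term of the rubber pushforward), leaving exactly $\langle\underline\alpha\,|\,\underline\mu\rangle^{\X|\D}_\Gamma$.

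For the genus-zero refinement, the extra input is a vanishing/degree count: in $\msf g=0$ the relevant Chern class is $c_0=1$ and the power of $r$ appearing is $r^{2d-2\msf g+1}=r^{2d+1}$ with $d=0$, so the only surviving term is the constant one; more precisely the dimension constraint forces the obstruction bundle contribution to be trivial, so no positive powers of $r$ can occur and the polynomial is constant. I would phrase this as: the genus-zero virtual dimension of $\M_{\Gamma(r)}(\X_{\D,r})$ differs from that of $\M_\Gamma(\X|\D)$ by a term that is absorbed entirely by the $\psi$-class at the (unique, in the rubber) bubble component, so the localization sum has a single non-vanishing fixed-point contribution.

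The main obstacle I expect is \emph{bookkeeping of the age/shifting data and the normalization of weighted insertions} $\underline\mu$ under the root construction: one must check that the isomorphism of moduli spaces $\M_{\Gamma(r)}(\X_{\D,r})\cong(\text{root-gerbe data over }\M_\Gamma(\X|\D))$ is compatible with virtual classes, that the $\rho$-compatible vector $A_\rho$ used here agrees with the one feeding Theorem \ref{T polynomiality} and Theorem \ref{T DR}, and that the powers of $r$ coming from (i) the $\integer_r$-gerbe automorphism groups, (ii) the $\psi$-to-$\psi$ comparison on the blown-up moduli space $\tilde\M$, and (iii) the degeneration-formula gluing factors, combine to give exactly the single overall factor $r$ in the statement. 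Once this matching of $r$-powers is in place, the polynomiality is immediate from Theorem \ref{T polynomiality} and the identification of the constant term with the relative invariant is a direct consequence of Theorem \ref{T DR}.
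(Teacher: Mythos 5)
Your overall strategy (deformation to the normal cone, reduction to a local model over the root-gerbe side, polynomiality) is the right one and parallels the paper, but two of the key deductions you propose do not hold, and a missing ingredient hides in the constant-term identification.

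First, the constant-term matching is \emph{not} a direct consequence of Theorem \ref{T DR}. Theorem \ref{T DR} concerns $\epsilon_{\D,*}[\M_\Gamma(\D_0|\Y|\D_\0)^\sim]^\vir$, the pushforward of the \emph{rubber} virtual class to $\M_\Gamma(\D)$. What actually appears on the $\Y_r$-side of the degeneration is the \emph{relative} invariant $\bl\underline\alpha,\underline\mu\mid\underline\eta\br^{\Y_{\D_0,r}|\D_\0}_{\Gamma(r)}$, living on $\M_{\Gamma(r)}(\Y_{\D_0,r}|\D_\0)$, which is a different moduli space and requires a dedicated virtual localization analysis (Lemma \ref{L Yr-to-rubber}). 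That analysis uses Theorem \ref{T polynomiality} to control the Type~II graphs, plus the rubber-calculus identity (Lemma \ref{L Y-to-rubber-general}) to recognize the constant term as a rubber invariant. Moreover, you need to degenerate both sides: the degeneration of $(\X|\D)$ to $(\X|\D)\wedge_\D(\D_\0|\Y|\D_0)$ must be matched term-by-term against the degeneration of $\X_{\D,r}$ (the content of Lemma \ref{L equal-local-model}), and to get a usable $[\D_\0]$-insertion in the local model you need the divisor-equation case analysis of \S\ref{SS local-model}; your proposal does not address either.

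Second, the genus-zero refinement is argued incorrectly. You claim that in genus zero ``the relevant Chern class is $c_0=1$'' and ``the obstruction bundle contribution is trivial.'' In fact, for a stable vertex $\msf v$ over $(\sqrt[r]{\D_0})_\rho$ with $|\tE(\msf v)|$ incident edges, the rank of the obstruction sheaf $(\sqrt[r]\L)_{\msf v}$ is $\msf g(\msf v)-1+|\tE(\msf v)|=|\tE(\msf v)|-1$, which is positive whenever the vertex carries at least two edges, so the Chern class is not forced to be $1$ and the polynomial contribution from such a vertex has positive degree in $r$. The correct mechanism (Lemma \ref{L Yr-to-rubber-g=0}) is an equivariant-degree count: each stable vertex contributes an overall factor $t^{-1}$, while the prefactor $\frac{\msf{ev}_1^*(e_{[h_1]}^*c_1(\L))+t}{t-\Psi_\0}$ is $1+O(t^{-1})$, so the $t^0$ coefficient of any Type~II graph contribution vanishes, and only the single Type~I graph (the pure rubber term, which is $r$-independent) survives. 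Constancy in $r$ thus follows from the collapse of the localization sum, not from triviality of the obstruction bundle.
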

When $(\X|\D)=(X|D)$ is a smooth relative pair, such a result was obtained by Abramovich--Cadman--Wise (\cite{Abramovich-Cadman-Wise2017}) for $\msf g=0$ and by Tseng--You (\cite{Tseng-You2018b}) for $\msf g>0$.

This paper is organized as the following. In \S \ref{S TGW-root-bdls}, we prove the polynomiality of the cycle valued twisted Gromov--Witten invariants for roots of orbifold line bundles. Then following the line of the computation in \cite{Janda-Pandharipande-Pixton-Zvonkine2018}, combining their generalizations to the orbifold case, a formula of DR-cycles with orbifold targets was obtained in \S \ref{S DR-cycle}. Finally in \S\ref{S rel-abs-GWI} by using the polynomiality and computation of DR-cycles with orbifold targets we study the relation between relative orbifold Gromov--Witten invariants and absolute orbifold Gromov--Witten invariants of root constructions.

When this paper is finishing, a paper by Tseng and You (\cite{Tseng-You2020a}) is posed and we note that some results also appear in their paper, such as the polynomiality and the relation between relative orbifold Gromov--Witten invariants and absolute orbifold Gromov--Witten invariants of root constructions mentioned above.

\subsection*{Acknowledgements}
We warmly thank Hsian-Hua Tseng and Fenglong You for valuable comments on the earlier arXiv version; Rui Wang sincerely thanks Alexander Givental for stimulating discussions during the preparation of the work. The first author is supported by the National Natural Science Foundation of China (No. 11890663, No. 11821001, No. 11826102). The second author is supported by the National Natural Science Foundation of China (No. 11501393), the Sichuan Science and Technology Program (No. 2019YJ0509) and a joint research project of Laurent Mathematics Research Center of Sichuan Normal University and V.\thinspace C. \& V.\thinspace R. Key Lab of Sichuan Province.

\section{Twisted Gromov--Witten invariants of roots of orbifold line bundles}\label{S TGW-root-bdls}

In this paper, we study orbifolds via proper \'etale Lie groupoids, which are called orbifold groupoids. There are some nice references on orbifold groupoids.
See for example Adem--Leida--Ruan (\cite{Adem-Leida-Ruan2007}) and Moerdijk--Pronk (\cite{Moerdijk-Pronk1997}).
One can see also \cite[Section 2]{Chen-Du-Hu2019} for a brief introduction of orbifold groupoids and Chen--Ruan cohomology etc.

In this section we study the twisted Gromov--Witten theory of $r$-roots of orbifold line bundles. In \S \ref{SS root-bdls} we explain the roots of orbifold line bundles and root constructions of symplectic orbifolds along divisors. In \S \ref{SS TGW-root-bdls} we prove the polynomiality in $r$ of certain cycle valued twisted Gromov--Witten invariants assuming $r$ is sufficiently large.

\subsection{Roots of orbifold line bundles and root constructions along divisors}
\label{SS root-bdls}

\subsubsection{Orbifold line bundles and its $r$-th root}
Let $\pi\co\L\rto\D=D^1\ltimes D^0$ being a complex line bundle over a compact (almost) K\"ahler orbifold $\D$. (We always use orbifold groupoids to describe orbifolds and the invariants defined are independent of choices of Morita equivalent groupoid representations.) We can choose a groupoid representation so that $D^0$ is a disjoint union of contractible components and $L^0\rto D^0$ is a trivial complex line bundle
\[
L^0=D^0\times \cplane,\qq \L=D^1\ltimes L^0.
\]
The orbifold line bundle $\L\rto \D$ is completely characterized by the representation on $L^0$ for $D^1$ which we denote by
\begin{align}\label{E rep-rho}
\rho\co D^1\rto U(1).
\end{align}
The degree shifting (or age) of arrows in $D^1$ on $\L$ is defined as
\begin{align}\label{E age-g-L}
\age_g(\L)=\frac{\log(\rho(g))}{2\pi\sqrt{-1}}\in[0,1), \qq\mathrm{for}\qq g\in D^1,
\end{align}
where $\log(\cdot)$ is the principal logarithm that takes value in $[0,2\pi\sqrt{-1})$.

For the current groupoid representation, the $S^1$-principle bundle of $\L$ is $\msf P=D^1\ltimes(D^0\times S^1)$, and then $\L$ is the associated bundle as $\L=\msf P\times_{S^1_{(-1,1)}}\cplane$. Here, $S^1_{(-1,1)}$ means the action of $S^1$ on $\msf P\times\cplane$ has weight $(-1,1)$.

\begin{defn}\label{D r-root-L}
For every $r\in\integer_{\geqslant 1}$, the $r$-th root\footnote{There is also a version of root of line bundles over Deligne--Mumford stacks, see for example \cite{Abramovich-Graber-Vistoli2002, Cadman2007}. By view an orbifold groupoid as a representation of a Deligne--Mumford stack, these two definitions coincides with each other.} of the orbifold line bundle $\L\rto\D$ is defined to be the orbifold line bundle
\[
\sqrt[r]{\L}:=\msf P\times_{S^1_{(-r,1)}}\cplane,
\]
where $S^1_{(-r,1)}$ means the action of $S^1$ on $\msf P\times\cplane$ has weight $(-r,1)$.
\end{defn}

The base (and also the zero section) of the line bundle
$\sqrt[r]{\L}$ is denoted by $\sqrt[r]{\L/\D}$ in literatures and is called the $r$-th root gerbe of $\L$ (see for example
\cite{Andreini-Jiang-Tseng2015}). It is a $\integer_r$-gerbe over $\D$. In this paper we denote it by $(\sqrt[r]{\D})_\rho$ to emphasize the role of $\rho$ in its gerbe structure. Detailed construction is given in the following remark.

\begin{remark}\label{R Zr-gerbe}
Consider the exact sequence
\[
\xymatrix{1\ar[r] & \integer_r \ar[r] &U(1) \ar[rr]^-{\phi_r:t\mapsto t^r} &&U(1)\ar[r] &1,}
\]
which together with the representation $\rho: D^1\rto U(1)$ induces the following commutative diagram
\begin{align}\label{E tilde-D1}
\begin{split}\xymatrix{
1\ar[r]
&D^0\times\integer_r \ar[r] \ar[d]_-{pr_2}
& \tilde D^1_{\rho,r}\ar[r]^-{pr_1}\ar[d]_-{pr_2=\tilde\rho}
&D^1\ar[r] \ar[d]^-{\rho}
&1\\
1\ar[r] & \integer_r \ar[r] &U(1)\ar[r]^-{\phi_r} &U(1)\ar[r] &1.}\end{split}
\end{align}
with $\tilde D^1_{\rho,r}$ as the fiber product of $\rho$ and $\phi_r$, i.e. $\tilde D^1_{\rho,r}=\{(g,\xi)\in D^1\times U(1)\mid \rho(g)=\xi^r\}$. Then the $\integer_r$-gerbe $(\sqrt[r]{\D})_\rho=\tilde D^1_{\rho,r}\ltimes D^0$. The natural projection $\pi\co (\sqrt[r]{\D})_\rho\rto\D$ is given by  $pr_1\co \tilde D^1_{\rho,r}\rto D^1$ and $id_{D^0}\co D^0\rto D^0$.

Moreover, the $r$-th root of $\L$ can be written as
\[
\sqrt[r]{\L}=\tilde D^1_{\rho,r}\ltimes L^0
\]
with $\sqrt[r]\L\rto (\sqrt[r]\D)_\rho$ an orbifold line bundle. The representation $\tilde \rho\co \tilde D^1_{\rho,r}\rto U(1)$ for this line bundle is $\tilde \rho=pr_2$.
\end{remark}

Denote by $\T(\D)$ the index set of twisted sectors of the orbifold $\D$. Clearly, since $\T(\D)$ is the set of equivalence classes of conjugate classes and $U(1)$ is commutative, the representation $\rho$ for $\L$ descends to $\rho\co\T(\D)\rto U(1)$. Similarly, the age function descends to $\age_{\cdot}(\L)\co\T(\D)\rto [0,1)\cap\ration$. Moreover we have the following lemma.
\begin{lemma}\label{L T(Dr)}
The index set of twisted sectors of $(\sqrt[r]\D)_\rho$ is the fiber product of $\rho\co\T(\D)\rto U(1)$ and $\phi_r$, i.e.
\[
\T((\sqrt[r]\D)_\rho)=\{([g],\xi)\in\T(\D)\times U(1)\mid \rho([g])=\xi^r\}.
\]
The induced map $\pi\co\T((\sqrt[r]\D)_\rho)\rto\T(\D)$ of the natural map $\pi\co(\sqrt[r]\D)_\rho\rto \D$ on the index sets of twisted sectors is the projection to the first factor.
\end{lemma}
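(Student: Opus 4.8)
The plan is to unwind the groupoid description of $(\sqrt[r]\D)_\rho$ given in Remark \ref{R Zr-gerbe} and read off its twisted sectors directly. Recall that twisted sectors of an orbifold groupoid $G^1\ltimes G^0$ are indexed by equivalence classes of pairs $(x,g)$ with $x\in G^0$ and $g$ an arrow based at $x$ (equivalently, by conjugacy classes of isotropy elements), two pairs being identified when they are conjugate by an arrow of the groupoid. For the banded $\integer_r$-gerbe $(\sqrt[r]\D)_\rho=\tilde D^1_{\rho,r}\ltimes D^0$, an arrow is a pair $(g,\xi)\in D^1\times U(1)$ with $\rho(g)=\xi^r$, and its source/target in $D^0$ are those of $g$. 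Since the projection $pr_1\co\tilde D^1_{\rho,r}\rto D^1$ is a groupoid homomorphism covering $\mathrm{id}_{D^0}$, it sends an isotropy element $(g,\xi)$ of $(\sqrt[r]\D)_\rho$ to the isotropy element $g$ of $\D$, inducing the stated map $\pi\co\T((\sqrt[r]\D)_\rho)\rto\T(\D)$ on index sets.

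The key step is to check that the assignment $([g],\xi)\mapsto$ (class of $(g,\xi)$) is a well-defined bijection onto $\T((\sqrt[r]\D)_\rho)$. For well-definedness and injectivity I would observe that conjugation in $\tilde D^1_{\rho,r}$ by an arrow $(h,\eta)$ sends $(g,\xi)$ to $(hgh\inv,\xi)$: the second coordinate is unchanged because $U(1)$ is abelian and the $\integer_r$-gerbe is banded (the extension $D^0\times\integer_r\rto\tilde D^1_{\rho,r}\rto D^1$ is central, so the $\xi$-coordinate is a conjugation invariant). Hence the conjugacy class of $(g,\xi)$ in $(\sqrt[r]\D)_\rho$ is exactly determined by the conjugacy class $[g]$ of $g$ in $\D$ together with $\xi$; this is precisely the pair $([g],\xi)$, and the constraint $\rho([g])=\xi^r$ is just the defining condition of $\tilde D^1_{\rho,r}$, which makes sense on conjugacy classes because $\rho$ already descends to $\T(\D)\rto U(1)$. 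For surjectivity, given $([g],\xi)$ in the fiber product, any representative arrow $g$ of $[g]$ together with $\xi$ gives an arrow $(g,\xi)\in\tilde D^1_{\rho,r}$ since $\rho(g)=\rho([g])=\xi^r$, hence an isotropy element of $(\sqrt[r]\D)_\rho$ mapping to it. Finally, since $\pi$ on groupoids covers $\mathrm{id}_{D^0}$ and acts as $(g,\xi)\mapsto g$ on arrows, the induced map on $\T$ is $([g],\xi)\mapsto[g]$, i.e. projection to the first factor.

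I expect the only real subtlety is the careful bookkeeping that conjugation in the gerbe groupoid leaves the $U(1)$-coordinate fixed — i.e. that the central extension \eqref{E tilde-D1} is genuinely central (equivalently, that the gerbe is banded by $\integer_r$, not merely has band $\integer_r$ up to automorphisms). This follows from the construction of $\tilde D^1_{\rho,r}$ as the honest fiber product $\{(g,\xi)\mid\rho(g)=\xi^r\}$ with the abelian target $U(1)$, so it is a short verification rather than a genuine obstacle. One should also note, as already remarked before the lemma, that everything is Morita-invariant, so the identification is independent of the chosen groupoid presentation with $D^0$ a disjoint union of contractible pieces.
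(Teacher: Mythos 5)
The paper states Lemma \ref{L T(Dr)} without proof, treating it as an immediate consequence of the explicit groupoid presentation $(\sqrt[r]\D)_\rho=\tilde D^1_{\rho,r}\ltimes D^0$ given in Remark \ref{R Zr-gerbe} together with the earlier observation that $\rho$ descends to $\T(\D)\rto U(1)$. Your proof fills in exactly those details, and the approach is the one the paper has in mind: read off the inertia groupoid from the fiber-product description of $\tilde D^1_{\rho,r}$ and use centrality of the extension (abelian $U(1)$-target) to conclude that conjugation fixes the $\xi$-coordinate.

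One small point worth being explicit about, since you phrase twisted sectors as ``conjugacy classes of isotropy elements'': $\T(\D)$ indexes connected components of the inertia orbifold, so you also need $\xi$ to be locally constant across the object space of the inertia groupoid of $(\sqrt[r]\D)_\rho$, not merely invariant under single-arrow conjugation. This is immediate because $\xi$ ranges over the discrete set of $r$-th roots of the locally constant function $\rho(g)$, so it is indeed locally constant; combining this with arrow-invariance shows $\xi$ is constant on each connected component, which is what makes the fiber-product description of $\T((\sqrt[r]\D)_\rho)$ correct. With that observation added, your argument is complete and matches the paper's (implicit) reasoning.
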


So for a given $[g]\in \T(\D)$, it has exactly $r$ preimages in $\T((\sqrt[r]\D)_\rho)$ under $\pi$. We call them {\em liftings} of $[g]$ in $\T((\sqrt[r]\D)_\rho)$. We could describe them in the following way. Consider a pair $([g],a)\in \T(\D)\times \ration$ and suppose it satisfies
\begin{align}\label{E lift-[g]-a}
\rho([g])=e^{2\pi\sqrt{-1}a},\qq\mathrm{i.e.}\qq \{a\}=\age_{[g]}(\L),
\end{align}
where $\{a\}$ is the fractional part of $a$. Then from the pair $([g],a)$ we get a lifting $([g],e^{2\pi\sqrt{-1}\frac{a}{r}})\in\T((\sqrt[r]\D)_\rho)$ of $[g]$. We use $\Upsilon_{r,\rho}$ to denote this lifting, i.e. we set
\begin{align}\label{E D-Psi}
\Upsilon_{r,\rho}([g],a):=([g],e^{2\pi\sqrt{-1}\frac{a}{r}}).
\end{align}
It is easy to see that
\begin{align}\label{E age-Upsilon}
\age_{\Upsilon_{r,\rho}([g],a)}(\sqrt[r]\L) =\left\{\frac{a}{r}\right\}=\frac{[a]_r}{r},
\end{align}
where $[a]_r$ is the remainder modulo out $r$, that is $a=kr+[a]_r$ with $[a]_r\in[0,r-1)\cap \ration$.
\begin{lemma}\label{L lift-[g]}
The liftings of $[g]\in\T(\D)$ in $\T((\sqrt[r]\D)_\rho)$  are in 1-to-1 correspondence to $\{a=\age_{[g]}(\L)+k\mid k=0,1,\ldots,r-1\}$ via $\Upsilon_{r,\rho}$. And for general $a,b\in\ration$ satisfying \eqref{E lift-[g]-a}, when $[a]_r=[b]_r$, the liftings $\Upsilon_{r,\rho}([g],a)=\Upsilon_{r,\rho}([g],b)$.
\end{lemma}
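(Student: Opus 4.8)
The plan is to unwind the two descriptions of $\T((\sqrt[r]\D)_\rho)$ provided by Lemma \ref{L T(Dr)} and the map $\Upsilon_{r,\rho}$ of \eqref{E D-Psi}, and to check that $\Upsilon_{r,\rho}$ restricted to the $r$ values $a = \age_{[g]}(\L)+k$, $k=0,1,\ldots,r-1$, is a bijection onto the fiber $\pi^{-1}([g])$. First I would fix $[g]\in\T(\D)$ and set $\alpha := \age_{[g]}(\L)\in[0,1)\cap\ration$, so that $\rho([g]) = e^{2\pi\sqrt{-1}\alpha}$. By Lemma \ref{L T(Dr)} the fiber $\pi^{-1}([g])$ consists of the pairs $([g],\xi)$ with $\xi\in U(1)$ and $\xi^r = \rho([g]) = e^{2\pi\sqrt{-1}\alpha}$; the solutions of this equation in $U(1)$ are exactly $\xi_k = e^{2\pi\sqrt{-1}(\alpha+k)/r}$ for $k=0,1,\ldots,r-1$, and these are pairwise distinct because the corresponding exponents $(\alpha+k)/r$ lie in the interval $[0,1)$ (as $0\le\alpha<1$ and $0\le k\le r-1$ force $0\le \alpha+k< r$), hence are distinct mod $\integer$. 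This already shows $|\pi^{-1}([g])| = r$, consistent with the remark preceding the lemma.

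Next I would compare this with $\Upsilon_{r,\rho}$. For $a = \alpha + k$ one has $\{a\} = \alpha = \age_{[g]}(\L)$, so the pair $([g],a)$ satisfies \eqref{E lift-[g]-a} and $\Upsilon_{r,\rho}([g],a) = ([g], e^{2\pi\sqrt{-1}a/r}) = ([g],\xi_k)$. As $k$ ranges over $\{0,1,\ldots,r-1\}$ this hits each $\xi_k$ exactly once, so $\Upsilon_{r,\rho}$ gives the asserted $1$-to-$1$ correspondence between $\{a = \age_{[g]}(\L)+k \mid k=0,\ldots,r-1\}$ and the liftings of $[g]$. For the last assertion, take general $a,b\in\ration$ both satisfying \eqref{E lift-[g]-a} for the same $[g]$, so $\{a\} = \{b\} = \alpha$; then $a - b\in\integer$. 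Writing $a = k_a r + [a]_r$ and $b = k_b r + [b]_r$ as in \eqref{E age-Upsilon}, the hypothesis $[a]_r = [b]_r$ gives $a - b = (k_a - k_b)r$, hence $a/r$ and $b/r$ differ by the integer $k_a - k_b$, so $e^{2\pi\sqrt{-1}a/r} = e^{2\pi\sqrt{-1}b/r}$ and therefore $\Upsilon_{r,\rho}([g],a) = \Upsilon_{r,\rho}([g],b)$.

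This argument is essentially a bookkeeping computation once Lemma \ref{L T(Dr)} is in hand, so I do not expect a serious obstacle; the only point requiring a little care is making sure the representatives $\xi_k$ are genuinely distinct, which is why I would emphasize that the exponents $(\alpha+k)/r$ all lie in $[0,1)$ rather than merely being congruent mod $\integer_r$ to distinct things — this is exactly the content of \eqref{E age-Upsilon}, which identifies $\age_{\Upsilon_{r,\rho}([g],a)}(\sqrt[r]\L)$ with $[a]_r/r$ and thus records the age as a faithful invariant distinguishing the $r$ liftings.
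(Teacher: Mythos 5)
Your proof is correct. The paper states Lemma \ref{L lift-[g]} without proof, treating it as immediate from Lemma \ref{L T(Dr)} and the definitions \eqref{E lift-[g]-a}--\eqref{E age-Upsilon}; your argument is precisely the natural elaboration one would supply: identify the fiber $\pi^{-1}([g])$ with the $r$-th roots of $\rho([g])$ via Lemma \ref{L T(Dr)}, show that $\Upsilon_{r,\rho}$ hits them bijectively on $\{\age_{[g]}(\L)+k : 0\le k\le r-1\}$, and for the second claim use $[a]_r=[b]_r$ together with $a\equiv[a]_r \pmod{r}$ to conclude $a/r\equiv b/r \pmod{\integer}$, hence equal exponentials. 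The one small cosmetic remark is that, as you implicitly note, the assertion that the exponents $(\alpha+k)/r$ lie in $[0,1)$ relies on $\alpha\in[0,1)$, which is guaranteed by the definition of age in \eqref{E age-g-L}; this is exactly what makes the $\xi_k$ pairwise distinct rather than merely solutions up to $\integer_r$-ambiguity.
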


Next we consider the projectification fiber bundle.

\begin{defn}\label{D P(L+O)}
The projectification of $\L$ is defined to be
\[
\Y:=\P(\L\oplus\mc O_\D),
\]
where $\mc O_\D$ is the trivial line bundle over $\D$, i.e. whose representation $D^1\rto U(1)$ is trivial.

$\Y$ has the 0-section $\P(0\oplus \mc O_\D)$ and $\0$-section $\P(\L\oplus 0)$, which we denote by $\D_0$ and $\D_\0$ respectively. Both of them are isomorphic to $\D$.
\end{defn}

$\D_0$ and $\D_\0$ are divisors of $\Y$, whose normal line bundles are $\L$ and $\L^*$ respectively, where $\L^*$ is the dual line bundle of $\L$.

Let $\T(\L)$ be the index set of twisted sectors of the orbifold line bundle $\L$. It is canonically identified with the set of twisted sectors of $\D$, i.e., $\T(\L)=\T(\D)$. There is the inertia orbifold bundle
\begin{align}\label{E IL-ID}
\I\pi=\bigsqcup_{[g]\in\T(\D)} \pi_{[g]}\co\I\L=\bigsqcup_{[g]\in\T(\D)} \L_{[g]}\rto\I\D=\bigsqcup_{[g]\in\T^\D}\D_{[g]}
\end{align}
constructed from the projections of inertia spaces of $\L$ and $\D$. The following lemma is easy to see.

\begin{lemma}\label{L T(L)}
$\msf I\pi\co \msf I\L\rto\msf I\D$ is also an orbifold vector bundle. For $[g]\in\T(D)$,
\begin{enumerate}
\item if $\rho([g])\neq 1$, then $\L_{[g]}=\D_{[g]}$;
\item if $\rho([g])=1$, then $\L_{[g]}=D^1_{[g]}\ltimes(D^0_{[g]}\times\cplane)$.
    In fact $\L_{[g]}=e_{[g]}^*\L$, where $e_{[g]}\co \D_{[g]}\rto\D$ is the natural evaluation map.
\end{enumerate}
\end{lemma}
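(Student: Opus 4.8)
The plan is to unwind the groupoid descriptions of the two inertia spaces in the chosen representation, in which $L^0=D^0\times\cplane$, $\L=D^1\ltimes L^0$, and an arrow $g\in D^1$ with $s(g)=x$ acts by $(x,v)\mapsto(t(g),\rho(g)v)$ with $\rho$ as in \eqref{E rep-rho}. Recall that the inertia groupoid of a groupoid has as objects the loops (arrows $a$ with $s(a)=t(a)$) and as arrows the conjugations. Thus for $\L$, writing an object of $L^0$ as $\ell=(x,v)$, an arrow $(g,\ell)$ over $\ell$ is a loop exactly when $x=s(g)=t(g)$ (that is, $g$ lies in the loop space $S_{D^1}$ of $\D$) and $\rho(g)v=v$; so $\I\L$ has object space $\{(g,(x,v)):g\in S_{D^1},\ x=s(g),\ (\rho(g)-1)v=0\}$, conjugation by $h\in D^1$ sends $(g,(x,v))$ to $(hgh\inv,(t(h),\rho(h)v))$, and $\I\pi$ sends $(g,(x,v))$ to the object $(g,x)$ of $\I\D$.

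With this in hand, I would first check that $\I\pi\co\I\L\rto\I\D$ is an orbifold vector bundle. Its fibre over an object $(g,x)$ of $\I\D$ is the linear subspace $\ker(\rho(g)-1)\subseteq\cplane$; since $U(1)$ is abelian one has $\rho(hgh\inv)=\rho(g)$, so the conjugation arrows of $\I\L$ act $\cplane$-linearly on these fibres and cover those of $\I\D$, while local triviality over each component of $S_{D^1}$ is inherited from that of $\L$ over $\D$. Hence $\I\pi$ is an orbifold vector bundle and decomposes along $\T(\D)$ into the bundles $\pi_{[g]}\co\L_{[g]}\rto\D_{[g]}$ of \eqref{E IL-ID}.

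It then remains to identify $\L_{[g]}$, which I would do by cases according to the value of $\rho$ on $[g]$, which is well defined since $\rho$ descends to $\T(\D)$. If $\rho([g])\neq 1$, then $\ker(\rho(g)-1)=\{0\}$ for every representative $g$, so $\L_{[g]}$ is the rank-$0$ bundle over $\D_{[g]}$, that is, $\L_{[g]}=\D_{[g]}$. If $\rho([g])=1$, then $\ker(\rho(g)-1)=\cplane$, so over $\D_{[g]}$ (whose object space is $D^0_{[g]}$) the object space of $\L_{[g]}$ is all of $D^0_{[g]}\times\cplane$, and the conjugation arrow attached to $h\in D^1_{[g]}$ acts on the $\cplane$-factor by $\rho(h)$. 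Comparing this with the pullback $e_{[g]}^*\L$, whose total groupoid is $D^1_{[g]}\ltimes(D^0_{[g]}\times\cplane)$ with $h\in D^1_{[g]}$ acting on $\cplane$ through $\rho\circ e_{[g]}$, yields a canonical isomorphism $\L_{[g]}=e_{[g]}^*\L=D^1_{[g]}\ltimes(D^0_{[g]}\times\cplane)$. The only delicate point is verifying that these set-level identifications are compatible with the groupoid (arrow) structures; this is immediate from the commutativity of $U(1)$, and I expect it to be the most bookkeeping-heavy — though not conceptually hard — step.
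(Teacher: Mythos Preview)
Your proposal is correct and is precisely the natural argument the paper has in mind; the authors in fact omit the proof entirely, stating only that the lemma ``is easy to see,'' and your explicit unwinding of the loop space of $\L=D^1\ltimes(D^0\times\cplane)$ together with the case split on $\rho([g])$ is exactly the routine verification they are leaving to the reader.
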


For the projectification fiber bundle $\Y=\P(\L\oplus\mc O_\D)$, there is the inertia fiber bundle $\msf I\pi\co \msf I\Y\rto\msf I\D$ constructed similarly.
\begin{lemma}\label{L CR-Y}
For each $[g]\in\T(\D)$, the component $\Y_{[g]}:=\pi_{[g]}\inv(\D_{[g]})$ of the fiber bundle $\I\pi\co\I\Y\rto\I\D$ is determined as follow:
\begin{enumerate}
\item If $\rho([g])\neq 1$, then $\Y_{[g]}=(\D_0)_{[(g)]}\sqcup(\D_\0)_{[g]}$ is a disjoint union of the twisted sector of $\D_0\cong\D$ corresponding to $[g]$ and the twisted sector of $\D_\0\cong \D$ corresponding to $[g]$.

\item If $\rho([g])=1$, then $\Y_{[g]}=\P(\L_{[g]}\oplus\mc O_{\D_{[g]}})$. Moreover, $\Y_{[g]}$ also contains the zero and infinity sections $(\D_0)_{[(g)]}$ and $(\D_\0)_{[g]}$.
\end{enumerate}
\end{lemma}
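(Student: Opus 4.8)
The plan is to use the explicit groupoid model already in force and reduce the statement to a fixed-point computation on the $\P^1$-fibre. Recall that $\D=D^1\ltimes D^0$ has been chosen with $D^0$ a disjoint union of contractible pieces, $L^0=D^0\times\cplane$, and $g\in D^1$ acting on the $\cplane$-fibre of $\L$ by multiplication by $\rho(g)$. Since $\mc O_\D$ carries the trivial representation, the projectification may be presented as $\Y=D^1\ltimes(D^0\times\P^1)$, where $g\in D^1$ over $x$ acts on the fibre $\P^1=\P(\cplane_{\L}\oplus\cplane_{\mc O})$ by the projective transformation induced by the linear map $\mathrm{diag}(\rho(g),1)$; in these homogeneous coordinates $\D_\0=\P(\L\oplus 0)$ is the constant section $[1:0]$ and $\D_0=\P(0\oplus\mc O_\D)$ is the constant section $[0:1]$.

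Next I would write down the inertia. An object of $\I\Y$ over $[g]\in\T(\D)$ is a pair consisting of a loop $g\in D^1$ at a point $x$ (representing $[g]$) together with a point $p$ of the fibre $\P^1$ over $x$ that is fixed by the action of $g$, morphisms acting by the usual conjugation. Hence the fibre of $\Y_{[g]}=\pi_{[g]}^{-1}(\D_{[g]})\to\D_{[g]}$ over $g$ is precisely the fixed-point locus in $\P^1$ of the projective transformation $[\mathrm{diag}(\rho(g),1)]$. Because $\rho$ descends to $\T(\D)$, the dichotomy $\rho([g])=1$ versus $\rho([g])\neq 1$ is well defined and constant along $\D_{[g]}$, so the two cases of the lemma are globally meaningful.

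Then I would treat the two cases. If $\rho([g])\neq 1$, the matrix $\mathrm{diag}(\rho(g),1)$ has the two distinct eigenvalues $\rho(g)$ and $1$, so its action on $\P^1$ has exactly the two fixed points $[1:0]$ (the $\L$-eigenline) and $[0:1]$ (the $\mc O$-eigenline); letting $g$ vary over $\D_{[g]}$, these trace out the twisted sector $(\D_\0)_{[g]}$ and the twisted sector $(\D_0)_{[g]}$ respectively, and their union is disjoint, giving part (1). If $\rho([g])=1$, the transformation is the identity on every fibre, so the fixed locus is the whole $\P^1$-fibre; letting $g$ vary over $\D_{[g]}$ gives the $\P^1$-bundle $\P(\L_{[g]}\oplus\mc O_{\D_{[g]}})$, where $\L_{[g]}=e_{[g]}^*\L$ by Lemma \ref{L T(L)}(2), and the two constant sections $[1:0]$, $[0:1]$ exhibit $(\D_\0)_{[g]}$ and $(\D_0)_{[g]}$ inside it, giving part (2).

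Finally I would upgrade these fibrewise identifications to isomorphisms of orbifold groupoids: in each case the fixed locus is cut out $D^1$-equivariantly (by the vanishing of a homogeneous coordinate when $\rho([g])\neq 1$, and as the whole fibre otherwise), so the restricted $D^1$-action is the conjugation action defining $\I\Y$, and the groupoid structure on $\Y_{[g]}$ matches that on $(\D_0)_{[g]}\sqcup(\D_\0)_{[g]}$, respectively on $\P(\L_{[g]}\oplus\mc O_{\D_{[g]}})$. I do not expect a real obstacle here; the only points demanding care are matching the conventions of Definition \ref{D P(L+O)} for which section is $\D_0$ and which is $\D_\0$, and checking that the orbifold/age data implicit in the labels $(\D_0)_{[g]}$, $(\D_\0)_{[g]}$ are consistent with Lemma \ref{L T(L)} — a routine bookkeeping verification.
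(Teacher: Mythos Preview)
Your proposal is correct. The paper states this lemma without proof (it is implicitly grouped with the preceding Lemma~\ref{L T(L)}, introduced as ``easy to see''), and your fixed-point computation on the $\P^1$-fibre via the groupoid model $\Y=D^1\ltimes(D^0\times\P^1)$ is precisely the natural way to supply the omitted details.
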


\subsubsection{Root constructions}

Consider a symplectic orbifold pair $(\X,\D)$ with $\D$ being a divisor of $\X$. By the weight-$(-r)$ blowup for $\X$ along $\D$ (cf. \cite[Section 3]{Chen-Du-Hu2019}), one get the orbifold $\X_{\D,r}$, with exceptional divisor $\D_r$. This corresponds to the $r$-th root construction of Deligne--Mumford stacks (cf. \cite{Abramovich-Graber-Vistoli2008,Cadman2007}). In fact, denote by $\L\rto\D$ the normal line bundle of $\D$ in $\X$ and $\rho\co D^1\rto U(1)$ the corresponding representation for $\L$. Then the exceptional divisor $\D_r$ is just $(\sqrt[r]\D)_\rho$ and the normal bundle of $\D_r$ in $\X_{\D,r}$ is just the $r$-th root $\sqrt[r]\L$ of $\L$ as in Definition \ref{D r-root-L}.

Taking $\X$ as the projectification fiber bundle $\Y$ in Definition \ref{D P(L+O)} and do the $r$-th root construction along its zero divisor $\D_0$ and infinity divisor $\D_\0$, we obtain $\Y_{\D_0,r}$ and $\Y_{\D_\0,r}$ respectively. We next describe the orbifold structures of $\Y_{\D_0,r}$ and $\Y_{\D_\0,r}$.

First $\Y_{\D_\0,r}$ can be written as the weight-$r$ projectification
\[
\Y_{\D_\0,r}=\P_{r,1}(\L\oplus\mc O_\D)=D^1\ltimes \P_{r,1}(L^0\oplus\mc O_{D^0}).
\]
The original $0$-section $\D_0$ of $\Y$ remains unchanged in $\Y_{\D_\0,r}$, and the original $\0$-section $\D_\0$ of $\Y$ becomes $(\sqrt[r]{\D_\0})_{\bar\rho}$, where
\begin{align}\label{E bar-rho}
\bar\rho(\cdot):=(\rho(\cdot))\inv
\end{align}
is the dual representation of $\rho$, i.e. the representation of the dual line bundle $\L^*$ of $\L$. Apply the general construction  Remark \ref{R Zr-gerbe} and Lemma \ref{L T(Dr)} to $\L^*$, we obtain
\begin{align*}
&\tilde D^1_{\bar\rho,r}=\{(g,\xi)\in D^1\times U(1)\mid \bar\rho(g)=\xi^r\},
\end{align*}
and
\begin{align*}
&\T((\sqrt[r]\D)_{\bar\rho})=\{([g],\xi)\in\T(\D)\times U(1)\mid \bar\rho([g])=\xi^r\}.
\end{align*}

For the inertia space of $\Y_{\D_\0,r}$, the following commutative diagram of the natural maps
\[
\xymatrix{\Y_{\D_\0,r}\ar[r]^-\kappa \ar[dr]^-\pi& \Y \ar[d]^-\pi\\ &\D,}
\]
induces the commutative diagram of inertia spaces
\[
\xymatrix{\msf I\Y_{\D_\0,r}\ar[r]^-{\msf I\kappa} \ar[dr]^-{\msf I\pi}& \Y \ar[d]^-{\msf I\pi}\\ &\msf I\D,}
\]
For a $[g]\in\T(\D)$, set $(\Y_{\D_\0,r})_{[g]}:=\msf I\pi\inv(\D_{[g]})$. Then $(\Y_{\D_\0,r})_{[g]}=\msf I\kappa\inv(\Y_{[g]})$, and there is the following lemma.
\begin{lemma}\label{L CR-Y-D00-r}
For a $[g]\in\T(\D)$,
\begin{enumerate}
\item if $\rho([g])=1$, then $(\Y_{\D_\0,r})_{[g]}$ is a disjoint union of $\P_{r,1}(\L_{[g]}\oplus\mc O_{\D_{[g]}})$ (containing $((\sqrt[r]{\D_\0})_{\bar\rho})_{([g],1)}$ as $\0$-section and $(\D_0)_{[g]}$ as $0$-section) and $((\sqrt[r]{\D_\0})_{\bar\rho})_{([g],\xi)}$ with $\xi=e^{2\pi\sqrt{-1}\frac{k}{r}}, 1\leq k\leq r-1$.
\item if $\rho([g])\neq 1$, then $(\Y_{\D_\0,r})_{[g]}$ is a disjoint union of $(\D_0)_{[g]}$ and $((\sqrt[r]{\D_\0})_{\bar\rho})_{([g],\xi)}$ with $\xi^r=\bar\rho([g])$.
\end{enumerate}
\end{lemma}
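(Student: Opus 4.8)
The plan is to reduce the statement entirely to a fiberwise computation, combining Lemma \ref{L CR-Y} (which identifies the components $\Y_{[g]}$ of $\msf I\Y$) with Lemma \ref{L T(Dr)} applied to the dual bundle $\L^*$ (which identifies the liftings $([g],\xi)$ in $\T((\sqrt[r]\D)_{\bar\rho})$). Since $\msf I\pi\co\msf I\Y_{\D_\0,r}\rto\msf I\D$ factors through $\msf I\kappa\co\msf I\Y_{\D_\0,r}\rto\msf I\Y$ and $\msf I\pi\co\msf I\Y\rto\msf I\D$, we have $(\Y_{\D_\0,r})_{[g]}=\msf I\kappa\inv(\Y_{[g]})$, so the point is to compute the inertia of the weight-$(-r)$ blowup map $\kappa$ over each component of $\Y_{[g]}$ described in Lemma \ref{L CR-Y}. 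I would carry out the case analysis in parallel with that lemma.

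First, the case $\rho([g])=1$. Here Lemma \ref{L CR-Y} gives $\Y_{[g]}=\P(\L_{[g]}\oplus\mc O_{\D_{[g]}})$, a genuine $\P^1$-bundle over $\D_{[g]}$ with zero section $(\D_0)_{[g]}$ and infinity section $(\D_\0)_{[g]}$. The blowup $\kappa$ modifies $\Y$ only along $\D_\0$, whose normal bundle is $\L^*$; pulling back to the inertia locus over $[g]$, the relevant normal bundle is $\L^*_{[g]} = e_{[g]}^*\L^*$ by Lemma \ref{L T(L)}(2), which carries the trivial representation since $\bar\rho([g])=\rho([g])\inv=1$. Hence over this component one is simply taking the $r$-th root construction of the $\P^1$-bundle $\P(\L_{[g]}\oplus\mc O)$ along its infinity section with trivial banding; that produces $\P_{r,1}(\L_{[g]}\oplus\mc O_{\D_{[g]}})$, with $\0$-section $((\sqrt[r]{\D_\0})_{\bar\rho})_{([g],1)}$ (the $\xi=1$ lift, consistent with age $0$ via \eqref{E age-Upsilon}) and $0$-section still $(\D_0)_{[g]}$. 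The remaining $r-1$ liftings $([g],\xi)$, $\xi=e^{2\pi\sqrt{-1}k/r}$, $1\le k\le r-1$, of Lemma \ref{L T(Dr)} contribute the extra twisted sectors $((\sqrt[r]{\D_\0})_{\bar\rho})_{([g],\xi)}$; these are exactly the components of $\msf I\kappa\inv$ lying over the new gerbe $(\sqrt[r]{\D_\0})_{\bar\rho}$ but not over $\D$ via the untwisted branch, and one checks they are disjoint from the $\P_{r,1}$-component because the corresponding isotropy acts nontrivially on the fiber coordinate.

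Second, the case $\rho([g])\neq 1$. Now Lemma \ref{L CR-Y} gives $\Y_{[g]}=(\D_0)_{[g]}\sqcup(\D_\0)_{[g]}$, since a nontrivial $\rho([g])$ acts on the line $\L_{[g]}$ and the only $\rho([g])$-fixed points of $\P(\L_{[g]}\oplus\mc O)$ are $0$ and $\0$. The zero section $(\D_0)_{[g]}$ is untouched by $\kappa$, giving the first component. Over the infinity section, $\kappa$ is the root construction along $\D_\0$; the inertia locus of $(\sqrt[r]{\D_\0})_{\bar\rho}$ over $[g]$ is, by Lemma \ref{L T(Dr)} applied to $\bar\rho$, the set of $([g],\xi)$ with $\xi^r=\bar\rho([g])$, yielding the components $((\sqrt[r]{\D_\0})_{\bar\rho})_{([g],\xi)}$. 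That these two pieces are disjoint is immediate since one lies over $\D_0$ and the other over the exceptional gerbe.

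The main obstacle is the first case: one must verify carefully that the inertia of the weight-$(-r)$ blowup along $\D_\0$, restricted over a sector $[g]$ with $\rho([g])=1$, really decomposes as claimed, and in particular that the ``interesting'' component is precisely the weight-$(r,1)$ projectification $\P_{r,1}(\L_{[g]}\oplus\mc O_{\D_{[g]}})$ of the restricted bundle rather than something larger — this is where one needs the explicit groupoid model $\Y_{\D_\0,r}=D^1\ltimes\P_{r,1}(L^0\oplus\mc O_{D^0})$ from the paragraph preceding the lemma, together with the age formula \eqref{E age-Upsilon}, to match the isotropy of each lift against the fiber weights $(r,1)$. Once the groupoid picture is set up, the remaining verifications in both cases are routine fiberwise bookkeeping.
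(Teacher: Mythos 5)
The paper states this lemma without an explicit proof, treating it as a direct consequence of the groupoid model $\Y_{\D_\0,r}=D^1\ltimes\P_{r,1}(L^0\oplus\mc O_{D^0})$ together with Lemmas \ref{L T(Dr)}, \ref{L T(L)}, \ref{L CR-Y}. Your proposal correctly supplies the implied fiberwise verification using exactly those ingredients (reducing to $\msf I\kappa\inv(\Y_{[g]})$, then checking stabilizers in the weight-$(r,1)$ fiber case by case according to whether $\rho([g])$ is trivial), so it is both correct and in the spirit of what the paper intends.
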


Similarly $\Y_{\D_0,r}$ can be written as the weight-$r$ projectification
\[
\Y_{\D_0,r}=\P_{r,1}(\L^*\oplus\mc O_{\D})=\Y_{\D_0,r}=D^1\ltimes\P_{r,1}(L^{*,0}\oplus\mc O_{D^0}).
\]
The original $\0$-section $\D_\0$ of $\Y$ remains unchanged in $\Y_{\D_0,r}$, and the original $0$-section $\D_0$ of $\Y$ becomes $(\sqrt[r]{\D_0})_\rho$. Again by applying the general construction Remark \ref{R Zr-gerbe} and Lemma \ref{L T(Dr)} to $\L$, we obtain
\begin{align*}
\tilde D^1_{\rho,r}&=\{(g,\xi)\in D^1\times U(1)\mid \rho(g)=\xi^r\},\\
\T((\sqrt[r]{\D_0})_\rho)&=\{([g],\xi)\in\T(\D)\times U(1)\mid \rho([g])=\xi^r\}.
\end{align*}
We also have the projection between inertia spaces $\msf I\pi\co \msf I\Y_{\D_0,r}\rto\msf I\D$. For a $[g]\in\T(\D)$, set $(\Y_{\D_0,r})_{[g]}:=\msf I\pi\inv(\D_{[g]})$.
\begin{lemma}\label{L T(D0)+CR-Y-D0}
We have
\begin{enumerate}
\item if $\rho([g])=1$, then $(\Y_{\D_0,r})_{[g]}$ is a disjoint union of $\P_{r,1}(\L^*_{[g]}\oplus\mc O_{\D_{[g]}})$ (containing $((\sqrt[r]{\D_0})_\rho)_{([g],1)}$ and $(\D_\0)_{[g]}$) and $((\sqrt[r]{\D_0})_\rho)_{([g],\xi)}$ with $\xi=e^{2\pi\sqrt{-1}\frac{k}{r}}, 1\leq k\leq r-1$.
\item if $\rho([g])\neq 1$, then $(\Y_{\D_0,r})_{[g]}$ is a disjoint union of $(\D_\0)_{[g]}$ and $((\sqrt[r]{\D_0})_\rho)_{([g],\xi)}$ with $\xi^r=\rho([g])$.
\end{enumerate}
\end{lemma}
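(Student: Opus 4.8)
\emph{Proof proposal.}
The plan is to deduce this lemma from Lemma~\ref{L CR-Y-D00-r} by a duality argument, avoiding a second chart-by-chart inertia computation. The key observation is that the projectification $\Y=\P(\L\oplus\mc O_\D)$ carries a canonical isomorphism $\Y\cong\P(\L^*\oplus\mc O_\D)$ of fiber bundles over $\D$, obtained by tensoring the rank-two bundle $\L\oplus\mc O_\D$ by $\L^*$ (projectivization is unchanged under twisting by a line bundle and under reordering the summands). Under this isomorphism the $0$-section $\D_0=\P(0\oplus\mc O_\D)$ of $\Y$ is carried to the $\0$-section $\P(\L^*\oplus 0)$ of $\P(\L^*\oplus\mc O_\D)$, while the $\0$-section $\D_\0=\P(\L\oplus 0)$ of $\Y$ is carried to the $0$-section $\P(0\oplus\mc O_\D)$; the normal bundles are compatible, since the normal bundle of $\D_0$ in $\Y$ is $\L$ and that of $\P(\L^*\oplus 0)$ in $\P(\L^*\oplus\mc O_\D)$ is $(\L^*)^*=\L$.

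First I would use this to identify $\Y_{\D_0,r}$, the $r$-th root construction of $\Y$ along $\D_0$, with the $r$-th root construction of $\P(\L^*\oplus\mc O_\D)$ along its $\0$-section, as orbifolds over $\D$. The latter is exactly the situation of Lemma~\ref{L CR-Y-D00-r}, but with the line bundle $\L$ replaced throughout by its dual $\L^*$: accordingly the representation $\rho$ is replaced by $\bar\rho$, the representation $\bar\rho$ of the dual is replaced by $\overline{\bar\rho}=\rho$, the bundle $\L_{[g]}$ by $\L^*_{[g]}$, the root gerbe $(\sqrt[r]{\D_\0})_{\bar\rho}$ by $(\sqrt[r]{\D_0})_\rho$, and --- because the duality isomorphism interchanges the two sections --- the component $(\D_0)_{[g]}$ occurring in Lemma~\ref{L CR-Y-D00-r} by $(\D_\0)_{[g]}$. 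Since the identification is over $\D$, it is compatible with the maps $\msf I\pi$ to $\msf I\D$, so the sliced inertia pieces $(\msf I\pi)^{-1}(\D_{[g]})$ correspond.

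Then it remains only to translate the two cases of Lemma~\ref{L CR-Y-D00-r} through this dictionary. The case division is preserved because $\rho([g])=1$ if and only if $\bar\rho([g])=\rho([g])^{-1}=1$. In the case $\rho([g])=1$, part~(1) of Lemma~\ref{L CR-Y-D00-r} becomes the statement that $(\Y_{\D_0,r})_{[g]}$ is the disjoint union of $\P_{r,1}(\L^*_{[g]}\oplus\mc O_{\D_{[g]}})$, containing $((\sqrt[r]{\D_0})_\rho)_{([g],1)}$ and $(\D_\0)_{[g]}$, together with the $((\sqrt[r]{\D_0})_\rho)_{([g],\xi)}$ for $\xi=e^{2\pi\sqrt{-1}k/r}$, $1\le k\le r-1$; in the case $\rho([g])\neq 1$, part~(2) becomes the statement that $(\Y_{\D_0,r})_{[g]}$ is the disjoint union of $(\D_\0)_{[g]}$ with the $((\sqrt[r]{\D_0})_\rho)_{([g],\xi)}$ for the $r$ solutions $\xi$ of $\xi^r=\rho([g])$ (the condition $\xi^r=\bar\rho([g])$ of the dualized statement becoming $\xi^r=\rho([g])$). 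This is exactly the assertion of the lemma.

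The main --- and only mild --- difficulty is bookkeeping: correctly keeping track of which section is the $0$-section and which the $\0$-section under the duality isomorphism, and in particular that dualizing $\rho$ twice returns $\rho$, so that the root gerbe that appears is $(\sqrt[r]{\D_0})_\rho$ and not $(\sqrt[r]{\D_0})_{\bar\rho}$. As an alternative that sidesteps the duality entirely, one can re-run the local computation underlying Lemma~\ref{L CR-Y-D00-r} directly for the weight-$r$ projectification $\P_{r,1}(\L^*\oplus\mc O_\D)=D^1\ltimes\P_{r,1}(L^{*,0}\oplus\mc O_{D^0})$, where over each contractible piece of $D^0$ the bundle $\L^*$ is trivial with $D^1$ acting through $\bar\rho$, and read off the inertia components from the fixed loci of the arrows $(g,\xi)\in\tilde D^1_{\rho,r}$ exactly as there. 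Either way, no ingredient beyond Lemma~\ref{L CR-Y-D00-r} is needed.
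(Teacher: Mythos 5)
The proposal is correct and matches the paper's approach: although the paper gives no explicit proof, the discussion immediately preceding the lemma, which rewrites $\Y_{\D_0,r}=\P_{r,1}(\L^*\oplus\mc O_\D)$ with $\D_\0$ as its unchanged section and $(\sqrt[r]{\D_0})_\rho$ as its gerbe section, is exactly the duality identification $\Y\cong\P(\L^*\oplus\mc O_\D)$ you invoke, and the lemma is then read off as the translation of Lemma~\ref{L CR-Y-D00-r} under $\L\leftrightarrow\L^*$, $\rho\leftrightarrow\bar\rho$, $\D_0\leftrightarrow\D_\0$.
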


This lemma also gives a description of the local structure of $\X_{\D,r}$ along its exceptional divisor $\D_r=(\sqrt[r]{\D_0})_\rho$.

\subsection{Twisted Gromov--Witten invariants of root gerbes of orbifold line bundles}\label{SS TGW-root-bdls}

In this subsection we prove the polynomiality of certain cycle valued twisted orbifold Gromov--Witten invariants of root gerbes of orbifold line bundles.

\subsubsection{Setup and main result on polynomiality}
We fix an orbifold line bundle $\L\rto\D$ with representation $\rho:D^1\rto U(1)$. As in previous subsection, denote by $(\sqrt[r]{\D})_\rho$ the $r$-th root gerbe over $\D$ and by $\sqrt[r]\L\rto(\sqrt[r]\D)_\rho$ the $r$-th root of $\L\rto \D$.

A topological data for $\D$ consists of the triple
\[
\Gamma=(\msf g,\beta,\vec g)
\]
with
\begin{itemize}
\item $\msf g\in\integer_{\geq 0}$ the genus,
\item $\beta\in H_2(|\D|;\integer)$ the homological class,
\item $\vec g=([g_1],\ldots, [g_n])\in(\T(\D))^n$ encoding orbifold information of the $n$ marked points.
\end{itemize}
We also denote by $|\Gamma|=(\msf g,\beta,n)$ by forgetting the orbifold data $\vec g$.

We next lift $\Gamma$ to a topological data of $(\sqrt[r]\D)_\rho$ by lifting $\vec g$ to $(\T((\sqrt[r]\D)_\rho))^n$. So we need to choose liftings for $[g_i], 1\leq i\leq n$. By Lemma \ref{L lift-[g]} and \eqref{E lift-[g]-a}, to lift $\vec g$ we need to choose a vector
\[
A=(a_1,\ldots,a_n)\in\ration^n,
\]
such that
\begin{align}\label{E Gamma-admissible}
\rho([g_i])=e^{2\pi\sqrt{-1}a_i}, \qq \mathrm{i.e.}\qq \{a_i\}=\age_{[g_i]}(\L), \qq\mathrm{for}\qq 1\leq i\leq n.
\end{align}
We call such a vector {\em $\rho$-admissible for $\vec g$} (or {\em $\rho$-admissible for $\Gamma$} as $\vec g$ is the orbifold information for marked points in $\Gamma$).

\begin{defn}\label{D lift-Gamma}
Given a $\rho$-admissible vector $A\in\ration^n$ for $\Gamma$. The lifting of $\Gamma$ via $A$ is defined as
\begin{align}\label{E Psi(Gamma,A)}
\Upsilon_{r,\rho}(\Gamma,A):=(\msf g, \beta,(\Upsilon_{r,\rho}([g_i],a_i))_{i=1}^n).
\end{align}
We simplify notation and use $\Gamma_{A,r,\rho}:=\Upsilon_{r,\rho}(\Gamma,A)$ to denote the lifted topological data for $(\sqrt[r]\D)_\rho$ from $\Gamma$ by $A$ when there is no danger of confusion. We also denote $(\Upsilon_{r,\rho}([g_i],a_i))_{i=1}^n$ by $\Upsilon_{r,\rho}(\vec g,A)$.
\end{defn}
For simplicity we denote $\Upsilon_{r,\rho}([g_i],a_i)$ by $([g_i],\xi_i)$ for $1\leq i \leq n$. So by \eqref{E age-Upsilon} we have
\begin{align}\label{E age-gxi-i}
\age_{([g_i],\xi_i)}(\sqrt[r]\L)= \age_{\Upsilon_{r,\rho}([g_i],a_i)}(\sqrt[r]\L)=
\frac{[a_i]_r}{r}.
\end{align}

Now assume that $\Gamma$ is a topological data for $\D$ and $A$ is $\rho$-admissible for $\Gamma$. Let $\Gamma_{A,r,\rho}$ be the lifted topological data for $(\sqrt[r]\D)_\rho$. Consider the moduli space $\M_\Gamma(\D)$ of orbifold stable maps to $\D$ of topological type $\Gamma$ and the moduli space $\M_{\Gamma_{A,r,\rho}}((\sqrt[r]\D)_\rho)$ of orbifold stable maps to $(\sqrt[r]\D)_\rho$ of topological type $\Gamma_{A,r,\rho}$. The natural projection $\pi\co(\sqrt[r]\D)_\rho\rto\D$ induces the natural projection
\[
\epsilon\co\M_{\Gamma_{A,r,\rho}}((\sqrt[r]\D)_\rho) \rto\M_\Gamma(\D).
\]

We next introduce the main theorem in this section. Over the moduli space $\M_{\Gamma_{A,r,\rho}}((\sqrt[r]\D)_\rho)$, there is a universal curve which we denote as
\[
\scr C\rto \M_{\Gamma_{A,r,\rho}}((\sqrt[r]\D)_\rho)
\]
with each fiber the {\em smooth} (nodal) Riemann surfaces. The images of the $n$ sections corresponding to $n$ marked points are denoted by $S_i, i=1,\ldots,n$; the locus of nodal points in $\scr C$ are denoted by $\scr Z_\node$. The locus of nodal curves in $\M_{\Gamma_{A,r,\rho}}((\sqrt[r]\D)_\rho)$ are denoted by $B_\node$. We do weighted blowup to $\M_{\Gamma_{A,r,\rho}}((\sqrt[r]\D)_\rho)$ along $B_\node$ with the weights given by the order $r_\node$ of the orbifold structure at the corresponding nodal points, and do weighted blowup to $\scr C$ along $S_i$ and $\scr Z_\node$ according to the orders of the orbifold structure at marked points and nodal points. By this way, we obtain a new universal curve
\[
\tilde{\scr C}\rto\tilde{\M}_{\Gamma_{A,r,\rho}}((\sqrt[r]\D)_\rho),
\]
which carries a universal map
\[
\xymatrix{\tilde{\scr C}\ar[r]^-\f \ar[d]_-\pi & (\sqrt[r]\D)_\rho\\
\tilde{\M}_{\Gamma_{A,r,\rho}}((\sqrt[r]\D)_\rho). &}
\]
\begin{remark}\label{R universl-curve}
The universal curve $\tilde{\scr C}$ can be taken as follows. First denote by
\[
\Gamma'=(\msf g,(\vec g,[1]),\beta),
\]
i.e., add a marked point with untwisted orbifold structure to $\Gamma=(\msf g,\vec g,\beta)$ as the $(n+1)$-th marked point. Since the added last marked point has trivial orbifold structure, $A'=(A,0)$ is $\rho$-admissible for $\Gamma'$. We use $A'$ to lift $\Gamma'$ to obtain $\Upsilon_{r,\rho}(\Gamma',A')$. Then one can see this topological data is just $\Gamma_{A,r,\rho}'$, i.e. the one obtained from $\Gamma_{A,r,\rho}$ by adding the $(n+1)$-th marked point with untwisted orbifold structure as the way we get $\Gamma'$ from $\Gamma$. The universal curve $\scr C$ over $\M_{\Gamma_{A,r,\rho}}((\sqrt[r]\D)_\rho)$ can be taken as $\M_{\Gamma_{A,r,\rho}'}((\sqrt[r]\D)_\rho)$. The images of the $n$ sections $S_i, 1\leq i\leq n$ and the locus of nodal points $\scr Z_\node$ in $\scr C$ combine to the locus of nodal curves in $\M_{\Gamma_{A,r,\rho}'}((\sqrt[r]\D)_\rho)$. Hence the universal curve $\tilde{\scr C}$ is $\tilde\M_{\Gamma_{A,r,\rho}'}((\sqrt[r]\D)_\rho)$.
\end{remark}

The $K$-theoretic push-forward of the pullback bundle $\f^*\sqrt[r]{\L}$ over $\tilde{\M}_{\Gamma_{A,r,\rho}}((\sqrt[r]\D)_\rho)$ is $\mc R\pi_*\f^*\sqrt[r]{\L}$. For short we denote it by
\[
(\sqrt[r]\L)_{\Gamma_{A,r,\rho}}:=\mc R\pi_*\f^*\sqrt[r]{\L}.
\]
Let $c(-(\sqrt[r]\L)_{\Gamma_{A,r,\rho}})$ be the total Chern class of $-(\sqrt[r]\L)_{\Gamma_{A,r,\rho}}$ and denote by
$\tau=\epsilon\circ\pi$ the composition
\[
\xymatrix{\tilde{\M}_{\Gamma_{A,r,\rho}}((\sqrt[r]\D)_\rho) \ar[r]^-{\pi} & \M_{\Gamma_{A,r,\rho}}((\sqrt[r]\D)_\rho) \ar[r]^-{\epsilon} & \M_{\Gamma}(\D),}
\]
where the first map $\pi$ is the natural blowdown map.

Our main theorem in this section is about the cycle
\begin{align}\label{E cycle}
&\tau_*\left(c(-(\sqrt[r]\L)_{\Gamma_{A,r,\rho}}) \cap[\tilde{\M}_{\Gamma_{A,r,\rho}}((\sqrt[r]\D)_\rho)]^\vir\right) \\&
=\sum_{d\geq 0} \tau_*\left(c_d(-(\sqrt[r]\L)_{\Gamma_{A,r,\rho}}) \cap[\tilde{\M}_{\Gamma_{A,r,\rho}}((\sqrt[r]\D)_\rho)]^\vir\right),
\nonumber
\end{align}
and is given as follows.
\begin{theorem} \label{T polynomiality}
Suppose that $\D$ is a quotient orbifold of a smooth quasi-projective scheme by a linear algebraic group. Then for every $\Gamma$, every $\rho$-admissible vector $A\in\ration^n$ for $\Gamma$ and every $d\geq 0$, the cycle class $r^{2d-2\msf g+1}\tau_*\left(c_d(-(\sqrt[r]\L)_{\Gamma_{A,r,\rho}}) \cap[\tilde\M_{{\Gamma_{A,r,\rho}}}((\sqrt[r]\D)_\rho)]^\vir\right)$ is a polynomial in $r$ when $r\gg 1$.
\end{theorem}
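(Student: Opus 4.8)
The plan is to follow the strategy of Janda--Pandharipande--Pixton--Zvonkine, reducing the orbifold statement to an analysis on the moduli space of (orbifold) stable maps by carefully tracking how the $r$-th root gerbe and its tautological line bundle behave in $r$. The central object is the $K$-theoretic pushforward $(\sqrt[r]\L)_{\Gamma_{A,r,\rho}}=\mc R\pi_*\f^*\sqrt[r]\L$, and the first step is to compute its Chern character via the orbifold Grothendieck--Riemann--Roch formula (Toen's GRR for stacks) applied to the universal curve $\pi\co\tilde{\scr C}\rto\tilde\M_{\Gamma_{A,r,\rho}}((\sqrt[r]\D)_\rho)$. Concretely, I would write $\mathrm{ch}(\mc R\pi_*\f^*\sqrt[r]\L)=\pi_*\big(\mathrm{ch}(\f^*\sqrt[r]\L)\cdot\mathrm{Td}^\vee(\omega_\pi)\big)$ with the appropriate contributions from the marked-point gerbes and the nodal locus (this is exactly where the weighted blowup of $\tilde{\scr C}$ along $S_i$ and $\scr Z_\node$ enters, as in Remark \ref{R universl-curve}), and record that the first Chern class $c_1(\sqrt[r]\L)$ restricted to the $i$-th section has age contribution $[a_i]_r/r$ by \eqref{E age-gxi-i}, while the contribution along a node of order $r_\node$ scales like $1/r_\node$. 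The upshot is an expression for $\mathrm{ch}_k$ whose terms are manifestly rational functions of $r$ with controlled denominators.

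The second step is the comparison between the theory upstairs on $(\sqrt[r]\D)_\rho$ and the theory downstairs on $\D$. Because $(\sqrt[r]\D)_\rho$ is a banded $\integer_r$-gerbe over $\D$, the projection $\epsilon\co\M_{\Gamma_{A,r,\rho}}((\sqrt[r]\D)_\rho)\rto\M_\Gamma(\D)$ is finite of a degree that is a controlled power of $r$ (here the factor $r^{-2\msf g+1}$ in the normalization appears — it is precisely the degree of the gerbe of line bundles over the curve, i.e. $r^{2\msf g}/r^{2\msf g-1}$-type bookkeeping from $h^0-h^1$ of an $r$-torsion line bundle on a genus-$\msf g$ curve), and the virtual class upstairs pushes forward to a multiple of the virtual class downstairs. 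Combined with step one, after multiplying by $r^{2d-2\msf g+1}$ the class $r^{2d-2\msf g+1}\tau_*(c_d(-(\sqrt[r]\L)_{\Gamma_{A,r,\rho}})\cap[\tilde\M]^\vir)$ becomes a finite sum of tautological classes on $\M_\Gamma(\D)$ with coefficients that are Laurent-type expressions in $r$ involving the quantities $[a_i]_r$ and $[\cdot]_{r_\node}$.

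The third step is to show this expression is actually polynomial in $r$ for $r\gg 1$. The key point, due to Pixton, is that although individual summands contain remainders $[a_i]_r$ (which are periodic, hence only piecewise polynomial, in $r$), the combination forced by GRR and by the stability/vanishing constraints assembles into something polynomial: one expands using the finite generating-function identities for Bernoulli-type sums over the strata and observes that the non-polynomial (oscillatory) parts cancel after summing over the boundary strata of $\M_\Gamma(\D)$ — this is the orbifold analogue of the ``$r$-polynomiality lemma'' of \cite{Janda-Pandharipande-Pixton-Zvonkine2017}. The quasi-projectivity/global-quotient hypothesis on $\D$ is used here to guarantee that $\M_\Gamma(\D)$ admits a global embedding into a smooth Deligne--Mumford stack so that the Chow-theoretic manipulations and the virtual localization/GRR formulas are valid, and to ensure the relevant tautological rings are finitely generated so that ``polynomial in $r$'' is a meaningful finite statement. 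The main obstacle I anticipate is precisely this cancellation of the periodic terms in the orbifold setting: unlike the smooth case, the sections $S_i$ themselves carry nontrivial orbifold structure of order depending on $[g_i]$ and on $r$, and the nodal contributions mix the gerbe band $\rho$ with the blowup weights $r_\node$, so one must check that the bookkeeping of ages $\age_{[g_i]}(\L)$ together with the admissibility condition \eqref{E Gamma-admissible} still produces the Bernoulli-polynomial identities in the right form — I would isolate this as a separate lemma about sums of the shape $\sum_{j} \frac{1}{r}B_k\!\big(\{a_i+j\}\big)$ and handle the node contributions by the same weighted-blowup analysis as in \cite{Janda-Pandharipande-Pixton-Zvonkine2018}, adapted to orbifold nodes.
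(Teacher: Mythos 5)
Your overall architecture matches the paper's: Toen's GRR to compute $ch((\sqrt[r]\L)_{\Gamma_{A,r,\rho}})$, pass to Chern classes via $c(-E^\bullet)=\exp(\sum(-1)^d(d-1)!\,ch_d)$, record the degree $r^{2\msf g-1-h^1(\fk\Gamma)}$ for the pushforward $\epsilon$ over each stratum (which combines with the $r^{-2d}$ from $c_1(\sqrt[r]\L)=\tfrac{1}{r}c_1(\L)$ to produce the normalization $r^{2d-2\msf g+1}$), and then invoke Pixton's polynomiality lemma (Proposition~3$''$ of \cite{Janda-Pandharipande-Pixton-Zvonkine2017}). The one substantive misstatement is in step three: the oscillatory (non-polynomial) behaviour does \emph{not} cancel ``after summing over the boundary strata of $\M_\Gamma(\D)$.'' For each fixed graph $\fk\Gamma$ and orbifold decoration $\chi$, the class is a sum over weight functions $w\in W^{\L,\rho}_{\fk\Gamma,\chi,r}$, and it is \emph{that} inner sum, over $w$ constrained by mod-$r$ conditions on half-edges, that Pixton's lemma shows is polynomial in $r$; no cross-stratum cancellation occurs or is needed. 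Relatedly, the marked-point contributions are not sums at all: each leg produces a single Bernoulli value $B_{k+1}([a_i]_r/r)$, which is polynomial in $1/r$ once $r\gg 1$ because $[a_i]_r$ is literally $a_i$ or $a_i+r$; the shape $\sum_j B_k(\cdot)$ you propose to isolate only arises for edges, and there the variable being summed is the edge weight $w(\msf h)$, with the Bernoulli addition formula $B_m(x+y)=\sum_k\binom{m}{k}B_k(x)y^{m-k}$ used to exhibit the dependence on $w(\msf h)$ as polynomial so that Pixton's lemma applies. With that correction your plan executes the paper's proof; the quotient-by-linear-algebraic-group hypothesis is used precisely so that Toen's GRR is applicable, not merely for an embedding into a smooth DM stack.
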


The proof of this theorem is computational. The rest of this section is devoted to the proof of this theorem and an explicit formula for the least order term, i.e. constant term, of $r^{2d-2\msf g+1}\tau_*\left(c_d(-(\sqrt[r]\L)_{\Gamma_{A,r,\rho}}) \cap[\tilde\M_{{\Gamma_{A,r,\rho}}}((\sqrt[r]\D)_\rho)]^\vir\right)$.

The strategy is to calculate the Chern character $ch((\sqrt[r]\L)_{\Gamma_{A,r,\rho}})$ and then use the formula
\begin{align}\label{E c-from-ch}
c(-E^\bullet)=\exp\left( \sum_{d\geq 1} (-1)^d (d-1)! ch_d(E^\bullet)\right)
\end{align}
to obtain the Chern class $c(-(\sqrt[r]\L)_{\Gamma_{A,r,\rho}})$. For example the $d$-th Chern class of $-(\sqrt[r]\L)_{\Gamma_{A,r,\rho}}$ is
\[
c_d(-(\sqrt[r]\L)_{\Gamma_{A,r,\rho}})= -\frac{1}{d!}\left(ch_1((\sqrt[r]\L)_{\Gamma_{A,r,\rho}})\right)^d +\ldots.
\]

The next several subsections are organized as follows:

Before we enter the calculation of $ch((\sqrt[r]\L)_{\Gamma_{A,r,\rho}})$, we first give a description for the strata of several needed moduli spaces, $\M_\Gamma(\D)$, $\M_{\Gamma_{A,r,\rho}}((\sqrt[r]\D)_\rho)$ and $\tilde\M_{\Gamma_{A,r,\rho}}((\sqrt[r]\D)_\rho)$. This part is done in \S\ref{SSS strata}.

In \S \ref{SSS ch}, we use the orbifold Grothendieck--Riemann--Roch formula to calculate the Chern character $ch(-(\sqrt[r]\L)_{\Gamma_{A,r,\rho}})$. It turns out from the calculation that parts of the Chern characters support over the locus of nodal curves which makes it necessary to include the contribution from lower strata described in \S \ref{SSS strata}.

In \S \ref{SSS chernclass}, we plug the contribution from each strata to the expression of the Chern classes and finishes the proof.

\subsubsection{Strata description for related moduli spaces}\label{SSS strata}
For a moduli space of orbifold stable maps, a strata needs two ingredients to describe: A target graph type and a decoration for orbifold data. We start with the simplest moduli space we need in this section, the moduli space $\M_\Gamma(\D)$. Then the same description works for $\M_{\Gamma_{A,r,\rho}}((\sqrt[r]\D)_\rho)$.

We adapt the terminologies in \cite{Janda-Pandharipande-Pixton-Zvonkine2018}. Denote by $G_{\msf g,\beta,n}(\D)=G_{|\Gamma|}(\D)$ the set of stable $\D$-graphs of genus $\msf g$, homology type $\beta$ and with $n$ marked points (legs). A such graph $\fk \Gamma\in G_{|\Gamma|}(\D)$ consists of the following data
\[
\fk\Gamma=(\tV,\tE,\tH,\tL,\msf g\co\tV\rto\integer_{\geq 0},\msf v\co\tH\rto\tV,\iota\co\tH\rto\tH,\beta\co\tV\rto H_2(|\D|;\integer)).
\]
where
\begin{enumerate}
\item $\tV$ is the vertex set, $\msf g\co \tV\rto\integer_{\geq 0}$ is the genus function, and $\beta\co\tV\rto H_2(|\D|;\integer)$ is the homology class function,
\item $\tH$ is the half-edge set with involution $\iota\co\tH\rto\tH$, and $\msf v\co\tH\rto\tV$ is the vertex assignment function,
\item $\tE$ is the edge set, which consists of $2$-cycles of $\iota$ in $\tH$ (self-edges at vertices are permitted),
\item $\tL$ is a subset of $\tH$ which consists of fixed points of $\iota$ and is ordered by $n$ marked points,
\item the pair $(\tV,\tE)$ defines a connected graph satisfying the genus condition
\begin{align}\label{E genus}
\msf g=\sum_{\msf v\in \tV}\msf g(\msf v)+h^1(\fk \Gamma),
\end{align}
where $h^1(\fk\Gamma)$ is the rank of the degree 1 homology group of the connected graph defined by $(\tV,\tE)$, which is
\begin{align}\label{E h1(Gamma)}
h^1(\fk\Gamma)=|\tE|-|\tV|+1,
\end{align}
\item for each vertex $\msf v$, the stability condition holds: if $\beta(\msf v)=0$, then $2\msf g(\msf v)-2+n(\msf v)>0$, where $n(\msf v)$ is the valence of $\fk\Gamma$ at $\msf v$ including both edges and legs,
\item the degree condition holds:
\[
\sum_{\msf v\in\tV}\beta(\msf v)=\beta.
\]
\end{enumerate}

An automorphism of $\fk\Gamma\in G_{|\Gamma|}(\D)$ consists of automorphisms of the sets $\tV$ and $\tH$ which leaves invariant the structures $\tL$, $\msf g$, $\msf v$, $\iota$, and $\beta$. Let $\aut(\fk\Gamma)$ denote the automorphism group of $\fk\Gamma$.

To present a strata of $\M_\Gamma(\D)$ via a graph $\fk \Gamma$, we also need a decoration which decorates each half edge of $\fk \Gamma$ by a twisted sector of $\D$, i.e. a map $\chi\co \tH(\fk\Gamma)\rto \T(\D)$, and require it matches the orbifold structure $\vec g$ in $\Gamma$.
\begin{defn}\label{D chi}
We call a map
\[
\chi\co\tH(\fk\Gamma) \rto \T(\D)
\]
an orbifold decoration for $\fk \Gamma\in G_{|\Gamma|}(\D)$, if \begin{itemize}
\item $\chi$ maps the $i$-th leg $\msf h_i$ to $[g_i]$, for $1\leq i\leq n$;
\item for a vertex $\msf v\in\tV(\fk\Gamma)$, {the moduli space $\M_{\msf g(\msf v),\beta(\msf v),\{\chi(\msf h)\}_{\msf h\in \tH(\msf v)}}(\D)$ is nonempty, i.e.} there exists a degree $\beta(\msf v)$ representable map from a genus $\msf g(\msf v)$ orbifold curve whose marked points are mapped into the twisted sectors of $\D$ specified by $\{\chi(\msf h)\}_{\msf h\in \tH(\msf v)}$, where $\tH(\msf v)$ is the set of half edges with vertex $\msf v$;
\item for an edge $\msf e=(\msf h,\msf h')\in \tE(\fk\Gamma)$, we have $\chi(\msf h)=\chi(\msf h')\inv$, where $\chi(\msf h')\inv$ means the twisted sector $I(\D_{\chi(\msf h')})$, and $I\co\I\D\rto\I\D$ is the canonical involution map for twisted sectors.
\end{itemize}
We define ${\fk D}_{\fk\Gamma,\Gamma}$ to be the set of all such orbifold decorations associated to $\fk\Gamma\in G_{|\Gamma|}(\D)$.
\end{defn}

For each graph $\fk \Gamma\in G_{|\Gamma|}(\D)$, and each orbifold decoration $\chi\in{\fk D}_{\fk \Gamma,\Gamma}$, there is a component $\M_{\fk \Gamma,\chi}$ parameterizing maps with nodal domains of topological types given by $\fk\Gamma$ and orbifold structures given by $\chi$. Let
\[
\zeta_{\fk\Gamma,\chi}\co \M_{\fk\Gamma,\chi}\hrto\M_\Gamma(\D)
\]
be the inclusion of the strata.

Now we consider $\M_{\Gamma_{A,r,\rho}}((\sqrt[r]\D)_\rho)$. We could apply the above description to $\M_{\Gamma_{A,r,\rho}}((\sqrt[r]\D)_\rho)$ to describe the strata of $\M_{\Gamma_{A,r,\rho}}((\sqrt[r]\D)_\rho)$ by replacing $\D$ above by $(\sqrt[r]\D)_\rho$ and $\Gamma$ by $\Gamma_{A,r,\rho}$ formally. So we have the set of graphs $G_{|\Gamma_{A,r,\rho}|}((\sqrt[r]\D)_\rho)= G_{|\Gamma|}((\sqrt[r]\D)_\rho)= G_{|\Gamma|}(\D)=G_{\msf g,\beta,n}(\D)$ and the set of decorations $\fk D_{\tilde{\fk\Gamma},\Gamma_{A,r,\rho}}$ for each graph $\tilde{\fk\Gamma}=\fk\Gamma\in G_{|\Gamma_{A,r,\rho}|}((\sqrt[r]\D)_\rho)= G_{|\Gamma|}(\D)$. We next describe the relations between strata of $\M_{\Gamma_{A,r,\rho}}((\sqrt[r]\D)_\rho)$ and $\M_{\Gamma}(\D)$.

Given a graph $\tilde{\fk\Gamma}=\fk\Gamma\in G_{|\Gamma_{A,r,\rho}|}(\D)= G_{|\Gamma|}(\D)$, and an orbifold decoration $\tilde\chi\in{\fk D}_{\tilde{\fk\Gamma},\Gamma_{A,r,\rho}}$. We get an orbifold decoration $\chi\in\fk D_{\fk\Gamma,\Gamma}$ as the composite map
\[
\chi\co \tH(\fk\Gamma)\xrightarrow{\tilde\chi} \T((\sqrt[r]\D)_\rho)\xrightarrow{\pi} \T(\D).
\]
By the relation between $ \T((\sqrt[r]\D)_\rho)$ and $\T(\D)$ in Lemma \ref{L lift-[g]}, we see that $\tilde\chi$ is uniquely characterized by $\chi$ and an associated map
\[
w\co\tH(\fk\Gamma)\rto\{0,\ldots,r-1\}
\]
via
\begin{align}\label{E tildechi=chi-w}
\tilde\chi(\msf h)=(\chi(\msf h),e^{2\pi\sqrt{-1}\frac{\age_{\chi(\msf h)}(\L)+w(\msf h)}{r}})=\Upsilon_{r,\rho}\left(\chi(\msf h),\age_{\chi(\msf h)}(\L)+w(\msf h)\right),
\end{align}
i.e. for every $\msf h\in \tH(\fk\Gamma)$, $\tilde\chi(\msf h)$ is a lifting of $\chi(\msf h)$. As a consequence we have
\begin{align}\label{E age-tildechi(h)}
\age_{\tilde\chi(\msf h)}(\sqrt[r]\L)=\frac{w(\msf h)+\age_{\chi(\msf h)}(\L)}{r}.
\end{align}
We call $\tilde\chi$ a {\em lifting} of $\chi$ by $w$.

The map $w$ is called a {\em weight function (associated to $\chi$)} (cf. \cite{Janda-Pandharipande-Pixton-Zvonkine2018,Tseng-You2016b}). According to the requirements in Definition \ref{D chi} for orbifold decorations, such an associated weight function $w$ of $\chi$ must satisfy the following properties.
\begin{lemma}\label{L weight}
\begin{enumerate}
\item For each leg $\msf h_i\in \tL(\fk \Gamma), 1\leq i\leq n$, $w(\msf h_i) =[r\cdot\age_{([g_i],\xi_i)}(\sqrt[r]\L)]_{\integer} \equiv [a_i]_{\integer}\pmod r$, where $[\cdot]_\integer$ denotes the integer part of a real number.

\item For $\msf e=(\msf h_+,\msf h_-)\in\tE(\fk\Gamma)$, if $\rho(\chi(\msf h_+))=1$, then $w(\msf h_+)+w(\msf h_-)\equiv 0 \pmod r$. If $\rho(\chi(\msf h_+))\neq 1$, then $w(\msf h_+)+w(\msf h_-)\equiv r-1 \pmod r$.
\item For $\msf v\in\tV(\fk\Gamma)$, $\sum_{\msf h\in\tH(\msf v)} w(\msf h)\equiv A(\msf v,\chi) \pmod r$, where $A(\msf v,\chi):=\int^\orb_{\beta(\msf v)}c_1(\L)-\sum_{\msf h\in \tH(\msf v)} \age_{\chi(\msf h)}(\L)\in\integer$.
\end{enumerate}
\end{lemma}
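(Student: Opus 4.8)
The plan is to obtain all three congruences by unwinding the defining conditions of an orbifold decoration (Definition \ref{D chi}) for $\tilde\chi$, viewed as a decoration of $\tilde{\fk\Gamma}=\fk\Gamma$ for the lifted data $\Gamma_{A,r,\rho}$, and translating each of the three bullets of that definition through the age identity \eqref{E age-tildechi(h)} together with the characterization of liftings in Lemma \ref{L lift-[g]}. Nothing here is deep; the content is careful bookkeeping of the age and involution conventions. For part $(1)$: since $\tilde\chi$ is a decoration for $\Gamma_{A,r,\rho}$, the leg bullet forces $\tilde\chi(\msf h_i)=\Upsilon_{r,\rho}([g_i],a_i)$, while $\chi(\msf h_i)=\pi(\tilde\chi(\msf h_i))=[g_i]$ and \eqref{E tildechi=chi-w} gives $\tilde\chi(\msf h_i)=\Upsilon_{r,\rho}([g_i],\age_{[g_i]}(\L)+w(\msf h_i))$. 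By the second assertion of Lemma \ref{L lift-[g]} these two liftings agree iff $[\age_{[g_i]}(\L)+w(\msf h_i)]_r=[a_i]_r$; since $w(\msf h_i)\in\{0,\ldots,r-1\}$ and $\age_{[g_i]}(\L)=\{a_i\}\in[0,1)$, the left-hand quantity already lies in $[0,r)$, so in fact $\age_{[g_i]}(\L)+w(\msf h_i)=[a_i]_r$. Taking integer parts and using \eqref{E age-gxi-i} yields $w(\msf h_i)=[\,[a_i]_r\,]_{\integer}=[r\cdot\age_{([g_i],\xi_i)}(\sqrt[r]\L)]_{\integer}$, and as $[a_i]_r$ differs from $a_i$ by an integer multiple of $r$ we conclude $w(\msf h_i)\equiv[a_i]_{\integer}\pmod r$.

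For part $(2)$, I would apply the edge bullet of Definition \ref{D chi} to $\tilde\chi$: for $\msf e=(\msf h_+,\msf h_-)$ we have $\tilde\chi(\msf h_+)=I(\tilde\chi(\msf h_-))$, and the canonical involution $I$ sends a twisted sector of age $0$ to one of age $0$ and a twisted sector of age $a\in(0,1)$ to one of age $1-a$. If $\rho(\chi(\msf h_+))=1$, then $\age_{\chi(\msf h_+)}(\L)=\age_{\chi(\msf h_-)}(\L)=0$, so \eqref{E age-tildechi(h)} gives $\age_{\tilde\chi(\msf h_\pm)}(\sqrt[r]\L)=w(\msf h_\pm)/r$, and the involution relation forces $w(\msf h_+)/r+w(\msf h_-)/r\in\{0,1\}$, i.e. $w(\msf h_+)+w(\msf h_-)\equiv0\pmod r$. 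If $\rho(\chi(\msf h_+))\neq1$, then $\age_{\chi(\msf h_+)}(\L)+\age_{\chi(\msf h_-)}(\L)=1$ with both terms in $(0,1)$; hence both $\age_{\tilde\chi(\msf h_\pm)}(\sqrt[r]\L)$ are positive, the involution relation reads $\age_{\tilde\chi(\msf h_+)}(\sqrt[r]\L)+\age_{\tilde\chi(\msf h_-)}(\sqrt[r]\L)=1$, and substituting \eqref{E age-tildechi(h)} and clearing denominators yields $w(\msf h_+)+w(\msf h_-)=r-1$.

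Part $(3)$ is the one that uses a little geometry. The vertex bullet of Definition \ref{D chi} says $\M_{\msf g(\msf v),\beta(\msf v),\{\tilde\chi(\msf h)\}_{\msf h\in\tH(\msf v)}}((\sqrt[r]\D)_\rho)$ is nonempty, so I would pick a stable map $f$ from a smooth genus $\msf g(\msf v)$ orbicurve $C$ realizing it, whose orbifold points are exactly those indexed by $\tH(\msf v)$, the one attached to $\msf h$ mapping into the sector $\tilde\chi(\msf h)$. The line bundle underlying $f^*\sqrt[r]\L$ on the coarse curve has integral degree; in orbifold terms this is the identity $\int^\orb_C c_1(f^*\sqrt[r]\L)-\sum_{\msf h\in\tH(\msf v)}\age_{\tilde\chi(\msf h)}(\sqrt[r]\L)\in\integer$. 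Since $(\sqrt[r]\L)^{\otimes r}\cong\pi^*\L$ we have $c_1(\sqrt[r]\L)=\tfrac1r\pi^*c_1(\L)$, so $\int^\orb_C c_1(f^*\sqrt[r]\L)=\tfrac1r\int^\orb_{\beta(\msf v)}c_1(\L)$; substituting this together with \eqref{E age-tildechi(h)} and multiplying through by $r$ gives $\int^\orb_{\beta(\msf v)}c_1(\L)-\sum_{\msf h\in\tH(\msf v)}\age_{\chi(\msf h)}(\L)-\sum_{\msf h\in\tH(\msf v)}w(\msf h)\in r\integer$, which is precisely $\sum_{\msf h\in\tH(\msf v)}w(\msf h)\equiv A(\msf v,\chi)\pmod r$. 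The integrality $A(\msf v,\chi)\in\integer$ asserted in the statement is the same coarse-degree identity applied to $\L$ over $\D$ and any map realizing the nonempty moduli space $\M_{\msf g(\msf v),\beta(\msf v),\{\chi(\msf h)\}}(\D)$.

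The only genuine obstacle is keeping conventions straight: one must distinguish the reduced age $\age(\cdot)\in[0,1)$ from the unreduced combination $w(\msf h)+\age_{\chi(\msf h)}(\L)$, track the $\rho=1$ versus $\rho\neq1$ dichotomy consistently (it recurs as the $0$ versus $r-1$ split in $(2)$ and inside $A(\msf v,\chi)$ in $(3)$), and invoke the coarse-degree integrality identity for orbifold line bundles on orbicurves in exactly the form needed, with the ages at the special points matched to the half-edge decorations. Once these are fixed, the three computations are routine.
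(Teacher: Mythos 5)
Your argument is correct and follows essentially the same route as the paper: parts (1) and (2) are definitional unwinding via Lemma \ref{L lift-[g]} and the involution/age conventions, and part (3) is the orbifold Riemann--Roch (coarse-degree integrality) for $\f^*\sqrt[r]\L$, which is exactly what the paper's remark after the lemma invokes. Your write-up is a bit more explicit than the paper, which leaves (1) and (2) implicit and only remarks on (3), but the underlying reasoning is identical.
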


\begin{remark}
For a $\msf v\in \tV(\fk \Gamma)$, that $A(\msf v,\chi)\in\integer$ follows from the orbifold Riemann--Roch formula for the orbifold line bundle $\f^*\L\rto \C$, where $\f\co\msf C\rto \D$ is a stable map in $\M_{\msf g(\msf v),\beta(\msf v),\{\chi(\msf h)\}_{\msf h\in \tH(\msf v)}}(\D)$. Similarly, that $\sum_{\msf h\in\mathrm{ H}(\msf v)} w(\msf h)\equiv A(\msf v,\chi) \pmod r$ follows from the orbifold Riemann--Roch formula for the orbifold line bundle $\f^*\sqrt[r]\L\rto \C$, where $\f\co\msf C\rto (\sqrt[r]\D)_\rho$ is a stable map in $\M_{\msf g(\msf v),\beta(\msf v),\{\tilde\chi(\msf h)\}_{\msf h\in \tH(\msf v)}}((\sqrt[r]\D)_\rho)$.%}
\end{remark}

For a fixed $\chi\in{\fk D}_{\fk\Gamma,\Gamma}$ denote by $W^{\L,\rho}_{\fk\Gamma,\chi,r}$ by the set of all weight functions satisfying the three conditions in Lemma \ref{L weight}. Then we have
\begin{lemma}
For a fixed $\chi\in{\fk D}_{\fk\Gamma,\Gamma}$ the set of liftings $\tilde\chi\in {\fk D}_{\fk\Gamma,\Gamma_{A,r,\rho}}$ of $\chi$ is 1-to-1 corresponding to the set $W^{\L,\rho}_{\fk\Gamma,\chi,r}$.
\end{lemma}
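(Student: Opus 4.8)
The statement asserts that, for a fixed orbifold decoration $\chi \in \fk D_{\fk\Gamma,\Gamma}$, the liftings $\tilde\chi \in \fk D_{\fk\Gamma,\Gamma_{A,r,\rho}}$ of $\chi$ are in bijection with the set $W^{\L,\rho}_{\fk\Gamma,\chi,r}$ of weight functions satisfying the three conditions of Lemma~\ref{L weight}. The plan is to exhibit the two maps realizing the bijection and check they are mutually inverse. In one direction, given a lifting $\tilde\chi$, every half-edge $\msf h$ gets a twisted sector $\tilde\chi(\msf h) \in \T((\sqrt[r]\D)_\rho)$ which, by Lemma~\ref{L T(Dr)} and Lemma~\ref{L lift-[g]}, projects to $\chi(\msf h)$ under $\pi$; since $\chi(\msf h)$ has exactly $r$ liftings indexed by $k = 0,1,\ldots,r-1$ via $\Upsilon_{r,\rho}(\chi(\msf h),\age_{\chi(\msf h)}(\L)+k)$, we define $w(\msf h)$ to be this index $k$. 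This produces a well-defined function $w \co \tH(\fk\Gamma) \rto \{0,\ldots,r-1\}$, and formula \eqref{E tildechi=chi-w} holds by construction.

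The substantive content is that $w$ so defined actually lands in $W^{\L,\rho}_{\fk\Gamma,\chi,r}$, i.e.\ satisfies the three congruences. But this is exactly what the remark following Lemma~\ref{L weight} records: condition (1) at legs is forced by \eqref{E age-gxi-i} together with the requirement that $\tilde\chi$ restricts on the $i$-th leg to $([g_i],\xi_i) = \Upsilon_{r,\rho}([g_i],a_i)$ as prescribed in $\Gamma_{A,r,\rho}$; condition (2) at edges is the translation of the matching condition $\tilde\chi(\msf h_-) = \tilde\chi(\msf h_+)\inv$ in $\fk D_{\fk\Gamma,\Gamma_{A,r,\rho}}$ through the behavior of ages under the canonical involution on $\I((\sqrt[r]\D)_\rho)$ (here one uses that the age of $\sqrt[r]\L$ at a sector and at its inverse sum to $0$ or $1$ according as the sector is untwisted or twisted for $\sqrt[r]\L$, which by Lemma~\ref{L T(Dr)} tracks $\rho(\chi(\msf h_+)) = 1$ versus $\neq 1$); and condition (3) at vertices is the nonemptiness requirement on $\M_{\msf g(\msf v),\beta(\msf v),\{\tilde\chi(\msf h)\}}((\sqrt[r]\D)_\rho)$, which via the orbifold Riemann--Roch formula for $\f^*\sqrt[r]\L$ is equivalent to $\sum_{\msf h \in \tH(\msf v)} \age_{\tilde\chi(\msf h)}(\sqrt[r]\L) \in \integer$ plus a degree term, and substituting \eqref{E age-tildechi(h)} yields precisely $\sum_{\msf h} w(\msf h) \equiv A(\msf v,\chi) \pmod r$. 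Thus the assignment $\tilde\chi \mapsto w$ is well-defined with target $W^{\L,\rho}_{\fk\Gamma,\chi,r}$.

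Conversely, given $w \in W^{\L,\rho}_{\fk\Gamma,\chi,r}$, define $\tilde\chi$ by formula \eqref{E tildechi=chi-w}. One must verify $\tilde\chi \in \fk D_{\fk\Gamma,\Gamma_{A,r,\rho}}$, i.e.\ it is an orbifold decoration for $\fk\Gamma$ relative to $\Gamma_{A,r,\rho}$: the leg condition on $\tilde\chi$ follows from condition (1) for $w$ combined with Lemma~\ref{L lift-[g]} (the lifting depends on $w(\msf h_i)$ only modulo $r$, and matches $([g_i],\xi_i)$), the edge condition $\tilde\chi(\msf h_-) = \tilde\chi(\msf h_+)\inv$ follows from condition (2) by reversing the age computation above, and the vertex nonemptiness follows from condition (3) via orbifold Riemann--Roch run in the other direction — here the nonemptiness of $\M_{\msf g(\msf v),\beta(\msf v),\{\chi(\msf h)\}}(\D)$ (which holds since $\chi \in \fk D_{\fk\Gamma,\Gamma}$) together with the integrality of the root-bundle Euler characteristic lets one lift a stable map to $\D$ to one to $(\sqrt[r]\D)_\rho$ with the prescribed orbifold structure, using the gerbe structure described in Remark~\ref{R Zr-gerbe}. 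Finally, the two constructions are visibly inverse to each other: starting from $\tilde\chi$, extracting $w$, and rebuilding via \eqref{E tildechi=chi-w} returns $\tilde\chi$ because $\Upsilon_{r,\rho}$ is injective on indices in $\{0,\ldots,r-1\}$ (Lemma~\ref{L lift-[g]}); starting from $w$, building $\tilde\chi$, and extracting the index returns $w$ since $w(\msf h) \in \{0,\ldots,r-1\}$ already.

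The only genuine subtlety — the ``hard part'' — is the vertex condition (3): establishing that a stable map to $\D$ at a vertex admits a lift to a stable map to $(\sqrt[r]\D)_\rho$ with the orbifold structure dictated by $w$ \emph{if and only if} the congruence $\sum_\msf h w(\msf h) \equiv A(\msf v,\chi) \pmod r$ holds. This is precisely the orbifold Riemann--Roch obstruction for the existence of an $r$-th root of the pulled-back line bundle $\f^*\L$ on the orbifold curve with prescribed local monodromies at the special points, and it is the one place where one invokes the structural computation (the remark after Lemma~\ref{L weight}) rather than a formal unwinding of definitions; everything else is bookkeeping on $\T(\D)$ versus $\T((\sqrt[r]\D)_\rho)$ already set up in Lemmas~\ref{L T(Dr)}--\ref{L lift-[g]}.
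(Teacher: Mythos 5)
Your proof is correct and takes essentially the same approach as the paper: both hinge on the observation that the vertex congruence (condition 3 of Lemma \ref{L weight}) is, via orbifold Riemann--Roch for $\f^*\sqrt[r]\L$, the necessary and sufficient condition for a stable map to $\D$ at a vertex to lift to $(\sqrt[r]\D)_\rho$ with the orbifold structure prescribed by $w$. The paper's published proof is just a one-line version of your argument, stating only this key point and leaving the leg/edge bookkeeping implicit.
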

\begin{proof}To see this 1-to-1 correspondence one only need to notice that the third condition above is the sufficient and necessary condition to lift a map in $\M_{\msf g(\msf v),\beta(\msf v),\{\chi(\msf h)\}_{\msf h\in \tH(\msf v)}}(\D)$ to a map in $\M_{\msf g(\msf v),\beta(\msf v),\{\tilde\chi(\msf h)\}_{\msf h\in \tH(\msf v)}}((\sqrt[r]\D)_\rho)$.
\end{proof}

In the following, for a given $\chi\in{\fk D}_{\fk\Gamma,\Gamma}$ and a given $w\in W^{\L,\rho}_{\fk\Gamma,\chi,r}$, we denote by $\tilde\chi_{(\chi;w)}\in {\fk D}_{\fk\Gamma,\Gamma_{A,r,\rho}}$ the corresponding orbifold decoration that lifts $\chi$ by $w$.

As for $\M_\Gamma(\D)$, for a graph $\fk\Gamma\in G_{|\Gamma|}(\D)$, an orbifold decoration $\chi \in{\fk D}_{\fk\Gamma,\Gamma}$ and a weight $w\in W_{\fk\Gamma,\chi,r}^{\L,\rho}$, there is a component $\M_{\fk\Gamma,(\chi;w)}=\M_{\fk\Gamma,\tilde\chi_{(\chi;w)}}$ parameterizing maps with nodal domains of topological types given by $\fk\Gamma$ and orbifold structures given by the orbifold decoration $\tilde\chi_{(\chi;w)}$, hence $\chi$ and $w$. Let
\[
\zeta_{\fk\Gamma,(\chi;w)}\co\M_{\fk\Gamma,(\chi;w)}\rto \M_{\Gamma_{A,r,\rho}}((\sqrt[r]\D)_\rho)
\]
be the inclusion of this strata, which is the restriction of $\fk i\co  B_\node\rto \M_{\Gamma_{A,r,\rho}}((\sqrt[r]\D)_\rho)$ to this component.

At last we consider the strata of $\tilde\M_{\Gamma_{A,r,\rho}}((\sqrt[r]\D)_\rho)$. Since it is obtained by performing weight-$r_\node$ blowup of $\M_{\Gamma_{A,r,\rho}}((\sqrt[r]\D)_\rho)$ along the locus of nodal curves $B_\node$, all the strata of $\tilde\M_{\Gamma_{A,r,\rho}}((\sqrt[r]\D)_\rho)$ are 1-to-1 corresponding to all the strata of $\M_{\Gamma_{A,r,\rho}}((\sqrt[r]\D)_\rho)$. For each strata $\M_{\fk\Gamma,(\chi;w)}$ of $\M_{\Gamma_{A,r,\rho}}((\sqrt[r]\D)_\rho)$ we denote its lifting in $\tilde \M_{\Gamma_{A,r,\rho}}((\sqrt[r]\D)_\rho)$ by $\tilde\M_{\fk\Gamma,(\chi;w)}$, which is a $\prod_{\msf e\in \tE(\fk \Gamma)}\integer_{r(\msf e)}$-gerbe over $\M_{\fk\Gamma,(\chi;w)}$ with $r(\msf e)$ being the order of the orbifold structure of the node corresponding to the edge $\msf e$. Meanwhile, the inclusion $\zeta_{\fk\Gamma,(\chi;w)}$ also lifts to an inclusion
\[
\tilde \zeta_{\fk \Gamma,(\chi;w)}\co \tilde \M_{\fk\Gamma,(\chi;w)}\rto\tilde \M_{\Gamma_{A,r,\rho}}((\sqrt[r]\D)_\rho).
\]

The natural maps between $\M_{\Gamma}(\D)$, $\M_{\Gamma_{A,r,\rho}}((\sqrt[r]\D)_\rho)$ and $\tilde\M_{\Gamma_{A,r,\rho}}((\sqrt[r]\D)_\rho)$ can restrict to each strata. We have the following commutative diagram of these natural maps and inclusions of strata
\begin{align}\label{E strataembeding}
\begin{split}
\xymatrix{
\tilde \M_{\Gamma_{A,r,\rho}}((\sqrt[r]\D)_\rho) \ar[r]^-\pi & \M_{\Gamma_{A,r,\rho}}((\sqrt[r]\D)_\rho) \ar[r]^-\epsilon & \M_\Gamma(\D)\\
\tilde \M_{\fk\Gamma,(\chi;w)}\ar[r]^-\pi \ar[u]^-{\tilde\zeta_{\fk\Gamma,(\chi;w)}} & \M_{\fk\Gamma,(\chi;w)}\ar[r]^\epsilon \ar[u]^-{\zeta_{\fk\Gamma,(\chi;w)}} &
\M_{\fk\Gamma,\chi} \ar[u]^-{\zeta_{\fk\Gamma,\chi}}.}
\end{split}
\end{align}

\subsubsection{The formula for $ch((\sqrt[r]\L)_{\Gamma_{A,r,\rho}})$}\label{SSS ch}
Now we write down the formula for $ch((\sqrt[r]\L)_{\Gamma_{A,r,\rho}})$. To use T\"one's Grothendieck--Riemann--Roch formula (\cite{Tone1999}) a technique assumption on $\D$ (so automatically on $(\sqrt[r]\D)_\rho$ for every $r\in\integer_{\geq 1}$) that $\D$ is a quotient orbifold of smooth quasi-projective scheme by a linear algebraic group needs to be added. (This is also where we need the assumption for Theorem \ref{T polynomiality}). Under this assumption, the Chern character $ch((\sqrt[r]\L)_{\Gamma_{A,r,\rho}})$ was computed by Tseng (\cite{Tseng2010}) using T\"one's Grothendieck--Riemann--Roch formula.

By results in \cite{Tseng2010} (see also \cite{Tonita2014}) we have the general formula
\begin{align}\label{E Chern-chara}
ch((\sqrt[r]\L)_{\Gamma_{A,r,\rho}})&=\pi_* \left(ch(\f^*\sqrt[r]\L) Td^\vee({\bar L_{n+1}})\right)%\\&
-\sum_{i=1}^n \sum_{k\geq 1} \frac{\msf{ev}_i^*A_k}{k!} \bar\psi_i^{k-1} %\nonumber
\\
&+\frac{1}{2}(\pi\circ \tilde i)_* \left(\sum_{k\geq 2} \frac{{r_\node^2}}{k!} \cdot\msf{ev}_\node^*A_k\cdot \frac{\bar \psi^{k-1}_+ +(-1)^k\bar\psi^{k-1}_-}{\bar\psi_+ +\bar\psi_-}\right).\nonumber
\end{align}
We next compute each entry for the current situation.
\begin{itemize}
\item Using $\pi\co(\sqrt[r]\D)_\rho\rto\D$ to pull back $\L$ to $(\sqrt[r]\D)_\rho$, we get
    \[
    (\sqrt[r]\L)^{\otimes r}=\pi^*\L.
    \]
    So $c_1(\sqrt[r]\L)=\frac{1}{r}\pi^*c_1(\L)$. In the following we abbreviate $\pi^*c_1(\L)$ as $c_1(\L)$. Therefore
    \[
    ch(\f^*\sqrt[r]\L)=e^{c_1(\f^*\sqrt[r]\L)}=\sum_{k\geq 0}\frac{1}{k!} (\f^*c_1(\sqrt[r]\L))^k=\sum_{k\geq 0}\frac{1}{k!} \left(\frac{\f^*c_1(\L)}{r}\right)^k.
    \]

\item By the construction from Remark \ref{R universl-curve}, $\bar L_{n+1}$ is the cotangent line bundle associated to the $(n+1)$-th marked point for $\tilde\M_{\Gamma_{A,r,\rho}'}((\sqrt[r]\D)_\rho)$; in particular, as this $(n+1)$-th marked point is smooth, i.e., with trivial orbifold structure, and it is the pull back of the cotangent line bundle $\bar L_{n+1}$ over $\M_{\Gamma_{A,r,\rho}'}((\sqrt[r]\D)_\rho)$. By definition we have
    \[
    Td^\vee(\bar L_{n+1})=\frac{\bar\psi_{n+1}}{e^{\bar\psi_{n+1}}-1} =\sum_{k\geq 0}\frac{B_k}{k!}\bar\psi^k_{n+1},
    \]
    where $B_k$ are Bernoulli numbers. Therefore
    \[
    \pi_*\left(ch(\f^*\sqrt[r]\L)Td^\vee(\bar L_{n+1})\right)=
    \sum_{d\geq 0}\sum_{k+l=d} \frac{B_k}{k!l!}
    \pi_*\left(\left(\frac{1}{r}\right)^l\cdot\left(\f^* c_1(\L)\right)^l\cdot\bar\psi^k_{n+1}\right).
    \]
\item $A_k$ is defined in \cite[Definition 4.1.2]{Tseng2010}. We have $A_k\in H^*_{\mathrm{CR}}((\sqrt[r]\D)_\rho)=H^*(\msf I(\sqrt[r]\D)_\rho)$. For each twisted sector $((\sqrt[r]\D)_\rho)_{([g],\xi)}$ of $(\sqrt[r]\D)_\rho$ indexed by $([g],\xi)$, the component of $A_k$ in $H^*(((\sqrt[r]\D)_\rho)_{([g],\xi)})$ is
\[
\sum_{\theta\in S^1} ch\left(\left(\sqrt[r]{\L}\right)^{(\theta)}_{([g],\xi)}\right) B_k(\frac{\log\theta}{2\pi \sqrt{-1}}),
\]
where
\begin{itemize}
\item $\left(\sqrt[r]{\L}\right)^{(\theta)}_{([g],\xi)}$ is the eigen-bundle of eigenvalue $\theta$ of the pullback of $\L$ on $((\sqrt[r]\D)_\rho)_{([g],\xi)}\subseteq\msf I(\sqrt[r]\D)_\rho$,
\item $B_k(x)$ are the Bernoulli polynomials, defined by
\[
\frac{te^{tx}}{e^t-1}=\sum_{k\geq 0}\frac{B_k(x)}{k!} t^k.
\]
\item $\frac{\log\theta}{2\pi\sqrt{-1}}\in[0,1)$.
\end{itemize}
Since $([g],\xi)$ acts on $\sqrt[r]{\L}$ by multiplying $\xi$, the component of $A_k$ in $H^*(((\sqrt[r]\D)_\rho)_{([g],\xi)})$ is
\[
ch\left(\left(\sqrt[r]{\L}\right)^{(\xi)}_{([g],\xi)}\right) B_k\left(\frac{\log\xi}{2\pi\sqrt{-1}}\right),\qq\mathrm{with}\qq
\left(\sqrt[r]{\L}\right)^{(\xi)}_{([g],\xi)} =e_{([g],\xi)}^*\sqrt[r]\L
\]
where $e_{([g],\xi)}\co((\sqrt[r]\D)_\rho)_{([g],\xi)}\rto(\sqrt[r]\D)_\rho$ is the natural evaluation map, and
\[
\frac{\log\xi}{2\pi\sqrt{-1}}=\age_{([g],\xi)}(\sqrt[r]\L).
\]
So the component of $A_k$ in $H^*(((\sqrt[r]\D)_\rho)_{([g],\xi)})$ is
\begin{align*}
e_{([g],\xi)}^*ch(\sqrt[r]\L)\cdot B_k\left({\age_{([g],\xi)}(\sqrt[r]\L)}\right)
&=\left(\sum_{l\geq 0}\frac{(e_{([g],\xi)}^*c_1(\sqrt[r]\L))^l}{l!}\right)\cdot B_k\left({\age_{([g],\xi)}(\sqrt[r]\L)}\right)\\
&=\left(\sum_{l\geq 0} \left(\frac{1}{r}\right)^l\cdot \frac{(e_{([g],\xi)}^*c_1(\L))^l}{l!}\right)\cdot B_k\left({\age_{([g],\xi)}(\sqrt[r]\L)}\right).
\end{align*}

\item In the last term, $r_\node$ is the order of the orbifold structure at the node, $\msf{ev}_\node$ is the evaluation map at the node, $\bar\psi_+$ and $\bar\psi_-$ are the $\bar\psi$-classes associated to the branches of the node. Explicitly, associated to each node, there are two bundle $\bar L_+$ and $\bar L_-$ whose fibers are the cotangent lines of the coarse spaces of the two branches of the node.

\item Finally we could rewrite the last term as follows. First we have the inclusion of locus of the nodes $i\co \scr Z_\node\hrto\scr C$. After the blowup of $\scr C$ along $\scr Z_\node$ with weight $r_\node$, $\scr Z_\node$ becomes a $ \integer_{r_\node}\times \integer_{r_\node}$-gerbe over $\scr Z_\node$, which we denoted by $\tilde{\scr Z}_\node$. The inclusion $i$ lifts to $\tilde i\co \tilde{\scr Z}_\node\hrto\tilde{\scr C}$, which is the $\tilde i$ in the last term. Meanwhile the locus of nodal curves $B_\node$ in $\M_{\Gamma_{A,r,\rho}}((\sqrt[r]\D)_\rho)$ becomes a $\integer_{r_\node}$-gerbe over $B_\node$, which we denoted by $\tilde B_\node$. Then we could rewrite the last term of the right hand side of \eqref{E Chern-chara} by using the inclusion
    \[
    \tilde{\fk i}\co \tilde B_\node\rto\tilde{\M}_{\Gamma_{A,r,\rho}}((\sqrt[r]\D)_\rho)
    \]
    which is the lifting of $\fk i\co B_\node\rto\M_{\Gamma_{A,r,\rho}}((\sqrt[r]\D)_\rho)$. As $\tilde{\scr Z}_\node$ is a $\integer_{r_\node}$-gerbe over $\tilde B_\node$, the last term becomes
    \begin{align*}
    &\frac{1}{2}\,\tilde{\fk i}_* \left(\sum_{k\geq 2} \frac{r_\node}{k!} \cdot\msf{ev}_\node^*A_k\cdot \frac{\bar \psi^{k-1}_+ +(-1)^k\bar\psi^{k-1}_-}{\bar\psi_+ +\bar\psi_-}\right)\\
    =\,&
    \frac{1}{2}\,\tilde{\fk i}_* \left(\sum_{k\geq 1} \frac{r_\node}{(k+1)!} \cdot\msf{ev}_\node^*A_{k+1}\cdot \frac{\bar \psi^k_+ -(-1)^k\bar\psi^k_-}{\bar\psi_+ +\bar\psi_-}\right).
    \end{align*}
\end{itemize}

To write down a formula for $c(-(\sqrt[r]\L)_{\Gamma_{A,r,\rho}})$ via $ch((\sqrt[r]\L)_{\Gamma_{A,r,\rho}}$, we need write down all homogenous components of $ch((\sqrt[r]\L)_{\Gamma_{A,r,\rho}})$. Precisely, the degree-$2d$ component of $ch((\sqrt[r]\L)_{\Gamma_{A,r,\rho}})$ for each $d\geq 1$ is
\begin{align}\label{E ch-d-L-Gamma-r}
%&
ch_d((\sqrt[r]\L)_{\Gamma_{A,r,\rho}}) %\\
=&\sum_{k+l=d+1} \frac{B_k}{k!l!}\left(\frac{1}{r}\right)^l \pi_*\left(\left(\f^*  c_1(\L)\right)^l\cdot\bar\psi^k_{n+1}\right)\\
%\nonumber\\
-&\sum_{i=1}^n \sum_{k+l=d}\left(\frac{1}{r}\right)^l\cdot \frac{(\msf{ev}_i^* e_{([g_i],\xi_i)}^*c_1(\L))^l\cdot\bar\psi_i^k}{(k+1)!l!}\cdot B_{k+1}({\age_{([g_i],\xi_i)}(\sqrt[r]\L)})\nonumber\\
+&\frac{1}{2}\sum_{([g],\xi)\in\T((\sqrt[r]\D)_\rho)} \tilde\zeta_{([g],\xi),*}
\Bigg( \sum_{\substack{k+l=d\\k\geq 1}}\left(\frac{1}{r}\right)^l\cdot
\frac{ \fk o([g],\xi)\cdot
B_{k+1}({\age_{([g],\xi)}(\sqrt[r]\L)})}{(k+1)!l!} \nonumber\\
&\qq\qq\qq \qq\qq\qq \qq\qq\qq \cdot\msf{ev}_\node^*e_{([g],\xi)}^*c_1(\L)^l\cdot
\frac{\bar \psi^k_+ -(-1)^k\bar\psi^k_-}{\bar\psi_+ +\bar\psi_-}
\Bigg),\nonumber
\end{align}
where
\[
\tilde\zeta_{([g],\xi)}\co \tilde B_{\node,([g],\xi)}\rto \tilde\M_{\Gamma_{A,r,\rho}}((\sqrt[r]\D)_\rho)
\]
is the lifting of the inclusion
\[
\zeta_{([g],\xi)}\co B_{\node,([g],\xi)}\rto \M_{\Gamma_{A,r,\rho}}((\sqrt[r]\D)_\rho),
\]
of the locus of nodal curves with a node whose orbifold structure is given by $([g],\xi)$, and $\fk o([g],\xi)$ is the order of a representative $(g,\xi)$ of $([g],\xi)$.

\subsubsection{Chern class of $-(\sqrt[r]\L)_{\Gamma_{A,r,\rho}}$}\label{SSS chernclass}
Now we could use the formula \eqref{E c-from-ch} to write down a formula of $c(-(\sqrt[r]\L)_{\Gamma_{A,r,\rho}})$ in terms of $ch_d((\sqrt[r]\L)_{\Gamma_{A,r,\rho}}), d\geq 1$. As the last term of $ch_d((\sqrt[r]\L)_{\Gamma_{A,r,\rho}})$ in \eqref{E ch-d-L-Gamma-r} supports over the locus of nodal curves, using \eqref{E c-from-ch}, the formula for $c(-(\sqrt[r]\L)_{\Gamma_{A,r,\rho}})$ has the following expression, where the last term in Chern characters are written as summation over all strata of $\tilde{\M}_{\Gamma_{A,r,\rho}}((\sqrt[r]\D)_\rho)$.
\begin{align}\label{E c-L-r-Gamma-r}
\sum_{\fk \Gamma\in G_{|\Gamma|}(\D)}\sum_{\chi\in{\fk D}_{\fk\Gamma,\Gamma}} \sum_{w\in W_{\fk\Gamma,\chi,r}^{\L,\rho}} \frac{1}{|\aut(\fk\Gamma)|}\\
\tilde \zeta_{\fk\Gamma,(\chi;w),*}\Bigg[
\prod_{\msf v\in\tV(\fk\Gamma)}\exp\left(
\sum_{d\geq 1}\sum_{k+l=d+1} \frac{(-1)^d(d-1)!B_k}{k!l!}\cdot\pi_*
\left(\left(\frac{\f^*c_1(\L)}{r}\right)^l \cdot\bar\psi^k_{n+1}\right)
\right)\nonumber\\
\times \prod_{i=1}^n \exp\left(
\sum_{d\geq 1}\sum_{k+l=d} \frac{(-1)^{d-1}(d-1)!}{(k+1)!l!}\cdot  B_{k+1}\left({\frac{[a_i]_r}{r}}\right)\cdot
\left(\frac{\msf{ev}_i^*e_{([g_i],\xi_i)}^* c_1(\L)}{r}\right)^l\cdot\bar\psi_i^k \right)\nonumber\\%absolute
\times \prod_{\substack{\msf e\in\tE(\fk\Gamma)\\
\msf e=(\msf h_+,\msf h_-)}}
\frac{r(\msf e)}{\bar\psi_{\msf h_+}+\bar\psi_{\msf h_-}}\Bigg\{1-\exp\Bigg(
\sum_{d\geq 1}\sum_{k+l=d,k\geq1}\frac{(-1)^{d-1}(d-1)!}{(k+1)!l!}\nonumber\\
\cdot B_{k+1}\left(\frac{w(\msf h_+)+\age_{\chi(\msf h_+)}(\L)}{r}\right)\cdot \left(\frac{\msf{ev}_\node^*c_1(\L)}{r}\right)^l\cdot (\bar\psi_{\msf h_+}^k-(-\bar\psi_{\msf h-}^k))
\Bigg)
\Bigg\}
\Bigg].\nonumber
\end{align}
Where we have used the following facts:
\begin{itemize}
\item for a given $\chi$ and $w$ and the corresponding $\tilde\chi_{(\chi;w)}$, for the $n$ legs $\msf h_i, 1\leq i\leq n$ corresponding to the $n$ marked points, by \eqref{E age-gxi-i}
\[
\age_{\tilde\chi_{(\chi;w)}(\msf h_i)}(\sqrt[r]\L)= \age_{([g_i],\xi_i)}(\sqrt[r]\L)= \frac{[a_i]_r}{r},
\]
\item for other half edges corresponding to nodal points,  by \eqref{E age-tildechi(h)}
\[
\age_{\tilde\chi_{(\chi;w)}(\msf h)}(\sqrt[r]\L) =\frac{w(\msf h)+\age_{\chi(\msf h)}(\L)}{r}
\]
\end{itemize}

Now we cap {$c(-(\sqrt[r]\L)_{\Gamma_{A,r,\rho}})$} with $[\tilde\M_{\Gamma_{A,r,\rho}}((\sqrt[r]\D)_\rho)]^\vir$ and push it forward to $\M_{\Gamma_{A,r,\rho}}((\sqrt[r]\D)_\rho)$ by the blowdown map $\pi\co\tilde \M_{\Gamma_{A,r,\rho}}((\sqrt[r]\D)_\rho)\rto \M_{\Gamma_{A,r,\rho}}((\sqrt[r]\D)_\rho)$. A factor
\[
\frac{1}{\prod_{\msf e\in \tE(\fk\Gamma)} r(\msf e)}
\]
for each summand in \eqref{E c-L-r-Gamma-r}, will show up
from the pushing forward because each strata $\tilde \M_{\fk\Gamma,\chi,w}$ is a $\prod_{\msf e\in \tE(\fk \Gamma)}\integer_{r(\msf e)}$-gerbe over the strata $\M_{\fk\Gamma,\chi,w}$. This kills the corresponding factor in the fourth line in \eqref{E c-L-r-Gamma-r}. Hence the push forward of $c(-(\sqrt[r]\L)_{\Gamma_{A,r,\rho}})\cap [\tilde\M_{\Gamma_{A,r,\rho}}((\sqrt[r]\D)_\rho)]^\vir$ by $\pi_*$ is the cap product of $[\M_{\Gamma_{A,r,\rho}}((\sqrt[r]\D)_\rho)]^\vir$ with
\begin{align*}
\sum_{\fk \Gamma\in G_{|\Gamma|}(\D)}\sum_{\chi\in{\fk D}_{\fk\Gamma,\Gamma}} \sum_{w\in W_{\fk\Gamma,\chi,r}^{\L,\rho}} \frac{1}{|\aut(\fk\Gamma)|}\\
 \zeta_{\fk\Gamma,(\chi;w),*}\Bigg[
\prod_{\msf v\in\tV(\fk\Gamma)}\exp\left(
\sum_{d\geq 1}\sum_{k+l=d+1} \frac{(-1)^d(d-1)!B_k}{k!l!}\cdot\pi_*
\left(\left(\frac{\f^*c_1(\L)}{r}\right)^l \cdot\bar\psi^k_{n+1}\right)
\right)\nonumber\\
\times \prod_{i=1}^n \exp\left(
\sum_{d\geq 1}\sum_{k+l=d} \frac{(-1)^{d-1}(d-1)!}{(k+1)!l!}\cdot B_{k+1}\left({\frac{[a_i]_r}{r}}\right)\cdot
\left(\frac{\msf{ev}_i^*e_{([g_i],\xi_i)}^* c_1(\L)}{r}\right)^l\cdot\bar\psi_i^k \right)\nonumber\\%absolute
\times \prod_{\substack{\msf e\in\tE(\fk\Gamma)\\
\msf e=(\msf h_+,\msf h_-)}}
\frac{1}{\bar\psi_{\msf h_+}+\bar\psi_{\msf h_-}}\Bigg\{1-\exp\Bigg(
\sum_{d\geq 1}\sum_{k+l=d,k\geq1}\frac{(-1)^{d-1}(d-1)!}{(k+1)!l!}\nonumber\\
\cdot B_{k+1}\left(\frac{w(\msf h_+)+\age_{\chi(\msf h_+)}(\L)}{r}\right)\cdot \left(\frac{\msf{ev}_\node^*c_1(\L)}{r}\right)^l\cdot (\bar\psi_{\msf h_+}^k-(-\bar\psi_{\msf h-}^k))
\Bigg)
\Bigg\}
\Bigg],\nonumber
\end{align*}
Here in the second line, we abuse notation and use $\zeta_{\fk\Gamma,(\chi;w)}$ to denote the composition of $\tilde\zeta_{\fk\Gamma,(\chi;w)}$ with the blowdown map.

Next we further push it forward to $\M_\Gamma(\D)$ by
the natural projection $\epsilon\co \M_{\Gamma_{A,r,\rho}}((\sqrt[r]\D)_\rho)\rto\M_\Gamma(\D)$ and finally obtain $\tau_*(c(-(\sqrt[r]\L)_{\Gamma_{A,r,\rho}})\cap [\tilde\M_{\Gamma_{A,r,\rho}}((\sqrt[r]\D)_\rho)]^\vir)$, which is the cap product of $[\M_\Gamma(\D)]^\vir$ with
\begin{align}\label{E epsilon-c-Lr}
\sum_{\fk \Gamma\in G_{|\Gamma|}(\D)}\sum_{\chi\in{\fk D}_{\fk\Gamma,\Gamma}} \sum_{w\in W_{\fk\Gamma,\chi,r}^{\L,\rho}} \frac{r^{2\msf g-1-h^1(\fk\Gamma)}}{|\aut(\fk\Gamma)|}\\
\zeta_{\fk\Gamma,(\chi;w),*}\Bigg[
\prod_{\msf v\in\tV(\fk\Gamma)}\exp\left(
\sum_{d\geq 1}\sum_{k+l=d+1} \frac{(-1)^d(d-1)!B_k}{k!l!}\cdot\pi_*
\left(\left(\frac{\f^*c_1(\L)}{r}\right)^l \cdot\bar\psi^k_{n+1}\right)
\right)\nonumber\\
\times \prod_{i=1}^n \exp\left(
\sum_{d\geq 1}\sum_{k+l=d} \frac{(-1)^{d-1}(d-1)!}{(k+1)!l!}\cdot B_{k+1}\left({\frac{[a_i]_r}{r}}\right)\cdot
\left(\frac{\msf{ev}_i^*e_{([g_i],\xi_i)}^*c_1(\L)}{r}\right)^l \cdot\bar\psi_i^k \right)\nonumber\\%absolute
\times \prod_{\substack{\msf e\in\tE(\fk\Gamma)\\
\msf e=(\msf h_+,\msf h_-)}}
\frac{1}{\bar\psi_{\msf h_+}+\bar\psi_{\msf h_-}}\Bigg\{1-\exp\Bigg(
\sum_{d\geq 1}\sum_{k+l=d,k\geq1}\frac{(-1)^{d-1}(d-1)!}{(k+1)!l!}\nonumber\\
\cdot B_{k+1}\left(\frac{w(\msf h_+)+\age_{\chi(\msf h_+)}(\L)}{r}\right)\cdot \left(\frac{\msf{ev}_\node^*c_1(\L)}{r}\right)^l\cdot (\bar\psi_{\msf h_+}^k-(-\bar\psi_{\msf h-}^k))
\Bigg)
\Bigg\}
\Bigg].\nonumber
\end{align}
Here again we abuse notation and use $\zeta_{\fk\Gamma,(\chi;w)}$ to denote the map further composite with the natural projection $\epsilon$ (see \eqref{E strataembeding}).

We now explain the factor $r^{2\msf g-1+h^1(\fk\Gamma)}$ which shows up in the first line of \eqref{E epsilon-c-Lr}. The strata $\M_{\fk \Gamma,(\chi;w)}$ can be written as fiber products, along each edge in $\tE(\fk\Gamma)$, of moduli spaces $\M_{\msf g(\msf v),\beta(\msf v),\{\tilde\chi_{(\chi;w)}(\msf h)\}_{\msf h\in\tH(\msf v)}}((\sqrt[r]\D)_\rho)$ of stable maps to $(\sqrt[r]\D)_\rho$ associated to the vertices $\msf v\in \tV(\fk\Gamma)$. For a fixed $\sf v$, it was proved in \cite{Tang-Tseng2016} that the map
\[
\epsilon\co \M_{\msf g(\msf v),\beta(\msf v),\{\tilde\chi_{(\chi;w)}(\msf h)\}_{\msf h\in\tH(\msf v)}}((\sqrt[r]\D)_\rho)\rto \M_{\msf g(\msf v),\beta(\msf v),\{\chi(\msf h)\}_{\msf h\in\tH(\msf v)}}(\D)
\]
is of degree $r^{2\msf g(\msf v)-1}$, and in fact,
\[
\M_{\msf g(\msf v),\beta(\msf v),\{\tilde\chi_{(\chi;w)}(\msf h)\}_{\msf h\in\tH(\msf v)}}((\sqrt[r]\D)_\rho) \cong \left(\prod_{r^{2\msf g(\msf v)}}\M_{\msf g(\msf v),\beta(\msf v),\{\chi(\msf h)\}_{\msf h\in\tH(\msf v)}}(\D)\right)\rtimes\integer_r.
\]
Further we obtain for the strata $\M_{\fk\Gamma,(\chi;w)}$ that
\[
{\M_{\fk\Gamma,(\chi;w)}}
\cong \dot\prod_{\msf v\in\tV(\fk\Gamma),\msf I(\sqrt[r]\D)_\rho}\left[\left(\prod_{r^{2\msf g(\msf v)}}\M_{\msf g(\msf v),\beta(\msf v),\{\chi(\msf h)\}_{\msf h\in\tH(\msf v)}}(\D)\right)\rtimes\integer_r\right]
\]
where $\dot\prod$ means fiber product along edges in $\tE(\fk\Gamma)$ and with $\msf I(\sqrt[r]\D)_\rho$ as the target space. On the other hand,
\[
\M_{\fk\Gamma,\chi}
= \dot\prod_{\msf v\in\tV(\fk\Gamma),\msf I\D} \M_{\msf g(\msf v),\beta(\msf v),\{\chi(\msf h)\}_{\msf h\in\tH(\msf v)}}(\D).
\]
So the degree for the strata projection $\epsilon\co \M_{\fk\Gamma,(\chi;w)}\rto \M_{\fk\Gamma,\chi}$ is
\[
r^{\sum_{\msf v\in \tV(\fk \Gamma)}2\msf g(\msf v)-|\tV(\fk\Gamma)|+|\tE(\fk \Gamma)|},
\]
where $-|\tV(\fk\Gamma)|$ comes from the $\integer_r$-gerbe structure of each $\M_{\msf g(\msf v),\beta(\msf v),\{\tilde\chi_{(\chi;w)}(\msf h)\}_{\msf h\in\tH(\msf v)}}((\sqrt[r]\D)_\rho)$, and $|\tE(\fk\Gamma)|$ comes the $\integer_r$-gerbe $\msf I(\sqrt[r]\D)_\rho\rto\msf I\D$ for each $\msf e\in\tE(\fk\Gamma)$.
We have
\begin{align*}
&\sum_{\msf v\in \tV(\fk \Gamma)}2\msf g(\msf v)-|\tV(\fk\Gamma)|+|\tE(\fk \Gamma)|\\
&=2\left(\sum_{\msf v\in \tV(\fk\Gamma)}\msf g(\msf v)+h^1(\fk \Gamma)\right)-2 h^1(\fk\Gamma)-|\tV(\fk\Gamma)| +|\tE(\fk\Gamma)|\\
&=2\msf g-h^1(\fk \Gamma)-(|\tE(\fk\Gamma)|-|\tV(\fk\Gamma)|+1)-|\tV(\fk\Gamma)| +|\tE(\fk\Gamma)| & (\text{by }\eqref{E genus}\text{ and }\eqref{E h1(Gamma)})\\
&=2\msf g-1-h^1(\fk\Gamma),
\end{align*}
this explains the factor $r^{2\msf g-1-h^1(\fk\Gamma)}$.

Finally we could prove the polynomiality in Theorem \ref{T polynomiality}. Note that
\[
\frac{w(\msf h_+)+\age_{\chi(\msf h_+)}(\L)}{r}=
\left\{\begin{array}{ll}
1-\frac{w(\msf h_-)}r &\textrm{if $\age_{\chi(\msf h_+)}(\L)=0$,}\\
1-\frac{w(\msf h_-)+\age_{\chi({\sf h}_-)}(\L)}{r} &\textrm{if $\age_{\chi(\msf h_+)}(\L)\neq 0$.}
\end{array}\right.
\]
The Bernoulli polynomials satisfy the following property
\[
B_m(x+y)=\sum_{k=0}^m {m \choose k}
B_k(x)y^{m-k}.
\]
This implies that terms of $\tau_*(c(-(\sqrt[r]\L)_{\Gamma_{A,r,\rho}}) \cap[\tilde\M_{\Gamma_{A,r,\rho}}((\sqrt[r]\D)_\rho)]^\vir)$ depend polynomially on $\{w(\msf h) \mid \msf h\in\tH(\fk\Gamma)\}$, the value of the weight function $w\in W_{\fk \Gamma,\chi,r}^{\L,\rho}$ on half-edges. So the proof of \cite[Proposition 3'']{Janda-Pandharipande-Pixton-Zvonkine2017} can be applied to the current case. In particular, when $r\gg1$ the minimal degree in $r$ for $\tau_*(c_d(-(\sqrt[r]\L)_{\Gamma_{A,r,\rho}}) \cap[\tilde\M_{\Gamma_{A,r,\rho}}((\sqrt[r]\D)_\rho)]^\vir)$ in the current case is
\[
(2\msf g-1-h^1(\fk\Gamma))-2d+h^1(\fk\Gamma)=2\msf g-2d-1.
\]
Therefore the cycle
\[
r^{2d-2\msf g+1}\tau_*(c_d(-(\sqrt[r]\L)_{\Gamma_{A,r,\rho}}) \cap[\tilde\M_{\Gamma_{A,r,\rho}}((\sqrt[r]\D)_\rho)]^\vir)
\]
is a polynomial in $r$ when $r\gg 1$. This finishes the proof of Theorem \ref{T polynomiality}.

Furthermore, the same as in \cite{Janda-Pandharipande-Pixton-Zvonkine2017} we could write down the constant term, i.e. lowest order term, of $r^{2d-2\msf g+1}\tau_*(c_d(-(\sqrt[r]\L)_{\Gamma_{A,r,\rho}}) \cap[\tilde\M_{\Gamma_{A,r,\rho}}((\sqrt[r]\D)_\rho)]^\vir)$ in $r$. To this end, we only need to take the lowest degree terms in $r$ in each exponent of the formula \eqref{E epsilon-c-Lr}. Note that
\[
B_1(x)=x-\frac{1}{2},\qq B_2(x)=x^2-x+\frac{1}6,
\]
and that when $r\gg 1$
\[
\frac{[a_i]_r}{r}=\left\{\begin{array}{ll}
\frac{a_i}{r}  &\textrm{$a_i\geq 0$,}\\
1+\frac{a_i}{r} &\textrm{$a_i<0$,}
\end{array}\right.\qq
\mathrm{for}\qq 1\leq i\leq n.
\]
We obtain the following proposition.
\begin{prop}\label{P leading-term}
Suppose that $\D$ is a quotient orbifold of a smooth quasi-projective scheme by a linear algebraic group.  When $r\gg 1$, the constant term of
\[
r^{2d-2\msf g+1}\tau_*(c_d(-(\sqrt[r]{\L})_{\Gamma_{A,r,\rho}}) \cap[\tilde\M_{\Gamma_{A,r,\rho}}((\sqrt[r]\D)_\rho)]^\vir)
\]
is the cap product of $[\M_\Gamma(\D)]^\vir$ with the degree $2d$ part of the following formula
\begin{align}\label{E leading-term}
\sum_{\fk \Gamma\in G_{|\Gamma|}(\D)}\sum_{\chi\in{\fk D}_{\fk\Gamma,\Gamma}} \sum_{w\in W_{\fk\Gamma,\chi,r}^{\L,\rho}} \frac{r^{-h^1(\fk\Gamma)}}{|\aut(\fk\Gamma)|}\\
\zeta_{\fk\Gamma,(\chi;w),*}\Bigg[
\prod_{\msf v\in\tV(\fk\Gamma)}\exp\left(-\frac{1}{2}\pi_*
\left(\left(\f^*c_1(\L)\right)^2\right)
\right)%\nonumber\\
\times \prod_{i=1}^n \exp\left(
\frac{a_i^2}{2}\bar\psi_i+a_i \msf{ev}_i^*e_{([g_i],\xi_i)}^*c_1(\L)
\right)\nonumber\\%absolute
\times \prod_{\substack{\msf e\in\tE(\fk\Gamma)\\
\msf e=(\msf h_+,\msf h_-)}}
\frac{1-\exp\left(
\frac{-(w(\msf h_+)+\age_{\chi(\msf h_+)}(\L))\cdot(w(\msf h_-)+\age_{\chi(\msf h_-)}(\L))\cdot (\bar\psi_{\msf h_+}+\bar\psi_{\msf h-})}{2}\right)
}{\bar\psi_{\msf h_+}+\bar\psi_{\msf h_-}}
\Bigg],\nonumber
\end{align}
where $([g_i],\xi_i)=\Upsilon_{r,\rho}([g_i],a_i)$ for $1\leq i\leq n$.

\end{prop}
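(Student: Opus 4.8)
The plan is to read off the answer directly from the exact identity \eqref{E epsilon-c-Lr}. That identity already expresses $\tau_*\big(c(-(\sqrt[r]\L)_{\Gamma_{A,r,\rho}})\cap[\tilde\M_{\Gamma_{A,r,\rho}}((\sqrt[r]\D)_\rho)]^\vir\big)$ as the cap product of $[\M_\Gamma(\D)]^\vir$ with a finite sum, indexed by stable $\D$-graphs $\fk\Gamma\in G_{|\Gamma|}(\D)$, orbifold decorations $\chi\in\fk D_{\fk\Gamma,\Gamma}$ and weight functions $w\in W^{\L,\rho}_{\fk\Gamma,\chi,r}$, of a product over vertices, legs and edges of $\fk\Gamma$ of exponentials of Bernoulli-polynomial-weighted tautological classes, carrying an overall factor $r^{2\msf g-1-h^1(\fk\Gamma)}/|\aut(\fk\Gamma)|$. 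First I would pass to the degree-$2d$ part, multiply by $r^{2d-2\msf g+1}$, and invoke Theorem \ref{T polynomiality} together with the degree count made in its proof (the minimal power of $r$ occurring in the degree-$2d$ part of \eqref{E epsilon-c-Lr} is $2\msf g-2d-1$): the result is a polynomial in $r$ for $r\gg1$, and its constant term is exactly the sum of the $r^0$-contributions of the right-hand side of \eqref{E epsilon-c-Lr}, capped with $[\M_\Gamma(\D)]^\vir$.

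Next I would isolate those $r^0$-contributions factor by factor. Into each Bernoulli argument one substitutes the $r\gg1$ asymptotics $[a_i]_r/r=a_i/r$ (if $a_i\ge0$) or $=1+a_i/r$ (if $a_i<0$) for leg half-edges, and, for an edge $\msf e=(\msf h_+,\msf h_-)$, the complementarity $\tfrac{w(\msf h_+)+\age_{\chi(\msf h_+)}(\L)}{r}=1-\tfrac{w(\msf h_-)}{r}$ or $1-\tfrac{w(\msf h_-)+\age_{\chi(\msf h_-)}(\L)}{r}$ coming from Lemma \ref{L weight}(2); then one expands every $B_{k+1}$ in powers of $1/r$ and keeps, in each exponent, only the terms of lowest order in $r$ per unit of cohomological degree. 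Using $B_1(x)=x-\tfrac12$ and $B_2(x)=x^2-x+\tfrac16=\tfrac16-x(1-x)$, the survivors are: from each vertex, the $(k,l)=(0,2)$ term, i.e. $-\tfrac1{2r^2}\pi_*\big((\f^*c_1(\L))^2\big)$; from the $i$-th leg, $\tfrac{a_i^2}{2r^2}\bar\psi_i+\tfrac{a_i}{r^2}\msf{ev}_i^*e_{([g_i],\xi_i)}^*c_1(\L)$; and from each edge, the $(k,l)=(1,0)$ term, which by the identity for $B_2$ and the age-complementarity becomes $-\tfrac{(w(\msf h_+)+\age_{\chi(\msf h_+)}(\L))(w(\msf h_-)+\age_{\chi(\msf h_-)}(\L))}{2r^2}(\bar\psi_{\msf h_+}+\bar\psi_{\msf h_-})$ to leading order. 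The degree count guarantees that the minimal power $r^{-2d}$ of the degree-$2d$ part of the interior sum is attained precisely by products of these degree-two terms; combining $r^{-2d}$ with the prefactor $r^{2\msf g-1-h^1(\fk\Gamma)}$ and the normalization $r^{2d-2\msf g+1}$ leaves exactly $r^{-h^1(\fk\Gamma)}$, and re-exponentiating produces the degree-$2d$ part of \eqref{E leading-term}.

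The main obstacle is the first step: the right-hand side of \eqref{E epsilon-c-Lr} is a polynomial jointly in $r$ and in the (integer, size-$O(r)$) values of the weight function $w$, so ``lowest order in $r$'' only acquires invariant meaning after performing $\sum_{w\in W^{\L,\rho}_{\fk\Gamma,\chi,r}}$, which turns the $w$-dependence into a further polynomial in $r$ via a Faulhaber/Ehrhart count of the lattice points obeying the three congruences of Lemma \ref{L weight}. Showing that this interaction does not feed sub-leading terms into the $r^0$-coefficient is exactly the argument of \cite[Proposition 3'']{Janda-Pandharipande-Pixton-Zvonkine2017}, and I expect the bulk of the work to be checking that this argument transports here verbatim --- the only orbifold modifications being the constant shifts by the ages $\age_{\chi(\msf h)}(\L)$ on half-edges and by the integers $A(\msf v,\chi)$ at vertices appearing in the congruences, which are rationals independent of $r$ and so affect neither the polynomiality nor the extraction of the constant term. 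Granting that, the remaining steps --- the Bernoulli substitutions and the power-of-$r$ bookkeeping above --- are routine and yield \eqref{E leading-term}.
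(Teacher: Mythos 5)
Your proposal is correct and follows essentially the same route as the paper's own proof: both start from \eqref{E epsilon-c-Lr}, extract the lowest-order-in-$r$ terms from each exponent using $B_1(x)=x-\tfrac12$, $B_2(x)=x^2-x+\tfrac16$, the $[a_i]_r/r$ asymptotics, and the edge complementarity from Lemma \ref{L weight}(2), while delegating the subtlety of the $w$-summation to the argument of \cite[Proposition 3'']{Janda-Pandharipande-Pixton-Zvonkine2017}. You flesh out the degree/power bookkeeping and the Ehrhart-counting obstacle in more detail than the paper's terse presentation, but the underlying argument is the same.
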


We next apply this polynomiality to compute the double ramification cycles with orbifold targets.

\section{Double ramification cycles with orbifold targets}\label{S DR-cycle}

In this section we apply the polynomiality in Theorem \ref{T polynomiality} to calculate the double ramification cycles with orbifold targets. The arrangement of this section is as follows. In \S \ref{SS D-DR-cycle} we first introduce the definition of double ramification cycles with orbifold targets (Definition \ref{D def-DR}), and then we state the key method of computation in Theorem \ref{T DR} based on localization (with proof given in \S \ref{SS localization}) and polynomiality. From Theorem \ref{T DR} and Proposition \ref{P leading-term}, we derive an explicit formula for double ramification cycles with orbifold targets which is stated in Theorem \ref{T formula-of-DR}. This formula also recovers the double ramification cycles
with smooth targets given by \cite{Janda-Pandharipande-Pixton-Zvonkine2018}. The whole subsection \S \ref{SS localization} is devoted to the proof of Theorem \ref{T DR}.

\subsection{A formula for double ramification cycles with orbifold targets}\label{SS D-DR-cycle}

\subsubsection{Definition of double ramification cycles with orbifold targets}\label{SSS D-DR-cycle}

Consider the projectification $\Y=\P(\L\oplus\mc O_\D)$ of $\L$ in Definition \ref{D P(L+O)}. Its 0-section and $\0$-section are $\D_0=\P(0\oplus\mc O_\D)$ and $\D_\0=\P(\L\oplus 0)$, both are isomorphic to $\D$. The normal bundle of $\D_0$ and $\D_\0$ in $\Y$ are $\L$ and $\L^*$ respectively.

We denote by
\[
\Gamma:=(\msf g,\beta, \vec g,\mu_0,\mu_\0)
\]
a topological data for $(\D_0|\Y|\D_\0)$, where
\begin{itemize}
\item[(1)] $\msf g\geq 0$ is the genus of connected (nodal) curves,
\item[(2)] $\beta\in H_2(|\D|;\integer)$ is the homology class that a stable curve represents,
\item[(3)] $\vec g=([g_1],\ldots,[g_m])\in(\T(\Y))^m$  encodes the orbifold information of absolute marked points,
\item[(4)] $\mu_0=\left(([g_{0,1}],\mu_{0,1}),\ldots, ([g_{0,n_0}]),\mu_{0,n_0})\right)\in (\scr T(\D_0)\times\ration_{>0})^{n_0}$ encodes the orbifold information and contact orders of relative marked points mapped to $\D_0$, and
\item[(5)] $\mu_\0=\left(([g_{\0,1}],\mu_{\0,1}),\ldots, ([g_{\0,n_\0}],\mu_{\0,n_\0})\right)\in(\scr T(\D_\0)\times\ration_{>0})^{n_\0}$ encodes the orbifold information and contact orders of relative marked points mapped to $\D_\0$,
\end{itemize}
and require it satisfy the following conditions
\begin{itemize}
\item for $1\leq i\leq m$, there is $\rho({[g_i]})=1$ and then the twisted sector $\Y_{[g_i]}=\P(\L_{[g_i]}\oplus\mc O_{\D_{[g_i]}})$ of $\Y$ is an orbifold $\P^1$-bundle over $\D_{[g_i]}$;

\item $\int_\beta^{\mathrm{orb}}c_1(\L)=|\mu_0|-|\mu_\0|$, where $|\mu_0|=\sum\limits_{i=1}^{n_0}\mu_{0,i}$, $|\mu_\0|=\sum\limits_{i=1}^{n_\0}\mu_{\0,i}$;

\item $\rho({[g_{0,i}]})=e^{2\pi\sqrt{-1}\mu_{0,i}}$, for $1\leq i\leq n_0$;
    %, hence
    %\begin{align}\label{E mu-0-cond}
    %\mu_{0,i}-\frac{\log(\rho({[g_{0,i}]}))} {2\pi\sqrt{-1}}{=\mu_{0,i}-\mathrm{age}_{[g_{0,i}]}(\L)} \in\integer_{\geq 0}{\qq\mathrm{and}\qq
    %-\mu_{0,i}-\mathrm{age}_{[g_{0,i}]}(\L^*) \in\integer_{\leq 0}};
    %\end{align}

\item $\bar\rho({[g_{\0,i}]})=e^{2\pi\sqrt{-1}\mu_{\0,i}}$, for $1\leq i\leq n_\0$. %, hence
    %\begin{align}\label{E mu-00-cond}
    %\mu_{\0,i}-\frac{\log(\bar\rho(g_{\0,i}))} {2\pi\sqrt{-1}}{=\mu_{\0,i}-\mathrm{age}_{[g_{\0,i}]}(\L^*)} \in\integer_{\geq 0}.
    %\end{align}
\end{itemize}

Denote by $\M_\Gamma(\D_0|\Y|\D_\0)^\sim$ the moduli space of stable maps of topological type $\Gamma$ to rubber targets associated to $(\D_0|\Y|\D_\0)$, and denote by $\M_\Gamma(\D_0|\Y|\D_\0)$ the moduli space of stable maps of topological type $\Gamma$ to $(\D_0|\Y|\D_\0)$. Let $\vec\mu_0=\left([g_{0,1}],\ldots, [g_{0,n_0}]\right)$ and $\vec\mu_\0=\left([g_{\0,1}],\ldots, [g_{\0,n_\0}]\right)$, by forgetting the contact orders in $\mu_0$ and $\mu_\0$. The (complex) virtual dimension of $\M_\Gamma(\D_0|\Y|\D_\0)^\sim$ is
\[
\vdimc\,\M_\Gamma(\D_0|\Y|\D_\0)^\sim =\int_\beta^\orb c_1(\D)+(\dim \D-2)(1-\msf g)+n-\iota^\D(\vec g)-\iota^\D(\vec\mu_0)- \iota^\D(\vec\mu_\0)-1.
\]
Here $n=m+n_0+n_\0$ is the number of marked points and $\iota^\D(\cdot)$ denotes the degree shifting (or age) in $\D$.

From the $\Gamma=(\msf g,\beta,\vec g,\mu_0,\mu_\0)$ we get a topological data of $\D$, which we still denote by
\begin{align}\label{E top-data-D}
\Gamma=(\msf g,\beta,\vec g\sqcup \vec \mu_0\sqcup \vec\mu_\0).
\end{align}
We denote by $\M_\Gamma(\D)$ the moduli space of stable maps to $\D$ of type $\Gamma$. The (complex) virtual dimension of $\M_\Gamma(\D)$ is
\[
\vdimc\,\M_\Gamma(\D)= \int_\beta^\orb c_1(\D)+(\dim \D-3)(1-\msf g)+n-\iota^\D(\vec g)-\iota^\D(\vec\mu_0)-\iota^\D(\vec\mu_\0).
\]

There is a natural forgetful map
\[
\epsilon_\D:\M_\Gamma(\D_0|\Y|\D_\0)^\sim\rto \M_\Gamma(\D).
\]

\begin{defn}\label{D def-DR}
The {\bf double ramification cycle (DR-cycle in short)} of type $\Gamma$ for $\L\rto \D$ is defined to be
\[
\msf{DR}_\Gamma(\D,\L):= \epsilon_{\D,*}[\M_\Gamma(\D_0|\Y|\D_\0)^\sim]^\vir\in H_{2(\vdimc\,\M_\Gamma(\D)-g)}(\M_\Gamma(\D)).
\]
\end{defn}

To compute $\msf{DR}_\Gamma(\D,\L)$, we need two moduli spaces: one is the moduli space of relative stable maps to $(\Y_{\D_\0,r}|\D_0)$, the other is the moduli space of (absolute) stable maps to $(\sqrt[r]{\D_\0})_{\bar\rho}=(\sqrt[r]\D)_{\bar\rho}$, both with topological data induced from the topological data $\Gamma=(\msf g,\beta,\vec g,\mu_0,\mu_\0)$ via the lifting $\Gamma_{r,\bar\rho}$ and $\bar\rho$-compatible vectors for $\vec g$, $\vec\mu_0$ and $\vec\mu_\0$. We first describe these $\bar\rho$-compatible vectors.

From constraints on $\Gamma=(\msf g,\beta,\vec g,\mu_0,\mu_\0)$, we see that the following three vectors
\begin{align*}
A_{\vec g,\bar\rho}&:=(0,\ldots,0)\in\ration^m,\\
A_{\vec\mu_0,\bar\rho}&:=(-\mu_{0,1},\ldots,-\mu_{0,n_0})\in \ration^{n_0},\\
A_{\vec\mu_\0,\bar\rho}&:=(\mu_{\0,1},\ldots,\mu_{\0,n_\0})\in \ration^{n_\0}
\end{align*}
are $\bar\rho$-admissible for $\vec g$, $\vec\mu_0$ and $\vec\mu_\0$ respectively. Hence
\begin{align}\label{E A-bar-rho}
A_{\bar\rho}:=(A_{\vec g,\bar\rho}, A_{\vec\mu_0,\bar\rho},A_{\vec\mu_\0,\bar\rho}) =(0,\ldots,0,-\mu_{0,1},\ldots,-\mu_{0,n_0}, \mu_{\0,1},\ldots,\mu_{\0,n_\0})\in\ration^n
\end{align}
is $\bar\rho$-admissible for $\vec g\sqcup\vec\mu_0\sqcup\vec\mu_\0$, and consequently $\bar\rho$-admissible for $\Gamma=(\msf g,\beta,\vec g\sqcup \vec \mu_0\sqcup \vec\mu_\0)$.

The topological data of the moduli space of relative stable maps to $(\Y_{\D_\0,r}|\D_0)$ is constructed from $\Gamma=(\msf g,\beta,\vec g,\mu_0,\mu_\0)$ and $A_{\vec\mu_\0,\bar\rho}$, and the topological data of the moduli space of (absolute) stable maps to $(\sqrt[r]{\D_\0})_{\bar\rho}$ is constructed from $\Gamma=(\msf g,\beta,\vec g\sqcup \vec \mu_0\sqcup \vec\mu_\0)$ and $A_{\bar\rho}$, via the lifting $\Upsilon_{r,\bar\rho}$. We next describe them explicitly.

\subsubsection{A relative moduli space of $(\Y_{\D_\0,r}|\D_0)$ from $\Gamma=(\msf g,\beta,\vec g,\mu_0,\mu_\0)$}\label{SSS Gamma(r)}
Consider the $r$-th root construction of $\Y$ along $\D_\0$, i.e. $\Y_{\D_\0,r}=\P_{r,1}(\L\oplus\mc O_\D)$. We have the natural projection $\pi\co\Y_{\D_\0,r}\rto\D$.
\begin{itemize}
\item The normal bundle of the zero section $\D_0=\P_{r,1}(0\oplus\mc O_\D)\cong \D$ in $\Y_{\D_\0,r}$ is $\L$.
\item The normal bundle of the infinite section $(\sqrt[r]{\D_\0})_{\bar\rho}=\P_{r,1}(\L\oplus 0)$ in $\Y_{\D_\0,r}$
    is the $r$-th root of $\L^*$, i.e. $\sqrt[r]{\L^*}$. So
    $
    %\left
    (\sqrt[r]{\L^*}%\right
    )^{\otimes r}\cong \pi^*\L^*,
    $
    for $\pi\co (\sqrt[r]{\D_\0})_{\bar\rho}\rto\D$.
\end{itemize}
Next according to $\Gamma=(\msf g,\beta,\vec g,\mu_0,\mu_\0)$, we assign a new topological data
\[
\Gamma(r)=(\msf g,\beta+dF,\vec g\sqcup\vec\mu_\0^r, \mu_0)
\]
to $(\Y_{\D_\0,r}|\D_0)$ as follow:
\begin{enumerate}
\item The genus of domain orbifold curve is $\msf g$.
\item The homology class is $\beta+dF\in H_2(|\Y_{\D_\0,r}|,\integer)=H_2(|\Y|;\integer)$, where $d$ is determined by $|\mu_0|=(\beta+dF)\cdot [\D_0]$ and $\beta$ is viewed as homology class of $|\Y|$ via the inclusion $|\D|=|\D_0|\hookrightarrow |\Y|=|\Y_{\D_\0,r}|$.
\item The absolute marked points are decorated by $\vec g\sqcup\vec\mu_\0^r$, where $\vec \mu_\0^r$ is constructed from the relative data $\mu_\0$ by
    \[
    \vec\mu_\0^r:= \Upsilon_{r,\bar\rho}(\vec\mu_\0,A_{\vec\mu_\0,\bar\rho}) =(\Upsilon_{r,\bar\rho}([g_{\0,i}],\mu_{\0,i}))_{i=1}^{n_\0}.
    \]
    Hence by \eqref{E D-Psi}, we have
    \[
    \Upsilon_{r,\bar\rho}([g_{\0,i}],\mu_{\0,i})= ([g_{\0,i}],e^{2\pi \sqrt{-1}\frac{\mu_\0,i}{r}})
    \]
    for $1\leq i\leq n_\0$.
\item The relative data to $\D_0$ is $\mu_0$.
\end{enumerate}

From $\Gamma(r)$ we get a relative moduli space $\M_{\Gamma(r)}(\Y_{\D_\0,r}|\D_0)$. When $r$ is sufficiently large, e.g. when $r\gg |\mu_\0|$, the (complex) virtual dimension of $\M_{\Gamma(r)}(\Y_{\D_\0,r}|\D_0)$ is
\[
\vdimc\,\M_{\Gamma(r)}(\Y_{\D_\0,r}|\D_0) =\vdimc\,\M_{\Gamma}(\D_0|\Y|\D_\0)^\sim+1.
\]

\subsubsection{A moduli space of $(\sqrt[r]{\D_\0})_{\bar\rho}$ from $\Gamma=(\msf g,\beta,\vec g\sqcup \vec \mu_0\sqcup \vec\mu_\0)$}\label{SSS Gamma-00(r)}
Similarly, we have a topological data for $(\sqrt[r]{\D_\0})_{\bar\rho}$
\begin{align}\label{E Gamma-00-r}
\Gamma_\0(r):=\Gamma_{A_{\bar\rho},r,\bar\rho}= \Upsilon_{r,\bar\rho}(\Gamma,A_{\bar\rho}).
\end{align}
Therefore $\Gamma_\0(r)$ consists of the following data:
\begin{enumerate}
\item the genus $\msf g$;
\item the homology class $\beta\in H_2(|\D|,\integer)$;
\item the orbifold information of absolute marked points is given by $\Upsilon_{r\bar\rho}(\vec g\sqcup \vec \mu_0\sqcup \vec\mu_\0,A_{\bar\rho})=(\Upsilon_{r,\bar\rho}(\vec g,A_{\vec g,\bar\rho}), \Upsilon_{r,\bar\rho}(\vec\mu_0,A_{\vec\mu_0,\bar\rho}) , \Upsilon_{r,\bar\rho}(\vec\mu_\0,A_{\vec\mu_\0,\bar\rho}))$ with
\begin{itemize}
\item $\Upsilon_{r,\bar\rho}(\vec g,A_{\vec g,\bar\rho})=(\ldots, ([g_i],1),\ldots)$, corresponding to the original absolute marked points;
\item $\Upsilon_{r,\bar\rho}(\vec\mu_0,A_{\vec\mu_0,\bar\rho}) =(\ldots, ([g_{0,i}],e^{2\pi\sqrt{-1}\frac{-\mu_{0,i}}{r}}),\ldots)$,
    corresponding to the relative marked points over $\D_0$;
\item $\Upsilon_{r,\bar\rho}(\vec\mu_\0,A_{\vec\mu_\0,\bar\rho}) =(\ldots, ([g_{\0,i}],e^{2\pi\sqrt{-1}\frac{\mu_{\0,i}}{r}}),\ldots)$,
    corresponding to the relative marked points over $\D_\0$.
\end{itemize}
\end{enumerate}

Denote the corresponding moduli space of $(\sqrt[r]{\D_\0})_{\bar\rho}$ by $\M_{\Gamma_\0(r)}((\sqrt[r]{\D_\0})_{\bar\rho})$.

\subsubsection{A formula for the DR-cycle}
Our main theorem in this section is
\begin{theorem}\label{T DR}
Suppose $\D$ is a quotient orbifold of a smooth quasi-projective scheme by a
linear algebraic group. The DR-cycle $\msf{DR}_\Gamma(\D,\L)$ can be computed by
\begin{align}\label{E DR}
\msf{DR}_\Gamma(\D,\L)&=\left[{\tau}_*( -r\cdot c_{\msf g}(-(\sqrt[r]{\L^*})_{\Gamma_\0(r)}) \cap [\tilde\M_{\Gamma_\0(r)} ((\sqrt[r]{\D_\0})_{\bar\rho})]^\vir)\right]_{r^0}.
\end{align}
The right hand side is a polynomial in $r$ when $r\gg 1$, and $[\cdot]_{r^0}$ means its constant term.
\end{theorem}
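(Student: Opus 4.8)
The plan is to adapt the $\cplane^*$-localization argument of \cite{Janda-Pandharipande-Pixton-Zvonkine2018} to the orbifold setting, using the intermediate relative moduli space $\M_{\Gamma(r)}(\Y_{\D_\0,r}|\D_0)$ of \S\ref{SSS Gamma(r)} as a bridge. First observe that the polynomiality assertion is already a consequence of Theorem~\ref{T polynomiality}: since $\Gamma_\0(r)=\Upsilon_{r,\bar\rho}(\Gamma,A_{\bar\rho})$ is the lift of the fixed data $\Gamma=(\msf g,\beta,\vec g\sqcup\vec\mu_0\sqcup\vec\mu_\0)$ by the fixed $\bar\rho$-admissible vector $A_{\bar\rho}$, and since $\L^*\rto\D$ is again a quotient orbifold of a smooth quasi-projective scheme by a linear algebraic group, Theorem~\ref{T polynomiality} applied to $(\D,\L^*)$ with $d=\msf g$ says precisely that $r\cdot\tau_*\bigl(c_{\msf g}(-(\sqrt[r]{\L^*})_{\Gamma_\0(r)})\cap[\tilde\M_{\Gamma_\0(r)}((\sqrt[r]{\D_\0})_{\bar\rho})]^\vir\bigr)$ is a polynomial in $r$ for $r\gg1$. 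Hence the content of Theorem~\ref{T DR} is the identification of the constant term of this polynomial with $\msf{DR}_\Gamma(\D,\L)$.

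To that end I would equip the target $\Y_{\D_\0,r}=\P_{r,1}(\L\oplus\mc O_\D)$ with the fibrewise $\cplane^*$-action scaling the $\L$-direction, whose two fixed sections are $\D_0$ (with trivial root structure) and $(\sqrt[r]{\D_\0})_{\bar\rho}$ (with normal bundle $\sqrt[r]{\L^*}$, and $(\sqrt[r]{\L^*})^{\otimes r}=\pi^*\L^*$); the fibre tangent weights at these two sections differ by the factor $r$. The induced $\cplane^*$-action on $\M_{\Gamma(r)}(\Y_{\D_\0,r}|\D_0)$ lets one apply the virtual localization formula in its Deligne--Mumford stack form. Its fixed loci are indexed by decorated graphs: non-contracted components are multiple covers of $\P^1$-fibres (``edges''), and contracted components are distributed over $\D_0$, over $(\sqrt[r]{\D_\0})_{\bar\rho}$, and over the zero/infinity sections of the rubber levels of an expanded degeneration of $(\Y_{\D_\0,r}|\D_0)$ along $\D_0$; the orbifold structures at the edges and vertices are exactly as governed by Lemma~\ref{L weight} and the weight sets $W^{\L,\rho}_{\fk\Gamma,\chi,r}$.

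Since the class $[\M_{\Gamma(r)}(\Y_{\D_\0,r}|\D_0)]^\vir$ is defined without reference to the $\cplane^*$-action, its localization expansion pushed forward to $\M_\Gamma(\D)$ is pole-free in the equivariant parameter $t$, so the $t^{-1}$-coefficients of the pushed-forward, normalized fixed-locus contributions sum to zero; this vanishing relation is valued in exactly the degree where $\msf{DR}_\Gamma(\D,\L)$ lives. Two of these contributions are distinguished. In the fixed locus where the domain lies, through an expanded degeneration, in a single rubber $\P^1$-bundle level over $\D_0$ --- with relative tangencies $\mu_0$ at that level's zero section, edge-attachments of orders $\mu_{\0,i}$ at its infinity section, and absolute markings $\vec g$ --- the main-component moduli is $\M_\Gamma(\D_0|\Y|\D_\0)^\sim$, and its $t^{-1}$-coefficient, pushed forward through the factorization by $\epsilon_\D$, is $\msf{DR}_\Gamma(\D,\L)$ by Definition~\ref{D def-DR}. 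In the fixed locus where a ``main'' component maps to $(\sqrt[r]{\D_\0})_{\bar\rho}$ carrying the data $\Gamma_\0(r)$ and is joined to $\D_0$ by edges of multiplicities $\mu_{0,j}$, the equivariant Euler class of the virtual normal contribution $-\mc R\pi_*\f^*\sqrt[r]{\L^*}=-(\sqrt[r]{\L^*})_{\Gamma_\0(r)}$ supplies, in the relevant $t$-power, the class $c_{\msf g}(-(\sqrt[r]{\L^*})_{\Gamma_\0(r)})$, the edge weights and node-smoothing factors supply the remaining $t$- and $\bar\psi$-contributions, and the $\integer_r$-gerbe degree of $(\sqrt[r]{\D_\0})_{\bar\rho}\rto\D$ (the degree-$r^{2\msf g(\msf v)-1}$ phenomenon recorded in \S\ref{SSS chernclass}) together with $c_1(\sqrt[r]{\L^*})=\frac1r\pi^*c_1(\L^*)$ and the factor-$r$ weight discrepancy produce the normalization $-r\cdot\tau_*\bigl(c_{\msf g}(-(\sqrt[r]{\L^*})_{\Gamma_\0(r)})\cap[\tilde\M_{\Gamma_\0(r)}((\sqrt[r]{\D_\0})_{\bar\rho})]^\vir\bigr)$. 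The remaining fixed loci --- those with several rubber levels, with the genus and markings split between $(\sqrt[r]{\D_\0})_{\bar\rho}$ and a rubber level, or with extra fibre edges --- are shown to contribute nothing to the $r^0$-coefficient, so substituting into the vanishing relation and extracting the constant term in $r$ yields \eqref{E DR}.

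The main obstacle I expect is precisely this last step: organizing the graph sum so that the expanded-degeneration contributions are matched correctly against the rubber-theoretic DR cycle on one side and shown to have vanishing constant term in $r$ on the other, while consistently tracking the $\integer_r$-gerbe degrees at the vertices, the fractional ages at the edges, and the representability constraints of the orbifold theory, all of which feed through Lemma~\ref{L weight}. A secondary but delicate point is the bookkeeping of the equivariant weights on the root stack $\Y_{\D_\0,r}$ --- the factor-$r$ discrepancy between the fibre weights at $\D_0$ and at $(\sqrt[r]{\D_\0})_{\bar\rho}$, the multiplicities $\mu$ entering the edge weights, and the $\frac1r$ scalings of the Chern classes of root bundles --- since it is the balance of all these powers of $r$ that pins down the constant $-r$ and its sign.
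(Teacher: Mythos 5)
Your proposal follows essentially the same route as the paper's proof: $\cplane^*$-localization on $\M_{\Gamma(r)}(\Y_{\D_\0,r}|\D_0)$, pushing forward to $\M_\Gamma(\D)$ and using the pole-freeness of the non-equivariant limit, then isolating the two distinguished fixed-locus graphs whose contributions yield $\msf{DR}_\Gamma(\D,\L)$ on the rubber side and $-r\cdot\tau_*(c_{\msf g}(-(\sqrt[r]{\L^*})_{\Gamma_\0(r)})\cap[\cdot]^\vir)$ on the root-gerbe side, with the remaining graphs contributing nothing to the constant term in $r$ by the polynomiality of Theorem~\ref{T polynomiality}. The bookkeeping of $r$-powers you flag as delicate is handled in the paper by the substitution $s=tr$ and by setting $\hat c_d=(-r)^{2d-2\msf g(\msf v)+1}\tilde c_d(-(\sqrt[r]{\L^*})_v)$, which makes the extraction of the $s^0r^0$-coefficient mechanical and pins down the factor $-r$ at $d=\msf g$.
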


When $\D$ is smooth, the formula \eqref{E DR} specifies to the one of Janda--Pandharipande--Pixton--Zvonkine (cf.  \cite[(25)]{Janda-Pandharipande-Pixton-Zvonkine2018}).

This theorem is proved by virtual localization on the relative moduli space $\M_{\Gamma(r)}(\Y_{\D_\0,r}|\D_0)$
(for sufficiently large $r$) and we postpone the proof to \S \ref{SS localization}.

Now by Theorem \ref{T DR}, we can apply the calculation result \eqref{E leading-term} in Proposition \ref{P leading-term} under the following situation:
\begin{enumerate}
\item the formula \eqref{E leading-term} is applied to the line bundle $\L^*\rto\D_\0=\D$ with representation $\bar\rho$;
\item the topological data $\Gamma_\0(r)$ in \eqref{E Gamma-00-r} for $(\sqrt[r]{\D_\0})_{\bar\rho}=(\sqrt[r]\D)_{\bar\rho}$  is obtained from $\Gamma=(\msf g,\beta,\vec g\sqcup\vec\mu_0\sqcup\vec\mu_\0)$ and $A_{\bar\rho}$  via Definition \ref{D lift-Gamma}, i.e. $\Gamma_\0(r)=\Upsilon_{r,\bar\rho}(\Gamma,A_{\bar\rho})$ with $A_{\bar\rho}$ given in \eqref{E A-bar-rho}.
\end{enumerate}

This gives us the following explicit formula for $\msf{DR}_\Gamma(\D,\L)$.

\begin{theorem}\label{T formula-of-DR}
Suppose $\D$ is a quotient orbifold of a smooth quasi-projective scheme by a linear algebraic group. The DR-cycle $\msf{DR}_\Gamma(\D,\L)$ is the cap product of $[\M_\Gamma(\D)]^\vir$ with the degree $2\msf g$ part of
\begin{align}\label{E DR-formula}
-\sum_{\fk \Gamma\in G_{|\Gamma|}(\D)}\sum_{\chi\in{\fk D}_{\fk\Gamma,\Gamma}} \sum_{w\in W_{\fk\Gamma,(\chi;w),r}^{\L^*,\bar\rho}} \frac{r^{-h^1(\fk\Gamma)}}{|\aut(\fk\Gamma)|}
\\
\zeta_{\fk\Gamma,(\chi;w),*}\Bigg[\prod_{\msf v\in\tV(\fk\Gamma)}\exp\left(-\frac{1}2\pi_* \left( \left(\f^*c_1(\L^*)\right)^2\right)\right)\times \prod_{i=1}^n \exp\left(
\frac{\bar a_i^2}{2}\bar\psi_i+\bar a_i\msf{ev}_i^* e_{([g_i],\bar\xi_i)}^*c_1(\L^*)\right)
\nonumber\\
\times \prod_{\substack{\msf e\in\tE(\fk\Gamma) \nonumber\\
\msf e=(\msf h_+,\msf h_-)}}
\frac{1-\exp \left( \frac{-(w(\msf h_+)+\age_{\chi(\msf h_+)}(\L^*))\cdot(w(\msf h_-)+\age_{\chi(\msf h_-)}(\L^*))\cdot (\bar\psi_{\msf h_+}+\bar\psi_{\msf h-})}{2}\right) }{\bar\psi_{\msf h_+}+\bar\psi_{\msf h_-}}  \Bigg]
\end{align}
for $r\gg 1$, where $(\bar a_1,\ldots,\bar a_n)=A_{\bar\rho}=(0,\ldots,0,-\mu_{0,1}, \ldots,-\mu_{0,n_0},\mu_{\0,1},\ldots,\mu_{\0,n_\0})$ and
$(\ldots,([g_i],\bar\xi_i),\ldots)=\Upsilon_{r,\bar\rho}(\vec g\sqcup\vec\mu_0\sqcup\vec\mu_\0,A_{\bar\rho})$.
\end{theorem}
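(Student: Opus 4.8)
The plan is to combine Theorem~\ref{T DR} with Proposition~\ref{P leading-term}; all the genuine content --- the polynomiality of Theorem~\ref{T polynomiality} and the localization argument behind Theorem~\ref{T DR} --- is already in hand, so what remains is to match notation and read off the answer. Indeed, Theorem~\ref{T DR} already reduces $\msf{DR}_\Gamma(\D,\L)$ to the constant term (in $r$, for $r\gg1$) of
\[
-r\cdot\tau_*\!\left(c_{\msf g}(-(\sqrt[r]{\L^*})_{\Gamma_\0(r)})\cap[\tilde\M_{\Gamma_\0(r)}((\sqrt[r]\D)_{\bar\rho})]^\vir\right),
\]
and Proposition~\ref{P leading-term} computes exactly the constant term of a cycle of this shape.

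First I would apply Proposition~\ref{P leading-term} with the following substitutions: the orbifold line bundle $\L\rto\D$ replaced by $\L^*\rto\D_\0=\D$; the representation $\rho$ replaced by $\bar\rho=\rho^{-1}$; the topological data $\Gamma$ replaced by the combined data $(\msf g,\beta,\vec g\sqcup\vec\mu_0\sqcup\vec\mu_\0)$ of \eqref{E top-data-D}; the $\rho$-admissible vector $A$ replaced by $A_{\bar\rho}$ of \eqref{E A-bar-rho}, which is $\bar\rho$-admissible for $\Gamma$ as checked just before \eqref{E A-bar-rho}; and the degree $d$ set equal to $\msf g$. Under these substitutions $\Gamma_{A,r,\rho}$ becomes $\Gamma_{A_{\bar\rho},r,\bar\rho}=\Gamma_\0(r)$ by \eqref{E Gamma-00-r} and Definition~\ref{D lift-Gamma}, the exponent $r^{2d-2\msf g+1}$ becomes $r^1=r$, the ages $\age_{\chi(\msf h)}(\L)$ become $\age_{\chi(\msf h)}(\L^*)$ and the weight functions range over $W^{\L^*,\bar\rho}_{\fk\Gamma,\chi,r}$ (the age identity \eqref{E age-tildechi(h)} applied to $\L^*$ guarantees that the half-edge ages appearing are the quantities $w(\msf h)+\age_{\chi(\msf h)}(\L^*)$ in \eqref{E DR-formula}), while $a_i$ becomes $\bar a_i$, the lifting $([g_i],\xi_i)=\Upsilon_{r,\rho}([g_i],a_i)$ becomes $([g_i],\bar\xi_i)=\Upsilon_{r,\bar\rho}([g_i],\bar a_i)$, and $c_1(\L)$ becomes $c_1(\L^*)$. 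Proposition~\ref{P leading-term} then yields that the constant term of $r\cdot\tau_*(c_{\msf g}(-(\sqrt[r]{\L^*})_{\Gamma_\0(r)})\cap[\tilde\M_{\Gamma_\0(r)}((\sqrt[r]\D)_{\bar\rho})]^\vir)$ is the cap product of $[\M_\Gamma(\D)]^\vir$ with the degree $2\msf g$ part of the bracketed sum in \eqref{E DR-formula}, i.e. of \eqref{E leading-term} under the above substitutions.

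To finish, I would multiply by $-1$, the sign carried by the factor $-r$ in \eqref{E DR} of Theorem~\ref{T DR}; this produces the leading minus sign in \eqref{E DR-formula}. The restriction to the degree $2\msf g$ part is the right homogeneous component because $\msf{DR}_\Gamma(\D,\L)$ lies in $H_{2(\vdimc\M_\Gamma(\D)-\msf g)}(\M_\Gamma(\D))$, i.e. in real codimension $2\msf g$, matching the choice $d=\msf g$; and since Theorem~\ref{T DR} already asserts the displayed expression is a polynomial in $r$ for $r\gg1$, taking the constant term is well defined, so \eqref{E DR-formula} holds for $r\gg1$.

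The argument is essentially bookkeeping, and the only point demanding care is the faithful translation of every piece of $\L$-data in \eqref{E leading-term} into the corresponding $\L^*$-data --- ages, Chern classes, the admissible vector $A_{\bar\rho}$, the weight sets $W^{\L^*,\bar\rho}_{\fk\Gamma,\chi,r}$ and the liftings $\Upsilon_{r,\bar\rho}$ --- together with the observation that the map $\tau$ in Theorem~\ref{T DR}, namely $\tilde\M_{\Gamma_\0(r)}((\sqrt[r]\D)_{\bar\rho})\to\M_{\Gamma_\0(r)}((\sqrt[r]\D)_{\bar\rho})\to\M_\Gamma(\D_\0)=\M_\Gamma(\D)$, is precisely the $\tau$ of Proposition~\ref{P leading-term} after these substitutions. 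No geometric input beyond Theorem~\ref{T DR} and Proposition~\ref{P leading-term} is needed.
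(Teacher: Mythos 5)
Your proof is correct and follows exactly the same route the paper takes: after Theorem~\ref{T DR} reduces the DR-cycle to the constant term of $-r\cdot\tau_*(c_{\msf g}(-(\sqrt[r]{\L^*})_{\Gamma_\0(r)})\cap[\tilde\M_{\Gamma_\0(r)}((\sqrt[r]\D)_{\bar\rho})]^\vir)$, the paper likewise invokes Proposition~\ref{P leading-term} specialized to $\L^*\rto\D$ with representation $\bar\rho$, topological data $\Gamma$, admissible vector $A_{\bar\rho}$, and $d=\msf g$, then carries through the sign from the $-r$ factor. Your translation of the $\L$-data into $\L^*$-data (ages, Chern classes, weight sets, liftings via $\Upsilon_{r,\bar\rho}$) and your identification of the map $\tau$ match the paper's bookkeeping.
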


Now we proceed to prove Theorem \ref{T DR}.

\subsection{Localization}\label{SS localization}
In this subsection, we carry out virtual localization to the moduli space
$\M_{\Gamma(r)}(\Y_{\D_\0,r}|\D_0)$ and compute the virtual fundamental cycle $[\M_{\Gamma(r)}(\Y_{\D_\0,r}|\D_0)]^\vir$.
The formula derived from localization reduces the calculation of $\msf{DR}_\Gamma(\D,\L)$ to Chern classes of
certain bundle over $\tilde\M_{\Gamma_\0(r)}((\sqrt[r]{\D_\0})_{\bar\rho})$ which was carried out in \S \ref{SS TGW-root-bdls}. Apply it, we complete the proof of Theorem \ref{T DR}.

To do the virtual localization, we consider the natural $\cplane^*$-action on $\Y_{\D_\0,r}$ via the dilation
over $\L$, which induces $\cplane^*$-action on $\M_{\Gamma(r)}(\Y_{\D_\0,r}|\D_0)$. The fixed loci of $\cplane^*$-action on $\Y_{\D_\0,r}$ consists of $(\sqrt[r]{\D_\0})_{\bar\rho}$ and $\D_0$, the fixed lines are the fibers of $\pi\co\Y_{\D_\0,r}\rto \D$.

\subsubsection{Graphs}\label{SSS localize-graphs}
As usual, the fixed loci of the induced $\cplane^*$-action on $\M_{\Gamma(r)}(\Y_{\D_\0,r}|\D_0)$ can be described by certain decorated graphs as those used in strata description in \S \ref{SSS strata}. Here we first describe these graphs.

A decorated graph\footnote{To distinguish the graphs appearing in localization calculation in this subsection and graphs appearing in strata description in \S \ref{SSS strata}, for vertices, edges, etc., in $\Phi$, we use the font $v, e$ instead of $\msf v, \msf e$, etc.} (in localization calculation) is
\[
\Phi=(\tV,\tE,\tL,\tH,\msf g\co\tV\rto\integer_{\geq 0}, \beta\co\tV\rto H_2(|\D|;\integer), \fk l\co\tV\rto\{0,1\}, v\co\tH\rto \tV,\iota\co\tH\rto \tH)
\]
which satisfies the following properties:
\begin{enumerate}
\item $\tV$ is a vertex set with a genus function $\msf g$, a homology class function $\beta$, and a label function $\fk l$. For each $v\in\tV$, the homology class $\beta(v)$ must be an effective curve class of $\D$. We also require the genus and degree conditions to hold:
    \[
    \msf g=\sum_{v\in \tV}\msf g(v)+h^1(\Phi),\qq\mathrm{and} \qq\beta=\sum_{v\in \tV}\beta(v),
    \]
    where $h^1(\Phi)$ is the rank of the degree one homology group of $\Phi$.

\item $\tE$ is the edge set. Every edge corresponds to an orbifold map of the following form\footnote{See \cite[Lemma 5.6]{Chen-Du-Hu2019} for the form of orbifold maps between cyclic group quotients of weighted projective spaces.}, a Galois cover,
    \[\begin{split}
    &\P_{\fkp_e,1}/\integer_{\fk o(g_e)}\rto \P_{r,1}/\langle g_e\rangle, \\ &[x,y]\mapsto [z=x^{d_e},w=y^{\fkq_e}],\qq e^{2\pi\sqrt{-1}\frac{1}{\fk o(g_e)}} \mapsto g_e,
    \end{split}\]
    where
\begin{itemize}
\item $\fkp_e,\fkq_e, d_e\in\integer_{\geq 1}$ satisfy $\frac{\fkq_e}{\fkp_e}=\frac{d_e}{r}$ and $\gcd(\fkp_e,\fkq_e)=1$,
\item $g_e$ is an element of the local group of a point $p\in |\D|=|\D_0|$, $\fk o(g_e)=\ord(g_e)$ is the order of $g_e$ and $\langle g_e\rangle$ is a cyclic subgroup of the local group generated by $g_e$,

\item the $\integer_{\fk o(g_e)}$ acts on $\P_{\fkp_e,1}$ by acting on $x$ with weight $1$,

\item the $\langle g_e\rangle$-action on $\P_{r,1}$ is obtained by identify $\P_{r,1}$ with the fiber of $\Y_{\D_\0,r}$ over $p$, so the $\langle g_e\rangle$ acts on $\P_{r,1}$ by acting on $z$ via multiplying $\rho(g_e)$,
\item the edge degree $d_e$ satisfies $d_e\equiv \fk b(g_e)\mod{\fk o(g_e)}$, where $\fk b(g_e)\in[0,\fk o(g_e)-1]\cap\integer$ is the action weight of $g_e$ on $\L$ and is determined by
    \[
    e^{2\pi\sqrt{-1}\frac{\fk b(g_e)}{\fk o(g_e)}}=\rho(g_e).
    \]
\end{itemize}
Therefore the images of the two orbifold points in $\P_{\fkp_e,1}/\integer_{\fk o(g_e)}$ are mapped as follows
    \begin{align*}
    &\langle e^{2\pi\sqrt{-1}\frac{1}{\fk o(g_e)}} \rangle\ltimes[0,1]\mapsto \langle g_e\rangle \ltimes[0,1]\in(\D_0)_{[g_e]}=\D_{[g_e]},\\
    &\langle e^{2\pi\sqrt{-1}\frac{1}{\fkp\fk o(g_e)}}\rangle\ltimes[1,0]\mapsto \langle g_e\inv, e^{2\pi\sqrt{-1}\frac{\fkq_e}{\fkp_e\fk o(g_e)}}\rangle \ltimes[1,0]\in((\sqrt[r]{\D_\0})_{\bar\rho})_{([g_e\inv], e^{2\pi\sqrt{-1}\frac{\fkq_e}{\fkp_e\fk o(g_e)}})}.
    \end{align*}
    Set
    \begin{align}\label{E D-kappa-e}
    \kappa_e:=\frac{d_e}{\fk o(g_e)}.
    \end{align}
    Note that
    \begin{align}\label{E q/pg=r/mg}
    \frac{\fkq_e}{\fkp_e\fk o(g_e)}=\frac{d_e}{r\fk o(g_e)}=\frac{\kappa_e}{r}.
    \end{align}
    The edge is labeled with $([g_e],\kappa_e)$. It corresponds to a $(1+0+1)$-point, genus zero, fiber class relative moduli space of $((\sqrt[r]{\D_\0})_{\bar\rho}|\Y_{\D_\0,r}|\D_0)$, which we denoted by $\scr F_e$, or more precisely $\scr F([g_e],\kappa_e)$, the contact orders for the relative marked point corresponding to $[1,0]$ is $\kappa_e$ and the contact order for the relative marked point corresponding to $[0,1]$ is $\frac{\fkp_e}{\fkq_e\fk o(g_e)}=\frac{\kappa_e}{r}$.

\item $\tH$ is the set of half edges with a map $v\co\tH\rto\tV$ that assigning half edges to vertices, and an involution $\iota\co\tH\rto\tH$, whose fixed loci is the set of Legs $\tL$,

\item $\tL$, the set of legs, is placed in bijective correspondence with the absolute and relative markings:
    \begin{itemize}
     \item leg $j$ is labeled with $([g_{0,j}],\mu_{0,j})\in \mu_0$ if it is incident to a vertex labeled $0$,
     \item leg $j$ is labeled with $([g_{\0,j}],e^{2\pi \sqrt{-1}\frac{\mu_{\0,i}}{r}})\in\vec\mu_\0^r$ if it is incident to a vertex labeled $\0$,
     \item there are exactly $m$ legs labeled with $[g_i], 1\leq i\leq m$, which are incident to vertices labeled with either $0$ or $\0$.
     \end{itemize}

\item $\Phi$ is a connected graph, and $\Phi$ is bipartite with respect to labeling $\fk l$: every edge is incident to a $0$-labeled vertex and an $\0$-labeled vertex.

\item If $\fk l(v)=0$, denote by $\mu_0(v)$ the list of labels formed by
    \begin{itemize}
    \item the label $([g_{0,j}],\mu_{0,j})$ of the leg $j$ incident to $v$,
    \item the label $([g_e],\kappa_e)$ for the edge $e$ incident to $v$, and
    \item the label $[g_j]$ of the leg $j$ incident to $v$.
    \end{itemize}
    For every such vertex $v$, we impose the condition
    \[
    |\mu_0(v)|=\int_{\beta(v)}^{\mathrm{orb}}c_1(\L),
    \]
    where $|\mu_0(v)|=\sum \mu_{0,j}-\sum \kappa_e$.

\item If $\fk l(v)=\0$, denote by $\mu_\0(v)$ the list of labels formed by
    \begin{itemize}
    \item the label $([g_{\0,j}],e^{2\pi\sqrt{-1}\frac{\mu_{\0,j}}{r}})$ of the leg $j$ incident to $v$,
    \item the label $([g_e],\kappa_e)$ for the edges $e$ incident to $v$, and
    \item the label $[g_j]$ of the leg $j$ incident to $v$.
    \end{itemize}
    For every such vertex $v$, we impose the condition
    \[
    |\mu_\0(v)|=\int_{\beta(v)}^{\mathrm{orb}}c_1(\L) \pmod r,
    \]
    where $|\mu_\0(v)|=\sum \kappa_e-\sum \mu_{\0,j}$, which is a consequence of orbifold Riemann--Roch formula
    \[
    \frac{|\mu_\0(v)|}r-\int_{\beta(v)}^{\mathrm{orb}} c_1(\L^{\frac{1}r})=\frac{|\mu_\0(v)|}r- \frac{\int_{\beta(v)}^{\mathrm{orb}}c_1(\L)}r\in\integer.
    \]
    and \eqref{E q/pg=r/mg}.

\end{enumerate}

\begin{remark}\label{R target-expan?}
\begin{enumerate}
\item If the target space for the moduli space associated to $\Phi$ is expanded, $\tV^0(\Phi)$ contains only one vertex denoted by $v_o$, and for this case we denote by $\M_{v_o}^\sim$ the moduli space of (possible disconnected) stable maps to rubber.
\item If the target space for the moduli space associated to $\Phi$ is not expanded, $\tV^0(\Phi)$ is 1-to-1 correspondence to relative markings ($\mu_0$ over $\D_0)$.
\end{enumerate}
\end{remark}
\begin{remark}\label{R inv-F}
The moduli space $\scr F_e=\scr F([g_e],\kappa_e)$ is a fibration over $\D_{[g_e]}'$ with fiber being the $(1+0+1)$-pointed, genus zero relative moduli space of $\P_{r,1}(\cplane\oplus\cplane)\rtimes\langle g_e\rangle$, whose topological type is determined by the topological type of $\scr F_e$ (cf. \cite{Chen-Du-WangR2019a}). The $\D_{[g_e]}'$ is obtained from $\D_{[g_e]}$ by modulo out the cyclic kernel $\langle g_e\rangle$ of the arrow space of $\D_{[g_e]}$. When $r$ is sufficiently large, e.g. $r>\fk o(g_e)\cdot d_e$, $\scr F_e=\scr F([g_e],\kappa_e)$ is a $\integer_{d_e}$-gerbe over $\D_{[g_e]}'$, and then for $\alpha_0\in H^*(\D_{[g_e]})$ and $\alpha_\0\in H^*(\D_{[g_e\inv]})$, we have
\begin{align}\label{E inv-F}
\int_{\scr F_e} ev_0^*\alpha_0\cup ev_\0^*\alpha_\0=\frac{1}{d_e}\int_{\D_{[g_e]}'}\alpha_0 \cup\alpha_\0 =\frac{\fk o(g_e)}{d_e}\int_{\D_{[g_e]}}\alpha_0\cup\alpha_\0= \frac{1}{\kappa_e}\int_{\D_{[g_e]}}\alpha_0\cup\alpha_\0.
\end{align}
\end{remark}

To every $\0$-labeled vertex $v$ of $\Phi$, we assign the moduli space
\[
\M_{\msf g(v),\beta(v),\mu_\0(v)}((\sqrt[r]{\D_\0})_{\bar\rho}).
\]
We have a natural map
\[
\epsilon\co \M_{\msf g(v),\beta(v),\mu_\0(v)}((\sqrt[r]{\D_\0})_{\bar\rho}) \rto \M_{\msf g(v),\beta(v),\pi(\mu_\0(v))}(\D),
\]
obtained from $\pi\co (\sqrt[r]{\D_\0})_{\bar\rho}\rto\D$. We will use the notation
\[
\M_v((\sqrt[r]{\D_\0})_{\bar\rho}) =\M_{\msf g(v),\beta(v),\mu_\0(v)}((\sqrt[r]{\D_\0})_{\bar\rho}).
\]

\subsubsection{Unstable vertices}\label{SSS unstable-vertex}

A vertex $v\in \tV(\Phi)$ is unstable if $\beta(v)=0$ and $2\msf g(v)-2+n(v) \leq 0$. As we are dealing with relative moduli space, there are four types of unstable vertices:
\begin{itemize}
\item[(i)] $\fk l(v)=\0, \msf g(v)=0$, $v$ carries no markings from $\vec g$ and $\vec\mu_\0^r$ and one incident edge,

\item[(ii)] $\fk l(v)=\0, \msf g(v)=0$, $v$ carries no markings from $\vec g$ and $\vec\mu_\0^r$ and two incident edges,

\item[(iii)] $\fk l(v)=\0, \msf g(v)=0$, $v$ carries one absolute marking from $\vec g$ and one incident edge,

\item[(iv)] $\fk l(v)=\0, \msf g(v)=0$, $v$ carries one absolute marking from $\vec\mu_\0^r$ and one incident edge,

\item[(v)] $\fk l(v)=0, \msf g(v)=0$, $v$ carries one relative marking from $\mu_0$ and one incident edge.
\end{itemize}

\begin{remark}
Here we view all Galois covers as edges, hence there may be unstable vertex of type (iii) and (iv). However, for these two types of vertices, there is no nodal points over $(\sqrt[r]{\D_\0})_{\bar\rho}$.
\end{remark}

As the smooth case, by similar proof of \cite[Lemma 12]{Janda-Pandharipande-Pixton-Zvonkine2018}, we have
\begin{lemma}\label{L unstable-vertex}
When $r\gg 1$, the unstable vertices of type (i), (ii) and (iii) can not occur.
\end{lemma}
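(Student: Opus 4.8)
The plan is to follow \cite[Lemma~12]{Janda-Pandharipande-Pixton-Zvonkine2018}; the only step needing the orbifold bookkeeping is a bound on the edge labels that is \emph{uniform in} $r$, which I would set up first. Every stable map indexed by a localization graph $\Phi$ represents the homology class $\beta+dF$ of $\Y_{\D_\0,r}$ fixed in \S\ref{SSS Gamma(r)}, and combining its defining relation $|\mu_0|=(\beta+dF)\cdot[\D_0]$ with the balancing condition $\int^\orb_\beta c_1(\L)=|\mu_0|-|\mu_\0|$ imposed on $\Gamma$ gives $d=|\mu_\0|$, independent of $r$. All of the fibrewise degree is carried by the edges, since the $0$-labeled and $\0$-labeled vertices map into the two sections $\D_0$ and $(\sqrt[r]{\D_\0})_{\bar\rho}$; hence the edge degrees $d_e$ of item (2) in the definition of $\Phi$ satisfy $\sum_{e\in\tE(\Phi)}d_e=|\mu_\0|$, and together with $\kappa_e=d_e/\fk o(g_e)\leq d_e$ (see \eqref{E D-kappa-e}) this yields $\sum_{e\in\tE(\Phi)}\kappa_e\leq|\mu_\0|$ for every such $\Phi$ and every $r$. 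From now on assume $r>|\mu_\0|$.

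Now let $v$ be a vertex of $\Phi$ of one of the three listed types. In each case $\fk l(v)=\0$ and $\beta(v)=0$, so the orbifold Riemann--Roch condition attached to $\0$-labeled vertices in the definition of $\Phi$ reduces to $|\mu_\0(v)|\in r\,\integer$, where $|\mu_\0(v)|=\sum_{e\ni v}\kappa_e-\sum_{j\ni v}\mu_{\0,j}$. For type (i), $v$ carries no leg, so $|\mu_\0(v)|=\kappa_e$ for the unique incident edge $e$; but $0<\kappa_e\leq|\mu_\0|<r$, hence $\kappa_e\notin r\,\integer$. For type (ii), there are two incident edges $e_1,e_2$ and no leg, so $|\mu_\0(v)|=\kappa_{e_1}+\kappa_{e_2}$ with $0<\kappa_{e_1}+\kappa_{e_2}\leq|\mu_\0|<r$, impossible again. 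For type (iii), the single absolute marking coming from $\vec g$ occurs in the label list $\mu_\0(v)$ but contributes $0$ to the rational number $|\mu_\0(v)|$, so once more $|\mu_\0(v)|=\kappa_e\in(0,|\mu_\0|]$, a contradiction. Hence none of the three types can occur once $r>|\mu_\0|$.

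As a sanity check, and to locate where ``$r\gg1$'' is genuinely used, note that the superficially similar types (iv) and (v) are \emph{not} ruled out: a type (iv) vertex gives $|\mu_\0(v)|=\kappa_e-\mu_{\0,j}$, which the same divisibility simply forces to vanish, so $\kappa_e=\mu_{\0,j}$ and the edge is glued straight onto the relative marking $([g_{\0,j}],e^{2\pi\sqrt{-1}\mu_{\0,j}/r})$; while a type (v) vertex is $0$-labeled and is governed by the \emph{exact} equality $|\mu_0(v)|=\sum\mu_{0,j}-\sum\kappa_e=\int^\orb_{\beta(v)}c_1(\L)=0$, i.e.\ $\kappa_e=\mu_{0,j}$, a plain equality carrying no congruence. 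The only real obstacle is therefore the uniform bound of the first step --- that the edge degrees stay bounded as $r\to\infty$; once $\sum_e\kappa_e$ is bounded by a constant depending only on $\Gamma$ (here, conveniently, $|\mu_\0|$ itself), the three cases follow at once, exactly as in the smooth situation of \cite{Janda-Pandharipande-Pixton-Zvonkine2018}.
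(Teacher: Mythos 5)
Your argument is correct and is exactly the argument the paper has in mind: the paper gives no details but states that the proof is ``similar to \cite[Lemma 12]{Janda-Pandharipande-Pixton-Zvonkine2018}'', and what you wrote is precisely the orbifold adaptation of that argument. The key steps --- pinning down the fiber degree $d$ from the homology constraint $|\mu_0|=(\beta+dF)\cdot[\D_0]$ combined with $\int_\beta^\orb c_1(\L)=|\mu_0|-|\mu_\0|$ to get $d=|\mu_\0|$ independent of $r$, distributing the fiber degree over edges to bound $\sum_e\kappa_e$ uniformly, and then reading off a contradiction from the congruence $|\mu_\0(v)|\equiv\int^\orb_{\beta(v)}c_1(\L)\pmod r$ when $\beta(v)=0$ and $r$ is large --- are exactly right, and your observation that the marking from $\vec g$ has age $0$ in $\sqrt[r]{\L^*}$ and hence drops out of $|\mu_\0(v)|$ is the one genuinely orbifold-specific point that needed to be checked. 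The sanity check on types (iv) and (v) is also accurate: there the congruence degenerates to a plain equality, which is why those vertices survive.
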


In the following, we always assume that $r$ is sufficiently large such that type (i), (ii) and (iii) unstable vertices do not occur, and the relative moduli space $\scr F_e=\scr F([g_e],\kappa_e)$ is a $\integer_{d_e}$-gerbe over $\D_{[g_e]}'$ for every edge $e\in \tE(\Phi)$ and every possible graph $\Phi$.

\subsubsection{Fixed loci}
Now we describe the fixed loci corresponding to the graphs described above. A stable map in the $\cplane^*$-fixed locus corresponding to a graph $\Phi$ is obtained by gluing together maps associated to the vertices $v\in\tV(\Phi)$ with Galois covers associated to the edges $e\in\tE(\Phi)$. Denote by $\tV^\0_{\mathrm{st}}(\Phi)$ the set of $\0$-labeled stable vertices of $\Phi$ and $\tV^\0_{\mathrm{ust}}(\Phi)$ the set of $\0$-labeled unstable vertices of $\Phi$.

\begin{remark}\label{R 3-Phi's}
We divide all graphs $\Phi$ into three types.
\begin{enumerate}
\item A graph $\Phi$ of the first type has $\tV^\0_{\mathrm{st}}(\Phi)=\varnothing$, then the target is expanded and $\Phi$ contains only one $0$-labeled vertex $v_o$ (cf. Remark \ref{R target-expan?}) with edges. Among its edges there are exactly $n_\0$ edges corresponding to $\vec\mu_\0^r$, hence $\mu_\0$; each one of these $n_\0$ edges is labeled by $([g_{\0,i}\inv],\mu_{\0,i})$. Other possible edges correspond to certain $[g_i]$'s in $\vec g$.

    \begin{lemma}\label{L No-edge-for-gi}
    There is no edges corresponding to any $[g_i]$ in $\vec g$.
    \end{lemma}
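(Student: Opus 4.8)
The plan is to argue by contradiction and to reduce the statement to Lemma \ref{L unstable-vertex}, which already forbids type-(iii) unstable vertices for $r\gg1$. So I would start by supposing that $\Phi$ is a graph of the first type possessing an edge $e$ that corresponds to some $[g_i]$ occurring in $\vec g$. By the bipartiteness of $\Phi$ recorded in \S\ref{SSS localize-graphs}, $e$ joins a $0$-labeled vertex to an $\0$-labeled vertex; since a first-type graph has the single $0$-labeled vertex $v_o$, the $0$-end of $e$ is $v_o$ and its other end is some $\0$-labeled vertex $v$. The content of ``$e$ corresponds to $[g_i]$'' is exactly that the leg labeled $[g_i]$ is incident to $v$: the absolute marking $[g_i]$ is not carried by the rubber vertex $v_o$, so it has been pushed onto the $\0$-side along $e$.

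The first step is to note that $v$ is forced to be unstable: $\Phi$ is of the first type, hence $\tV^\0_{\mathrm{st}}(\Phi)=\varnothing$, so every $\0$-labeled vertex of $\Phi$, and in particular $v$, lies in $\tV^\0_{\mathrm{ust}}(\Phi)$. The second step is to pin down its type among the $\0$-labeled unstable vertices (i)--(iv) listed in \S\ref{SSS unstable-vertex}. Types (i) and (ii) carry no markings at all, while type (iv) carries one marking coming from $\vec\mu_\0^r$, which is disjoint from the list $\vec g$ of absolute markings; since $v$ carries the leg $[g_i]\in\vec g$ and, being unstable, has $\msf g(v)=0$, $\beta(v)=0$ and $2\msf g(v)-2+n(v)\le0$ (so that the one edge already incident to $v$ together with this leg exhaust the incidences at $v$), it must be of type (iii). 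The third and final step is to invoke Lemma \ref{L unstable-vertex}: for $r\gg1$ no unstable vertex of type (iii) appears, contradicting the existence of $v$; hence no such edge $e$ can exist and $\Phi$ has no edge corresponding to any $[g_i]$ in $\vec g$.

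I do not expect a genuine obstacle here. The only point deserving care is the combinatorial translation ``$e$ corresponds to $[g_i]\in\vec g$'' $\Longleftrightarrow$ ``the $\0$-end of $e$ is an unstable $\0$-labeled vertex carrying $[g_i]$, necessarily of type (iii)'', which rests on bipartiteness and on the defining property $\tV^\0_{\mathrm{st}}(\Phi)=\varnothing$ of first-type graphs; once that dictionary is fixed, the lemma follows at once from Lemma \ref{L unstable-vertex}. This implicitly re-uses the standing hypothesis $\rho([g_i])=1$, which is what allows a marking of orbifold type $([g_i],1)$ to sit on a component mapping into $(\sqrt[r]{\D_\0})_{\bar\rho}$ in the first place, so that such a configuration is a priori conceivable.
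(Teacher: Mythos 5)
Your argument is correct, but it takes a genuinely different route from the paper's. The paper proves this lemma by a degree/contact-order count: if some $[g_i]$ were pushed onto the $\0$-side, the corresponding Galois-cover edge would, because $\rho([g_i])=1$ forces $\kappa_e\in\integer_{\ge 1}$, contribute an extra relative marked point over $\D_\0$ to the rubber component with contact order at least $1$, so the total contact order $|\mu_\0'|$ of the rubber at $\D_\0$ would satisfy $|\mu_\0'|\ge |\mu_\0|+1$; on the other hand the orbifold Riemann--Roch constraint for the rubber gives $|\mu_0|-|\mu_\0'|=\int_\beta c_1(\L)$, while the constraint on $\Gamma$ gives $|\mu_0|-|\mu_\0|=\int_\beta c_1(\L)$, whence $|\mu_\0'|=|\mu_\0|$, a contradiction. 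You instead reduce the statement to Lemma \ref{L unstable-vertex}: the $\0$-end of such an edge must be an unstable $\0$-labeled vertex (since $\tV^\0_{\mathrm{st}}(\Phi)=\varnothing$ for a first-type graph), it carries one leg from $\vec g$ and one edge and hence is of type (iii), and Lemma \ref{L unstable-vertex} rules this out for $r\gg1$. Both proofs are valid; yours is shorter and, given that the paper has already declared after Lemma \ref{L unstable-vertex} the standing assumption that type-(iii) vertices do not occur, is arguably the more natural thing to say at this point. The paper's version has the advantage of being self-contained (Lemma \ref{L unstable-vertex} is stated in the paper only by reference to the smooth case) and of exhibiting the precise numerical contradiction rather than delegating it to an estimate on $\kappa_e$ that is not reproved here.
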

    \begin{proof}
    Otherwise suppose the contrast. So at least one such absolute marking is distribute to $(\sqrt[r]{\D_\0})_{\bar\rho}$, which will contribute a relative marked point over $\D_\0$ to the rubber component with contact order at least $1$, as $\rho(g_i)=1$. On the other hand, all absolute marking corresponding to $\vec\mu^r_\0$ are distributed to $(\sqrt[r]{\D_\0})_{\bar\rho}$ as they support over $(\sqrt[r]{\D_\0})_{\bar\rho}$; each one of them contribute an edge labeled by $([g_{\0,i}\inv],\mu_{\0,i})$, hence a relative marked point over $\D_\0$ to the rubber component with contact order $\mu_{\0,j}$.
    Therefore the sum of contact orders of the rubber component at $\D_\0$, denoted by $|\mu_\0'|$, is
\[
|\mu_\0'|\geq \sum_{1\leq i\leq n_\0}\mu_{\0,j}+1=|\mu_\0|+1.
\]
On the other hand for such a $\Phi$ the stable maps to the rubber component represent the class $\beta$ when project to $\D$. So we also have
\[
|\mu_0|-|\mu_\0'|=\int_{\beta}c_1(\L).
\]
Finally from the constraint on $\Gamma=(\msf g,\beta,\vec g,\mu_0,\mu_\0)$, we have
\[
|\mu_0|-|\mu_\0|=\int_{\beta}c_1(\L).
\]
Consequently $|\mu_\0'|=|\mu_\0|$, a contradiction.
\end{proof}
Hence for these $\Phi$, all absolute markings corresponding to $\vec g$ are distributed to the rubber component, and there are only $n_\0$ edges:  $e_{\0,i},1\leq i\leq n_\0$, labeled by $([g_{\0,i}\inv],\mu_{\0,i})$ (corresponding to $\mu_\0$). So there is only one graph of this form. We denote this graph by $\Phi_0$. Moreover, for this $\Phi_0$, the topological data for the rubber component $\M_{v_o}^\sim$ is the original $\Gamma$, so we denote this moduli space by $\M_\Gamma^\sim$.

\item The second type consists of only one graph, for which the target space is not expanded. It contains only one stable $\0$-vertex $v_\0$ with $n_0$ edges: $e_i, 1\leq i\leq n_0$, labeled by $([g_{0,i}],\mu_{0,i})$ in $\mu_0$. We denote this graph by $\Phi_\0$. The topological data for the moduli space corresponding to the vertex $v_\0$ is $\Gamma_\0(r)$, and we denote the associated moduli space $\M_{v_\0}((\sqrt[r]{\D_\0})_{\bar\rho})$ by $\M_{\Gamma_\0(r)}((\sqrt[r]{\D_\0})_{\bar\rho})$.
\item Others. $\Phi$ contains one $0$-vertex $v_o$ and nonempty $\tV^\0_{\mathrm{st}}(\Phi)$. We denote a vertex in $\tV^\0_{\mathrm{st}}(\Phi)$ by $v$ and
    \[
    \tE(v)=\{e_{v,1},\ldots,\}.
    \]
    Each edge $e_{v,j}$ is decorated by $([g_{e_{v,j}}], \kappa_{e_{v,j}})$. We denote an unstable vertex in $\tV^\0_{\mathrm{ust}}(\Phi)$ by $w$. As we assume that $r$ is sufficient large, for a $w\in \tV^\0_{\mathrm{ust}}(\Phi)$ we have $|\tE(w)|$=1, we denote the unique edge incident to $w$ by $e_w$.
\end{enumerate}
\end{remark}

Denote the fixed locus corresponding to $\Phi$ by $\M_\Phi$. For the two special graphs $\Phi_0$ and $\Phi_\0$ we have the following two lemmas.
\begin{lemma}
\[
\M_{\Phi_0}=\M_{\Gamma}^\sim\times_{(\msf{ID}_0)^{n_\0}} \prod_{1\leq i\leq n_\0} \scr F_i
\]
with $\scr F_i:=\scr F([g_{\0,i}\inv],\mu_{\0,i}), 1\leq i\leq n_\0$.
\end{lemma}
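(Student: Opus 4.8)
The plan is to identify $\M_{\Phi_0}$ by the standard analysis of $\cplane^*$-fixed stable maps in relative Gromov--Witten theory, making precise the decomposition already encoded by the graph $\Phi_0$ in Remark \ref{R 3-Phi's}(1); throughout one keeps $r\gg 1$ as in the running hypotheses of this subsection.

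First I would recall the shape of a $\cplane^*$-fixed stable map to an expanded degeneration of $(\Y_{\D_\0,r}|\D_0)$: every irreducible component of the domain is either contracted into one of the fixed loci $\D_0$, $(\sqrt[r]{\D_\0})_{\bar\rho}$, or into the rubber expansion over $\D_0$, or else maps onto a $\cplane^*$-fixed fibre of $\Y_{\D_\0,r}\rto\D$ with some multiplicity, and the combinatorial type is recorded by a graph as in \S\ref{SSS localize-graphs}. Specializing to $\Phi=\Phi_0$ and invoking Remark \ref{R target-expan?}(1), Lemma \ref{L No-edge-for-gi} and the explicit description in Remark \ref{R 3-Phi's}(1), such a fixed map is assembled from exactly the following data: a stable map $f_0$ to the rubber of $\Y$ of topological type $\Gamma=(\msf g,\beta,\vec g,\mu_0,\mu_\0)$, carrying the $\vec g$-markings and the $\mu_0$-relative markings, i.e.\ a point of $\M_\Gamma^\sim=\M_\Gamma(\D_0|\Y|\D_\0)^\sim$; and, for each $i=1,\ldots,n_\0$, a genus-zero fibre-class Galois cover corresponding to the edge $e_{\0,i}$ labelled $([g_{\0,i}\inv],\mu_{\0,i})$, whose $[1,0]$-end carries the single absolute marking $([g_{\0,i}],e^{2\pi\sqrt{-1}\mu_{\0,i}/r})\in\vec\mu_\0^r$ coming from the unstable $\0$-vertex $w_i$ of type (iv), and which, together with that trivial extra datum, is precisely a point of $\scr F_i=\scr F([g_{\0,i}\inv],\mu_{\0,i})$ (cf.\ \S\ref{SSS localize-graphs} and Remark \ref{R inv-F}); these pieces are glued along $n_\0$ nodes.

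Second I would analyze the gluing. The $i$-th node joins the $i$-th relative point of $f_0$ over $\D_\0$, whose image lies in the twisted sector $(\D_\0)_{[g_{\0,i}]}\cong\D_{[g_{\0,i}]}$ of $\msf I\D_\0$, to the $[0,1]$-end of the edge $e_{\0,i}$, whose image lies in $(\D_0)_{[g_{\0,i}\inv]}=\D_{[g_{\0,i}\inv]}$; the orbifold node condition $\chi(\msf h)=\chi(\msf h')\inv$ is exactly why the edge carries the label $[g_{\0,i}\inv]$, and the matching of contact orders along the two branches is forced by the numerology \eqref{E D-kappa-e}--\eqref{E q/pg=r/mg} of the edge data. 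After the canonical identifications $\msf I\D_0\cong\msf I\D\cong\msf I\D_\0$, composed with the involution of the inertia orbifold on the appropriate factor, the compatibility of all $n_\0$ gluings is exactly the condition that the evaluation map $\M_\Gamma^\sim\rto(\msf{ID}_0)^{n_\0}$ at the $\mu_\0$-relative points agrees with the product of the evaluation maps $\scr F_i\rto\msf{ID}_0$ at the $[0,1]$-ends. Assembling the pieces and, conversely, cutting a $\Phi_0$-fixed map at its $n_\0$ nodes gives mutually inverse morphisms between $\M_\Gamma^\sim\times_{(\msf{ID}_0)^{n_\0}}\prod_i\scr F_i$ and $\M_{\Phi_0}$, hence an identification already at the level of points; this is the bulk of the argument.

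Finally I would upgrade this to an isomorphism of Deligne--Mumford stacks. The automorphism group of a $\Phi_0$-fixed map splits as $\aut(f_0)$ times the automorphisms of the edge covers (there is no edge-flipping, since $\Phi_0$ is bipartite with distinguishable endpoint types), all of which are already recorded on the fibre-product side; and the $\cplane^*$-fixed part of the perfect obstruction theory of $\M_{\Gamma(r)}(\Y_{\D_\0,r}|\D_0)$ restricted to $\M_{\Phi_0}$ decomposes, via the normalization sequence of the domain curve along the $n_\0$ gluing nodes, into the obstruction theory of the rubber factor $\M_\Gamma^\sim$ and the obstruction theories of the edge factors $\scr F_i$, which is precisely the obstruction theory of the fibre product. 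The step I expect to be the main obstacle is this last bookkeeping: keeping careful track of the $\integer_r$-gerbe and twisted-sector decorations at the nodes, checking that $\scr F_i$ as described in Remark \ref{R inv-F} really parametrizes the edge covers together with their orbifold automorphisms, and confirming that the unstable vertices $w_i$ contribute no further moduli or automorphisms. Granting these points, the identification $\M_{\Phi_0}=\M_\Gamma^\sim\times_{(\msf{ID}_0)^{n_\0}}\prod_{1\le i\le n_\0}\scr F_i$ follows.
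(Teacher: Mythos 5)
Your proposal is correct and is essentially the (implicit) argument the paper relies on: the paper states this lemma without proof, treating it as the standard description of a $\cplane^*$-fixed locus attached to a bipartite localization graph, namely the fiber product of the vertex moduli (here the single rubber component $\M_\Gamma^\sim$) and the edge moduli (here the $n_\0$ covers $\scr F_i$) along the evaluation maps at the gluing nodes. Your careful accounting of the combinatorics of $\Phi_0$ (invoking Lemma \ref{L No-edge-for-gi}, the uniqueness of the graph, the type-(iv) unstable vertices absorbing the $\vec\mu_\0^r$ markings with no node contribution), the orbifold node-matching via the inertia involution $[g_{\0,i}]\leftrightarrow[g_{\0,i}\inv]$, and the stack/obstruction-theory bookkeeping is exactly what a full proof would require, and matches the paper's setup.
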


\begin{lemma}
\[
\M_{\Phi_\0}=\M_{\Gamma_\0(r)}((\sqrt[r]{\D_\0})_{\bar\rho}) \times_{(\msf I(\sqrt[r]{\D_\0})_{\bar\rho})^{n_0}} \prod_i \scr F_i
\]
with $\scr F_i:=\scr F([g_{0,i}],\mu_{0,i}), 1\leq i\leq n_0$.
\end{lemma}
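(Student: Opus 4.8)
The proof is a routine $\cplane^*$-fixed-point analysis of $\M_{\Gamma(r)}(\Y_{\D_\0,r}|\D_0)$ restricted to the decorated graph $\Phi_\0$, in the style of the corresponding step in \cite{Janda-Pandharipande-Pixton-Zvonkine2018}. First I would read off the shape of a fixed map of type $\Phi_\0$. By the description of $\Phi_\0$ in Remark \ref{R 3-Phi's}(2) --- a non-expanded target, one stable $\0$-vertex $v_\0$, $n_0$ edges $e_1,\dots,e_{n_0}$ labelled by $([g_{0,i}],\mu_{0,i})\in\mu_0$, and $n_0$ incident type-(v) unstable $0$-vertices --- such a map has domain obtained by gluing a stable map $f_\0\co C_\0\rto(\sqrt[r]{\D_\0})_{\bar\rho}$ (carried by $v_\0$) to $n_0$ genus-zero fiber-class covers $f_i\co E_i\rto\Y_{\D_\0,r}$ (one per edge), where $f_i$ meets the relative divisor $\D_0$ only at the relative marking $([g_{0,i}],\mu_{0,i})$ with contact order $\mu_{0,i}$ and is attached to $C_\0$ at its other orbifold special point. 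Since for $r\gg1$ no unstable vertex of type (i)--(iii) occurs (Lemma \ref{L unstable-vertex}) and the $0$-vertices of $\Phi_\0$ carry neither $\vec g$- nor $\vec\mu_\0^r$-markings, all those markings lie on $v_\0$; as the edges form a star, $h^1(\Phi_\0)=0$, so $(\msf g(v_\0),\beta(v_\0))=(\msf g,\beta)$ and $f_\0$ has precisely the topological type $\Gamma_\0(r)$ of \S\ref{SSS Gamma-00(r)}, that is $f_\0\in\M_{\Gamma_\0(r)}((\sqrt[r]{\D_\0})_{\bar\rho})$.

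Next I would identify the edge data and the gluing. For $r\gg1$ the cover $f_i$, together with its two orbifold special points, is exactly a point of the $(1+0+1)$-pointed genus-zero fiber-class relative moduli space $\scr F([g_{0,i}],\mu_{0,i})$ of \S\ref{SSS localize-graphs}: its relative marking over $\D_0$ carries the sector $[g_{0,i}]$ and contact order $\mu_{0,i}$, and, by the Galois-cover normalization and \eqref{E q/pg=r/mg}, its relative marking over $(\sqrt[r]{\D_\0})_{\bar\rho}$ lies in the twisted sector $([g_{0,i}\inv],e^{2\pi\sqrt{-1}\mu_{0,i}/r})$. On the other hand, by the lifting formula of \S\ref{SSS Gamma-00(r)}, the $i$-th marked point of $C_\0$ coming from $\vec\mu_0$ lies in the sector $([g_{0,i}],e^{-2\pi\sqrt{-1}\mu_{0,i}/r})$ of $(\sqrt[r]{\D_\0})_{\bar\rho}$. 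These two sectors are interchanged by the canonical involution of $\msf I(\sqrt[r]{\D_\0})_{\bar\rho}$, so the $i$-th node is balanced and the gluing condition there is exactly that the $i$-th evaluation point of $f_\0$ on $\M_{\Gamma_\0(r)}((\sqrt[r]{\D_\0})_{\bar\rho})$ and the evaluation point of $f_i$ at its $(\sqrt[r]{\D_\0})_{\bar\rho}$-marking coincide in $\msf I(\sqrt[r]{\D_\0})_{\bar\rho}$ (up to the involution). Imposing this for all $i=1,\dots,n_0$, a $\Phi_\0$-fixed map is the same datum as a point of $\M_{\Gamma_\0(r)}((\sqrt[r]{\D_\0})_{\bar\rho})\times_{(\msf I(\sqrt[r]{\D_\0})_{\bar\rho})^{n_0}}\prod_i\scr F([g_{0,i}],\mu_{0,i})$, and conversely any point of this fibered product glues back to one; this is the asserted isomorphism. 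Finally I would check, as a routine matter, that the identification is compatible with automorphisms --- each unstable $0$-vertex and each edge cover is attached at a single node, hence contributes nothing new --- so that the equality holds as orbifold groupoids, not merely on coarse spaces.

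I expect the only delicate point to be the bookkeeping of twisted sectors at the $n_0$ nodes: one must pin down the sector indices so that the fibered product is formed over the correct component of $\msf I(\sqrt[r]{\D_\0})_{\bar\rho}$, i.e. one must reconcile the $\Upsilon_{r,\bar\rho}$-lifting convention built into $\Gamma_\0(r)$ in \S\ref{SSS Gamma-00(r)} with the Galois-cover normalization of $\scr F([g_{0,i}],\mu_{0,i})$ from \S\ref{SSS localize-graphs}. With the sectors matched, the remaining steps are the orbifold analogue of the corresponding statement in \cite{Janda-Pandharipande-Pixton-Zvonkine2018} and run in parallel with the analysis of $\Phi_0$ in the preceding lemma.
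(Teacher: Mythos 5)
Your argument is correct and is exactly the natural identification the paper intends; the paper itself states this lemma without proof, reading it off directly from the description of the graph $\Phi_\0$ in Remark \ref{R 3-Phi's}(2). Your sector bookkeeping at the nodes — matching the stable-vertex marking in $([g_{0,i}],e^{-2\pi\sqrt{-1}\mu_{0,i}/r})$ against the edge endpoint in $([g_{0,i}^{-1}],e^{2\pi\sqrt{-1}\mu_{0,i}/r})$ via the canonical involution on $\msf I(\sqrt[r]{\D_\0})_{\bar\rho}$ — is the one nontrivial check, and you have it right.
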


Now consider a $\Phi$ other than $\Phi_0,\Phi_\0$. For each $v\in\tV_{\mathrm{st}}^\0(\Phi)$, define
\[
\scr N_v((\sqrt[r]{\D_\0})_{\bar\rho}):= \M_v((\sqrt[r]{\D_\0})_{\bar\rho}) \times_{(\msf I(\sqrt[r]{\D_\0})_{\bar\rho})^{|\tE(v)|}} \prod_{e\in\tE(v)}\scr F_e
\]
with $\scr F_e=\scr F([g_e],\kappa_e)$.

\begin{lemma}
\[\M_\Phi=\left(\prod_{v\in\tV_{\mathrm{st}}^\0(\Phi)}\scr N_v((\sqrt[r]{\D_\0})_{\bar\rho})\times\prod_{w\in \tV^\0_{\mathrm{ust}}(\Phi)}\scr F_{e_w}\right)\times_{(\msf{ID}_0)^{|\tE(\Phi)|}}\M_{v_o}^\sim,
\]
where we have used the fact that for an unstable vertex $w\in \tV^\0_{\mathrm{ust}}(\Phi)$, $|\tE(w)|=1$.
\end{lemma}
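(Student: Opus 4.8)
The plan is to identify the $\cplane^*$-fixed locus $\M_\Phi$ by the standard decomposition of a $\cplane^*$-fixed stable map, carried out in the orbifold relative setting along the lines of \cite[\S 3]{Janda-Pandharipande-Pixton-Zvonkine2018} and \cite{Chen-Du-WangR2019a}. First I would fix a $\cplane^*$-fixed object of $\M_{\Gamma(r)}(\Y_{\D_\0,r}|\D_0)$ whose decorated graph is $\Phi$. Since $\cplane^*$ acts trivially on the two sections $\D_0$ and $(\sqrt[r]{\D_\0})_{\bar\rho}$ of $\Y_{\D_\0,r}$ and on every bubble of the expansion over $\D_0$, each irreducible component of the domain curve is either contracted into one of these loci or maps with positive degree onto a fixed fiber of $\pi\co\Y_{\D_\0,r}\rto\D$; the former components are organized by the vertices of $\Phi$ and the latter by its edges, exactly as set up in \S\ref{SSS localize-graphs}.

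Next I would read off the contribution of each vertex and edge. The unique $0$-labeled vertex $v_o$ collects all components mapping into the expansion over $\D_0$; for a graph of this third type the target is expanded over $\D_0$ (Remark \ref{R 3-Phi's}), so these components assemble into a (possibly disconnected) stable map to the rubber over $\D_0\cong\D$, contributing the factor $\M_{v_o}^\sim$, whose relative markings are $\mu_0$ at the original $\D_0$-end and, at the other end, the nodes along the edges together with those absolute markings of $\vec g$ distributed here. Each stable $\0$-labeled vertex $v$ collects a stable map to $(\sqrt[r]{\D_\0})_{\bar\rho}$ of topological type $\mu_\0(v)$, contributing $\M_v((\sqrt[r]{\D_\0})_{\bar\rho})$. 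Each edge $e$ is forced to be a Galois cover of a fiber of the precise shape recorded in item (2) of \S\ref{SSS localize-graphs}, and the moduli of such covers, remembered with their two relative ends, is exactly the fiber-class relative space $\scr F_e=\scr F([g_e],\kappa_e)$, a $\integer_{d_e}$-gerbe over $\D_{[g_e]}'$ once $r\gg 1$ by Remark \ref{R inv-F}. Finally, each unstable $\0$-labeled vertex $w$, necessarily of type (iv) with $|\tE(w)|=1$ once $r\gg 1$ by Lemma \ref{L unstable-vertex}, carries no moduli of its own and contributes only its incident edge $\scr F_{e_w}$, any $\vec\mu_\0^r$-leg at $w$ being already recorded on that edge.

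It then remains to impose the node-matching conditions and assemble the product. At the $[1,0]$-end of an edge $e$ incident to a stable vertex $v$, the evaluation into $\msf I(\sqrt[r]{\D_\0})_{\bar\rho}$ coming from $\scr F_e$ must agree with the corresponding evaluation from $\M_v((\sqrt[r]{\D_\0})_{\bar\rho})$; imposing this simultaneously over all edges at $v$ produces $\scr N_v((\sqrt[r]{\D_\0})_{\bar\rho})=\M_v((\sqrt[r]{\D_\0})_{\bar\rho})\times_{(\msf I(\sqrt[r]{\D_\0})_{\bar\rho})^{|\tE(v)|}}\prod_{e\in\tE(v)}\scr F_e$. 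At the $[0,1]$-end every edge evaluates into $\msf{ID}_0=\msf I\D$, and these values must match the rubber evaluations at the corresponding end of $\M_{v_o}^\sim$; gluing all $|\tE(\Phi)|$ edges at once yields the asserted description of $\M_\Phi$, and conversely any datum of this form glues back uniquely to a $\cplane^*$-fixed stable map with graph $\Phi$, giving the isomorphism. I expect the only real subtlety --- rather than a genuine obstacle --- to be the orbifold bookkeeping at the nodes: one must check that the fiber products are taken over the correct components of the inertia orbifolds, matching twisted sectors and the $\integer_r$- resp.\ $\integer_{d_e}$-gerbe bands coming from $(\sqrt[r]{\D_\0})_{\bar\rho}\rto\D$ and from the covers, and that the automorphisms of the edge-covers are counted exactly once. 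The hypothesis $r\gg1$ enters precisely here, to exclude the degenerate vertices of types (i)--(iii) and to furnish each $\scr F_e$ with its clean gerbe description.
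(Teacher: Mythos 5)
Your argument is correct and is exactly the standard $\cplane^*$-fixed-locus decomposition the paper leaves implicit (this lemma, like the two preceding it for $\Phi_0$ and $\Phi_\0$, is stated without proof). You correctly observe that for a type-III graph the target must be expanded so the sole $0$-vertex $v_o$ contributes the rubber space, that stable $\0$-vertices assemble with their incident edge covers into $\scr N_v$ via fiber product over $\msf I(\sqrt[r]{\D_\0})_{\bar\rho}$, that each unstable $\0$-vertex is of type (iv) (types (i)--(iii) are excluded by Lemma~\ref{L unstable-vertex} for $r\gg1$, and type (v) cannot occur in an expanded-target graph by Remark~\ref{R target-expan?}) and hence contributes only its single edge $\scr F_{e_w}$ with a marked rather than nodal end, and that the remaining gluing along all $|\tE(\Phi)|$ edges to the rubber happens over $(\msf{ID}_0)^{|\tE(\Phi)|}$.
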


For a $\Phi$ set $\kappa=\kappa_\Phi:=\prod_{e\in \tE(\Phi)}\kappa_e$. Then as now $\scr F_e=\scr F([g_e],\kappa_e)$ is a $\integer_{d_e}$-gerbe over $\D_{[g_e]}'$, we have
\begin{lemma}\label{L 3.13}
\begin{eqnarray}
\M_{\Phi_0} &\cong& \kappa\inv\M_\Gamma^\sim,\\
\M_{\Phi_\0} &\cong& r^{n_0}\kappa\inv\M_{\Gamma_\0(r)} ((\sqrt[r]{\D_\0})_{\bar\rho}),\\
\M_\Phi &\cong& r^{|\Phi|}\kappa\inv\M_{\Phi}',
\end{eqnarray}
with $|\Phi|=\sum_{v\in\tV_{\mathrm{st}}^\0(\Phi)}|\tE(v)|$ and $\M_\Phi'=(\prod_{v\in\tV_{\mathrm{st}}^\0(\Phi)} \M_v((\sqrt[r]{\D_\0})_{\bar\rho})) \times_{(\msf{ID}_0)^{|\Phi|}} \M_{v_o}^\sim$.
\end{lemma}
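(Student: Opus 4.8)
The plan is to derive all three isomorphisms from the fiber‑product descriptions of $\M_{\Phi_0}$, $\M_{\Phi_\0}$ and $\M_\Phi$ obtained in the three lemmas immediately preceding the statement, by contracting the edge factors $\scr F_e=\scr F([g_e],\kappa_e)$ one at a time and recording the multiplicity each contraction produces. The only geometric input needed is the structure of a single edge space, supplied by Remark \ref{R inv-F}: for $r\gg1$ the space $\scr F_e$ is a $\integer_{d_e}$‑gerbe over $\D'_{[g_e]}=\D_{[g_e]}/\langle g_e\rangle$, the map $\D_{[g_e]}\rto\D'_{[g_e]}$ is a $\langle g_e\rangle$‑gerbe of degree $\fk o(g_e)$, and, by Lemma \ref{L T(Dr)}, the twisted sector $((\sqrt[r]{\D_\0})_{\bar\rho})_{([g_e\inv],\xi_e)}$ (with $\xi_e=e^{2\pi\sqrt{-1}\kappa_e/r}$, cf. \eqref{E q/pg=r/mg}) is a $\integer_r$‑gerbe over $\D_{[g_e\inv]}=\D_{[g_e]}$. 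Using $\kappa_e=d_e/\fk o(g_e)$ (see \eqref{E D-kappa-e}), the two evaluation maps of $\scr F_e$ then have stacky degrees
\begin{align*}
\deg\big(ev_0\co\scr F_e\rto(\D_0)_{[g_e]}\big)&=\frac{\fk o(g_e)}{d_e}=\frac{1}{\kappa_e},\\
\deg\big(ev_\0\co\scr F_e\rto((\sqrt[r]{\D_\0})_{\bar\rho})_{([g_e\inv],\xi_e)}\big)&=\frac{r\,\fk o(g_e)}{d_e}=\frac{r}{\kappa_e},
\end{align*}
the first being exactly the multiplicity recorded in the integration formula \eqref{E inv-F}, and the second following in the same way with the $\integer_r$‑gerbe now on the other side.

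I would treat $\Phi_0$ first. The corresponding lemma gives $\M_{\Phi_0}=\M_\Gamma^\sim\times_{(\msf{ID}_0)^{n_\0}}\prod_{i=1}^{n_\0}\scr F_i$ with $\scr F_i=\scr F([g_{\0,i}\inv],\mu_{\0,i})$; since $\Phi_0$ has only unstable $\0$‑vertices, each $\scr F_i$ is glued to $\M_\Gamma^\sim$ through its $ev_0$‑evaluation into the twisted sector $(\D_0)_{[g_{\0,i}\inv]}$, the $ev_\0$‑evaluation being the free absolute marking coming from $\vec\mu_\0^r$. Base‑changing $\M_\Gamma^\sim$ along $ev_0$, whose stacky degree is $\kappa_i\inv$, and iterating over $i$ multiplies the class by $\prod_i\kappa_i\inv=\kappa\inv$, so $\M_{\Phi_0}\cong\kappa\inv\M_\Gamma^\sim$. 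For $\Phi_\0$, the relevant lemma gives $\M_{\Phi_\0}=\M_{\Gamma_\0(r)}((\sqrt[r]{\D_\0})_{\bar\rho})\times_{(\msf I(\sqrt[r]{\D_\0})_{\bar\rho})^{n_0}}\prod_{i=1}^{n_0}\scr F_i$ with $\scr F_i=\scr F([g_{0,i}],\mu_{0,i})$, each $\scr F_i$ now glued along its $ev_\0$‑evaluation into the root‑gerbe sector $((\sqrt[r]{\D_\0})_{\bar\rho})_{([g_{0,i}\inv],\xi_i)}$ (its $ev_0$‑leg providing the relative marking of $\M_{\Phi_\0}$ over $\D_0$). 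Contracting each $\scr F_i$ now costs $\deg(ev_\0)=r/\kappa_i$, and running over $i=1,\dots,n_0$ yields $\prod_i(r/\kappa_i)=r^{n_0}\kappa\inv$, i.e. $\M_{\Phi_\0}\cong r^{n_0}\kappa\inv\M_{\Gamma_\0(r)}((\sqrt[r]{\D_\0})_{\bar\rho})$.

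For a general $\Phi$, the third lemma exhibits $\M_\Phi$ as $\M_{v_o}^\sim$ glued over $(\msf{ID}_0)^{|\tE(\Phi)|}$ to $\prod_{v\in\tV^\0_{\mathrm{st}}(\Phi)}\scr N_v((\sqrt[r]{\D_\0})_{\bar\rho})\times\prod_{w\in\tV^\0_{\mathrm{ust}}(\Phi)}\scr F_{e_w}$, where $\scr N_v=\M_v((\sqrt[r]{\D_\0})_{\bar\rho})\times_{(\msf I(\sqrt[r]{\D_\0})_{\bar\rho})^{|\tE(v)|}}\prod_{e\in\tE(v)}\scr F_e$. There are two kinds of edges. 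An edge $e_w$ incident to an unstable $\0$‑vertex is a relative tail glued to $\M_{v_o}^\sim$ through $ev_0$, and its contraction costs $\kappa_{e_w}\inv$. An edge $e\in\tE(v)$ incident to a stable $\0$‑vertex is a bridge joining $\M_v((\sqrt[r]{\D_\0})_{\bar\rho})$ (along a root‑gerbe sector, via $ev_\0$) to $\M_{v_o}^\sim$ (along a twisted sector of $\D_0$, via $ev_0$); contracting it replaces the root‑gerbe‑valued evaluation of $\M_v$ by its $\epsilon$‑projection to $\msf I\D$ and costs $\deg(ev_\0)=r/\kappa_e$. Multiplying all these factors, and using $\kappa=\prod_{e\in\tE(\Phi)}\kappa_e$ together with $|\Phi|=\sum_{v\in\tV^\0_{\mathrm{st}}(\Phi)}|\tE(v)|$ (the number of bridge edges), gives $\M_\Phi\cong r^{|\Phi|}\kappa\inv\M_\Phi'$ with $\M_\Phi'=\big(\prod_{v\in\tV^\0_{\mathrm{st}}(\Phi)}\M_v((\sqrt[r]{\D_\0})_{\bar\rho})\big)\times_{(\msf{ID}_0)^{|\Phi|}}\M_{v_o}^\sim$.

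The main obstacle will be the multiplicity bookkeeping in the first step: one must cleanly disentangle the $\integer_{d_e}$‑gerbe structure of $\scr F_e$ over $\D'_{[g_e]}$ from the two gerbes it is glued against — the $\langle g_e\rangle$‑gerbe $\D_{[g_e]}\rto\D'_{[g_e]}$ coming from the cyclic kernel of the twisted sector, and the $\integer_r$‑gerbe $((\sqrt[r]{\D_\0})_{\bar\rho})_{([g_e\inv],\xi_e)}\rto\D_{[g_e\inv]}$ coming from the root construction — so that the extra factor $r$ is produced exactly once per edge incident to a stable $\0$‑vertex and nowhere else. The identities \eqref{E D-kappa-e}, \eqref{E q/pg=r/mg} and the integration formula \eqref{E inv-F} are precisely what control this; granting them, each edge contraction is a routine base change along an inertia stack and the three claimed isomorphisms follow.
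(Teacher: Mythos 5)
The paper states this lemma without a written proof; it simply records the three fiber-product descriptions of $\M_{\Phi_0}$, $\M_{\Phi_\0}$, $\M_\Phi$ and then asserts the result as a consequence of $\scr F_e$ being a $\integer_{d_e}$-gerbe over $\D'_{[g_e]}$ (Remark \ref{R inv-F}). Your reconstruction supplies precisely the omitted bookkeeping: from the gerbe structure and \eqref{E D-kappa-e}, \eqref{E q/pg=r/mg} you correctly extract that $ev_0\colon\scr F_e\to(\D_0)_{[g_e]}$ has stacky degree $\fk o(g_e)/d_e=1/\kappa_e$ and $ev_\0\colon\scr F_e\to((\sqrt[r]{\D_\0})_{\bar\rho})_{([g_e\inv],\xi_e)}$ has stacky degree $r\,\fk o(g_e)/d_e=r/\kappa_e$, consistent with the integral identity \eqref{E inv-F}, and the three constants then arise by base-changing each $\scr F_e$ out of the relevant fiber product. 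This is exactly the argument the paper is appealing to, so the proposal is correct and follows the paper's intended route.
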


\subsubsection{Localization formula}

By the virtual localization we have
\begin{align}\label{E loc-formula}
[\M_{\Gamma(r)}(\Y_{\D_\0,r}|\D_0)]^\vir= \sum_{\Phi} \frac{1}{|\aut(\Phi)|}\cdot i_*\left( \frac{[\M_\Phi]^\vir}{e(\cN_\Phi)}\right),
\end{align}
where $i:\M_\Phi\rto \M_{\Gamma(r)}(\Y_{\D_\0,r}|\D_0)$ is the inclusion of fixed loci, and $\cN_\Phi$ is the virtual normal bundle of $\M_\Phi$ in $\M_{\Gamma(r)}(\Y_{\D_\0,r}|\D_0)$.

Let
\[
\msf T\rto \Y_{\D_\0,r}
\]
be the tangent line bundle of the fiber of $\Y_{\D_\0,r}\rto\D$. We also denote by $\msf T$ the pull-back of $\msf T$ from $\Y_{\D_\0,r}$ to the expansion of $\Y_{\D_\0,r}$ along $\D_0$.

Let $[\f:\C\rto \Y_{\D_\0,r}]\in\M_\Phi$. The $\cplane^*$-equivariant Euler class of the virtual normal bundle of $\M_\Phi$ in $\M_{\Gamma(r)}(\Y_{\D_\0,r}|\D_0)$ can be described as
\begin{align}\label{E normal-bdl-A.2}
\frac{1}{e(\cN_\Phi)}= \frac{e(H^1(\C,\f^*\msf T(-\D_0)))}{e(H^0(\C,\f^*\msf T(-\D_0)))}\cdot \frac{1}{\prod_i e(\N_i)}\cdot\frac{1}{e(\N_0)},
\end{align}
where $\N_i$ is a node corresponds to half edge of $\Phi$ adjacent to a $\0$-labeled vertex, and $\N_0$ corresponds to the expansion of the target $\Y_{\D_\0,r}$ over $\D_0$.

We first compute the leading term
\[
\frac{e(H^1(\C,\f^*\msf T(-\D_0)))}{e(H^0(\C,\f^*\msf T(-\D_0)))}.
\]
We use the normalization exact sequence for the domain tensored with the line bundle $\f^*\msf T(-\D_0)$. The associated long exact sequence in cohomology decomposes the leading term into a product of vertex, edge, node contributions:
\begin{itemize}
\item Let $v\in \tV_{\mathrm{st}}^\0(\Phi)$ be a stable vertex over $(\sqrt[r]{\D_\0})_{\bar\rho}\subseteq \Y_{\D_\0,r}$ corresponding to moduli space
    \[
    \M_v((\sqrt[r]{\D_\0})_{\bar\rho}) :=\M_{\msf g(v),\beta(v),\mu_\0(v)}((\sqrt[r]{\D_\0})_{\bar\rho}).
    \]
    As in \S \ref{SS TGW-root-bdls} we have a universal curve
    \[
    \pi\co \tilde{\scr C}\rto\tilde{\M}_v((\sqrt[r]{\D_\0})_{\bar\rho})
    \]
    and a pull back bundle $\f^*\sqrt[r]{\L^*}$ (the pull back of the normal bundle of $(\sqrt[r]{\D_\0})_{\bar\rho}$). Consider the class
\[
e(-(\sqrt[r]{\L^*})_v\otimes\mc O({-\frac{1}r}))
=c_{\mathrm{rk}(v)} (-(\sqrt[r]{\L^*})_v\otimes\mc O({-\frac{1}r}))
\]
in $H^*(\tilde\M_v)\otimes \ration[t,\frac{1}{t}]$, where
\[
(\sqrt[r]{\L^*})_v=\mc R\pi_*\f^*\sqrt[r]{\L^*}
\]
and $\mc O({-\frac{1}r})$ is a trivial line bundle with a $\cplane^*$-action of weight ${-\frac{1}r}$. Since we assume that $r$ is sufficient large (cf. Lemma \ref{L unstable-vertex}), the rank of $-(\sqrt[r]{\L^*})_v$ over $\tilde\M_v((\sqrt[r]{\D_\0})_{\bar\rho})$ is
\[
\mathrm{rk}(v)=\msf g(v)-1+|\tE(v)|.
\]
So
\[
c_{\mathrm{rk}(v)}(-(\sqrt[r]{\L^*})_v\otimes\mc O(-\frac{1}r))=\sum_{0\leq d\leq \mathrm{rk}(v)}c_d(-(\sqrt[r]{\L^*})_v) \left({-\frac{t}r}\right)^{\msf g(v)-1+|\tE(v)|-d}.
\]
The contribution $\frac{e(H^1(\C,\f^*\msf T(-\D_0)))}{e(H^0(\C,\f^*\msf T(-\D_0)))}$ yields the class $\pi_*(c_{\mathrm{rk}(v)}(-(\sqrt[r]{\L^*})_v\otimes\mc O(-\frac{1}r)))$, denoted by
\[
\tilde c_{\mathrm{rk}(v)}(-(\sqrt[r]{\L^*})_v\otimes\mc O(-\frac{1}r))=\sum_{0\leq d\leq \mathrm{rk}(v)}\tilde c_d(-(\sqrt[r]{\L^*})_v) \left({-\frac{t}r}\right)^{\msf g(v)-1+|\tE(v)|-d},
\]
where $\pi\co \tilde \M_v((\sqrt[r]{\D_\0})_{\bar\rho})\rto \M_v((\sqrt[r]{\D_\0})_{\bar\rho})$ is the natural blowdown map.

\item The two possible unstable vertices contribute 1.
\item The edge contribution is trivial since the degree
$\frac{\kappa_e}{r}$ of $\f^*\msf T(-\D_0)$ is less than $1$ for sufficient large $r$.
\item The contribution of a node $\N$ over $\D_\0$ is trivial. Suppose that the edge corresponding this node $\N$ is labeled with $([g_e],\kappa_e)$, then the isotropy of the image of $\N$ is $(g_e\inv,e^{2\pi\sqrt{-1}\frac{\fkq_e}{\fkp_e\fk o(g_e)}})=(g_e\inv,e^{2\pi\sqrt{-1}\frac{\kappa_e}{r}})$. Since we must have $\kappa_e>0$, so $\N$ must be an orbifold node. Then the space of sections $H^0(\N,\f^* \msf T(-\D_0))$ vanishes, and $H^1(\N,\f^*\msf T(-\D_0))$ is trivial for dimension reasons.
Nodes over $\D_0$ contribute $1$.
\end{itemize}

Consider next the last two factors of \eqref{E normal-bdl-A.2},
\[
\frac{1}{\prod\limits_i e(\msf N_i)}\frac{1}{e(\msf N_\0)}.
\]
\begin{itemize}
\item The product $\prod\limits_i e(\msf N_i)\inv$ is over the nodes that correspond to half-edges of the graph $\Phi$ adjacent to a stable $\0$-labeled vertex. If $\msf N$ is such a node corresponding to an edge $e\in \tE(\Phi)$ and the associated vertex $v$ is stable, then
    \[
    \frac{1}{e(\msf N)} =\frac{1}{-\frac{1}{\kappa_e}(t+ev^*_e(c_1(\L))) -\bar\psi_e}=- \frac{\kappa_e}{t+ev^*_e(c_1(\L)) +\kappa_e\bar\psi_e}.
    \]
    This factor corresponds to the smoothing of the node $\msf N$ of the domain curve: $e(\msf N)$ is the first Chern class of the normal line bundle of the divisor of nodal domain curves. There are two parts denoted by $\cplane_{e,+}\otimes\cplane_{e,-}$ corresponding to the two branches mapped into $(\sqrt[r]{\D_\0})_{\bar\rho}$ and fibers respectively. Then
\[
(\tilde\cplane_{e,-})^{\fkq_e}=\L^{*,\frac{1}r},\qq (\tilde\cplane_{e,-})^{\fkp_e\fk o(g_e)}=\cplane_{e,-}.
\]
where $|[\tilde\cplane_{e,-}/\integer_{r\fk o(g_e)}]|=\cplane_{e,-}$ is the coarse space. So since by \eqref{E q/pg=r/mg}, $\frac{\fkq_e}{\fkp_e\fk o(g_e)}=\frac{\kappa_e}{r}$, we have
\[
\cplane_{e,-}=\L^{*,\frac{\fkp_e\fk o(g_e)}{r\fkq_e}}= \L^{*,\frac{1}{\kappa_e}}.
\]
Therefore
\begin{align*}
&e(\cplane_{e,-})=\frac{1}{\kappa_e}(-t-ev_e^*(c_1(\L))), \qq\mathrm{and}\qq e(\cplane_{e,+}\otimes\cplane_{e,-})=\frac{1}{\kappa_e} (-t-ev_e^*(c_1(\L)))-\bar\psi_e,
\end{align*}
with $-\bar\psi_e$ corresponding to $\cplane_{e,+}$.

In the case of an unstable vertex of type (iv), the associated edge does not produce a node of the domain. The type (iv) edge incidences do not appear in
$\prod_ie(\msf N_i)\inv$.

\item $\msf N_0$ corresponds to the expansion of the target $\Y_{\D_\0,r}$ over $\D_0$. The factor $e(\msf N_0)$ is $1$ if the target $(\Y_{\D_\0,r}|\D_0)$ does not expand and
\[
\frac{1}{e(\msf N_0)}=\frac{\kappa} {t+\Psi_0}
\]
if the target expands. 
\end{itemize}

Finally, for each $v\in \tV^\0_{\mathrm{st}}(\Phi)$ we define
\begin{align}\label{E cont-v-00}
\cont_v:=r^{|\tE(v)|} \tilde c_{\mathrm{rk}(v)}(-(\sqrt[r]{\L^*})_v\otimes\mc O(-\frac{1}{r}))\cdot \prod_{e\in\tE(v)}\frac{-\kappa_e}{t+ev_e^*(c_1(\L))+\kappa_e \bar\psi_e}.
\end{align}

Then the contributions of all decorated graphs to the virtual localization formula for the virtual class of $\M_{\Gamma_r}(\Y_r|\D_0)$ are as follows.
\begin{prop}
We have
\begin{eqnarray*}
\cont_{\Phi_0}&=&\frac{1}{t+\Psi_0}\cap [\M_\Gamma^\sim]^\vir,\\
\cont_{\Phi_\0}&=&\kappa\inv\cont_{v_\0}\cap [\M_{\Gamma_\0(r)}((\sqrt[r]{\D_\0})_{\bar\rho}) ]^\vir,\\
\cont_\Phi &=&\frac{1} {|\aut(\Phi)|} \cdot \frac{1}{t+\Psi_0}\cdot \prod_{v\in\tV^\0_{\mathrm{st}}}\cont_v\cap [\M_\Phi']^\vir.
\end{eqnarray*}
\end{prop}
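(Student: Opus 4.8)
The plan is to assemble, case by case according to the trichotomy of Remark~\ref{R 3-Phi's}, the three factors of \eqref{E normal-bdl-A.2} from the vertex/edge/node contributions computed above, and then to match the outcome against the definition \eqref{E cont-v-00} of $\cont_v$ and the fixed-locus identifications of Lemma~\ref{L 3.13}. Throughout I write $\cont_\Phi:=\frac{1}{|\aut(\Phi)|}\,i_*\!\big(\frac{[\M_\Phi]^\vir}{e(\cN_\Phi)}\big)$ for the $\Phi$-summand of \eqref{E loc-formula}; for $\Phi_0$ and $\Phi_\0$ the legs are ordered, which rigidifies the graph, so $|\aut(\Phi_0)|=|\aut(\Phi_\0)|=1$ and the automorphism factor is silent there.

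First, for $\Phi_0$ the target expands and every $\0$-vertex is unstable of type~(iv); in particular there are no stable $\0$-vertices, so $e(H^1(\C,\f^*\msf T(-\D_0)))/e(H^0(\C,\f^*\msf T(-\D_0)))=1$, the type-(iv) edges contribute nothing to $\prod_i e(\N_i)\inv$ so that factor is also $1$, and $e(\N_0)\inv=\kappa/(t+\Psi_0)$. Combining with $\M_{\Phi_0}\cong\kappa\inv\M_\Gamma^\sim$ from Lemma~\ref{L 3.13}, the two $\kappa$'s cancel and I obtain $\cont_{\Phi_0}=\frac{1}{t+\Psi_0}\cap[\M_\Gamma^\sim]^\vir$. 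Next, for $\Phi_\0$ the target is not expanded, so $e(\N_0)=1$; there is one stable $\0$-vertex $v_\0$ carrying the $n_0$ edges labelled by $\mu_0$, each of which gives a node over $(\sqrt[r]{\D_\0})_{\bar\rho}$. Reading the first two factors of \eqref{E normal-bdl-A.2} and comparing with \eqref{E cont-v-00} (whose prefactor is $r^{|\tE(v_\0)|}=r^{n_0}$) yields $\frac{1}{e(\cN_{\Phi_\0})}=r^{-n_0}\cont_{v_\0}$; since $\M_{\Phi_\0}\cong r^{n_0}\kappa\inv\M_{\Gamma_\0(r)}((\sqrt[r]{\D_\0})_{\bar\rho})$, the powers of $r$ cancel, leaving $\cont_{\Phi_\0}=\kappa\inv\cont_{v_\0}\cap[\M_{\Gamma_\0(r)}((\sqrt[r]{\D_\0})_{\bar\rho})]^\vir$.

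For a general $\Phi$ I would run the same argument: the target expands so $e(\N_0)\inv=\kappa/(t+\Psi_0)$; by Lemma~\ref{L unstable-vertex} the unstable $\0$-vertices are all of type~(iv), so each contributes $1$ to $e(H^1)/e(H^0)$ with its edge absent from $\prod_i e(\N_i)\inv$; each stable $\0$-vertex $v$ contributes $\tilde c_{\mathrm{rk}(v)}\big(-(\sqrt[r]{\L^*})_v\otimes\mc O(-\tfrac1r)\big)$ to $e(H^1)/e(H^0)$ and $\prod_{e\in\tE(v)}\frac{-\kappa_e}{t+ev_e^*(c_1(\L))+\kappa_e\bar\psi_e}$ to $\prod_i e(\N_i)\inv$; and the edge and node contributions to $e(H^1)/e(H^0)$ vanish for $r\gg1$ because the degree $\kappa_e/r$ of $\f^*\msf T(-\D_0)$ along an edge is then $<1$. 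Multiplying these and invoking \eqref{E cont-v-00} gives $\frac{1}{e(\cN_\Phi)}=\frac{\kappa}{t+\Psi_0}\,r^{-|\Phi|}\prod_{v\in\tV^\0_{\mathrm{st}}(\Phi)}\cont_v$ with $|\Phi|=\sum_v|\tE(v)|$; since $\M_\Phi\cong r^{|\Phi|}\kappa\inv\M_\Phi'$ by Lemma~\ref{L 3.13}, the $r$'s and $\kappa$'s cancel and I arrive at $\cont_\Phi=\frac{1}{|\aut(\Phi)|}\cdot\frac{1}{t+\Psi_0}\prod_{v\in\tV^\0_{\mathrm{st}}(\Phi)}\cont_v\cap[\M_\Phi']^\vir$.

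The only genuinely delicate point — where I expect the main work to lie — is the bookkeeping of the powers of $r$ and the factors $\kappa_e$. The prefactor $r^{|\tE(v)|}$ in the definition \eqref{E cont-v-00} of $\cont_v$ must absorb precisely the $r$-multiplicities in the gerbe presentations of the fixed loci given by Lemma~\ref{L 3.13}, and the $\kappa=\prod_e\kappa_e$ produced by the target-expansion factor $e(\N_0)\inv$ must cancel the $\kappa\inv$ appearing there; both identifications rest on $\scr F_e$ being a $\integer_{d_e}$-gerbe over $\D'_{[g_e]}$, so that integration along $\scr F_e$ carries the factor $1/\kappa_e$ of \eqref{E inv-F} — and this is precisely where $r\gg1$ enters, together with Lemma~\ref{L unstable-vertex}. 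Once these factors are matched, the three asserted formulas fall out.
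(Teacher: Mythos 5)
The paper does not give an explicit proof of this Proposition but simply states it after listing the vertex/edge/node/expansion contributions to $e(\cN_\Phi)^{-1}$ and the gerbe identifications of Lemma~\ref{L 3.13}; your proof assembles exactly these ingredients in the intended way, correctly matches the prefactor $r^{|\tE(v)|}$ in the definition \eqref{E cont-v-00} of $\cont_v$ against the $r^{|\Phi|}$ (resp. $r^{n_0}$) from Lemma~\ref{L 3.13}, and correctly tracks that $\kappa$ cancels for the expanded graphs $\Phi_0$ and general $\Phi$ (via $e(\msf N_0)^{-1}=\kappa/(t+\Psi_0)$) but survives as $\kappa^{-1}$ for $\Phi_\0$ since there the target is unexpanded and $e(\msf N_0)=1$. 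This is essentially the same route as the paper, so the proposal is correct.
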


\subsubsection{Proof of Theorem \ref{T DR}}
For each $\Phi$ and a stable vertex $v\in \tV^\0_{\mathrm{st}}(\Phi)$ over $(\sqrt[r]{\D_\0})_{\bar\rho}$ we have
\begin{align*}
\cont_v&=r^{|\tE(v)|} \tilde c_{\mathrm{rk}(v)}(-(\sqrt[r]{\L^*})_v\otimes\mc O(-\frac{1}{r}))\cdot \prod_{e\in\tE(v)}\frac{-\kappa_e}{t+ev_e^*(c_1(\L))+\kappa_e \bar\psi_e}\\
&=\tilde c_{\mathrm{rk}(v)}(-(\sqrt[r]{\L^*})_v\otimes\mc O(-\frac{1}{r}))\cdot \prod_{e\in\tE(v)}\frac{-r\cdot\kappa_e}{t+ev_e^*(c_1(\L))+\kappa_e \bar\psi_e}\\
&=\sum_{0\leq d\leq \mathrm{rk}(v)} \tilde c_d(-(\sqrt[r]{\L^*})_v) \left(-\frac{t}{r}\right)^{\msf g(v)-1+|\tE(v)|-d}\cdot
\left(-\frac{t}{r}\right)^{-|\tE(v)|}
\cdot
\prod_{e\in\tE(v)}\frac{\kappa_e}{1+\frac{ev_e^*(c_1(\L))+\kappa_e \bar\psi_e}{t}}\\
&=\kappa_v\sum_{0\leq d\leq \mathrm{rk}(v)} \tilde c_d(-(\sqrt[r]{\L^*})_v) \left(-\frac{t}{r}\right)^{\msf g(v)-1-d}\cdot
\prod_{e\in\tE(v)}\frac{1}{1+\frac{ev_e^*(c_1(\L))+\kappa_e \bar\psi_e}{t}},
\end{align*}
where $\kappa_v:=\prod_{e\in\tE(v)}\kappa_e$.

Set
\[
\hat c_d:=(-r)^{2d-2\msf g(v)+1}\tilde c_d(-(\sqrt[r]{\L^*})_v).
\]
Then
\begin{align*}
\mathrm{Cont}_v=\kappa_v t\inv \sum_{0\leq d\leq \mathrm{rk}(v)}(-tr)^{\msf g(v)-d}\hat c_d\cdot
\prod_{e\in\tE(v)}\frac{1}{1+\frac{ev_e^*(c_1(\L))+\kappa_e \bar\psi_e}{t}}.
\end{align*}
Set $\widetilde{\cont}_v:=t\cdot\cont_v$. Change the variable $s:=tr,r=r$, we get
\begin{align*}
\widetilde{\mathrm{Cont}}_v=\kappa_v\sum_{0\leq d\leq \mathrm{rk}(v)}(-s)^{\msf g(v)-d}\hat c_d\cdot
\prod_{e\in\tE(v)}\frac{1}{1+\frac{r(ev_e^*(c_1(\L))+\kappa_e \bar\psi_e)}{s}}.
\end{align*}

The virtual class of the moduli space of rubber maps has non-equivariant limit, and $\cplane^*$ acts trivially on $\M_{\Gamma}(\D)$. Therefore the $\cplane^*$-equivariant push-forward
$\epsilon_{\D,*}([\M_{\Gamma(r)}(\Y_{\D_\0,r}|\D_0)]^\vir)$ via the natural map
\[
\epsilon_\D:\M_{\Gamma(r)}(\Y_{\D_\0,r}|\D_0)\rto \M_\Gamma(\D)
\]
is a polynomial in $t$. Hence its coefficient of $t\inv$ is equal to $0$. That is
\begin{align}\label{E coeff-t0=0}
\mathrm{Coeff}_{t^0}\left[\epsilon_{\D,*}(t\cdot [\M_{\Gamma(r)}(\Y_{\D_\0,r}|\D_0)]^\vir)\right]=0.
\end{align}
We have a the commutative diagram
\[
\xymatrix{
\M_v((\sqrt[r]{\D_\0})_{\bar\rho}) \ar[r]^-i \ar[dr]_-{\epsilon} & \M_{\Gamma(r)}(\Y_{\D_\0,r}|\D_0) \ar[d]^-{\epsilon_\D} \\
& \M_v(\D).}
\]
The topological data for $\M_v(\D)$ is determined by the topological data of $\M_v((\sqrt[r]{\D_\0})_{\bar\rho})$ via projecting orbifold information from $\T((\sqrt[r]{\D_\0})_{\bar\rho})$ of marked points to $\T(\D_\0)=\T(\D)$. The genuses and homology classes for both of them are the same.

Note that
\[
\epsilon_*(\tilde c_d((-\sqrt[r]{\L^*})_v)\cap [\M_v((\sqrt[r]{\D_\0})_{\bar\rho})]^\vir)= \tau_*(c_d((-\sqrt[r]{\L^*})_v)\cap [\tilde\M_v((\sqrt[r]{\D_\0})_{\bar\rho})]^\vir).
\]
As $(\sqrt[r]{\D_\0})_{\bar\rho}=(\sqrt[r]\D)_{\bar\rho}$,
so by applying Theorem \ref{T polynomiality} to $\L^*\rto\D$ we see that $\epsilon_*(\hat c_d\cap [\M_v((\sqrt[r]{\D_\0})_{\bar\rho})]^\vir)$ hence $\epsilon_*(\widetilde{\cont}_v\cap [\M_v((\sqrt[r]{\D_\0})_{\bar\rho})]^\vir)$ is a polynomial in $r$ for sufficient large $r$ and rational in $s$.

\begin{coro}\label{C Phi-poly}
$\epsilon_{\D,*}(t\cdot\cont_\Phi)$ is a polynomial in $r$ and rational in $s$ for sufficient large $r$.
\end{coro}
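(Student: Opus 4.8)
The plan is to assemble $\cont_\Phi$ out of the vertex contributions $\widetilde{\cont}_v$, whose pushforwards were already shown (in the paragraph just before the statement) to be polynomial in $r$ and rational in $s$, and then to control the one remaining factor depending on $t$, namely $\tfrac1{t+\Psi_0}$, by nilpotence of cohomology. Using $\cont_v=\widetilde{\cont}_v/t$ in the formula of the preceding proposition,
\[
t\cdot\cont_\Phi=\frac{1}{|\aut(\Phi)|}\cdot\frac{t^{\,1-|\tV^\0_{\mathrm{st}}(\Phi)|}}{t+\Psi_0}\cdot\Bigl(\prod_{v\in\tV^\0_{\mathrm{st}}(\Phi)}\widetilde{\cont}_v\Bigr)\cap[\M_\Phi']^\vir,
\]
and after the substitution $s=tr$, i.e.\ $t=s/r$, the identity $\frac1{t+\Psi_0}=\frac{r}{s}\sum_{j\ge 0}\bigl(-\frac{r\Psi_0}{s}\bigr)^{j}$ is a \emph{finite} sum because $\Psi_0$ is a nilpotent cohomology class; hence $\frac{t^{1-|\tV^\0_{\mathrm{st}}(\Phi)|}}{t+\Psi_0}$ becomes a finite $\ration[\Psi_0]$-combination of monomials $r^a s^b$ with $a,b\in\integer$. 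Here $\Psi_0$ and the rubber factor $\M_{v_o}^\sim$ do not involve $r$ for a fixed $\Phi$: their discrete invariants are fixed by the labels of $\Phi$, and for $r\gg1$ only finitely many $\Phi$ contribute, all with bounded edge labels because $\sum_{e}\kappa_e$ at every $0$-labeled vertex equals $|\mu_0(v)|=\int_{\beta(v)}^{\orb}c_1(\L)$.

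Next I would push forward along $\epsilon_\D$. On the fixed locus, $\epsilon_\D$ is the composite of the gerbe projections $\epsilon\co\M_v((\sqrt[r]{\D_\0})_{\bar\rho})\to\M_v(\D)$ on the vertex factors of $\M_\Phi'=\bigl(\prod_{v\in\tV^\0_{\mathrm{st}}(\Phi)}\M_v((\sqrt[r]{\D_\0})_{\bar\rho})\bigr)\times_{(\msf{ID}_0)^{|\Phi|}}\M_{v_o}^\sim$ with an $r$-independent gluing morphism into $\M_\Gamma(\D)$. In the explicit expression for $\widetilde{\cont}_v$ the only $r$-carrying ingredients are the classes $\hat c_d=(-r)^{2d-2\msf g(v)+1}\tilde c_d(-(\sqrt[r]{\L^*})_v)$ together with explicit powers of $r$ and $s$; the remaining insertions $\bar\psi_e$ and $ev_e^*(c_1(\L))$ are pulled back along $\epsilon$ from $\M_v(\D)$, since the coarse cotangent lines at markings and the evaluation classes do not see the banded $\integer_r$-gerbe structure. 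So by the projection formula on each vertex factor, together with the base-change compatibility of virtual pushforward with the fiber product over the inertia stacks along the gluing nodes, $\epsilon_{\D,*}(t\cdot\cont_\Phi)$ is a finite $\ration$-linear combination of terms $r^a s^b\cdot(\mathrm{glue})_*\bigl(\Psi_0^{\,j}\otimes\bigotimes_{v}\epsilon_*(\hat c_{d_v}\cap[\M_v((\sqrt[r]{\D_\0})_{\bar\rho})]^\vir)\bigr)$ with $a,b\in\integer$ and $r$-free insertion classes on $\M_\Gamma(\D)$ absorbed into $(\mathrm{glue})_*$.

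Finally I would invoke Theorem \ref{T polynomiality}. Since $(\sqrt[r]{\D_\0})_{\bar\rho}=(\sqrt[r]\D)_{\bar\rho}$, applying that theorem to the line bundle $\L^*\to\D$ and using $\epsilon_*(\tilde c_d\cap[\M_v]^\vir)=\tau_*(c_d\cap[\tilde\M_v]^\vir)$, each factor $\epsilon_*(\hat c_{d_v}\cap[\M_v((\sqrt[r]{\D_\0})_{\bar\rho})]^\vir)=\pm\,r^{2d_v-2\msf g(v)+1}\tau_*\bigl(c_{d_v}(-(\sqrt[r]{\L^*})_v)\cap[\tilde\M_v]^\vir\bigr)$ is a polynomial in $r$ for $r\gg1$; a product of such is again polynomial in $r$, and the prefactors $r^a s^b$ keep the result polynomial in $r$ and a Laurent polynomial, hence rational, in $s$. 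The two special graphs are subsumed: for $\Phi_0$ one has $\tV^\0_{\mathrm{st}}=\varnothing$ and $\cont_{\Phi_0}=\frac1{t+\Psi_0}\cap[\M_\Gamma^\sim]^\vir$, which after the same expansion is manifestly polynomial in $r$ and rational in $s$; for $\Phi_\0$ the single stable vertex $v_\0$ reduces everything to the already-treated $\widetilde{\cont}_{v_\0}$, the extra factor $r^{n_0}\kappa^{-1}$ being a monomial in $r$ for fixed $\Phi_\0$. I expect the real technical content to be the middle step: verifying precisely, in the orbifold setting, that virtual pushforward along $\epsilon_\D$ factors through the vertex-wise gerbe projections up to an $r$-free gluing morphism — so that all $r$-dependence is quarantined in the $\hat c_d$ pieces governed by Theorem \ref{T polynomiality} — and that the rubber factors and the class $\Psi_0$ indeed carry no hidden dependence on $r$.
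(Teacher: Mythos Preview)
Your approach is essentially the same as the paper's: rewrite $t\cdot\cont_\Phi$ using $\widetilde{\cont}_v=t\cdot\cont_v$, change variables to $s=tr$, and invoke the already-established polynomiality of $\epsilon_*(\widetilde{\cont}_v\cap[\M_v]^\vir)$ vertex by vertex. The paper treats the three cases $\Phi_0$, $\Phi_\0$, general $\Phi$ separately and simply notes that $\frac{t}{t+\Psi_0}=\frac{1}{1+r\Psi_0/s}$ and $(1/t)^{|\tV^\0_{\mathrm{st}}|}=(r/s)^{|\tV^\0_{\mathrm{st}}|}$; you spell out more explicitly why this is polynomial in $r$ (nilpotence of $\Psi_0$ gives a finite expansion) and why the pushforward respects the vertex decomposition (projection formula plus base change), which the paper leaves implicit. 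One small slip: for $\Phi_\0$ there is no extra factor $r^{n_0}$ in $\cont_{\Phi_\0}$---that power of $r$ is already inside the definition of $\cont_{v_\0}$, and $\kappa^{-1}$ alone is an $r$-free constant; but this does not affect the argument.
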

\begin{proof}
For $\Phi_0$ we have
\[
\epsilon_{\D,*}(t\cdot\cont_{\Phi_0})=\epsilon_{\D,*}( \frac{t}{t+\Psi_0}\cap [\M_\Gamma^\sim]^\vir)=\epsilon_{\D,*}(\frac{1}
{1+\frac{r\Psi_0}{s}}\cap[\M_\Gamma^\sim]^\vir),
\]
which is a polynomial in $r$ and rational in $s$.

For $\Phi_\0$ we have
\[
\epsilon_{\D,*}(t\cdot\cont_{\Phi_\0})=\epsilon_{\D,*}( \kappa\inv\widetilde{\cont}_{v_\0}\cap [\M_{\Gamma_\0(r)}((\sqrt[r]{\D_\0})_{\bar\rho}) ]^\vir),
\]
which is a polynomial in $r$ and rational in $s$.

For general $\Phi$, we have
\begin{align*}
t\cdot\cont_{\Phi}&=\frac{1}{|\aut(\Phi)|}\cdot \frac{t}{t+\Psi_0}\cdot \left(\frac{1}{t}\right)^{|\tV^\0_{\mathrm{st}}(\Phi)|}\cdot \prod_{v\in\tV^\0_{\mathrm{st}}(\Phi)}\widetilde{\mathrm{Cont}}_v \cap [\M_\Phi']^\vir
\\
&=\frac{1}{|\aut(\Phi)|}\cdot \frac{1}{1+\frac{r\Psi_0}{s}}\cdot \left(\frac{r}{s}\right)^{|\tV^\0_{\mathrm{st}}(\Phi)|}\cdot \prod_{v\in\tV^\0_{\mathrm{st}}(\Phi)}\widetilde{\cont}_v \cap [\M_\Phi']^\vir.
\end{align*}
So $\epsilon_{\D,*}(t\cdot\cont_{\Phi})$ is a polynomial in $r$ of lowest degree $|\tV^\0_{\mathrm{st}}(\Phi)|$ and rational in $s$.
\end{proof}
Now we extract the coefficient of $t^0$ and then coefficient of $r^0$ in $\epsilon_{\D,*}(t\cdot [\M_{\Gamma(r)}(\Y_{\D_\0,r}|\D_0)]^\vir)$. This is equivalent to extract the coefficient of $s^0r^0$.

By the proof of Corollary \ref{C Phi-poly} we see that only $\Phi_0$ and $\Phi_\0$ contribute the coefficients of $r^0$. Therefore the $r^0$ coefficient is
\begin{align*}
&\mathrm{Coeff}_{r^0}\left[\epsilon_{\D,*}(t\cdot [\M_{\Gamma(r)}(\Y_{\D_\0,r}|\D_0)]^{\mathrm{vir}})\right]\\
&=-\mathrm{Coeff}_{r^0}\left[ \sum_{0\leq d\leq \msf g-1+n_0}\epsilon_*(\hat c_d \cap [\M_{\Gamma_\0(r)}((\sqrt[r]{\D_\0})_{\bar\rho})]^{\mathrm{vir}}) (-s)^{\msf g-d}\right]+ \msf{DR}_{\Gamma}(\D,\L).
\end{align*}
Finally, we take $d=\msf g$ and get
\begin{align*}
&\mathrm{Coeff}_{s^0r^0}\left[\epsilon_{\D,*}(t\cdot [\M_{\Gamma_\0(r)}(\Y_r|\D_0)]^\vir)\right] \\ &=-\mathrm{Coeff}_{r^0}\left[ \epsilon_*(\hat c_{\msf g} \cap [\M_{\Gamma_\0(r)}((\sqrt[r]{\D_\0})_{\bar\rho})]^\vir) \right]+ \msf{DR}_{\Gamma}(\D,\L).
\end{align*}
Then by \eqref{E coeff-t0=0}, $\mathrm{Coeff}_{s^0r^0}\left[\epsilon_{\D,*}(t\cdot [\M_{\Gamma_r}]^\vir)\right]$ vanishes. So we have
\begin{align}\label{E DR-in-proof}
\msf{DR}_{\Gamma}(\D,\L)&=\mathrm{Coeff}_{r^0} \left[\epsilon_*(\hat c_{\msf g} \cap [\M_{\Gamma_\0(r)}((\sqrt[r]{\D_\0})_{\bar\rho})]^\vir) \right]\\
&=\left[\tau_*(-r\cdot c_{\msf g}(-(\sqrt[r]{\L^*})_{\Gamma_\0(r)}) \cap [\tilde\M_{\Gamma_\0(r)} ((\sqrt[r]{\D_\0})_{\bar\rho})]^\vir)\right]_{r^0}.\nonumber
\end{align}
This finishes the proof of Theorem \ref{T DR}.

\begin{remark}\label{R Gamma-0-DR}
In the computation of $\msf{DR}_\Gamma(\D,\L)$ above we take $r$-th root construction on $\Y$ along $\D_\0$. One can also take $r$-th root construction on $\Y$ along $\D_0$ and repeat the computation above. First of all we have $\rho$-compatible vectors for $\vec g,\vec\mu_0$ and $\vec\mu_\0$ given by
\[
A_{\vec g,\rho}=A_{\vec g,\bar\rho}, \qq A_{\vec\mu_0,\rho}=-A_{\vec\mu_0,\bar\rho},\qq A_{\vec\mu_\0,\rho}=-A_{\vec\mu_\0,\rho},
\]
and hence a $\rho$-compatible vector
\[
A_{\rho}=(A_{\vec g,\rho},A_{\vec\mu_0,\rho},A_{\vec\mu_\0,\rho})=-A_{\bar\rho}
\]
for $\Gamma=(\msf g,\beta,\vec g\sqcup\vec\mu_0\sqcup\vec\mu_\0)$. So we now have another moduli space for $(\sqrt[r]{\D_0})_\rho$ with topological type
\[
\Gamma_0(r):=\Gamma_{A_\rho,r,\rho}= \Upsilon_{r,\rho}(\Gamma,A_\rho).
\]
The liftings of marked points are in the following way
\begin{eqnarray*}
\vec g&\rto& \Upsilon_{r,\rho}(\vec g,A_{\vec g,\rho}) =(\ldots,([g_i],1),\ldots),\\
\vec\mu_0&\rto& \Upsilon_{r,\rho}(\vec\mu_0,-A_{\vec\mu_0,\rho})= (\ldots,([g_{0,i}],e^{2\pi\sqrt{-1}\frac{\mu_{0,i}}{r}}),\ldots), \\
\vec\mu_\0&\rto& \Upsilon_{r,\rho}(\vec\mu_\0,-A_{\vec\mu_\0,\rho})= (\ldots,([g_{0,i}],e^{-2\pi\sqrt{-1}\frac{\mu_{\0,i}}{r}}),\ldots).
\end{eqnarray*}
Note that $\Upsilon_{r,\rho}(\vec g,A_{\vec g,\rho})=\Upsilon_{r,\bar\rho}(\vec g,A_{\vec g,\bar\rho})$ as for these $\vec g$, $\rho(\vec g)=\bar\rho(\vec g)=(1,\ldots,1)$.

Then by similar localization calculation as above we could show that
\[
\msf{DR}_\Gamma(\D,\L)=\left[r\cdot c_{\msf g}(-(\sqrt[r]\L)_{\Gamma_0(r)}) \cap[\tilde\M_{\Gamma_0(r)} ((\sqrt[r]{\D_0})_\rho)]^\vir\right]_{r^0}
\]
with $(\sqrt[r]\L)_{\Gamma_0(r)}=\mc R\pi_*\f^*(\sqrt[r]\L)$ and $\f$ being the universal map for the universal curve over $\tilde\M_{\Gamma_0(r)} ((\sqrt[r]{\D_0})_\rho)$. Hence the formula for $\msf {DR}_\Gamma(\D,\L)$ becomes the cap product of $[\M_\Gamma(\D)]^\vir$ with the degree $2\msf g$ part of
\begin{align}\label{E DR-formula-by-D0}
\sum_{\fk \Gamma\in G_{|\Gamma|}(\D)}\sum_{\chi\in{\fk D}_{\fk\Gamma,\Gamma}} \sum_{w\in W_{\fk\Gamma,\chi,r}^{\L,\rho}} \frac{r^{-h^1(\fk\Gamma)}}{|\aut(\fk\Gamma)|}
\\
\zeta_{\fk\Gamma,(\chi;w),*}\Bigg[
\prod_{\msf v\in\tV(\fk\Gamma)}\exp\left(-\frac{1}2\pi_*
\left(\left(\f^*c_1(\L)\right)^2\right)
\right)\times \prod_{i=1}^n \exp\left(\frac{a_i^2}{2}\bar\psi_i+a_i\msf{ev}_i^* e_{([g_i],\xi_i)}^*c_1(\L)\right)
\nonumber\\
\times \prod_{\substack{\msf e\in\tE(\fk\Gamma)\nonumber\\
\msf e=(\msf h_+,\msf h_-)}}
\frac{1-
\exp
\left(
\frac{-(w(\msf h_+)+\age_{\chi(\msf h_+)}(\L))\cdot(w(\msf h_-)+\age_{\chi(\msf h_-)}(\L))\cdot (\bar\psi_{\msf h_+}+\bar\psi_{\msf h-})}{2}\right)}{\bar\psi_{\msf h_+}+\bar\psi_{\msf h_-}}
\Bigg]
\end{align}
for $r\gg 1$, where $(a_1,\ldots,a_n)=A_\rho=(0,\ldots,0,\mu_{0,1},\ldots,\mu_{0,n_0}, -\mu_{\0,1},\ldots,-\mu_{\0,n_\0})$ and $(\ldots,([g_i],\xi_i),\ldots)=\Upsilon_{r,\rho}(\vec g\sqcup \vec\mu_0\sqcup\vec\mu_\0,A_{\rho})$. Hence we have an equality between \eqref{E DR-formula} and \eqref{E DR-formula-by-D0}. Comparing \eqref{E DR-formula} with \eqref{E DR-formula-by-D0} we see
\[
a_i=-\bar a_i, \qq \xi_i=\bar\xi_i\inv.
\]

When $(\D,\L)=(X,L)$ is smooth, the formula \eqref{E DR-formula-by-D0} coincides with the one obtained by Janda--Pandharipande--Pixton--Zvonkine in \cite{Janda-Pandharipande-Pixton-Zvonkine2018}.
\end{remark}

\subsection{A cycle version of Leray--Hirsch result}
As an application of the computation of DR-cycles for $\L\rto\D$ we could prove a cycle version of Leray--Hirsch result for orbifold Gromov--Witten theory obtained in \cite{Chen-Du-WangR2019a} under the assumption that $\D$ is a quotient orbifold of a smooth quasi-projective scheme by a linear algebraic group.

\begin{theorem}
When $\D$ is a quotient orbifold of a smooth quasi-projective scheme by a linear algebraic group, the formula for DR-cycles with target $\D$ calculates the push-forward to the moduli space of orbifold maps to $\D$ of the virtual fundamental classes of the moduli spaces of orbifold stable maps to
\[
(\Y|\D_0),\qq (\Y|\D_\0),\qq (\D_0|\Y|\D_\0)
\]
in terms of tautological classes and $c_1(\L)$.
\end{theorem}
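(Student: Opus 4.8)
The plan is to handle the three moduli spaces in turn, the rubber one directly and the two one--sided relative pairs by reduction to it.

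First, the case of $\M_\Gamma(\D_0|\Y|\D_\0)^\sim$ is immediate. By Definition \ref{D def-DR} the push--forward $\epsilon_{\D,*}[\M_\Gamma(\D_0|\Y|\D_\0)^\sim]^\vir$ is exactly $\msf{DR}_\Gamma(\D,\L)$, and Theorem \ref{T formula-of-DR} exhibits the latter as the cap product of $[\M_\Gamma(\D)]^\vir$ with an explicit class assembled from: the $\bar\psi$--classes at marked points and at the half--edges; the boundary push--forwards $\zeta_{\fk\Gamma,(\chi;w),*}$ out of the twisted--sector--decorated strata of \S\ref{SSS strata}; the evaluation pull--backs $\msf{ev}_i^* e_{([g_i],\bar\xi_i)}^* c_1(\L)$; and the kappa--type class $\pi_*((\f^* c_1(\L))^2)$. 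Each of these is a tautological class on $\M_\Gamma(\D)$ (decorated by $c_1(\L)$), so there is nothing further to prove here; the same applies to the two--sided non--rubber pair $\M_\Gamma(\D_0|\Y|\D_\0)$, which is related to the rubber one by the $\cplane^*$--rigidification already implicit in \S\ref{SS localization}.

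Second, for $(\Y|\D_\0)$ I would run virtual localization for the fiberwise $\cplane^*$--action on $\Y=\P(\L\oplus\mc O_\D)$, exactly as in \S\ref{SS localization} but now applied to $\M_\Gamma(\Y|\D_\0)$ itself (the ``$r=1$'' version, without the root construction). The $\cplane^*$--fixed loci decompose a stable map into a part lying over $\D_0\cong\D$, contributing $[\M_\Gamma(\D)]^\vir$ decorated by $\bar\psi$-- and evaluation classes; a part over the expansion along $\D_\0$, which is a (possibly disconnected) rubber map and hence, after $\epsilon_\D$--push--forward, a product of cycles of the $\msf{DR}$--type computed by Theorem \ref{T formula-of-DR}; and edges that are multiple covers of the fibers, whose Euler--class contributions are rational functions in $\bar\psi$--classes with coefficients polynomial in the equivariant parameter $t$ and in $c_1(\L)$ (the latter since $c_1$ of the fiber tangent bundle restricts to $\D_0$ and $\D_\0$ as $\pm t$ plus a multiple of $c_1(\L)$). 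Summing over the graphs and taking the non--equivariant limit by the coefficient--of--$t^0$ argument of \S\ref{SS localization} --- legitimate because $\M_\Gamma(\D)$ carries no residual $\cplane^*$--action --- expresses $\epsilon_{\D,*}[\M_\Gamma(\Y|\D_\0)]^\vir$ as a finite sum of products of $\msf{DR}$--type classes and tautological classes on $\M_\Gamma(\D)$, with all remaining coefficients polynomial in $c_1(\L)$. The pair $(\Y|\D_0)$ is treated identically after the $\D_0\leftrightarrow\D_\0$ exchange, i.e. replacing $\L$ by $\L^*$, consistent with Remark \ref{R Gamma-0-DR}. Equivalently, one may obtain the same reduction from the orbifold degeneration formula of \cite{Chen-Li-Sun-Zhao2011, Abramovich-Fantechi2016}, applied to the deformation of $\Y$ to the normal cone of $\D_0$ (resp. $\D_\0$) together with the rubber calculus; this is the very mechanism underlying the Leray--Hirsch result of \cite{Chen-Du-WangR2019a}, and the present statement simply upgrades that relation from numerical invariants to cycle classes, which is legitimate since degeneration, virtual localization, and rubber rigidification are all statements at the level of virtual classes.

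The main obstacle I anticipate is organizational rather than conceptual: one must (a) fix a precise definition of the tautological ring of the orbifold moduli space $\M_\Gamma(\D)$ --- generated by $\bar\psi$--classes, kappa--classes, Chen--Ruan evaluation classes, and push--forwards from the twisted--sector--decorated boundary strata of \S\ref{SSS strata} --- and verify that it is closed under the gluing and push--forward operations produced by the localization (or degeneration); and (b) keep the equivariant bookkeeping under control --- existence of non--equivariant limits, cancellation of the parameter $t$, and consistent treatment of the weighted blowups and $\integer_r$--gerbes at nodes and relative markings. Both points are handled by essentially the same estimates as in the proof of Theorem \ref{T DR} (and the dimension/contact--order counts of \S\ref{SS localization}); the genuinely new input is the reduction of the two non--rubber cases and the observation that the induced recursion terminates, which it does because each degeneration or target expansion strictly decreases the total contact order over the relative markings.
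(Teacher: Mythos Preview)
Your approach is essentially the same as the paper's: apply $\cplane^*$--virtual localization on $\Y$ itself (no root construction), observe that simple fixed loci already push forward to tautological classes, and reduce the composite (expanded--target) fixed loci to rubber contributions, which are then computed by the DR--formula of Theorem~\ref{T formula-of-DR}.

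One point needs sharpening. In your second step you write that the part over the expansion ``is a rubber map and hence, after $\epsilon_\D$--push--forward, a product of cycles of the $\msf{DR}$--type.'' This is too quick: the composite loci contribute rubber virtual classes multiplied by \emph{powers of the target psi--class} $\Psi_0$ (coming from the $\frac{1}{-t+\Psi_0}$ factor in the inverse Euler class), and $\epsilon_{\D,*}(\Psi_0^k\cap[\M^\sim]^\vir)$ is \emph{not} a DR--cycle for $k\ge 1$. The paper handles this with the rubber calculus of \cite[\S4.2]{Chen-Du-WangR2019a}, which trades each $\Psi_0$ for boundary divisors of the rubber moduli space and thereby reduces inductively to the $k=0$ case. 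You do mention rubber calculus, but only in the degeneration--alternative paragraph, not where it is actually needed; make it the explicit mechanism that eliminates $\Psi_0$ in the localization argument. Relatedly, your termination argument (``strictly decreases the total contact order'') is not the correct one here: what terminates is the rubber--calculus recursion, and it does so because each application lowers the exponent of $\Psi_0$, not the contact order.
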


\begin{proof}
We apply virtual localization w.r.t. the $\cplane^*$-action on $\Y=\P(\L\oplus\mc O_\D)$ to calculate the virtual fundamental classes of moduli spaces of orbifold stable maps to these three targets. We take the first one as an example.

As in \S \ref{SS localization}, the localization formula express the virtual cycle of $\M_\Gamma(\Y|\D_0)$ into contributions from simple fixed loci (for which the targets are not expanded) and composite fixed loci (for which the targets are expanded). After pushing forward to $\M_\Gamma(\D)$, the contribution from the simple fixed loci is already of the desired form. So we only have to consider the composite fixed loci. The composite fixed loci is a composition of simple fixed loci and moduli space of stable orbifold maps to the rubber targets $(\D_0|\Y|\D_\0)$ and powers of $\Psi_0$ class. We only have to consider the contribution from the rubber components and $\Psi_0$. By the rubber calculus described in \cite[Section 4.2]{Chen-Du-WangR2019a}, we can remove those $\Psi_0$ classes. Then \eqref{E DR-formula} (equivalently \eqref{E DR-formula-by-D0}) proves the theorem.
\end{proof}

\section{Relative v.s. absolute orbifold Gromov--Witten invariants}\label{S rel-abs-GWI}

In this section we apply the polynomiality in Theorem \ref{T polynomiality} and the localization analysis in \S \ref{SS localization} to present a relation between the relative orbifold Gromov--Witten (GW for short) invariants and the absolute GW-invariants of root constructions. %\red{
The main result of this section also appears in \cite[Theorem 8]{Tseng-You2020a} which is stated in the cycle level.%}

\subsection{Notations and the result}
Let $(\X|\D)$ be an orbifold relative pair such that $\D$ is a divisor of $\X$. We first collect come notations and state the main result in this section.
\subsubsection{Inertia spaces}
Denote the index sets of inertia orbifolds of $\X$ and $\D$ by $\T(\X)$ and $\T(\D)$ respectively. As $\D$ is a sub-orbifold of $\X$, the inertia space $\msf I\D$ is a sub-orbifold of the inertia space $\msf I\X$. Hence
\[
\T(\D)\subseteq\T(\X).
\]
Denote by $\L$ the normal line bundle of $\D$ in $\X$. So we have a representation $\rho\co D^1\rto U(1)$ associated to $\L$. Then for every $[g]\in\scr T(\D)$ when $\rho([g])\neq 1$, we have
\[
\X_{[g]}=\D_{[g]}.
\]
We set
\[
\scr T(\D)_+:=\{[g]\in\scr T(\D)\mid \rho([g])\neq 1\},
\]
and
\[
\scr T(\X)_0:=\scr T(\X)\setminus\scr T(\D)_+.
\]
We next consider relative orbifold GW-invariants of $(\X|\D)$.

\subsubsection{Relative orbifold GW-invariants of $(\X|\D)$}

As in \S \ref{SS D-DR-cycle} let $\Gamma=(\msf g,\beta,\vec h,\mu)$ be a relative topological data of $(\X|\D)$ with
\begin{enumerate}
\item $\msf g$ the genus, $\beta\in H_2(|\X|;\integer)$ the homology class,

\item $\vec h=([h_1],\ldots,[h_m])\in (\T(\X)_0)^m$ the orbifold information of absolute marked points,
\item $\mu=\left(([g_1],\mu_1),\ldots,([g_n],\mu_n)\right)\in (\T(\D)\times \ration_{>0})^n$ the orbifold information and contact orders of relative marked points, satisfying
    \[
    |\mu|:=\sum_{j=1}^n \mu_j=\int_\beta^{\mathrm{orb}}[\D],
    \]
    and
    \[
    \rho(g_i)=e^{2\pi\sqrt{-1} \mu_i},\qq\mathrm{for}\qq 1\leq i\leq n.
    \]
\end{enumerate}
Let $\vec \mu=([g_1],\ldots,[g_n])$ by forgetting the contact orders in $\mu$. So
\begin{align}\label{E A-mu-rho}
A_{\vec\mu,\rho}=(\mu_1,\ldots,\mu_n)\in\ration^n
\end{align}
is $\rho$-admissible for $\vec\mu$.

Denote the moduli space of relative orbifold stable maps to $(\X|\D)$ of topological type $\Gamma$ by $\M_\Gamma(\X|\D)$. From $\Gamma=(\msf g,\beta,\vec h,\mu)$ we also get a topological type $(\msf g,\beta,\vec h\sqcup\vec\mu)$, still denoted by $\Gamma$, of $\X$ by viewing $[g_i]\in \T(\D)\subseteq \T(\X)$. We denote the moduli space of absolute orbifold stable maps to $\X$ of topological type $\Gamma$ by $\M_\Gamma(\X)$. We have a natural map
\[
\epsilon\co \M_\Gamma(\X|\D)\rto\M_\Gamma(\X)
\]
by first projecting those components mapped to the rubber target $\P(\L\oplus\mc O_\D)$ of $(\D,\L)$ to $\D$ and then stabilizing the domain curves. Over $\M_\Gamma(\X)$ we have the psi-classes $\bar\psi_i$ corresponding to the $(m+n)$ absolute marked points. It is the Chern class of the line bundle over $\M_\Gamma(\X)$, whose fiber over a stable map $\f\co \msf C\rto \X$ is the cotangent line of the coarse space of the domain curve at the $i$-th marking. We set
\[
\bar\psi_i:=\epsilon^*\bar\psi_i
\]
over $\M_\Gamma(\X|\D)$.

A relative orbifold GW-invariant is of the form
\begin{align}\label{E ROGW-XD}
\Big\langle \underline\alpha
\,\Big| \, \underline\mu\Big\rangle^{\X|\D}_\Gamma:=\int_{[\M_\Gamma(\X|\D)]^\vir} \prod_{i=1}^m \msf{ev}_i^*(\alpha_i) \bar\psi_i^{a_i}\wedge \prod_{j=1}^n\msf{rev}_j^*(\theta_j)\bar\psi_{m+j}^{b_j}
\end{align}
where
\begin{itemize}
\item $\underline\alpha=(\bar\psi^{a_1}\alpha_1,\ldots, \bar\psi^{a_m}\alpha_m)\in(\cplane[\bar\psi]\otimes H^*_{\mathrm{CR}}(\X))^m,\, \underline\mu=(\bar\psi^{b_1}\theta_1,\ldots, \bar\psi^{b_n}\theta_n)\in (\cplane[\bar\psi]\otimes H^*_{\mathrm{CR}}(\D))^n$, with $\alpha_i\in H^*(\X_{[h_i]})$ and $\theta_j\in H^*(\D_{[g_j]})$.

\item $\msf{ev}_i$ and $\msf {rev}_j$ are evaluation maps at absolute and relative marked points respectively.

\item $\bar\psi_i$ and $\bar\psi_{m+j}$ are the psi-classes of the corresponding absolute and relative marked points respectively.
\end{itemize}

\subsubsection{Absolute orbifold GW-invariants of root construction}\label{SSS aogw-Xr}
Let $\X_{\D,r}$ be the $r$-th root construction of $\X$ along $\D$, with exceptional divisor $(\sqrt[r]\D)_\rho$, a $\integer_r$-gerbe over $\D$. Denote by $\pi\co\X_{\D,r}\rto\X$ the natural projection, which induces a morphism over inertia spaces $\msf I\pi\co \msf I\X_{\D,r}\rto\msf I\X$.

As in \S \ref{SSS Gamma(r)} from $\Gamma$ of $(\X|\D)$ we get a topological data of stable maps to $\X_{\D,r}$
\begin{align}\label{E gamma-r-in-Sec3}
\Gamma(r)=(\msf g,\beta,\vec h\sqcup\vec\mu^r),
\end{align}
by the following convention:
\begin{itemize}
\item For $[h_i]\in \vec h$,
\begin{itemize}
\item when $[h_i]\notin \T(\D)$, $\X_{[h_i]}$ lifts to a twisted sector of $\X_{\D,r}$, we leave it unchanged;
\item when $[h_i]\in \T(\D)\setminus\T(\D)_+$, it lifts to $\Upsilon_{r,\bar\rho}([h_i],0) =([h_i],1)\in\T((\sqrt[r]\D)_\rho) \subseteq\T(\X_{\D,r})$.
\end{itemize}
\item The $\vec\mu^r$ is
\begin{align}\label{E vec-mu-r}
\vec\mu^r=\Upsilon_{r,\rho}(\vec\mu,A_{\vec\mu,\rho})=(\ldots, ([g_j],e^{2\pi i\frac{\mu_j}{r}}),\ldots)
\end{align}
with $A_{\vec\mu,\rho}$ given by \eqref{E A-mu-rho}. We denote $([g_j],e^{-2\pi i\frac{\mu_j}{r}})$ by $([g_j],\xi_j), 1\leq j\leq n$ for simplicity.
\end{itemize}
Then we lift $\underline\alpha$ and $\underline\mu$ as follow
\begin{itemize}
\item for $\alpha_i$, we take the component of $\msf I\pi^*\alpha_i$ over $(\X_{\D,r})_{[h_i]}$ or $(\X_{\D,r})_{([h_i],1)}$,
\item for $\theta_j$, we take the component of $\msf I\pi^*\theta_j$ over $((\sqrt[r]\D)_\rho)_{([g_j],\xi_j)}$.
\end{itemize}
To save notations we still denote these liftings by $\underline\alpha$ and $\underline\mu$ respectively. Then we get an absolute orbifold GW-invariants of $\X_r$:
\begin{align}\label{E AOGW-Xr}
\Big\langle \underline\alpha,\underline\mu\Big\rangle^{\X_{\D,r}}_{\Gamma(r)} :=\int_{[\M_{\Gamma(r)}(\X_{\D,r})]^\vir} \prod_{i=1}^m\msf{ev}_i^*\alpha_i\bar\psi_i^{a_i} \wedge \prod_{j=1}^n\msf{ev}^*_{m+j}\theta_j\bar\psi_{m+j}^{b_j}.
\end{align}

\subsubsection{Main result in this section}
Now we can state our main result of this section.
\begin{theorem}\label{Thm abs-rel}
Suppose $\D$ is a quotient orbifold of a smooth quasi-projective scheme by a linear algebraic group. When $r\gg 1$, $\bl \underline\alpha,\underline\mu\br^{\X_{\D,r}}_{\Gamma(r)}$ is a polynomial in $r$, and the constant term satisfies
\[
\left[\Big\langle \underline\alpha,\underline\mu\Big\rangle^{\X_{\D,r}}_{\Gamma(r)}
\right]_{r^0}=
\Big\langle \underline\alpha\,\Big|\, \underline\mu\Big\rangle^{\X|\D}_{\Gamma}
\]
where $[\cdot]_{r^0}$ means the constant term of a polynomial in $r$.
\end{theorem}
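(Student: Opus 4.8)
The plan is to mimic the virtual localization argument of \S\ref{SS localization} after a degeneration to the normal cone, reducing the absolute invariant of $\X_{\D,r}$ to the relative invariant of $(\X|\D)$ paired with twisted invariants of roots of $\L^*$ over $\D$, and then to apply Theorem~\ref{T polynomiality} and Proposition~\ref{P leading-term}. First I would deform $\X$ to the normal cone of $\D$, so that the target degenerates to $\X\cup_\D\Y$ with $\Y=\P(\L\oplus\mc O_\D)$ and $\D$ sitting as the $0$-section $\D_0\subseteq\Y$ glued to $\D\subseteq\X$; the distinguished divisor of the degenerate fibre is then $\D_\0\subseteq\Y$ with normal bundle $\L^*$. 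Under this degeneration $\X_{\D,r}$ degenerates to $\X\cup_{\D_0}\Y_{\D_\0,r}$: the $r$-th root construction is carried entirely by the $\Y$-bubble along its $\0$-section (compatibly with the description of $\Y_{\D_\0,r}$ in \S\ref{SS root-bdls}), the gluing divisor $\D_0\cong\D$ remaining unrooted, and the twisted absolute markings $\vec\mu^r$ of $\Gamma(r)$, being supported on the gerbe, land on the bubble. Applying the orbifold degeneration formula of \cite{Chen-Li-Sun-Zhao2011,Abramovich-Fantechi2016} then expresses $\bl\underline\alpha,\underline\mu\br^{\X_{\D,r}}_{\Gamma(r)}$ as a finite sum over bipartite splittings $\eta$ of the topological data and intermediate contact data $\underline\nu$ along $\D$,
\[
\bl\underline\alpha,\underline\mu\br^{\X_{\D,r}}_{\Gamma(r)}=\sum_{\eta,\,\underline\nu}\frac{\prod_k\nu_k}{|\aut(\eta)|}\,\bl\underline\alpha'\,\big|\,\underline\nu\br^{\X|\D}_{\Gamma_1(\eta)}\cdot\bl\underline\mu\,\big|\,\underline\nu^\vee\br^{\Y_{\D_\0,r}|\D_0}_{\Gamma_2(\eta)},
\]
where the first factor is a relative invariant of $(\X|\D)$ and the second is a relative invariant of $(\Y_{\D_\0,r}|\D_0)$ with $\underline\mu$ as absolute markings on the rooted side.

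Next I would compute the $\Y_{\D_\0,r}$-factor by virtual localization for the fibrewise $\cplane^*$-dilation on $\Y_{\D_\0,r}$, which is exactly the set-up of \S\ref{SSS localize-graphs}--\S\ref{SSS unstable-vertex}: the fixed loci are indexed by the decorated graphs $\Phi$, the edge contributions come from the genus-zero fibre-class relative moduli $\scr F_e=\scr F([g_e],\kappa_e)$, which for $r\gg1$ are $\integer_{d_e}$-gerbes over $\D'_{[g_e]}$ (Lemma~\ref{L unstable-vertex}, Remark~\ref{R inv-F}), and the $\0$-vertex contributions are the twisted classes $\tilde c_d(-(\sqrt[r]{\L^*})_v)$ of roots of $\L^*$. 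By Theorem~\ref{T polynomiality} applied to $\L^*\rto\D$ these classes, after the normalization $r^{2d-2\msf g(v)+1}$, are polynomial in $r$ for $r\gg1$, while the node-smoothing factors (involving the $\kappa_e$'s) and the $\Psi_0$-factor are rational in $r$; tracking the powers of $r$ exactly as in the proof of Theorem~\ref{T DR} shows the whole contribution of each graph, hence $\bl\underline\alpha,\underline\mu\br^{\X_{\D,r}}_{\Gamma(r)}$, is a polynomial in $r$ for $r\gg1$. This establishes the first assertion.

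For the constant term I would extract the $r^0$-coefficient and argue, as in \cite{Janda-Pandharipande-Pixton-Zvonkine2018,Tseng-You2018b} and in the proof of Theorem~\ref{T DR}, that only the trivial graph survives: the $\Y_{\D_\0,r}$-bubble then contributes only genus-zero trivial rubber (respectively trivial $\P^1$) pieces realizing the identity pairing $\bl\underline\mu\,\big|\,\underline\nu^\vee\br=\mathrm{id}$, so that the splitting sum collapses to $\underline\nu=\underline\mu$ with the multiplicity $\prod_k\nu_k$ cancelling against $|\aut(\eta)|$, leaving precisely $\bl\underline\alpha\,\big|\,\underline\mu\br^{\X|\D}_\Gamma$. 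When $\msf g=0$ every vertex has $\msf g(v)=0$ and $h^1=0$, so the $r^{2\msf g(v)-1}$-type factors together with the edge and node $\kappa_e$-factors force every nontrivial graph to contribute with strictly positive $r$-degree; hence the polynomial is already constant, which gives Theorem~\ref{Thm abs-rel-g=0}.

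\textbf{Main obstacle.} The delicate point is the $r^0$-extraction in the last step: one must carry out the orbifold analogue of the degree-in-$r$ bookkeeping of \cite{Janda-Pandharipande-Pixton-Zvonkine2018}, now complicated by the age shifts in $\T((\sqrt[r]\D)_{\bar\rho})$, the gerbe factors of $\msf I(\sqrt[r]\D)_{\bar\rho}\rto\msf I\D$ appearing along edges, the edge orders $\kappa_e=d_e/\fk o(g_e)$, and the Bernoulli-polynomial expansions entering through Proposition~\ref{P leading-term}, in order to see that every fixed locus other than the trivial one is annihilated in the constant term while the trivial one reproduces exactly the relative invariant. One should also check that the only hypothesis actually needed is that $\D$ (hence $(\sqrt[r]\D)_{\bar\rho}$) be a quotient of a smooth quasi-projective scheme by a linear algebraic group, since Theorem~\ref{T polynomiality} is invoked only on the bubble built from $\L^*\rto\D$.
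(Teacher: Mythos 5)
Your overall strategy---degenerate $\X_{\D,r}$, localize on the bubble, invoke Theorem~\ref{T polynomiality}---is the same first half of the paper's proof, but the second half has a genuine gap.

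\textbf{Minor issue (labeling).} You glue $\D_0\subseteq\Y$ to $\D\subseteq\X$ and put the root on $\D_\0$, giving $\Y_{\D_\0,r}$ and twisted classes $\tilde c_d(-(\sqrt[r]{\L^*})_v)$. But in a nodal degeneration the normal bundles at the gluing divisor must be dual: $\D\subseteq\X$ has normal bundle $\L$, so the gluing section of $\Y$ must have normal bundle $\L^*$, which is $\D_\0$. Consequently, as in \S\ref{SSS deg-XDr}, the degeneration is $\X_{\D,r}\rightsquigarrow(\X|\D)\wedge_\D(\Y_{\D_0,r}|\D_\0)$ with the root carried at $\D_0$, the far end becoming $(\sqrt[r]{\D_0})_\rho$ with normal bundle $\sqrt[r]\L$. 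The vertex twists in the localization are therefore $\tilde c_d(-(\sqrt[r]\L)_v)$ (as in \eqref{E hat-Cont}), and Theorem~\ref{T polynomiality} is invoked for $\L\rto\D$, not $\L^*\rto\D$. (Your $\L^*$ version is the convention of \S\ref{SS localization} for DR cycles; it is the opposite root construction from the one relevant to Theorem~\ref{Thm abs-rel}.)

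\textbf{The real gap is the constant-term argument.} You claim that after extracting the $r^0$-coefficient ``only the trivial graph survives'' and that the bubble factor ``realizes the identity pairing'' so that the degeneration sum collapses to $\underline\nu=\underline\mu$. This is not what happens, and the claim is false. The trivial localization graph does give the constant term, but its contribution is a genuine \emph{rubber} invariant $\epsilon^\sim_{\Gamma,*}\big([\M_\Gamma(\D_0|\Y|\D_\0)^\sim]^\vir\big)$ of the projectification $\Y$ (Lemma~\ref{L Yr-to-rubber}), not a Kronecker delta: the bubble component can carry positive genus, positive degree, and all of the absolute insertions $\underline\mu$ coming from $\vec\mu^r$ together with an arbitrary share of $\vec h$, so its moduli space is in general far from a point. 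The paper's resolution (\S\ref{SS local-model}) is that the constant term of each bubble invariant $\bl\underline\alpha,\underline\mu\,|\,\underline\eta\br^{\Y_{\D_0,r}|\D_\0}_{\Gamma(r)^+}$ equals the corresponding relative invariant $\bl\underline\mu\,|\,\underline\alpha\,|\,\underline\eta\br^{\D_0|\Y|\D_\0}_{\Gamma^+}$ (Lemma~\ref{L equal-local-model}), and one must then \emph{also} degenerate the target relative pair via $(\X|\D)\rightsquigarrow(\X|\D)\wedge_\D(\D_\0|\Y|\D_0)$ (formula~\eqref{E dege-XD-inv}) so that $\bl\underline\alpha\,|\,\underline\mu\br^{\X|\D}_\Gamma$ decomposes into the \emph{same} $(\X|\D)$-factors paired with exactly those $(\D_0|\Y|\D_\0)$-invariants; the two degeneration sums then match term by term. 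Without this second degeneration there is no way to identify the constant term of your sum with $\bl\underline\alpha\,|\,\underline\mu\br^{\X|\D}_\Gamma$, and the shortcut ``bubble $=$ identity'' is simply incorrect. You are also silently skipping the case analysis of \S\ref{SS local-model} (divisor equation to create a $[\D_\0]\cup\alpha$-insertion, reduction to the rubber lemma, the case $\beta\cdot[\D_\0]=0$, and the case of $[\D_0]$-insertions), which is where the polynomiality is actually invoked and the vanishing of higher graphs is verified; this is not a detail one can wave away.

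Your closing remark on the genus-$0$ case (that every nontrivial graph contributes with strictly positive $r$-degree) is consistent with Lemma~\ref{L Yr-to-rubber-g=0}, but that lemma is again a statement about a local-model invariant of $(\Y_{\D_0,r}|\D_\0)$ and must be plugged into the double-degeneration framework above, not into a collapsed sum.
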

We will prove this theorem in the rest of this section.

\subsection{Reducing to local model by degeneration}
In this subsection we use degeneration formula (cf. \cite{Chen-Li-Sun-Zhao2011,Abramovich-Fantechi2016}) to reduce the proof of Theorem \ref{Thm abs-rel} to local model.

Let $\Y=\P(\L\oplus\mc O_\D)$ be the projectification of the normal line bundle $\L$ of $\D$ in $\X$. Let $\Y_{\D_0,r}=\P_{r,1}(\L^*\oplus\mc O_D)$ be the $r$-th root construction of $\Y$ along its $0$-section $\D_0$. Then the $0$-section of $\Y_{\D_0,r}$ is $(\sqrt[r]{\D_0})_\rho$ with normal line bundle $\sqrt[r]\L$. The $\0$-section of $\Y_{\D_0,r}$ is still $\D_\0\cong \D$.

\subsubsection{Degeneration of $\X_{\D,r}$}\label{SSS deg-XDr}
We first consider the following degeneration of $\X_{\D,r}$ along $(\sqrt[r]\D)_\rho$
\[
\X_{\D,r} \xrightarrow{\mathrm{degenerate}} (\X|\D)\wedge_\D(\Y_{\D_0,r}|\D_\0).
\]
The gluing is along $\D_\0$. Then the degeneration formula gives rise to
\begin{align}\label{E dege-Xr-inv}
\Big\langle\underline\alpha,\underline\mu \Big\rangle^{\X_{\D,r}}_{\Gamma(r)}= \sum_{(\Gamma(r)^\pm)}c(\Gamma(r)^\pm) \cdot \Big\langle\underline\alpha^-\,\Big|\, \underline{\check\eta}\Big\rangle^{\X|\D}_{\Gamma(r)^-}\cdot \Big\langle\underline\alpha^+,\underline\mu\,\Big|\, \underline{\eta}\Big\rangle^{\Y_{\D_0,r}|\D_\0}_{\Gamma(r)^+},
\end{align}
where
\begin{itemize}
\item the summation is taken over all possible splitting $\Gamma(r)^\pm=(g^\pm,\beta^\pm,\vec h^\pm\sqcup(\vec\mu^r)^\pm,\eta^\pm)$ of $\Gamma$, with
    \begin{itemize}
    \item $\eta^+=\eta=(([k_1],\eta_1),\ldots([k_\tau],\eta_\tau)) \in (\T(\D_\0)\times\ration)^\tau= (\T(\D)\times\ration)^\tau$ being a partition of $\beta_+\cdot[\D_\0]$, and
    \item $\eta^-=\check\eta=(([k_1\inv],\eta_1), \ldots,([k_\tau\inv],\eta_\tau))$,
    \item $(\vec\mu^r)^-=\varnothing$ since they all support over $\D_r$;
    \end{itemize}
\item $\underline\eta=(\delta_1,\ldots,\delta_{\tau})$ with $\delta_i\in H^*((\D_\0)_{[k_i]})=H^*(\D_{[k_i]})$ is a cohomological weighted partition corresponds to $\eta$ and $\underline{\check\eta}$ is the dual cohomological weighted partition of $\underline\eta$;
\item the constant $c(\Gamma_r^\pm)=\frac{\prod_j\eta_j}{|\aut(\eta)|}$.
\end{itemize}

\subsubsection{Degeneration of $(\X|\D)$.}

For $(\X|\D)$ we can also degenerate $\X$ along $\D$ to get
\[
(\X|\D)\xrightarrow{\mathrm{degenerate}} (\X|\D)\wedge_\D(\D_\0|\Y|\D_0)
\]
where the gluing is along $\D_\0$. Then
\begin{align}\label{E dege-XD-inv}
\Big\langle\underline\alpha\,\Big|\, \underline\mu\Big\rangle^{\X|\D}_{\Gamma}= \sum_{(\Gamma^\pm)}c(\Gamma^\pm)\cdot \Big\langle\underline\alpha^-\,\Big|\,\underline{\check\eta} \Big\rangle^{\X|\D}_{\Gamma^-}\cdot \Big\langle\underline\eta\,\Big|\,\underline\alpha^+\,\Big|\, \underline\mu\Big\rangle^{\D_\0|\Y|\D_0}_{\Gamma^+},
\end{align}
where as in \S \ref{SSS deg-XDr}
\begin{itemize}
\item the summation is taken over all possible splitting $\Gamma^+=(g^+,\beta^+,\vec h^+,\eta^+,\mu)$ and $\Gamma^-=(g^-,\beta^-,\vec h^-,\eta^-)$ of $\Gamma$, with
    \begin{itemize}
    \item $\eta^+=\eta=(([k_1],\eta_1),\ldots([k_\tau],\eta_\tau)) \in (\T(\D_\0)\times\ration)^\tau= (\T(\D)\times\ration)^\tau$ being a partition of $\beta_+\cdot[\D_\0]$, and
    \item $\eta^-=\check\eta=(([k_1\inv],\eta_1), \ldots,([k_\tau\inv],\eta_\tau))$;
    \end{itemize}
\item $\underline\eta=(\delta_1,\ldots,\delta_{\tau})$ with $\delta_i\in H^*((\D_\0)_{[k_i]})=H^*(\D_{[k_i]})$ is a cohomological weighted partition corresponds to $\eta$ and $\underline{\check\eta}$ is the dual cohomological weighted partition of $\underline\eta$;
\item the constant $c(\Gamma^\pm)=\frac{\prod_j\eta_j}{|\aut(\eta)|}$.
\end{itemize}

\subsubsection{Comparison between local models}

Recall that $\Y_{\D_0,r}$ is the $r$-th root construction of $\Y$ along $\D_0$. Hence along the way that we match invariants \eqref{E ROGW-XD} and \eqref{E AOGW-Xr} we could match invariants of $(\D_0|\Y|\D_\0)$ with invariants of $(\Y_{\D_0,r}|\D_\0)$. Then by comparing the summands in \eqref{E dege-Xr-inv} and \eqref{E dege-XD-inv} we have the following lemma.
\begin{lemma}
There is a 1-to-1 correspondence between the summands in \eqref{E dege-Xr-inv} and \eqref{E dege-XD-inv}, under which \begin{itemize}
\item the datum on the ``$-$'' side, i.e. $(\X|\D)$ side, are the same, and
\item the datum on the ``$+$'' side are matched via the way that we match \eqref{E ROGW-XD} and \eqref{E AOGW-Xr}.
\end{itemize}
Hence for every matched pair of summands in \eqref{E dege-Xr-inv} and \eqref{E dege-XD-inv}, we have $\Gamma(r)^-=\Gamma^-$ and $\Gamma(r)^+$ is obtained from $\Gamma^+$ via the convention in \S \ref{SSS aogw-Xr}, see \eqref{E gamma-r-in-Sec3}.

\end{lemma}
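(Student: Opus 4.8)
The plan is to unwind both degeneration formulas into their indexing data and exhibit a term-by-term bijection; the argument is bookkeeping, not analysis. Recall that a summand of an orbifold (relative) degeneration formula is indexed by a splitting datum $\Gamma^{\pm}$: a distribution of the genus, a distribution of the coarse homology class, a distribution of the absolute markings between the two sides, and a cohomology-weighted partition $(\eta,\underline{\eta})$ recording the twisted sectors, contact orders and cohomology insertions at the nodes over the gluing divisor, all constrained by genus additivity, homology additivity, the matching of inverse twisted sectors and equal contact orders $\eta_j$ at each node, the orbifold Riemann--Roch divisibility conditions on each piece, and non-emptiness of the two moduli factors. In \eqref{E dege-Xr-inv} the ``$-$'' piece lives in $(\X|\D)$, the ``$+$'' piece in $(\Y_{\D_0,r}|\D_\0)$, the gluing is along $\D_\0\cong\D$, and the relative markings $\vec\mu^r$ all fall on the ``$+$'' side because they are supported on the exceptional divisor $(\sqrt[r]{\D_0})_\rho$ of $\Y_{\D_0,r}$; in \eqref{E dege-XD-inv} the ``$-$'' piece lives in $(\X|\D)$, the ``$+$'' piece in $(\D_\0|\Y|\D_0)$, the gluing is again along $\D_\0\cong\D$, and the relative markings $\mu$ all fall on the ``$+$'' side because they are supported on $\D_0\subseteq\Y$.

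First I would note that the ``$-$'' factors are verbatim identical: both formulas degenerate against the same relative pair $(\X|\D)$, so a splitting of its ``$-$'' side in one formula is literally a splitting of the ``$-$'' side in the other. This yields the first bullet and $\Gamma(r)^-=\Gamma^-$ at once. Next I would build the bijection on the ``$+$'' side. The crucial input is that $\Y_{\D_0,r}$ is the $r$-th root construction of $\Y$ along $\D_0$ and that $\D_0\cap\D_\0=\varnothing$ in $\Y$, so this root construction is an isomorphism in a neighbourhood of $\D_\0$: hence $|\Y_{\D_0,r}|=|\Y|$, so $H_2(|\Y_{\D_0,r}|;\integer)=H_2(|\Y|;\integer)$, and the twisted sectors of $\D_\0$ — hence the gluing partitions $(\eta,\underline{\eta})$ over $\D_\0$ — are unchanged. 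Consequently the genus distribution, the homology distribution $\beta^{+}$ together with its fibre-class shift $d^{+}F$ (computed from $\eta$ and $\mu$ by the same intersection numbers on $|\Y|$ in both cases, exactly as for the $\beta+dF$ of \S\ref{SSS Gamma(r)}), the distribution of the absolute markings $\vec h$, and the gluing data all correspond tautologically. The only non-trivial identification is that of the data relative to $\D_0$ in $(\D_\0|\Y|\D_0)$ with the data absolute at the exceptional divisor in $(\Y_{\D_0,r}|\D_\0)$: by Lemma \ref{L lift-[g]}, applied with the $\rho$-admissible vector $A_{\vec\mu,\rho}$ of \eqref{E A-mu-rho}, the lifting $\Upsilon_{r,\rho}$ sends $([g_j],\mu_j)$ to $([g_j],e^{2\pi\sqrt{-1}\mu_j/r})$ and, for $r\gg1$, is a bijection onto the relevant liftings; pulling back cohomology insertions along the natural projection $\Y_{\D_0,r}\rto\Y$ matches $\underline\mu$ on the two sides. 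This is precisely the convention of \S\ref{SSS aogw-Xr}, see \eqref{E vec-mu-r}, so $\Gamma(r)^{+}$ is obtained from $\Gamma^{+}$ by that convention, which gives the second bullet and the rest of the ``Hence''.

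It then remains to check that the defining constraints correspond under this dictionary. Homology additivity survives because it is an identity in $H_2$ of coarse spaces, which agree; the contact orders $\eta_j$ over $\D_\0$ and the inverse-twisted-sector matching at each node are literally unchanged; the orbifold Riemann--Roch divisibility on the ``$+$'' piece over $\D_0$ (respectively at the exceptional divisor) differs only by the factor $r$ built into $\mu_j$ versus $\mu_j/r$ and into $\L$ versus $\sqrt[r]\L$, so one holds if and only if the other does; and non-emptiness of the ``$+$'' moduli factors is equivalent under $\Upsilon_{r,\rho}$ by the same lifting argument used in \S\ref{SSS strata} after Definition \ref{D chi}. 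Since the combinatorial constants $c(\Gamma(r)^{\pm})=\prod_j\eta_j/|\aut(\eta)|=c(\Gamma^{\pm})$ depend only on $\eta$, which is preserved, the dictionary is the asserted term-by-term bijection.

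The hard part is organisational rather than conceptual: one must keep straight three distinct divisors (the divisor $\D\subseteq\X$, the exceptional divisor of $\X_{\D,r}$, and $\D_0$ versus $\D_\0$ inside $\Y$), track the fibre-class shift $d^{+}F$ in the homology splitting, and verify that the orbifold Riemann--Roch constraints on a ``$+$'' vertex and its incident node match after rescaling the contact orders by $1/r$ — essentially re-running, for a single node and a single vertex, the age and degree computations already packaged in Lemma \ref{L weight} and \S\ref{SSS localize-graphs}. Beyond the observation that the root construction along $\D_0$ does not meet $\D_\0$, no new input is needed.
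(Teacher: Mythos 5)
The paper states this lemma without any proof, treating the term-by-term matching as evident once the two degeneration formulas are written down; so there is no paper argument to compare against, and your task is to supply the missing bookkeeping, which you do correctly. The core observations are exactly right: the ``$-$'' sides are verbatim the same because both formulas degenerate against $(\X|\D)$ along $\D\cong\D_\0$; and since $\D_0\cap\D_\0=\varnothing$ in $\Y$, the root construction $\Y_{\D_0,r}$ is an isomorphism near $\D_\0$, so the coarse space, $H_2$, the twisted sectors over $\D_\0$, and hence the gluing data $(\eta,\underline\eta)$ and the combinatorial prefactors $c(\cdot)=\prod_j\eta_j/|\aut(\eta)|$ are untouched by the root construction. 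The only residual identification — $\mu$ as relative data to $\D_0$ in $(\D_\0|\Y|\D_0)$ versus $\vec\mu^r$ as absolute data at $(\sqrt[r]{\D_0})_\rho$ in $(\Y_{\D_0,r}|\D_\0)$ — is precisely the $\Upsilon_{r,\rho}$-lifting of \eqref{E gamma-r-in-Sec3} via the admissible vector $A_{\vec\mu,\rho}$ of \eqref{E A-mu-rho}, and you invoke Lemma \ref{L lift-[g]} and the weight/lifting discussion after Definition \ref{D chi} appropriately to match the divisibility constraints and non-emptiness. One small stylistic caveat: your reference to ``the $\beta+dF$ of \S\ref{SSS Gamma(r)}'' is a slight cross-wiring, since that subsection treats the root construction $\Y_{\D_\0,r}$ used in the localization of Section 3 rather than the $\Y_{\D_0,r}$ relevant here; the underlying point that the fiber-class component of $\beta^+$ is determined identically by the contact data on both sides (because the coarse spaces and intersection numbers agree) is nevertheless correct. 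Overall the proof is sound and fills a gap the paper leaves implicit.
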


So to prove Theorem \ref{Thm abs-rel} we only have to match the invariants of the ``$+$'' side of the degenerations. Explicitly, we reduce Theorem \ref{Thm abs-rel} to
\begin{lemma}\label{L equal-local-model}
Suppose $\D$ is a quotient orbifold of a smooth quasi-projective scheme by a linear algebraic group. When $r\gg 1$, $\Big\langle\underline\alpha,\underline\mu\,\Big|\, \underline{\eta}\Big\rangle^{\Y_{\D_0,r}|\D_\0}_{\Gamma(r)}$ is a polynomial in $r$ and
\[
\left[\Big\langle\underline\alpha,\underline\mu\,\Big|\, \underline{\eta} \Big\rangle^{\Y_{\D_0,r}|\D_\0}_{\Gamma(r)}\right]_{r^0}= \Big\langle\underline\mu\,\Big|\, \underline\alpha\,\Big|\, \underline\eta \Big\rangle^{\D_0|\Y|\D_\0}_{\Gamma},
\]
where for the topological data $\Gamma$ for $(\D_0|\Y|\D_\0)$ and the insertions $\underline\alpha,\underline\mu$ and $\underline\eta$, see the beginning of \S \ref{SSS 4.3.1}.
\end{lemma}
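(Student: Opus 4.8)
The plan is to prove Lemma \ref{L equal-local-model} by running the virtual-localization argument of \S\ref{SS localization} on the moduli space $\M_{\Gamma(r)}(\Y_{\D_0,r}|\D_\0)$, now carrying along the insertions $\underline\alpha$, $\underline\mu$, $\underline\eta$. Since $\Y_{\D_0,r}=\P_{r,1}(\L^*\oplus\mc O_\D)$ has $(\sqrt[r]{\D_0})_\rho$ and $\D_\0$ as its two sections, the fibrewise $\cplane^*$-action (dilation along $\L^*$) fixes exactly these two loci, while the relative condition is imposed along $\D_\0$. The $\cplane^*$-fixed loci of $\M_{\Gamma(r)}(\Y_{\D_0,r}|\D_\0)$ are indexed by decorated bipartite graphs $\Phi$ of the same type as in \S\ref{SSS localize-graphs}: the $0$-labeled vertices carry orbifold stable maps to the root gerbe $(\sqrt[r]{\D_0})_\rho$ (whose normal bundle is $\sqrt[r]\L$), the $\0$-labeled vertices carry stable maps to $\D_\0$ or, in the composite case, to the rubber target of $(\D_0|\Y|\D_\0)$; edges are the Galois covers of the fibre $\P_{r,1}$ from \S\ref{SSS localize-graphs}, the legs carry the absolute markings $\vec h\sqcup\vec\mu^r$ together with the relative markings $\eta$ over $\D_\0$, and for $r\gg 1$ every edge moduli space $\scr F_e$ is a $\integer_{d_e}$-gerbe over a rigidified twisted sector (the analogues of Lemma \ref{L unstable-vertex} and \S\ref{SSS unstable-vertex} eliminate the small unstable vertices over the root gerbe).

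Having fixed this combinatorial framework, I would write each graph's $\cplane^*$-equivariant contribution $\cont_\Phi$ as a product of root-gerbe vertex factors — built from the Chern classes $\tilde c_\bullet$ of $\mc R\pi_*\f^*\sqrt[r]\L$, computed by the Grothendieck--Riemann--Roch formula of \S\ref{SSS ch} — together with explicit rational edge factors in $t$, $c_1(\L)$ and $\bar\psi$-classes governed by \eqref{E q/pg=r/mg}, the gerbe-order normalizations of Lemma \ref{L 3.13}, and, in the composite case, a rubber factor $(t+\Psi_0)^{-1}$ decorated by the absolute insertions $\underline\alpha$ and by the insertions $\underline\mu$ (which now reappear as relative markings at $\D_0$ of the rubber). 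Because every sector $[h_i]$ occurring in $\underline\alpha$ lies in $\T(\X)_0$, i.e.\ has $\rho([h_i])=1$, the argument of Lemma \ref{L No-edge-for-gi} forces all of the $\underline\alpha$-markings onto the $\D_\0$/rubber side; this pins down the distinguished graph $\Phi_0$ whose rubber component has topological type exactly $\Gamma$ and, after the rubber calculus of \cite[Section 4.2]{Chen-Du-WangR2019a} removes the $\Psi_0$-classes, whose integrated contribution is precisely $\bl\underline\mu\,\bb\,\underline\alpha\,\bb\,\underline\eta\br^{\D_0|\Y|\D_\0}_\Gamma$.

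The polynomiality and constant-term statements then follow exactly as in the proof of Theorem \ref{T DR}. Applying Theorem \ref{T polynomiality} to $\L\rto\D$, each root-gerbe vertex contribution becomes, after the appropriate power of $r$, a polynomial in $r$; combining this with the $r$-dependence of the edge factors and the gerbe normalizations — the bookkeeping of Corollary \ref{C Phi-poly} — one obtains that $\cont_\Phi$, equivariantly pushed forward, is a polynomial in $r$ whose lowest degree equals the number of stable $0$-vertices of $\Phi$, and is rational in the rescaled variable $s=tr$. The full equivariant pushforward has a non-equivariant limit, so its $t^{-1}$-coefficient vanishes (the analogue of \eqref{E coeff-t0=0}); extracting the coefficient of $t^0$ and then of $r^0$ kills every graph carrying a stable $0$-vertex and, just as in \eqref{E DR-in-proof}, leaves the distinguished graph $\Phi_0$ as the only $r^0$-contribution. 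This yields both the polynomiality of $\bl\underline\alpha,\underline\mu\,\bb\,\underline\eta\br^{\Y_{\D_0,r}|\D_\0}_{\Gamma(r)}$ in $r$ and the equality of its constant term with $\bl\underline\mu\,\bb\,\underline\alpha\,\bb\,\underline\eta\br^{\D_0|\Y|\D_\0}_\Gamma$.

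The main obstacle will be setting up this ``mixed'' localization cleanly — taking relative conditions along $\D_\0$ while the target has been root-constructed along the opposite section $\D_0$ — and carrying out the precise edge/gerbe bookkeeping so that the powers of $r$ furnished by Theorem \ref{T polynomiality} match those coming from the $\integer_{d_e}$-gerbe structures on the edge spaces $\scr F_e$; once the distinguished graph $\Phi_0$ is correctly identified (via the fact that the $\T(\X)_0$-insertions cannot be pushed onto the root-gerbe side) and the rubber $\Psi_0$-classes are removed by rubber calculus, all remaining steps run parallel to \S\ref{SS localization}.
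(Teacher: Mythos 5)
Your route — localize $\M_{\Gamma(r)}(\Y_{\D_0,r}|\D_\0)$ directly and read off the $r^0$ coefficient of the full localization sum — is not how the paper proves Lemma \ref{L equal-local-model}. The paper first establishes the comparison in the \emph{special} case where one insertion of $\underline\alpha$ has the form $\bar\psi^a([\D_\0]\cup\alpha)$ (Lemma \ref{L Yr-to-rubber}, Corollary \ref{C}, Lemma \ref{L Y-to-rubber-general}), and then reduces the general case to that one via the divisor equation and the identity $[\D_0]=[\D_\0]+c_1(\L)$ (the three-case argument after Lemma \ref{L Y-to-rubber-general}). That reduction is not a cosmetic choice: it supplies exactly what is missing in your argument.

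There are two concrete gaps. First, you invoke Lemma \ref{L No-edge-for-gi} to conclude that the $\underline\alpha$-markings are forced onto the $\D_\0$/rubber side and that the graph $\Phi_0$ is thereby "pinned down." But that lemma is a statement internal to the single graph $\Phi_0$: its proof uses the balance $|\mu_0|-|\mu_\0'|=\int_\beta c_1(\L)$, which presupposes there are no stable root-gerbe vertices. Markings $[h_i]$ with $\rho([h_i])=1$ lift to twisted sectors $([h_i],1)$ that live on \emph{both} sides of $\Y_{\D_0,r}$ (Lemma \ref{L T(D0)+CR-Y-D0}); graphs placing such markings over $(\sqrt[r]{\D_0})_\rho$, as well as simple fixed loci with non-expanded targets, contribute and are unaccounted for in your outline. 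Second, and more seriously, even after isolating the distinguished rubber graph, its contribution is $\frac{1}{-t+\Psi_\0}\cap[\M_{\Gamma}^\sim]^\vir$, and after the substitution $s=tr$ the constant term in $r$ of $\frac{1}{-t+\Psi_\0}=\frac{r}{-s+r\Psi_\0}$ is $0$, not the rubber class. What makes the paper's computation produce $\epsilon^\sim_{\Gamma,*}[\M_\Gamma^\sim]^\vir$ is precisely the equivariant numerator $(-\msf{ev}_1^*e_{[h_1]}^*c_1(\L)-t)$ coming from the $[\D_\0]$ insertion (Lemma \ref{L Yr-to-rubber}), which both forces the target to expand and makes the constant-term extraction nontrivial; and then Lemma \ref{L Y-to-rubber-general} is what converts the resulting rubber invariant into the relative invariant $\langle\underline\mu\,|\,\underline\alpha\,|\,\underline\eta\rangle^{\D_0|\Y|\D_\0}_\Gamma$. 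Appealing to "rubber calculus" alone does not accomplish either step.
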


\subsection{Local model}\label{SS local-model}
In this subsection we prove Lemma \ref{L equal-local-model}.

\subsubsection{Setup and some preliminary results}\label{SSS 4.3.1}

First we relate both the relative invariants of $(\D_0|\Y|\D_\0)$ and relative invariants of $(\Y_{\D_0,r}|\D_\0)$ to rubber invariants of $(\D_0|\Y|\D_\0)$.

We now use
\[
\Gamma=(\msf g,\beta,\vec h,\mu,\eta),
\]
to denote topological data of relative stable maps to $(\D_0|\Y|\D_\0)$, where
\begin{itemize}
\item $\msf g$ is the genus, $\beta\in H_2(|\Y|;\integer)$ is the homology class,
\item $\vec h=([h_1],\ldots,[h_m])\in \T(\D)^m\subseteq \T(\Y)^m$ with $\rho([h_i])=0$,
\item $\mu=(([g_1],\mu_1),\ldots, ([g_n],\mu_n))$ encodes  the orbifold information and contact orders of relative marked points mapped to $\D_0$ and
\item $\eta=(([k_1],\eta_1),\ldots, ([k_\tau],\eta_\tau))$ encodes the orbifold information and contact orders of relative marked points mapped to $\D_\0$.
\end{itemize}
This is similar to the $\Gamma$ in \S \ref{SS D-DR-cycle}. Then as in \S \ref{SSS aogw-Xr}, \eqref{E gamma-r-in-Sec3}, we get a topological type $\Gamma(r)$ of relative stable maps to $(\Y_{\D_0,r}|\D_\0)$ by changing $\mu$ into $\vec\mu^r=\Upsilon_{r,\rho}(\vec\mu,A_{\vec\mu,\rho})$ as \eqref{E vec-mu-r} with $\vec\mu=([g_1],\ldots,[g_n])$ and $A_{\vec\mu,\rho}=(\mu_1,\ldots,\mu_n)$. These two topological types are the $\Gamma$ and $\Gamma(r)$ in Lemma \ref{L equal-local-model}. And the insertions in Lemma \ref{L equal-local-model} are $\underline\alpha=(\bar\psi^{a_1}\alpha_1,\ldots, \bar\psi^{a_m}\alpha_m)$ for $\vec h$, $\underline\mu=(\bar\psi^{b_1}\theta_1,\ldots, \bar\psi^{b_n}\theta_n)$ for $\mu$ and $\underline\eta=(\bar\psi^{c_1}\delta_1,\ldots,\bar\psi^{c_\tau} \delta_\tau)$ for $\eta$.

We have proved in \cite{Chen-Du-WangR2019a} the following result.
\begin{lemma}\label{L Y-to-rubber}
Suppose that the first absolute marked point in $\Gamma$ is mapped to the untwisted sector $\Y_{[h_1]}=\Y$, i.e. $[h_1]=[1]$. Then
\begin{align*}
[\M_{\Gamma}(\D_0|\Y|\D_\0)^\sim]^\vir
&=\epsilon_{\Gamma,*}\left( \msf{ev}_1^*([\D_0]\cap [\M_\Gamma(\D_0|\Y|\D_\0)]^\vir\right)\\
&=\epsilon_{\Gamma,*}\left( \msf{ev}_1^*([\D_\0]\cap [\M_\Gamma(\D_0|\Y|\D_\0)]^\vir\right)
\end{align*}
where $\epsilon_\Gamma\co \M_\Gamma(\D_0|\Y|\D_\0)\rto \M_\Gamma(\D_0|\Y|\D_\0)^\sim$ is the natural forgetful map.

Under the current circumstance, suppose $\underline\alpha=(\bar\psi^{a_1}([\D_\0]\cup\alpha),\ldots)$ with $\alpha\in H^*(\D_\0)=H^*(\D)$, i.e. $\alpha_1=[\D_\0]\cup\alpha$. Write $\tilde{\underline\alpha}=(\bar\psi^{a_1}\alpha,\ldots)$. Then the invariants
\[
\Big\langle\underline\mu \,\Big|\,  \underline\alpha\,\Big|\,\underline\eta \Big\rangle^{\D_0|\Y|\D_\0}_\Gamma= \Big\langle\underline\mu \,\Big|\, \tilde{\underline\alpha}\,\Big|\,\underline\eta \Big\rangle^{\D_0|\Y|\D_\0,\sim}_\Gamma.
\]
\end{lemma}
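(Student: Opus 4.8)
The plan is to treat this as a relative-to-rubber comparison and rigidify the fibrewise $\cplane^*$-action on the target by a section; this is the orbifold version of the rubber calculus used in \cite{Janda-Pandharipande-Pixton-Zvonkine2018}, and is the argument of \cite{Chen-Du-WangR2019a}. Write $\M:=\M_\Gamma(\D_0|\Y|\D_\0)$ and $\M^\sim:=\M_\Gamma(\D_0|\Y|\D_\0)^\sim$. First I would record that $\epsilon_\Gamma\co\M\rto\M^\sim$ forgets the rigidification of the bottom level of the target expansion, so that its generic fibre is a single orbit of the $\cplane^*$-action on $\Y=\P(\L\oplus\mc O_\D)$ fixing $\D_0$ and $\D_\0$; this matches the virtual-dimension formulas of \S\ref{SSS D-DR-cycle}, which give $\vdimc\M=\vdimc\M^\sim+1$. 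Since $[h_1]=[1]$, Lemma \ref{L CR-Y} gives $\Y_{[h_1]}=\Y$, the full orbifold $\P^1$-bundle $\pi\co\Y\rto\D$, so $\msf{ev}_1^*$ of a section class of $\pi$ is defined. Both $[\D_0]$ and $[\D_\0]$ are section classes, and any two section classes differ by a class in $\pi^*H^2(\D)$; moreover $\msf{ev}_1^*\pi^*=\epsilon_\Gamma^*\,\overline{\msf{ev}}_1^{\,*}$, where $\overline{\msf{ev}}_1\co\M^\sim\rto\D$ records the base-point of the first marking (it descends to $\M^\sim$ because it is $\cplane^*$-invariant). Since the fibres of $\epsilon_\Gamma$ are positive-dimensional, $\epsilon_{\Gamma,*}\big(\msf{ev}_1^*\pi^*(\cdot)\cap[\M]^\vir\big)=0$ by the projection formula. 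Hence the two asserted pushforwards coincide and both reduce to
\[
\epsilon_{\Gamma,*}\big(\msf{ev}_1^*[p]\cap[\M]^\vir\big)=[\M^\sim]^\vir,
\]
where $[p]$ is the class of a generic section of $\pi$ disjoint from $\D_0\cup\D_\0$.

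I expect this last identity to be the main obstacle; it is the orbifold rigidification lemma. Over the open locus of $\M^\sim$ where the first marking lies in the interior of its target level, one may represent $\msf{ev}_1^*[p]$ by a transverse $\cplane^*$-slice on which $\epsilon_\Gamma$ is degree one, which pins the identity down generically; the remaining lower-dimensional loci, where the first marking degenerates onto a level node, are handled by the compatibility of virtual classes. Concretely one must check that $\msf{ev}_1^*[p]\cap[\M]^\vir$ is the virtual class of the corresponding section locus for its induced perfect obstruction theory, and that along $\epsilon_\Gamma$ this obstruction theory is identified with that of $\M^\sim$, the only discrepancy being the deformations of the rigidified level together with the $\cplane^*$-direction, both of which fixing the value $p$ kills. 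In the orbifold setting the automorphism and gerbe factors at the nodes (and at $(\sqrt[r]{\D})$-type gerbes) have to be tracked throughout, exactly as in the strata analysis of \S\ref{SSS strata}, and they cancel in the comparison; this bookkeeping is the genuinely new ingredient over the smooth case.

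Granting the first part, the statement on invariants follows from the projection formula. Since $\alpha_1=[\D_\0]\cup\alpha=j_{\D_\0,*}\alpha$ for the inclusion $j_{\D_\0}\co\D_\0\hrto\Y$ and $\alpha\in H^*(\D_\0)=H^*(\D)$, we get $\msf{ev}_1^*\alpha_1=\msf{ev}_1^*[\D_\0]\cdot\msf{ev}_1^*\pi^*\alpha=\msf{ev}_1^*[\D_\0]\cdot\epsilon_\Gamma^*\,\overline{\msf{ev}}_1^{\,*}\alpha$. All the remaining insertions in $\bl\underline\mu\,\bb\,\underline\alpha\,\bb\,\underline\eta\br^{\D_0|\Y|\D_\0}_\Gamma$ are $\epsilon_\Gamma$-pullbacks of the corresponding classes on $\M^\sim$: the $\bar\psi$-classes at all marked points and the evaluations at the relative markings descend because $\epsilon_\Gamma$ changes only the target and leaves the domain curve and its composition with $\pi$ untouched, while the remaining absolute evaluations descend because $\cplane^*$ acts trivially on $H^*(\Y)$, so their pullbacks factor through $\M^\sim$. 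Therefore $\int_{[\M]^\vir}\msf{ev}_1^*\alpha_1\cdot\bar\psi_1^{a_1}\cdot(\text{rest})$ equals, by the projection formula together with $\epsilon_{\Gamma,*}(\msf{ev}_1^*[\D_\0]\cap[\M]^\vir)=[\M^\sim]^\vir$ from the first part, the integral over $[\M^\sim]^\vir$ of $\overline{\msf{ev}}_1^{\,*}\alpha\cdot\bar\psi_1^{a_1}\cdot(\text{rest})$, which is precisely $\bl\underline\mu\,\bb\,\tilde{\underline\alpha}\,\bb\,\underline\eta\br^{\D_0|\Y|\D_\0,\sim}_\Gamma$.
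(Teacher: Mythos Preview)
Your proposal is correct and follows the standard rubber-calculus rigidification argument; the paper does not prove this lemma itself but cites \cite{Chen-Du-WangR2019a}, whose ``rubber calculus'' is exactly the mechanism you invoke. Note, however, that the paper's own argument for the generalization of this lemma (Lemma \ref{L Y-to-rubber-general}, allowing $\rho([h_1])=1$ rather than $[h_1]=[1]$) proceeds by a different route: virtual localization on $\M_\Gamma(\D_0|\Y|\D_\0)$ with respect to the fibrewise $\cplane^*$-action, as in the proof of Lemma \ref{L Yr-to-rubber}. There one observes that the insertion $[(\D_\0)_{[h_1]}]$ forces the target to expand, and then a direct inspection of the fixed-locus graphs shows that only the graph $\Phi_\0$ (with a single rubber vertex and no stable vertices over $\D_0$) contributes after pushing forward and extracting the $t^0$-coefficient. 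Your rigidification argument and the localization argument are two sides of the same coin: yours identifies the rubber class directly by slicing the $\cplane^*$-orbit, while the paper's version reconstructs it from the equivariant decomposition. One small clarification on your descent claim for the remaining absolute insertions: the evaluation maps $\msf{ev}_i\co\M\rto\Y_{[h_i]}$ do not themselves factor through $\M^\sim$, but since $\cplane^*$ is connected it acts trivially on $H^*(\Y_{[h_i]})$, so the pulled-back classes $\msf{ev}_i^*\alpha_i$ are constant along $\epsilon_\Gamma$-fibres and hence are $\epsilon_\Gamma$-pullbacks; this is what makes the projection-formula step go through.
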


We will find out that Lemma \ref{L Y-to-rubber} is not enough to prove Lemma \ref{L equal-local-model}; we need to consider the more general case that $[h_1]$ satisfies $\rho([h_1])=1$. We will generalize Lemma \ref{L Y-to-rubber} to this more general case in Lemma \ref{L Y-to-rubber-general}. Here we first consider  $(\Y_{\D_0,r}|\D_\0)$ in Lemma \ref{L Yr-to-rubber} as the proof of Lemma \ref{L Y-to-rubber-general} is similar to the proof of Lemma \ref{L Yr-to-rubber}.

\begin{lemma}\label{L Yr-to-rubber}
Consider the two maps
\[
\epsilon_{\Gamma(r)}\co \M_{\Gamma(r)}(\Y_{\D_0,r}|\D_\0)\rto \M_\Gamma(\Y),\qq\mathrm{and}\qq \epsilon_\Gamma^\sim\co \M_\Gamma(\D_0|\Y|\D_\0)^\sim \rto\M_\Gamma(\Y).
\]
Suppose that $[h_1]$ in $\Gamma$ satisfies $\rho([h_1])=1$. Then
\[
\epsilon_{\Gamma(r),*}\left(\msf{ev}_1^*([(\D_\0)_{[h_1]}])\cap [\M_{\Gamma(r)}(\Y_{\D_0,r}|\D_\0)]^\vir\right)
\]
is a polynomial in $r$ when $r\gg 1$, and
\[
\left[\epsilon_{\Gamma(r),*}\left(\msf{ev}_1^*([(\D_\0)_{[ h_1]}])\cap [\M_{\Gamma(r)}(\Y_{\D_0,r}|\D_\0)]^\vir\right)\right]_{r^0}= \epsilon_{\Gamma,*}^\sim\left( [\M_{\Gamma}(\D_0|\Y|\D_\0)^\sim]^\vir\right).
\]
\end{lemma}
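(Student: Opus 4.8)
The plan is to run the virtual localization argument of \S\ref{SS localization} on the relative moduli space $\M_{\Gamma(r)}(\Y_{\D_0,r}|\D_\0)$, with the roles of $\D_0$ and $\D_\0$ interchanged relative to the proof of Theorem \ref{T DR}: here the $r$-th root construction is performed along the zero section, the relative divisor is $\D_\0$, and the normal bundle of $\D_\0$ in $\Y_{\D_0,r}$ is still $\L^*$. I would use the fibrewise $\cplane^*$-action obtained by dilation on $\L$, whose fixed loci in $\Y_{\D_0,r}$ are $(\sqrt[r]{\D_0})_\rho$ and $\D_\0$, lift $[(\D_\0)_{[h_1]}]$ to its $\cplane^*$-equivariant fundamental class, and expand
\[
\epsilon_{\Gamma(r),*}\big(\msf{ev}_1^*([(\D_\0)_{[h_1]}])\cap[\M_{\Gamma(r)}(\Y_{\D_0,r}|\D_\0)]^\vir\big)
\]
by the localization formula \eqref{E loc-formula} into contributions of the decorated graphs of \S\ref{SSS localize-graphs} (again with $\D_0\leftrightarrow\D_\0$). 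The new feature, compared with the proof of Theorem \ref{T DR}, is that the factor $\msf{ev}_1^*([(\D_\0)_{[h_1]}])$ forces the first absolute marking to lie over the relative divisor $\D_\0$, hence on a rubber component; thus only graphs with expanded target contribute, and that marking sits on the unique rubber vertex $v_o$.

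Next I would classify the contributing graphs exactly as in Remark \ref{R 3-Phi's} and Lemmas \ref{L unstable-vertex}, \ref{L No-edge-for-gi}. For $r\gg1$ the markings from $\vec\mu^r$ are forced onto $(\sqrt[r]{\D_0})_\rho$, producing edges with minimal weight $\kappa_e$; since for $r\gg1$ the only unstable vertices over $(\sqrt[r]{\D_0})_\rho$ are valence-one ones carrying a single $\vec\mu^r$-marking (the analogues of types (i)--(iii) being excluded), while the markings from $\vec h$ and from $\eta$ all lie on $v_o$, a graph with no \emph{stable} vertex over $(\sqrt[r]{\D_0})_\rho$ has exactly $n$ edges and is the distinguished graph $\Phi_0$: one rubber vertex $v_o$ carrying $\M_\Gamma(\D_0|\Y|\D_\0)^\sim$ joined by those $n$ edges to valence-one vertices over $(\sqrt[r]{\D_0})_\rho$. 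Using the analogue of Lemma \ref{L 3.13}, the $\scr F_e$-gerbe identity \eqref{E inv-F} (each edge contributing $\kappa_e\inv$, cancelling the edge factor), and the rubber calculus of \cite[Section 4.2]{Chen-Du-WangR2019a} to absorb the $\frac{1}{t+\Psi_0}$ factor against the equivariant constraint at the first marking, the contribution of $\Phi_0$ equals $\epsilon^\sim_{\Gamma,*}([\M_\Gamma(\D_0|\Y|\D_\0)^\sim]^\vir)$, which is $r$-independent. Every other contributing graph has at least one stable vertex over $(\sqrt[r]{\D_0})_\rho$, so by the computations of $\cont_v$ and $\widetilde{\cont}_v$ in \S\ref{SS localization} its contribution is polynomial in $r$ with strictly positive lowest degree, hence has vanishing constant term; moreover, applying Theorem \ref{T polynomiality} to the vertex bundles $\mc R\pi_*\f^*\sqrt[r]{\L}$ over $\tilde\M_v((\sqrt[r]{\D_0})_\rho)$ shows the whole sum is a polynomial in $r$. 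Extracting the constant term gives the lemma.

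The main obstacle will be the bookkeeping that pins down the $\Phi_0$-contribution to be exactly $\epsilon^\sim_{\Gamma,*}([\M_\Gamma(\D_0|\Y|\D_\0)^\sim]^\vir)$ with no spurious multiplicity or residual $r$-dependence: one must reconcile the automorphism factor $|\aut(\Phi_0)|$ with the $|\aut(\eta)|$-type weights, the edge weights $\kappa_e$ against the $\integer_{d_e}$-gerbe normalizations of the $\scr F_e$, the $r$-powers supplied by Lemma \ref{L 3.13}, and --- most delicately --- the interaction of the $\Psi_0$-classes with the equivariant class of $(\D_\0)_{[h_1]}$ at the constrained marking, where the rubber calculus must be invoked to show that the resulting expression has a well-defined and correct non-equivariant value. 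A secondary point is verifying rigorously that \emph{all} remaining graphs carry a strictly positive power of $r$, i.e. that the relevant exponent $|\tV^\0_{\mathrm{st}}(\Phi)|$ (in the present labelling) is $\ge1$ whenever $\Phi\neq\Phi_0$. Once Lemma \ref{L Yr-to-rubber} is in place, Lemma \ref{L Y-to-rubber-general} follows from the same localization analysis carried out on $\Y$ itself, as the text indicates.
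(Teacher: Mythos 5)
Your overall scheme is the right one and matches the paper's: localize on $\M_{\Gamma(r)}(\Y_{\D_0,r}|\D_\0)$ with the $0/\0$ labelling flipped, note that the constraint $\msf{ev}_1^*[(\D_\0)_{[h_1]}]$ forces the target to expand, identify the unique graph with no stable vertex over $(\sqrt[r]{\D_0})_\rho$, invoke Theorem~\ref{T polynomiality} for the remaining graphs, and read off the $r^0$ coefficient. However, there is a concrete error in the way you handle the distinguished graph. You assert that ``the contribution of $\Phi_0$ equals $\epsilon^\sim_{\Gamma,*}([\M_\Gamma(\D_0|\Y|\D_\0)^\sim]^\vir)$, which is $r$-independent,'' arrived at by using rubber calculus to ``absorb the $\frac{1}{t+\Psi_0}$ factor against the equivariant constraint.'' That claim is false: the localization weight of this graph is
\[
\frac{-\msf{ev}_1^*(e_{[h_1]}^*c_1(\L))-t}{-t+\Psi_\0}\cap[\M_\Gamma^\sim]^\vir ,
\]
and after the paper's change of variables $s=tr$ this becomes $\frac{\frac{r}{s}\msf{ev}_1^*e_{[h_1]}^*c_1(\L)+1}{1-\frac{r\Psi_\0}{s}}\cap[\M_\Gamma^\sim]^\vir$, a genuine power series in $r$ with nonvanishing higher-order terms. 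Only its $r^0$ coefficient equals $[\M_\Gamma^\sim]^\vir$; the higher-order terms are $s$-dependent and must conspire with the positive-$r$-degree contributions of the other graphs to make the total a polynomial in $r$ alone. No rubber calculus is invoked here --- the paper simply expands a geometric series and reads off its constant coefficient. Rubber calculus (removing $\Psi_0$-insertions via comparison of different rubber moduli) is not the relevant tool and I do not see how it would produce the identity you want.

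A secondary but necessary point: the substitution $s=tr$ should be made explicit. Your reference to the computations in \S\ref{SS localization} relies on it, but without stating it the polynomiality bookkeeping does not close --- the vertex contributions involve both powers of $t$ and powers of $1/r$, and only the combination $s=tr$ reorganizes them so that the exponent of $r$ is visibly bounded below by the number of stable vertices over $(\sqrt[r]{\D_0})_\rho$ (with $\Phi_0$, having none, starting at degree zero). Once these two corrections are made --- drop the ``$r$-independent'' claim in favor of extracting the $r^0$ coefficient of the $\Phi_0$ weight, and state $s=tr$ --- the argument is the same as the paper's.
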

\begin{proof}
As in \S \ref{SS localization}, by virtual localization we have
\begin{align*}
&\msf{ev}_1^*([(\D_\0)_{[h_1]}])\cap [\M_{\Gamma(r)}(\Y_{\D_0,r}|\D_\0)]^\vir%\\&
=\sum_{\Phi}\frac{1}{|\aut(\Phi)| }\cdot
i_*\left((-\msf{ev}_1^*(e_{[h_1]}^*c_1(\L))-t) \cdot\frac{[\M_{\Phi}]^\vir}{e(\mc N_\Phi)} \right)
\end{align*}
where $e_{[h_1]}:(\D_\0)_{[h_1]}\rto \D$ is the natural evaluation map from twisted sector to non-twisted sector, and $\Phi$ are bipartite graphs of the forms in \S\ref{SSS localize-graphs} with {\em the $0$-labeling and $\0$-labeling exchanged}. Moreover, when $r\gg 1$, there are also only two type of unstable vertices, which are type (iv) and type (v) in \S\ref{SSS unstable-vertex} with $0$-labeling and $\0$-labeling exchanged.

As the first absolute marking is mapped to $(\D_\0)_{[h_1]}$, the target must be expanded. So for the localization formula we only need to consider the following two types of graphs with expanded targets:
\begin{itemize}
\item[(1)] {\bf Type I}. Those have no stable vertex labeled by $0$ (i.e. over $(\sqrt[r]{\D_0})_\rho$), but one stable vertex labeled by $\0$ (i.e. over rubber). Such a graph corresponds to the first type graphs in Remark \ref{R 3-Phi's}. So by the proof Lemma \ref{L No-edge-for-gi} we see that there is only one graph of this type, which has exactly $n$ (the number of relative marked points in $\mu$) unstable vertices corresponding to absolute marked points decorated by $\vec\mu^r$ and no other unstable vertices corresponding to absolute marked points decorated by (parts of) $\vec g$. So as in Remark \ref{R 3-Phi's} we denote this graph by $\Phi_\0$ (recall that here the labelling $0$ and $\0$ are exchanged comparing with the labelling of graphs in \S \ref{SS localization}).
    Moreover, the topological data for the rubber component of this graph is the same as $\Gamma$.

\item[(2)] {\bf Type II}. Those have stable vertices labeled by $0$ (i.e. over $(\sqrt[r]{\D_0})_\rho$), and one stable vertex labeled by $\0$ (i.e. over rubber), denoted by $v_\0$. We denote such a graph by $\Phi$. These graphs correspond to the third type graphs in Remark \ref{R 3-Phi's} with $0$-labeling and $\0$-labeling exchanged.
\end{itemize}
Then the contributions of these graphs are as follows.
\begin{itemize}
\item The contribution of the only one type I graph $\Phi_\0$ is
\[
\frac{-\msf{ev}_1^*(e_{[h_1]}^*c_1(\L))-t}{-t+\Psi_\0}\cap [\M_{\Gamma}^\sim]^\vir= \frac{\frac{r}{s}\msf{ev}_1^*( e_{[h_1]}^*c_1(\L))+1}{1-\frac{r\Psi_\0}{s}}\cap [\M_{\Gamma}^\sim]^\vir.
\]
\item The contribution of a type II graph $\Phi$ is
\begin{align*}
&\left(\frac{1}{|\aut(\Phi)|} \frac{-\msf{ev}_1^*(e_{[h_1]}^*c_1(\L))-t}{-t+\Psi_\0} \left(\frac{1}{t}\right)^{|\tV_{\mathrm{st}}^0(\Phi)|} \prod_{v\in \tV^0_{\mathrm{st}}(\Phi)}\widehat{\cont}_v\right)\cap [\M_{\Phi}'']^\vir\\
&=\left(\frac{1}{|\aut(\Phi)|} \frac{\frac{r}{s}\msf{ev}_1^*(e_{[h_1]}^*c_1(\L))+1} {1-\frac{r\Psi_\0}{s}} \left(\frac{r}{s}\right)^{|\tV_{\mathrm{st}}^0(\Phi)|} \prod_{v\in \tV^0_{\mathrm{st}}(\Phi)}\widehat{\cont}_v\right)\cap [\M_{\Phi}'']^\vir,
\end{align*}
where $\M_\Phi''$ is similar to $\M_\Phi'$ and is given by
\[
\M_\Phi''=(\prod_{v\in\tV_{\mathrm{st}}^0(\Phi)} \M_v((\sqrt[r]{\D_0})_\rho))
\times_{(\msf{ID}_\0)^{|\Phi|}} \M_{v_\0}^\sim,
\]
with $|\Phi|=\sum_{v\in\tV_{\mathrm{st}}^0(\Phi)}|\tE(v)|$, and $\widehat{\cont}_v$ is similar to $\widetilde{\cont}_v$ and is given by
\begin{align}\label{E hat-Cont}
&t\cdot r^{|\tE(v)|} \tilde c_{\mathrm{rk}(v)}(-(\sqrt[r]\L)_v\otimes\mc O(\frac{1}{r}))\cdot \prod_{e\in\tE(v)}\frac{\kappa_e}{t+ev_e^*(c_1(\L))-\kappa_e \bar\psi_e}\\
=\,&\kappa_v \sum_{0\leq d\leq \mathrm{rk}(v)}(s)^{\msf g(v)-d}\cdot r^{2d-2\msf g(v)+1} \tilde c_d(-(\sqrt[r]\L)_v)\cdot
\prod_{e\in\tE(v)}\frac{1}{1+\frac{r(ev_e^*(c_1(\L))-\kappa_e \bar\psi_e)}{s}}.\nonumber
\end{align}
\end{itemize}
with $(\sqrt[r]\L)_v=\mc R\pi_*\f^*\sqrt[r]\L$ over $\tilde\M_v((\sqrt[r]\D)_\rho)$.

Now we push forward these contributions to $\M_\Gamma(\Y)$ via $\epsilon_{\Gamma(r)}$, then $\M_v((\sqrt[r]\D)_\rho)$ is pushed forward to $\M_v(\D_0)$. As
\begin{align*}
&\epsilon_*(r^{2d-2\msf g(v)+1} \tilde c_d(-(\sqrt[r]\L)_v)
\cap[\M_{v}((\sqrt[r]\D)_\rho)]^\vir)%\\&
=\tau_*(r^{2d-2\msf g(v)+1} c_d(-(\sqrt[r]\L)_v)\cap [\tilde\M_{v}((\sqrt[r]\D)_\rho)]^\vir)
\end{align*}
and by Theorem \ref{T polynomiality} $\tau_*(r^{2d-2\msf g(v)+1} c_d(-(\sqrt[r]\L)_v) \cap[\tilde\M_{v}((\sqrt[r]\D)_\rho)]^\vir)$ is a polynomial in $r$ when $r\gg 1$, therefore the contribution of every type II graph $\Phi$ is a polynomial in $r$ when $r\gg 1$, and its lowest degree of $r$ is the number of stable vertices over $0$. So the contribution of every type II graph is a polynomial in $r$ with lowest degree at least one. On the other hand, the only one type I graph $\Phi_\0$ also contributes a polynomial in $r$ with lowest degree zero. Hence $\epsilon_{\Gamma(r),*}\left(\msf{ev}_1^*([(\D_\0)_{[h_1]}]\cap [\M_{\Gamma(r)}(\Y_{\D_0,r}|\D_\0)]^\vir\right)$ is a polynomial in $r$ when $r\gg 1$. Moreover, its constant term corresponds to the constant term of
\begin{align*}
\epsilon_{\Gamma(r),*}\left(\frac{\msf{ev}_1^*(\frac{r}{s} e_{[h_1]}^*c_1(\L))+1}{1-\frac{r\Psi_\0}{s}}\cap [\M_\Gamma^\sim]^\vir\right),
\end{align*}
which is $ \epsilon^\sim_{\Gamma,*} \left([\M_\Gamma(\D_0|\Y|\D_\0)^\sim]^\vir\right)$.
\end{proof}

Consequently,
\begin{coro}\label{C}
Suppose $\underline\alpha= (\bar\psi^{a_1}([(\D_\0)_{[h_1]}]\cup\alpha),\ldots)$ with $\rho([h_1])=1$ and $\alpha\in H^*((\D_\0)_{[h_1]})=H^*(\D_{[h_1]})$. Write $\tilde{\underline\alpha}=(\bar\psi^{a_1}\alpha,\ldots)$. Then $\Big\langle\underline\alpha,\underline\mu\,\Big|\, \underline{\eta}\Big\rangle^{\Y_{\D_0,r}|\D_\0}_{\Gamma(r)}$ is a polynomial in $r$ when $r\gg 1$, and
\[
\left[\Big\langle\underline\alpha,\underline\mu\,\Big|\, \underline{\eta}\Big\rangle^{\Y_{\D_0,r}|\D_\0}_{\Gamma(r)} \right]_{r^0}=\Big\langle\underline\mu \,\Big|\, \tilde{\underline\alpha}\,\Big|\, \underline\eta \Big\rangle_\Gamma^{\D_0|\Y|\D_\0,\sim}.
\]
\end{coro}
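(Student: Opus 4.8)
The plan is to deduce this numerical statement from the cycle-level Lemma \ref{L Yr-to-rubber} by the projection formula, exactly in the way the second assertion of Lemma \ref{L Y-to-rubber} is deduced from its first assertion in the non-root setting. First I would unwind the definition of $\bl\underline\alpha,\underline\mu\,\bb\,\underline\eta\br^{\Y_{\D_0,r}|\D_\0}_{\Gamma(r)}$ as the integral over $[\M_{\Gamma(r)}(\Y_{\D_0,r}|\D_\0)]^\vir$ of $\msf{ev}_1^*(\alpha_1)\bar\psi_1^{a_1}$ wedged with the remaining absolute insertions attached to $\vec h$ and $\vec\mu^r$ and the relative insertions attached to $\eta$. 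Writing $\alpha_1=[(\D_\0)_{[h_1]}]\cup\alpha$ factors $\msf{ev}_1^*(\alpha_1)$ as $\msf{ev}_1^*([(\D_\0)_{[h_1]}])\cup\msf{ev}_1^*(\alpha)$, so the integrand becomes $\msf{ev}_1^*([(\D_\0)_{[h_1]}])$ times a class which, I claim, is the pullback along $\epsilon_{\Gamma(r)}\co\M_{\Gamma(r)}(\Y_{\D_0,r}|\D_\0)\rto\M_\Gamma(\Y)$ of a fixed, $r$-independent class $\Xi\in H^*(\M_\Gamma(\Y))$.

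Granting this, the projection formula along $\epsilon_{\Gamma(r)}$ rewrites the invariant as
\[
\bl\underline\alpha,\underline\mu\,\bb\,\underline\eta\br^{\Y_{\D_0,r}|\D_\0}_{\Gamma(r)} =\int_{\M_\Gamma(\Y)}\Xi\cup\epsilon_{\Gamma(r),*}\big(\msf{ev}_1^*([(\D_\0)_{[h_1]}])\cap[\M_{\Gamma(r)}(\Y_{\D_0,r}|\D_\0)]^\vir\big).
\]
By Lemma \ref{L Yr-to-rubber} the pushforward cycle on the right is a polynomial in $r$ for $r\gg1$ with constant term $\epsilon_{\Gamma,*}^\sim\big([\M_\Gamma(\D_0|\Y|\D_\0)^\sim]^\vir\big)$, and since $\Xi$ does not depend on $r$ this gives both the polynomiality and
\[
\left[\bl\underline\alpha,\underline\mu\,\bb\,\underline\eta\br^{\Y_{\D_0,r}|\D_\0}_{\Gamma(r)}\right]_{r^0} =\int_{\M_\Gamma(\Y)}\Xi\cup\epsilon_{\Gamma,*}^\sim\big([\M_\Gamma(\D_0|\Y|\D_\0)^\sim]^\vir\big) =\int_{[\M_\Gamma(\D_0|\Y|\D_\0)^\sim]^\vir}(\epsilon_\Gamma^\sim)^*\Xi.
\]
To finish I would check that pulling $\Xi$ back through $\epsilon_\Gamma^\sim$ reproduces exactly the insertions of the rubber invariant of $(\D_0|\Y|\D_\0)$, the effect of having divided out $\msf{ev}_1^*([(\D_\0)_{[h_1]}])$ being precisely to replace the first insertion $\alpha_1=[(\D_\0)_{[h_1]}]\cup\alpha$ by $\alpha$, i.e.\ $\underline\alpha$ by $\tilde{\underline\alpha}$; hence the last integral equals $\bl\underline\mu\,\bb\,\tilde{\underline\alpha}\,\bb\,\underline\eta\br^{\D_0|\Y|\D_\0,\sim}_\Gamma$.

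The step I expect to be the main obstacle is the bookkeeping behind the claim that, apart from $\msf{ev}_1^*([(\D_\0)_{[h_1]}])$ --- which is exactly what Lemma \ref{L Yr-to-rubber} is designed to absorb --- every remaining insertion of the root-side invariant descends from $\M_\Gamma(\Y)$. For the $\bar\psi$-classes this is built into their definition on $\M_{\Gamma(r)}(\Y_{\D_0,r}|\D_\0)$ as $\epsilon_{\Gamma(r)}^*$ of the $\bar\psi$-classes on $\M_\Gamma(\Y)$; for the cohomology weights one uses that, under the lifting convention of \S \ref{SSS aogw-Xr} (the same convention under which the root-side and base-side invariants are being matched), $\alpha$, the $\alpha_i$, the $\theta_j$ and the $\delta_k$ are $\msf I\pi^*$-images of classes on $\Y$ or $\D$, together with the evident commutative squares relating the evaluation maps on the two moduli spaces over $\epsilon_{\Gamma(r)}$. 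The hypothesis $\rho([h_1])=1$ is what makes the first absolute marked point able to specialize onto $\D_\0$, so that $\msf{ev}_1^*([(\D_\0)_{[h_1]}])$ is meaningful; the argument is otherwise parallel to the proof of Lemma \ref{L Yr-to-rubber} just given, which is why the corollary follows almost at once.
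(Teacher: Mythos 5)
Your proposal is correct and matches the paper's intent exactly: the paper marks Corollary~\ref{C} with a bare ``Consequently'' after Lemma~\ref{L Yr-to-rubber}, and what you have written out --- factoring off $\msf{ev}_1^*([(\D_\0)_{[h_1]}])$, observing that every other insertion is $\epsilon_{\Gamma(r)}^*$ of an $r$-independent class on $\M_\Gamma(\Y)$ (by the definition $\bar\psi_i=\epsilon^*\bar\psi_i$ and the $\msf I\pi^*$ lifting convention of \S\ref{SSS aogw-Xr}), applying the projection formula, and then invoking Lemma~\ref{L Yr-to-rubber} --- is precisely the argument being gestured at. No further comment is needed.
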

By a similar proof we could generalize Lemma \ref{L Y-to-rubber} to the more general case as in Lemma \ref{L Yr-to-rubber} without using the polynomiality in Theorem \ref{T polynomiality}.

\begin{lemma}\label{L Y-to-rubber-general}
Suppose that $[h_1]$ in $\Gamma$ satisfies $\rho([h_1])=1$. Then
\[
\epsilon_{\Gamma,*}\left(\msf{ev}_1^*([(\D_\0)_{[h_1]}])\cap [\M_\Gamma(\D_\0|\Y|\D_0)]^\vir\right)
=[\M_\Gamma(\D_0|\Y|\D_\0)^\sim]^\vir.
\]
So for the $\underline\alpha$ in last corollary we have
\[
\Big\langle \underline\mu \,\Big|\,  \underline\alpha\,\Big|\, \underline\eta \Big\rangle^{\D_0|\Y|\D_\0}_\Gamma= \Big\langle \underline\mu \,\Big|\, \tilde{\underline\alpha}\,\Big|\,\underline\eta \Big\rangle^{\D_0|\Y|\D_\0,\sim}_\Gamma.
\]
\end{lemma}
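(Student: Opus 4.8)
The plan is to prove the displayed cycle identity by $\cplane^*$-virtual localization, following the argument of Lemma~\ref{L Yr-to-rubber} but carried out on the unrooted pair $(\D_\0|\Y|\D_0)$ itself rather than on a root construction, and hence \emph{without invoking the polynomiality of Theorem~\ref{T polynomiality}}. Concretely, I would use the $\cplane^*$-action on $\Y=\P(\L\oplus\mc O_\D)$ that dilates the fibres of $\L$; its fixed loci are the two sections $\D_0,\D_\0$ and its fixed lines are the fibres of $\Y\rto\D$, and this induces a $\cplane^*$-action on the spaces of expansions parametrized by $\M_\Gamma(\D_\0|\Y|\D_0)$. The hypothesis $\rho([h_1])=1$ is what makes the statement meaningful: by Lemma~\ref{L CR-Y}(2) the sector $\Y_{[h_1]}$ is again a $\P^1$-bundle $\P(\L_{[h_1]}\oplus\mc O_{\D_{[h_1]}})$ over $\D_{[h_1]}$ with the two $\cplane^*$-fixed sections $(\D_0)_{[h_1]}$ and $(\D_\0)_{[h_1]}$, so the evaluation map $e_{[h_1]}\co(\D_\0)_{[h_1]}\rto\D$ is defined and $[(\D_\0)_{[h_1]}]$ carries a canonical $\cplane^*$-equivariant lift supported on the $\0$-section.

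First I would apply the virtual localization formula to the equivariant refinement of $\msf{ev}_1^*([(\D_\0)_{[h_1]}])\cap[\M_\Gamma(\D_\0|\Y|\D_0)]^\vir$. Since the first marking is pinned over the $\0$-end, the target of every fixed stable map is expanded, and the decorated graphs that contribute are precisely those of \S\ref{SSS localize-graphs} (up to interchanging the $0$- and $\0$-labels), with only the unstable vertices of types (iv) and (v) occurring. They fall into the same two classes as in the proof of Lemma~\ref{L Yr-to-rubber}: one distinguished graph, say $\Phi_\0$, carrying no stable vertex over the rigid part and whose rubber component carries the topological data $\Gamma$, and composite graphs $\Phi$ carrying at least one stable vertex over the rigid $\Y$-part together with a rubber vertex. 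The contribution of $\Phi_\0$, viewed on $\M_\Gamma(\D_0|\Y|\D_\0)^\sim$ after applying $\epsilon_{\Gamma,*}$, is
\[
\frac{-\msf{ev}_1^*(e_{[h_1]}^*c_1(\L))-t}{-t+\Psi_\0}\cap[\M_\Gamma(\D_0|\Y|\D_\0)^\sim]^\vir ,
\]
whose $t^0$-coefficient is exactly $[\M_\Gamma(\D_0|\Y|\D_\0)^\sim]^\vir$; every composite graph $\Phi$ contributes, exactly as in Corollary~\ref{C Phi-poly}, a class weighted by an overall factor $t^{-|\tV^\0_{\mathrm{st}}(\Phi)|}$ (with $|\tV^\0_{\mathrm{st}}(\Phi)|\ge 1$) coming from the node-smoothing factors $1/(t+\msf{ev}_e^*c_1(\L)+\kappa_e\bar\psi_e)$ over the rigid part, together with further non-positive $t$-powers once the $\Psi_\0$-string is removed by the rubber calculus of \cite[\S4.2]{Chen-Du-WangR2019a}, so it has vanishing $t^0$-coefficient. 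Since the left-hand side is an honest non-equivariant class, its equivariant pushforward to the trivial-action base $\M_\Gamma(\D_0|\Y|\D_\0)^\sim$ is a polynomial in $t$ whose $t^0$-coefficient is that left-hand side; collecting the graph contributions therefore gives $\epsilon_{\Gamma,*}(\msf{ev}_1^*([(\D_\0)_{[h_1]}])\cap[\M_\Gamma(\D_\0|\Y|\D_0)]^\vir)=[\M_\Gamma(\D_0|\Y|\D_\0)^\sim]^\vir$, which is the cycle identity. The second assertion, $\bl\underline\mu\,\bb\,\underline\alpha\,\bb\,\underline\eta\br^{\D_0|\Y|\D_\0}_\Gamma=\bl\underline\mu\,\bb\,\tilde{\underline\alpha}\,\bb\,\underline\eta\br^{\D_0|\Y|\D_\0,\sim}_\Gamma$, then follows by capping both sides of the cycle identity with $\bar\psi_1^{a_1}e_{[h_1]}^*\alpha$ and the remaining insertions and using the projection formula for $\epsilon_\Gamma$, exactly as Corollary~\ref{C} is deduced from Lemma~\ref{L Yr-to-rubber}.

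The step I expect to be the main obstacle is the precise accounting showing that the composite graphs contribute nothing to the $t^0$-part. In the rooted setting this was handled cleanly because polynomiality (Theorem~\ref{T polynomiality}) converts the estimate into a count of stable vertices over the gerbe, whereas here one must instead follow the virtual dimensions of the rigid fibre covers and of the rubber pieces by hand, and check that the edge/node factors always leave a strictly negative net power of $t$ after the rubber $\Psi_\0$-classes are eliminated. A safer alternative I would pursue in parallel is to avoid the composite analysis altogether and reduce to the already-established case $[h_1]=[1]$ of Lemma~\ref{L Y-to-rubber}: on the $\P^1$-bundle $\Y_{[h_1]}$ one has $[(\D_\0)_{[h_1]}]=[(\D_0)_{[h_1]}]+e_{[h_1]}^*c_1(\L)$ up to the evident signs, the $e_{[h_1]}^*c_1(\L)$-term being disposed of by the divisor/dilaton relations for rubber moduli, so an induction on the number of non-rigid components reduces the general $\rho([h_1])=1$ case to Lemma~\ref{L Y-to-rubber}.
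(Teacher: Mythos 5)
Your main route is the one the paper intends: the text records only that Lemma~\ref{L Y-to-rubber-general} follows ``by a similar proof'' to Lemma~\ref{L Yr-to-rubber} ``without using the polynomiality in Theorem~\ref{T polynomiality},'' and your localization-on-$\Y$ outline is that similar proof. Two corrections, though. First, when you say the fixed-locus graphs are ``precisely those of \S\ref{SSS localize-graphs} (up to interchanging the $0$- and $\0$-labels),'' you are importing the wrong picture: in $\M_\Gamma(\D_0|\Y|\D_\0)$ \emph{both} sections are relative divisors, so the $\cplane^*$-fixed loci have rubber components over $\D_0$ rather than absolute stable maps to a gerbe, and each such piece carries the target-smoothing factor $\kappa/(t+\Psi_0)$ of strictly negative $t$-degree. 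That is exactly what makes the $t^0$-extraction go through without any polynomiality input---the accounting you flag as ``the main obstacle'' is not a delicate virtual-dimension bookkeeping of Hodge-type classes over a rigid gerbe, since those classes simply are not present here; it is a standard rubber $\Psi_0$-string argument. In other words, your worry is attached to a misreading of the fixed-locus structure, not to an actual gap. Second, the fallback reduction cannot work as written: the relation $[(\D_\0)_{[h_1]}]=[(\D_0)_{[h_1]}]+e_{[h_1]}^*c_1(\L)$ is an identity inside $H^*(\Y_{[h_1]})$ for the \emph{fixed} sector $[h_1]$. It lets you trade the $\0$-divisor for the $0$-divisor plus $e_{[h_1]}^*c_1(\L)$ within that sector, but it does nothing to the topological datum $[h_1]$ carried by the marking, so there is no induction that lands you in the case $[h_1]=[1]$ of Lemma~\ref{L Y-to-rubber}. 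You must run the localization directly in the twisted sector, which is what your primary route already does.
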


\subsubsection{Theorem \ref{Thm abs-rel} in the local model}
Now we prove Lemma \ref{L equal-local-model}, hence finish the proof of Theorem \ref{Thm abs-rel}. We split the proof into several cases.
\begin{itemize}
\item[{\bf Case 1.}] {\bf In $\underline{\alpha}$ this is no term of the form $\bar\psi^a([\D_0]\cdot\alpha)$ or $\bar\psi^a([\D_\0]\cdot\alpha)$.}
    For this case, we have two subcases.
    \begin{itemize}
    \item[{\bf Case 1.1.}] $\beta\cdot[\D_\0]\neq 0$. Then we can use divisor equation to add an insertion of the form $\bar\psi^0[\D_\0]$ to $\underline\alpha$, and then apply the above reductions to rubber invariants as follows. For relative orbifold GW-invariants of $(\D_0|\Y|\D_\0)$ we add a smooth marked point to $\Gamma$ to get $\Gamma_{[1]}$. So we change $\vec h$ into $\vec h_{[1]}=([1],\vec h)$ and enlarge the insertion $\underline\alpha=(\bar\psi^{a_1}\alpha_1,\ldots, \bar\psi^{a_m}\alpha_m)$ into
        \[
        \underline\alpha_{[1]}=\left(\bar\psi^0[\D_\0], \underline\alpha\right)=\left(\bar\psi^0[\D_\0], \bar\psi^{a_1}\alpha_1,\ldots, \bar\psi^{a_m}\alpha_m\right).
        \]
        Set
        \begin{align*}
        {\underline\alpha}_i:=&\,\left(\bar\psi^{a_1}\alpha_1, \ldots, \bar\psi^{a_{i-1}}\alpha_{i-1}, \bar\psi^{a_i-1}[\D_\0]\cup_{\mathrm{CR}}\alpha_i ,\bar\psi^{a_{i+1}}\alpha_{i+1},\ldots, \bar\psi^{a_m}\alpha_m\right)\\
        =&\,\left(\bar\psi^{a_1}\alpha_1, \ldots, \bar\psi^{a_{i-1}}\alpha_{i-1}, \bar\psi^{a_i-1}([(\D_\0)_{[h_i]}]\cup\alpha_i) , \bar\psi^{a_{i+1}}\alpha_{i+1}, ,\ldots \bar\psi^{a_m}\alpha_m\right),
        \end{align*}
        where ``$\cup_{\mathrm{CR}}$'' means the Chen--Ruan product, and the second equality follows from the facts that
        \begin{align}\label{E CR-product}
        \alpha\cup_{\mathrm{CR}}\beta=\alpha\cup\beta|_{\Y_{[h]}}
        \end{align}
        for $\alpha\in H^*(\Y_{[h]})$ and $\beta\in H^*(\Y)$ (see for example \cite{Tseng2010,Du2016}) and $ [\D_\0]|_{\Y_{[h_i]}}=[(\D_\0)_{[h_i]}]$.
        Similarly we get $\Gamma(r)_{[1]}$ by adding a smooth marked point, which corresponds to $\Gamma_{[1]}$ under the convention in \S \ref{SSS aogw-Xr}.
        The divisor equation takes the form
        \begin{align}\label{E Divisor-eqn-Y}
        \Big\langle\underline\mu\,\Big|\, {\underline\alpha}_{[1]}\,\Big|\,\underline\eta \Big\rangle_{\Gamma_{[1]}}^{\D_0|\Y|\D_\0} =\int_\beta[\D_\0]\cdot\Big\langle\underline\mu \,\Big|\, \underline\alpha\,\Big|\, \underline\eta\Big\rangle_\Gamma^{\D_\0|\Y|\D_0}+
        \sum_{j=1}^m\Big\langle\underline\mu \,\Big|\,{\underline\alpha}_i \,\Big|\,\underline\eta\rangle_{\Gamma}^{\D_0|\Y|\D_\0}.
        \end{align}
        For the relative invariants of $(\Y_{\D_0,r}|\D_\0)$ we have
        \begin{align}\label{E Divisor-eqn-Yr}
        \Big\langle {\underline\alpha}_{[1]},\underline\mu\,\Big|\, \underline\eta \Big\rangle_{\Gamma(r)_{[1]}}^{\Y_{\D_0,r}|\D_\0} =\int_\beta[\D_\0]\cdot\Big\langle \underline\alpha, \underline\mu\,\Big|\,\underline\eta \rangle_{\Gamma(r)}^{\Y_{\D_0,r}|\D_\0} +
        \sum_{j=2}^n \Big\langle{\underline\alpha}_i,\underline\mu \,\Big|\,\underline\eta \Big\rangle_{\Gamma(r)}^{\Y_{\D_0,r}|\D_\0} .
        \end{align}
        Note that for \eqref{E Divisor-eqn-Yr} we have used
        \[
        [\D_\0]\cup_{\mathrm{CR}}\theta_i =[\D_\0]|_{\Y_{[g_i]}}\cup\theta_i =[(\D_\0)_{[g_i]}]\cup\theta_i=0
        \]
        for $\theta_i$ in $\underline\mu$ as $\theta_i$ supports over $(\sqrt[r]{\D_0})_\rho$ and $[\D_\0]$ supports over $\D_\0$. The divisor equations for both relative invariants of $(\D_0|\Y|\D_\0)$ and $(\Y_{\D_0,r}|\D_0)$ are of the same form. From Corollary \ref{C} we see that
        $\Big\langle {\underline\alpha}_{[1]},\underline\mu\,\Big|\, \underline\eta \Big\rangle_{\Gamma(r)_{[1]}}^{\Y_{\D_0,r}|\D_\0}$ is a polynomial in $r$ when $r\gg 1$. Moreover by Corollary \ref{C} $\Big\langle{\underline\alpha}_i,\underline\mu \,\Big|\,\underline\eta \Big\rangle_{\Gamma(r)}^{\Y_{\D_0,r}|\D_\0}$ is also a polynomial in $r$ when $r\gg 1$ as for every $i$ we have $\rho([h_i])=1$ and
        \[
        \underline{\alpha}_i=\left(\ldots, \bar\psi^{a_i-1}([(\D_\0)_{[h_i]}]\cup\alpha_i) ,\ldots\right).
        \]
        So by \eqref{E Divisor-eqn-Yr} we see that
        \[
        \Big\langle \underline\alpha, \underline\mu\,\Big|\, \underline\eta \Big\rangle_{\Gamma(r)}^{\Y_{\D_0,r}|\D_\0}
        \]
        is a polynomial in $r$ when $r\gg 1$, and by Corollary \ref{C} and Lemma \ref{L Y-to-rubber-general} we have
        \[
        \left[\Big\langle \underline\alpha, \underline\mu\,\Big|\, \underline\eta \Big\rangle_{\Gamma(r)}^{\Y_{\D_0,r}|\D_\0}\right]_{r^0} =\Big\langle\underline\mu\,\Big|\, \underline\alpha\,\Big|\, \underline\eta\Big\rangle_\Gamma^{\D_0|\Y|\D_\0}.
        \]
        So Lemma \ref{L equal-local-model} holds for this case.

    \item[{\bf Case 1.2.}] $\beta\cdot[\D_\0]=0$. Suppose that there is at least one absolute marked point in $\Gamma$. Then by virtual localization, the virtual dimension of the $\cplane^*$-fixed loci is one less than the virtual dimension of $\M_\Gamma(\D_0|\Y|\D_\0)$. Hence the invariant is zero. For the corresponding relative invariant of $(\Y_{\D_0,r}|\D_\0)$, by using the localization computation in Lemma \ref{L Yr-to-rubber} we see that for every graph $\Phi$ the target is not expanded, since $\beta\cdot[\D_\0]=0$, hence the contributions all come from stable vertex over $(\sqrt[r]{\D_0})_\rho$. Then the result is a polynomial in $r$ of degree at least $1$ when $r\gg 1$. Hence the constant term is $0$. So Lemma \ref{L equal-local-model} holds also for this case when there is at least one absolute marked point. We next consider the case that there is no absolute marked point. We could  always choose an $H\in H^2(\D_\0)$ such that $\beta\cdot H\neq 0$. Then by divisor equation it can be reduced to former case as \eqref{E Divisor-eqn-Y} and \eqref{E Divisor-eqn-Yr}. Then by Corollary \ref{C} and the former case, we see that Lemma \ref{L equal-local-model} holds for this case.
    \end{itemize}
\item[{\bf Case 2.}]  {\bf In $\underline\alpha$ there is an insertion of the form $\bar\psi^a([\D_\0]\cdot\alpha)$.}   Then for $\Gamma(r)$ of $(\Y_{\D_0,r}|\D_\0)$, there is also a smooth absolute marked point with constraint $\bar\psi^a([\D_\0]\cdot\alpha)$. Then by Corollary \ref{C} and Lemma \ref{L Y-to-rubber-general}
\[
\Big\langle \underline\alpha,\underline\mu \,\Big|\,\underline\eta \Big\rangle^{\Y_{\D_0,r}|\D_\0}_{\Gamma(r)}
\]
is a polynomial in $r$ when $r\gg 1$, and
\begin{align*}
\left[\Big\langle\underline\alpha,\underline\mu\,\Big|\, \underline\eta\Big\rangle^{\Y_{\D_0,r}|\D_\0}_{\Gamma(r)} \right]_{r^0}&= \Big\langle\underline\mu \,\Big|\, \tilde{\underline\alpha} \,\Big|\, \underline\eta \Big\rangle^{\D_0|\Y|\D_\0,\sim}_\Gamma%\\ &
=\Big\langle\underline\mu \,\Big|\, \underline\alpha \,\Big|\, \underline\eta\Big\rangle^{\D_0|\Y|\D_\0}_\Gamma.
\end{align*}

\item[{\bf Case 3.}] {\bf There is an insertion of the form $\bar\psi^a([\D_0]\cdot\alpha)$ in $\underline\alpha$}. Then by using
    \[
    [\D_0]=[\D_\0]+c_1(\L)
    \]
    we reduces this case to {\bf Case 1} and {\bf Case 2}. So Lemma \ref{L equal-local-model} holds for this case too.
\end{itemize}
This finishes the proof of Lemma \ref{L equal-local-model}, hence completes the proof of Theorem \ref{Thm abs-rel}.

\subsection{Genus zero case of Theorem \ref{Thm abs-rel}}
We next take a closer look at the genus zero case of Theorem \ref{Thm abs-rel}. When genus $\msf g=0$, we have an improvement of Lemma \ref{L Yr-to-rubber}.
\begin{lemma}\label{L Yr-to-rubber-g=0}
Consider the two maps
\[
\epsilon_{\Gamma(r)}\co\M_{\Gamma(r)}(\Y_{\D_0,r}|\D_\0)\rto \M_\Gamma(\Y),\qq\mathrm{and}\qq \epsilon_\Gamma^\sim\co \M_\Gamma(\D_0|\Y|\D_\0)^\sim\rto\M_\Gamma(\Y).
\]
Suppose that $[h_1]$ in $\Gamma$ satisfies $\rho([h_1]) =0$, and the genus $\msf g=0$ in $\Gamma$. Then
\[
\epsilon_{\Gamma(r),*}\left(\msf{ev}_1^*([(\D_\0)_{[h_1]}])\cap [\M_{\Gamma(r)}(\Y_{\D_0,r}|\D_\0)]^\vir\right)
\]
is a constant in $r$ when $r\gg 1$, and
\[
\epsilon_{\Gamma(r),*}\left(\msf{ev}_1^*([(\D_\0)_{[h_1]}])\cap [\M_{\Gamma(r)}(\Y_{\D_0,r}|\D_\0)]^\vir\right)= \epsilon^\sim_{\Gamma,*}\left( [\M_{\Gamma}(\D_0|\Y|\D_\0)^\sim]^\vir\right).
\]
Therefore for $\underline\alpha= (\bar\psi^{a_1}([(\D_\0)_{[h_1]}]\cup\alpha),\ldots)$, by setting $\tilde{\underline\alpha}=(\bar\psi^{a_1}\alpha,\ldots)$, we have
\[\Big\langle\underline\alpha,\underline\mu\,\Big|\, \underline{\eta}\rangle^{\Y_{\D_0,r}|\D_\0}_{\Gamma(r)} =\Big\langle\underline\mu \,\Big|\, \tilde{\underline\alpha}\,\Big|\, \underline\eta \Big\rangle_\Gamma^{\D_0|\Y|\D_\0,\sim}
\]
when $r\gg 1$.
\end{lemma}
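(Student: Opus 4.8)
The plan is to rerun the virtual localization of Lemma~\ref{L Yr-to-rubber}, now with $\msf g=0$. Applying the fiberwise $\cplane^*$-action on $\Y_{\D_0,r}\rto\D$ (with the $0$-- and $\0$--labelings interchanged, as in the proof of Lemma~\ref{L Yr-to-rubber}) to $\msf{ev}_1^*([(\D_\0)_{[h_1]}])\cap[\M_{\Gamma(r)}(\Y_{\D_0,r}|\D_\0)]^\vir$, the marking on the infinity section forces the target to expand; hence, exactly as there, only the single Type~I graph $\Phi_\0$ (no stable vertex over $(\sqrt[r]{\D_0})_\rho$, rubber datum $\Gamma$; cf. Lemma~\ref{L No-edge-for-gi}) and the Type~II graphs $\Phi$ with $\tV^0_{\mathrm{st}}(\Phi)\neq\varnothing$ contribute, and for $r\gg1$ only unstable vertices of types (iv) and (v) of \S\ref{SSS unstable-vertex} occur.

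The genus-zero hypothesis enters through two facts. First, $\msf g=0$ forces every localization graph $\Phi$ to be a tree with all vertex genera zero, and likewise all strata graphs of each vertex moduli space $\M_v((\sqrt[r]\D)_\rho)$ ($\msf g(v)=0$) occurring in the formula of Theorem~\ref{T polynomiality} for $\tilde c_d(-(\sqrt[r]\L)_v\otimes\mc O(\tfrac{1}{r}))$ have $h^1=0$. Second --- and this is the key point --- the virtual dimension of the genus-zero rubber space $\M_\Gamma(\D_0|\Y|\D_\0)^\sim$ is exactly one below that of $\M_\Gamma(\Y)$, so the $r$-independent class $\epsilon^\sim_{\Gamma,*}\bigl([\M_\Gamma(\D_0|\Y|\D_\0)^\sim]^\vir\bigr)$ already occupies the homological degree in which the cycle $\epsilon_{\Gamma(r),*}(\msf{ev}_1^*([(\D_\0)_{[h_1]}])\cap[\M_{\Gamma(r)}(\Y_{\D_0,r}|\D_\0)]^\vir)$ must lie. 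I would then check, by a dimension count on the fixed loci, that all remaining contributions --- the terms of the $\Phi_\0$-contribution $\epsilon^\sim_{\Gamma,*}\bigl(\frac{1+\frac{r}{s}\,\msf{ev}_1^*(e_{[h_1]}^*c_1(\L))}{1-\frac{r}{s}\Psi_\0}\cap[\M_\Gamma(\D_0|\Y|\D_\0)^\sim]^\vir\bigr)$ carrying a positive power of $\Psi_\0$, and the whole Type~II contributions (already known from Lemma~\ref{L Yr-to-rubber} to be polynomials in $r$ of strictly positive order) --- push forward into strictly smaller homological degree than that target class. Since the left-hand side is a cycle of pure homological degree, these lower-degree pieces must cancel, and the left-hand side equals just the $r$-independent Type~I constant term; this yields
\[
\epsilon_{\Gamma(r),*}\bigl(\msf{ev}_1^*([(\D_\0)_{[h_1]}])\cap[\M_{\Gamma(r)}(\Y_{\D_0,r}|\D_\0)]^\vir\bigr)=\epsilon^\sim_{\Gamma,*}\bigl([\M_\Gamma(\D_0|\Y|\D_\0)^\sim]^\vir\bigr)
\]
for all $r\gg1$, i.e. constancy in $r$.

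The main obstacle is the dimension bookkeeping for the Type~II contributions: in Lemma~\ref{L Yr-to-rubber} it was enough that they have strictly positive lowest order in $r$, whereas here one must control their \emph{homological} degree after combining, for each $0$-vertex $v$ with $\msf g(v)=0$, the class $\tilde c_{\mathrm{rk}(v)}(-(\sqrt[r]\L)_v\otimes\mc O(\tfrac{1}{r}))$ (with $\mathrm{rk}(v)=|\tE(v)|-1$ and higher Chern classes controlled by $h^1=0$ via Theorem~\ref{T polynomiality}), the node factors $\tfrac{\kappa_e}{t+\msf{ev}_e^*(c_1(\L))-\kappa_e\bar\psi_e}$, the rubber factor $\tfrac{1}{1-\frac{r}{s}\Psi_\0}$, and the prefactor $\bigl(\tfrac{r}{s}\bigr)^{|\tV^0_{\mathrm{st}}(\Phi)|}$, after the substitution $s=tr$ and push-forward to $\M_\Gamma(\Y)$. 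Once this is done, the asserted identity of invariants $\Big\langle\underline\alpha,\underline\mu\,\Big|\,\underline\eta\Big\rangle^{\Y_{\D_0,r}|\D_\0}_{\Gamma(r)}=\Big\langle\underline\mu\,\Big|\,\tilde{\underline\alpha}\,\Big|\,\underline\eta\Big\rangle^{\D_0|\Y|\D_\0,\sim}_\Gamma$ for $\underline\alpha=(\bar\psi^{a_1}([(\D_\0)_{[h_1]}]\cup\alpha),\ldots)$ follows by capping with the remaining insertions and integrating, exactly as Corollary~\ref{C} was deduced from Lemma~\ref{L Yr-to-rubber}, now using Lemma~\ref{L Y-to-rubber-general} on the rubber side.
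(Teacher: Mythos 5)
Your high-level plan matches the paper's: rerun the localization of Lemma~\ref{L Yr-to-rubber}, let the $[(\D_\0)_{[h_1]}]$-insertion force the target to expand, reduce to the single Type~I graph $\Phi_\0$ plus Type~II graphs, and use $\msf g=0$ to kill the Type~II contribution exactly rather than just its $r^0$-part. You also correctly identify $\mathrm{rk}(v)=|\tE(v)|-1$ for genus-zero $0$-vertices as the relevant consequence of $\msf g=0$. However, the proposal stops precisely at the step that carries the weight of the proof: you list the factors that must be combined --- the Euler class $\tilde c_{\mathrm{rk}(v)}(-(\sqrt[r]\L)_v\otimes\mc O(\tfrac1r))$, the node factors $\tfrac{\kappa_e}{t+\msf{ev}_e^*c_1(\L)-\kappa_e\bar\psi_e}$, the rubber factor, the prefactor $(1/t)^{|\tV^0_{\mathrm{st}}(\Phi)|}$ --- and then explicitly defer the bookkeeping as ``the main obstacle.'' That bookkeeping \emph{is} the proof, so as written the argument is incomplete.

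Moreover, the route you sketch for closing the gap --- a homological dimension count after substituting $s=tr$ --- is not the right tool and is not what the paper does here. The $s=tr$ change of variables was introduced in Lemma~\ref{L Yr-to-rubber} to package polynomiality in $r$ for arbitrary genus; in the genus-zero proof the paper keeps $t$ and performs a direct $t$-degree count. A ``dimension count'' cannot do the job on its own: after push-forward, the $t^0$-coefficient of a Type~II contribution would live in the same homological degree as the $t^0$-coefficient of the Type~I contribution, so degree considerations cannot tell them apart; one must show the Type~II $t^0$-coefficient is literally zero. The way to do that, made elementary by $\msf g=0$, is: cancel each $t$ sitting in front of $\widehat{\cont}_v$ against one $1/t$ in the prefactor $(1/t)^{|\tV^0_{\mathrm{st}}(\Phi)|}$; what remains for each stable $0$-vertex is $r^{|\tE(v)|}\tilde c_{\mathrm{rk}(v)}(-(\sqrt[r]\L)_v\otimes\mc O(\tfrac1r))\prod_{e\in\tE(v)}\tfrac{\kappa_e}{t+\cdots}$, whose $t$-degree is at most $(|\tE(v)|-1)-|\tE(v)|=-1$ because the Euler class contributes $t$-degree at most $\mathrm{rk}(v)=|\tE(v)|-1$ while the node factors contribute exactly $-|\tE(v)|$. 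Hence every Type~II contribution has $t$-degree $\leq -|\tV^0_{\mathrm{st}}(\Phi)|\leq-1$ and vanishes in the $t^0$-slot identically, while $\tfrac{-\msf{ev}_1^*(e_{[h_1]}^*c_1(\L))-t}{-t+\Psi_\0}$ has $t^0$-coefficient $1$, giving $\epsilon^\sim_{\Gamma,*}([\M_\Gamma^\sim]^\vir)$ from $\Phi_\0$ alone, independently of $r$. One last note: the observation you single out as ``the key point'' --- that $\vdimc\M_\Gamma(\D_0|\Y|\D_\0)^\sim$ is one less than the target's --- holds in every genus and is not where $\msf g=0$ enters; the genus-zero input is exactly the strict inequality $\mathrm{rk}(v)<|\tE(v)|$ used above.
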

\begin{proof}
Following the proof of Lemma \ref{L Yr-to-rubber}, we have
\begin{align*}
&\msf{ev}_1^*([(\D_\0)_{[h_1]}])\cap [\M_{\Gamma(r)}(\Y_{\D_0,r}|\D_\0)]^\vir
%\\&
=\sum_{\Phi}\frac{1}{|\aut(\Phi)|}\cdot
i_*\left((-\msf{ev}_1^*(e_{[h_1]}^*c_1(\L))-t)\cdot \frac{[\M_{\Phi}]^\vir}{e(\mc N_\Phi)} \right).
\end{align*}
As the proof of Lemma \ref{L Yr-to-rubber}, since the first absolute marking has insertion $[(\D_\0)_{[h_1]}]$, the target must expand. So we also only have to consider two possible types of graphs as in the proof or Lemma \ref{L Yr-to-rubber}.
\begin{itemize}
\item For the unique Type I graph $\Phi_\0$, the contribution is
    \[
\frac{-\msf{ev}_1^*(e_{[h_1]}^*c_1(\L))-t}{-t+\Psi_\0}\cap [\M_{\Gamma}^\sim]^\vir.
\]

\item For a type II graph $\Phi$, the contribution is
\begin{align*}
&\frac{1}{|\aut(\Phi)|} \left(\frac{-\msf{ev}_1^*(e_{[h_1]}^*c_1(\L))-t}{-t+\Psi_\0} \left(\frac{1}{t}\right)^{|\tV_{\mathrm{st}}^0(\Phi)|} \prod_{v\in \tV^0_{\mathrm{st}}(\Phi)}\widehat{\cont}_v\right)\cap [\M_{\Phi}']^\vir\\
&=\frac{1}{|\aut(\Phi)|} \Bigg\{ \frac{\msf{ev}_1^*(e_{[h_1]}^*c_1(\L))+t} {t-\Psi_\0} \cdot %\\&
\prod_{v\in \tV^0_{\mathrm{st}}(\Phi)} \Bigg[ \sum_{0\leq d\leq |\tE(v)|-1}\tilde c_d(-(\sqrt[r]\L)_v) \left(\frac{t}{r}\right)^{|\tE(v)|-1-d}\\
&
\qq\qq\qq\qq\qq\qq\qq\qq\qq\qq\qq  \cdot r^{|\tE(v)|}\cdot \prod_{e\in\tE(v)}\frac{\kappa_e}{t+ev_e^*(c_1(\L))-\kappa_e \bar\psi_e}\Bigg]\Bigg\}\cap [\M_{\Phi}'']^\vir\\
&=\frac{1}{|\aut(\Phi)|} \Bigg\{ \frac{\msf{ev}_1^*(e_{[h_1]}^*c_1(\L))+t} {t-\Psi_\0} \cdot \prod_{v\in \tV^0_{\mathrm{st}}(\Phi)} \Bigg[\frac{r}{t} \sum_{0\leq d\leq |\tE(v)|-1}\tilde c_d(-(\sqrt[r]\L)_v)\left(\frac{t}{r}\right)^{-d}
\\&
\qq\qq\qq\qq\qq\qq\qq\qq\qq\qq\qq \cdot \prod_{e\in\tE(v)} \frac{\kappa_e}{1+\frac{ev_e^*(c_1(\L))-\kappa_e \bar\psi_e}{t}}\Bigg]\Bigg\}\cap [\M_{\Phi}'']^\vir.
\end{align*}

\end{itemize}
Therefore we always have a factor $\frac{\msf{ev}_1^*(e_{[h_1]}^*c_1(\L))+t} {t-\Psi_\0}$. On the other hand, for a type II graph $\Phi$, each stable vertex contributes a $t\inv$. Now we push them forward to $\M_\Gamma(\Y)$. We need to extract the coefficient of $t^0$. So we only need to consider the unique type I graph $\Phi_0$.
Then we see that the coefficient of $t^0$ does not depend on $r$ and is exactly the $\epsilon^\sim_{\Gamma,*}\left( [\M_\Gamma(\D_0|\Y|\D_\0)^\sim]^\vir\right)$.
\end{proof}

Then by the proof of Theorem \ref{Thm abs-rel} we have the following theorem.
\begin{theorem}\label{Thm abs-rel-g=0}
Suppose $\D$ is a quotient orbifold of a smooth quasi-projective scheme by a linear algebraic group. For genus $0$ invariants, when $r\gg 1$, we have
\[
\Big\langle \underline\alpha,\underline\mu\Big\rangle^{\X_{\D,r}}_{\Gamma(r)} =\Big\langle \underline\alpha\,\Big|\, \underline\mu\Big\rangle^{\X|\D}_{\Gamma}.
\]
\end{theorem}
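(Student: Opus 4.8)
The plan is to invoke Theorem~\ref{Thm abs-rel}, which already gives that for $r\gg1$ the invariant $\Big\langle\underline\alpha,\underline\mu\Big\rangle^{\X_{\D,r}}_{\Gamma(r)}$ is a polynomial in $r$ with constant term $\Big\langle\underline\alpha\,\Big|\,\underline\mu\Big\rangle^{\X|\D}_{\Gamma}$; it then remains only to show that in genus zero this polynomial has degree zero, i.e.\ is constant in $r$. To do so I would revisit the proof of Theorem~\ref{Thm abs-rel} and upgrade each ``polynomial in $r$'' assertion appearing there to ``constant in $r$''. The first step is the degeneration reduction of \S~\ref{SS local-model}: degenerating $\X_{\D,r}$ along $(\sqrt[r]\D)_\rho$ and $(\X|\D)$ along $\D$ produces the matched expansions \eqref{E dege-Xr-inv} and \eqref{E dege-XD-inv}, and since a stable genus-zero curve is a tree of rational components, every irreducible piece produced by either degeneration is again of genus zero; hence each ``$+$'' factor is a genus-zero invariant of a local model, and Theorem~\ref{Thm abs-rel-g=0} reduces to the genus-zero strengthening of Lemma~\ref{L equal-local-model}: for $r\gg1$, $\Big\langle\underline\alpha,\underline\mu\,\Big|\,\underline\eta\Big\rangle^{\Y_{\D_0,r}|\D_\0}_{\Gamma(r)}$ is independent of $r$ and equals $\Big\langle\underline\mu\,\Big|\,\underline\alpha\,\Big|\,\underline\eta\Big\rangle^{\D_0|\Y|\D_\0}_{\Gamma}$.

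For this local statement I would first record the genus-zero analogue of Corollary~\ref{C}, obtained from Lemma~\ref{L Yr-to-rubber-g=0} exactly as Corollary~\ref{C} was obtained from Lemma~\ref{L Yr-to-rubber}: when the first absolute insertion has the form $\bar\psi^{a_1}([(\D_\0)_{[h_1]}]\cup\alpha)$ with $\rho([h_1])=1$, the invariant $\Big\langle\underline\alpha,\underline\mu\,\Big|\,\underline\eta\Big\rangle^{\Y_{\D_0,r}|\D_\0}_{\Gamma(r)}$ is literally constant in $r$ for $r\gg1$, and by Lemma~\ref{L Y-to-rubber-general} it equals $\Big\langle\underline\mu\,\Big|\,\underline\alpha\,\Big|\,\underline\eta\Big\rangle^{\D_0|\Y|\D_\0}_{\Gamma}$. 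Then I would run the same case analysis used to prove Lemma~\ref{L equal-local-model}. In the subcase of Case~1 with $\beta\cdot[\D_\0]\neq0$ one adjoins a smooth marked point carrying $\bar\psi^0[\D_\0]$ and applies the divisor equations \eqref{E Divisor-eqn-Y} and \eqref{E Divisor-eqn-Yr}; since every invariant on the right of \eqref{E Divisor-eqn-Yr} is now constant in $r$ by the genus-zero corollary, so is $\Big\langle\underline\alpha,\underline\mu\,\Big|\,\underline\eta\Big\rangle^{\Y_{\D_0,r}|\D_\0}_{\Gamma(r)}$, and comparison with \eqref{E Divisor-eqn-Y} together with Lemma~\ref{L Y-to-rubber-general} yields the equality. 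Cases~2 and~3 reduce to Case~1 via the Chen--Ruan-product manipulations and the identity $[\D_0]=[\D_\0]+c_1(\L)$, exactly as in the proof of Lemma~\ref{L equal-local-model}; each of these operations visibly preserves $r$-independence.

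The step I expect to be the main obstacle is the remaining subcase $\beta\cdot[\D_\0]=0$ of Case~1: here the general argument only concluded that the relevant absolute local-model invariant is a polynomial in $r$ of positive degree with vanishing constant term, whereas now I must show it vanishes outright, so that the matched summand in \eqref{E dege-Xr-inv} equals the matched (also vanishing) summand in \eqref{E dege-XD-inv}. The rubber side vanishes for the dimension reason noted in the proof of Lemma~\ref{L equal-local-model} once there is an absolute marking, and otherwise one first adjoins a divisor $H\in H^2(\D_\0)$ with $\beta\cdot H\neq0$; for the absolute side the target $(\Y_{\D_0,r}|\D_\0)$ never expands, so by the localization analysis of Lemma~\ref{L Yr-to-rubber-g=0} all contributions come from stable vertices mapped into $(\sqrt[r]{\D_0})_\rho$, and combining Theorem~\ref{T polynomiality} with the genus-zero rank identity $\mathrm{rk}(v)=|\tE(v)|-1$ (which forces every such vertex contribution to carry a strictly positive power of $r$, cf.\ the bookkeeping in \eqref{E hat-Cont}) shows that the $r^0$-part is exactly the vanishing comparison term. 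Feeding the resulting genus-zero local-model lemma back into \eqref{E dege-Xr-inv}--\eqref{E dege-XD-inv} then gives $\Big\langle\underline\alpha,\underline\mu\Big\rangle^{\X_{\D,r}}_{\Gamma(r)}=\Big\langle\underline\alpha\,\Big|\,\underline\mu\Big\rangle^{\X|\D}_{\Gamma}$ for $r\gg1$, which is the assertion of Theorem~\ref{Thm abs-rel-g=0}; the one genuinely new computational input beyond the proof of Theorem~\ref{Thm abs-rel} is Lemma~\ref{L Yr-to-rubber-g=0}.
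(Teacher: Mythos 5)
Your proposal follows the paper's approach exactly: the paper's own proof is a one-line reference to ``the proof of Theorem~\ref{Thm abs-rel},'' with the new computational input being Lemma~\ref{L Yr-to-rubber-g=0} (which you correctly single out), the genus-zero strengthening of Lemma~\ref{L Yr-to-rubber} from ``polynomial in $r$'' to ``constant in $r$.'' Your write-up spells out in more detail how this lemma, its corollary, Lemma~\ref{L Y-to-rubber-general}, and the divisor-equation/Chen--Ruan reductions feed into the case analysis of Lemma~\ref{L equal-local-model}, and this is the intended argument.

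One step in your treatment of Case~1.2 does not yet close. You rightly observe that when $\beta\cdot[\D_\0]=0$ the general argument of Lemma~\ref{L equal-local-model} only shows that $\big\langle\underline\alpha,\underline\mu\,\big|\,\underline\eta\big\rangle^{\Y_{\D_0,r}|\D_\0}_{\Gamma(r)}$ is a polynomial in $r$ of positive degree with vanishing constant term, and you state the goal as showing ``it vanishes outright.'' But the argument you then give (all contributions come from stable vertices over $(\sqrt[r]{\D_0})_\rho$, the genus-zero rank identity $\mathrm{rk}(v)=|\tE(v)|-1$ forces a strictly positive power of $r$ per stable vertex) only re-establishes that the $r^0$-coefficient is zero; it does not show that the higher-order-in-$r$ terms vanish, which is what ``vanishes outright'' requires for the local-model equality, and hence for $r$-independence of the summand in \eqref{E dege-Xr-inv}. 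To finish this subcase one needs an additional argument that the relative $(\Y_{\D_0,r}|\D_\0)$ invariant itself is zero (for example a dimension/localization argument on that side analogous to the one used for the rubber side), not merely that its constant term is zero. The paper's terse proof does not make this explicit either, so you have put your finger on the one place where the ``by the proof of Theorem~\ref{Thm abs-rel}'' shortcut is doing silent work; nevertheless, as written your argument asserts more than it proves at this step.
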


\bibliographystyle{abbrv}

\bibliography{chengyong}

\end{document}